\documentclass[a4paper,12pt]{amsart}
\usepackage{fullpage}
\usepackage{amssymb}
\usepackage{wasysym}
\newtheorem{thm}{Theorem}

\newtheorem{lemma}{Lemma}
\newtheorem{cor}{Corollary}

\DeclareSymbolFont{script}{U}{eus}{m}{n}
\DeclareMathSymbol{\Wedge}{0}{script}{"5E}
\newcommand{\rank}{\operatorname{\mathrm{rank}}}
\newcommand{\End}{\operatorname{\mathrm{End}}}
\newcommand{\Hom}{\operatorname{\mathrm{Hom}}}

\newcommand{\bfour}[4]{\begin{picture}(68,11)
\put(4,1.5){\makebox(0,0){$\bullet$}}
\put(24,1.5){\makebox(0,0){$\bullet$}}
\put(44,1.5){\makebox(0,0){$\bullet$}}
\put(64,1.5){\makebox(0,0){$\bullet$}}
\put(4,1.5){\line(1,0){40}}
\put(44,2.5){\line(1,0){20}}
\put(44,0.5){\line(1,0){20}}
\put(54,1.5){\makebox(0,0){$\rangle$}}
\put(4,10){\makebox(0,0){$\vphantom{(}\scriptstyle #1$}}
\put(24,10){\makebox(0,0){$\vphantom{(}\scriptstyle #2$}}
\put(44,10){\makebox(0,0){$\vphantom{(}\scriptstyle #3$}}
\put(64,10){\makebox(0,0){$\vphantom{(}\scriptstyle #4$}}
\end{picture}}

\newcommand{\cthree}[3]{\begin{picture}(48,11)
\put(4,1.5){\makebox(0,0){$\bullet$}}
\put(24,1.5){\makebox(0,0){$\bullet$}}
\put(44,1.5){\makebox(0,0){$\bullet$}}
\put(4,1.5){\line(1,0){20}}
\put(24,2.5){\line(1,0){20}}
\put(24,0.5){\line(1,0){20}}
\put(34,1.5){\makebox(0,0){$\langle$}}
\put(4,10){\makebox(0,0){$\vphantom{(}\scriptstyle #1$}}
\put(24,10){\makebox(0,0){$\vphantom{(}\scriptstyle #2$}}
\put(44,10){\makebox(0,0){$\vphantom{(}\scriptstyle #3$}}
\end{picture}}

\newcommand{\afive}[5]{\begin{picture}(88,11)
\put(4,1.5){\makebox(0,0){$\bullet$}}
\put(24,1.5){\makebox(0,0){$\bullet$}}
\put(44,1.5){\makebox(0,0){$\bullet$}}
\put(64,1.5){\makebox(0,0){$\bullet$}}
\put(84,1.5){\makebox(0,0){$\bullet$}}
\put(4,1.5){\line(1,0){80}}
\put(4,10){\makebox(0,0){$\vphantom{(}\scriptstyle #1$}}
\put(24,10){\makebox(0,0){$\vphantom{(}\scriptstyle #2$}}
\put(44,10){\makebox(0,0){$\vphantom{(}\scriptstyle #3$}}
\put(64,10){\makebox(0,0){$\vphantom{(}\scriptstyle #4$}}
\put(84,10){\makebox(0,0){$\vphantom{(}\scriptstyle #5$}}
\end{picture}}

\newcommand{\esix}[6]{\begin{picture}(88,15)(0,-8)
\put(4,1.5){\makebox(0,0){$\bullet$}}
\put(24,1.5){\makebox(0,0){$\bullet$}}
\put(44,-14.5){\makebox(0,0){$\bullet$}}
\put(44,1.5){\makebox(0,0){$\bullet$}}
\put(64,1.5){\makebox(0,0){$\bullet$}}
\put(84,1.5){\makebox(0,0){$\bullet$}}
\put(4,1.5){\line(1,0){80}}
\put(44,1.5){\line(0,-1){16}}
\put(4,10){\makebox(0,0){$\vphantom{(}\scriptstyle #1$}}
\put(38,-12.5){\makebox(0,0){$\vphantom{(}\scriptstyle #2$}}
\put(24,10){\makebox(0,0){$\vphantom{(}\scriptstyle #3$}}
\put(44,10){\makebox(0,0){$\vphantom{(}\scriptstyle #4$}}
\put(64,10){\makebox(0,0){$\vphantom{(}\scriptstyle #5$}}
\put(84,10){\makebox(0,0){$\vphantom{(}\scriptstyle #6$}}
\end{picture}}
\newcommand{\xesix}[6]{\begin{picture}(88,15)(0,-8)
\put(4,1.5){\makebox(0,0){$\times$}}
\put(24,1.5){\makebox(0,0){$\bullet$}}
\put(44,-14.5){\makebox(0,0){$\bullet$}}
\put(44,1.5){\makebox(0,0){$\bullet$}}
\put(64,1.5){\makebox(0,0){$\bullet$}}
\put(84,1.5){\makebox(0,0){$\bullet$}}
\put(4,1.5){\line(1,0){80}}
\put(44,1.5){\line(0,-1){16}}
\put(4,10){\makebox(0,0){$\vphantom{(}\scriptstyle #1$}}
\put(38,-12.5){\makebox(0,0){$\vphantom{(}\scriptstyle #2$}}
\put(24,10){\makebox(0,0){$\vphantom{(}\scriptstyle #3$}}
\put(44,10){\makebox(0,0){$\vphantom{(}\scriptstyle #4$}}
\put(64,10){\makebox(0,0){$\vphantom{(}\scriptstyle #5$}}
\put(84,10){\makebox(0,0){$\vphantom{(}\scriptstyle #6$}}
\end{picture}}
\newcommand{\ffour}[4]{\begin{picture}(68,11)
\put(4,1.5){\makebox(0,0){$\bullet$}}
\put(24,1.5){\makebox(0,0){$\bullet$}}
\put(44,1.5){\makebox(0,0){$\bullet$}}
\put(64,1.5){\makebox(0,0){$\bullet$}}
\put(4,1.5){\line(1,0){20}}
\put(24,2.5){\line(1,0){20}}
\put(24,0.5){\line(1,0){20}}
\put(44,1.5){\line(1,0){20}}
\put(34,1.5){\makebox(0,0){$\rangle$}}
\put(4,10){\makebox(0,0){$\vphantom{(}\scriptstyle #1$}}
\put(24,10){\makebox(0,0){$\vphantom{(}\scriptstyle #2$}}
\put(44,10){\makebox(0,0){$\vphantom{(}\scriptstyle #3$}}
\put(64,10){\makebox(0,0){$\vphantom{(}\scriptstyle #4$}}
\end{picture}}

\usepackage{euscript,color}

\begin{document}

\title[Killing two-tensors]
{Prolongation and Killing two-tensors}
\author[M.G.~Eastwood]{Michael Eastwood}
\address{\hskip-\parindent
School of Mathematical Sciences\\
Adelaide University\\ 
SA 5005\\ 
Australia}
\email{meastwoo@gmail.com}
\author[T.~Leistner]{Thomas Leistner}
\address{\hskip-\parindent
School of Mathematical Sciences\\
Adelaide University\\ 
SA 5005\\ 
Australia}
\email{thomas.leistner@adelaide.edu.au}
\subjclass{53C35} 

\begin{abstract} We present a systematic prolongation procedure and its
implementation for Killing two-tensors.  We use the resulting machinery to
elucidate the natural quadratic mapping from Killing fields to Killing
two-tensors in general and on irreducible locally symmetric spaces of compact
type.
\end{abstract}

\renewcommand{\subjclassname}{\textup{2020} Mathematics Subject Classification}

\thanks{This work was supported by Australian Research Council (Discovery 
Projects DP190102360 and DP260102671).}

\maketitle

\setcounter{section}{-1}
\section{Introduction}\label{introduction}

This article is concerned with the differential geometry associated to a
torsion-free affine connection, viewed as a differential operator
$\nabla:\Wedge^1\to\Wedge^1\otimes\Wedge^1$, and written as
$\sigma_b\mapsto\nabla_a\sigma_b$, using the {\em abstract index notation\/}
of~\cite{OT}.  Here, we are writing $\Wedge^1$ for the bundle of $1$-forms on a
connected $n$-dimensional smooth manifold~$M$ (which will usually be suppressed
from the notation).  A $1$-form $\sigma_b$ is said to be {\em Killing\/} if and
only if 
\begin{equation}\label{Killing_one_form} \nabla_{(a}\sigma_{b)}=0,
\end{equation}
where round brackets mean to take the symmetric part.  In other words, the
$1$-form $\sigma_b$ should be in the kernel of the {\em Killing operator\/}
$$\textstyle 
\Wedge^1\ni\sigma_b\longmapsto\nabla_{(a}\sigma_{b)}\in\bigodot^2\!\Wedge^1.$$
As is well-known (and explained at the start of \S\ref{interactions}), if
$\nabla_a$ is the Levi-Civita connection of a metric $g_{ab}$, then its Killing
$1$-forms enjoy a geometric interpretation.  Specifically, the flow of a vector
field $X^a$ is by local isometries of $g_{ab}$ if and only if the
corresponding $1$-form $\sigma_b\equiv g_{bc}X^c$ is a Killing $1$-form (and
$X^a$ itself is said to be a {\em Killing field\/}).

A {\em Killing $2$-tensor} is a symmetric covariant tensor field 
$\sigma_{bc}=\sigma_{(bc)}$ such that
$$\nabla_{(a}\sigma_{bc)}=0.$$
Although, Killing $2$-tensors for a metric $g_{ab}$ do not find a direct
geometric interpretation via the symmetries of $g_{ab}$, they do provide
conserved quantities along the geodesics.  In any case, whether $\nabla_a$
preserves a metric or not, the Killing $2$-tensors are the solutions of an
overdetermined system and it follows (see, for example,~\cite{BCEG}) that the
local solution space is finite-dimensional (with dimension bounded by
$n(n+1)^2(n+2)/12$).  The dimension is only known in particular cases, for
example on the compact rank one symmetric spaces:
\begin{equation}\label{some_dimensions}
\begin{tabular}{rl} 
the round $n$-sphere or ${\mathbb{RP}}_n$&\enskip
dimension $=n(n+1)^2(n+2)/12$,\\
${\mathbb{CP}}_m$&\enskip
dimension $=m(m+1)^2(m+2)/2$,\\
${\mathbb{HP}}_k$&\enskip
dimension $=(k+1)(2k+3)(4k^2+6k+5)/3$,\\
${\mathbb{OP}}_2$&\enskip
dimension $=1404$,
\end{tabular}
\end{equation}
these last two cases only recently (due to Matveev and Nikolayevsky~\cite{MN}).  
For any two $1$-forms $\sigma_b$ and $\widetilde\sigma_b$, notice that
$$\nabla_{(a}\sigma_b\widetilde\sigma_{c)}
=\sigma_{(a}\nabla_b\widetilde\sigma_{c)}
+\widetilde\sigma_{(a}\nabla_b\sigma_{c)}$$
and so, if $\sigma_b$ and $\widetilde\sigma_b$ are Killing $1$-forms, then
$\sigma_{(b}\widetilde\sigma_{c)}$ is a Killing $2$-tensor.  The linear span of
such tensors comprises the {\em decomposable\/} Killing $2$-tensors whilst
Killing $2$-tensors not of this form are often referred to as {\em hidden
symmetries\/}.  (The celebrated {\em Carter constant\/} \cite{C} for the Kerr
metric corresponds to a hidden symmetry.)  Even for locally symmetric spaces,
hidden symmetries are somewhat mysterious.  The existence of hidden symmetries
for ${\mathbb{HP}}_k$ (with $k\geq 3$) and ${\mathbb{OP}}_2$ was only recently
discovered~\cite{MN}.  On the other hand, Nguyen~\cite{Nguyen} has shown 
that there are no hidden symmetries for the symmetric spaces
$G_2/{\mathrm{SO}}(4)$, or ${\mathrm{SL}}(n)/{\mathrm{SO}}(n)$; also for the
compact Lie groups ${\mathrm{SU}}(3)$, $G_2$, or ${\mathrm{SO}}(n)$, with 
their bi-invariant metrics.

In this article, we present a systematic prolongation procedure and its
implementation for Killing $1$-forms in \S\ref{K_1forms} and for Killing
$2$-tensors in~\S\ref{K_2tensors}.  In both these cases, it turns out that the
resulting connection is canonical.  Although various prolongation connections
are already known~\cite{BCEG,E,GL,Heil17,HouriTomodaYasui18}, our choices in
this article are arranged to respect the construction of Killing $2$-tensors
from Killing $1$-forms mentioned above.  All of these constructions are fully
understood for the round sphere (behind the scenes, it is because the round
sphere is projectively flat) and, in this case, the {\em Killing-Yano
$3$-forms} (a vector space of dimension $(n-2)(n-1)n(n+1)/24$ on the
$n$-sphere) appear as the kernel of the quadratic mapping from Killing
$1$-forms to Killing $2$-tensors (see \S\ref{the_sphere}).  We shall find (in
the second bullet point just after Corollary~\ref{SES_of_connections} in
\S\ref{2from1-sec}) that the Killing-Yano $3$-forms also control this kernel
for any affine torsion-free connection.

We apply these general results to the affine locally symmetric case (where
$\nabla_aR_{bc}{}^d{}_e=0$, as discussed in~\S\ref{affine_appendix}).
Consequently, we are able to identify (in Corollary~\ref{injection}) the
irreducible locally symmetric spaces of compact type so that the mapping from
Killing $1$-forms to Killing $2$-tensors is injective.  For example, this is
true for $E_6/F_4$ and provides another example where there are hidden
symmetries:
$$\underbrace{\{\mbox{Killing $2$-tensors on 
$E_6/F_4$}\}}_{\mbox{dimension $3159$}}
=\underbrace{\textstyle\bigodot^2\!
\{\mbox{Killing fields on $E_6/F_4$}\}}_{\mbox{dimension $3081$}}\oplus
\underbrace{\{\mbox{Hidden symmetries}\}}_{\mbox{dimension $78$}},$$
as shown in \S\ref{answer_for_E6/F4}.  For general symmetric spaces such as
this one, it is essential to have a more abstract understanding, via the
standard theory of highest weights, of the various homogeneous bundles that
arise.  For this, we find the computer software LiE~\cite{LiE} very useful and,
in particular, its use in branching representations to symmetric subgroups, as
explained in~\cite{EW}.  The LiE commands that we use are given in an
appendix~\S\ref{LiE_appendix}.  Their use is illustrated with
${\mathrm{SU}}(6)/{\mathrm{Sp}}(3)$, starting in \S\ref{interactions} and
continuing into \S\ref{A5/C3} where we show that this particular symmetric
space has no hidden symmetries.  In \S\ref{answer_for_F4/B4} we confirm the
results of Matveev and Nikolayevsky~\cite{MN} on the Octonionic plane (by
combining the methods of this article with some parabolic geometry).  Although
the most serious application of our prolongation procedure is to Killing
$2$-tensors, it also applies to affine Killing fields.  Being essentially a
side issue, we confine their discussion to another appendix
\S\ref{affine_appendix}.

\section{Prolongation}\label{prolongation}

\subsection{A prolongation procedure} In this subsection, we present a general
prolongation procedure.  It is canonical save for one choice, a `lift of
curvature' to a homomorphism~$\tilde\kappa$, as in the following Theorem.  This
procedure may be iterated and, in good circumstances, yields a connection on a
canonically constructed vector bundle whose parallel sections are in one-to-one
correspondence with the solutions of the original linear differential equation.
In \S\ref{K_1forms} we apply Theorem~\ref{prolong} to encode Killing $1$-forms
and in \S\ref{K_2tensors} to Killing $2$-tensors, where we will see the that
the freedom in choice of $\tilde\kappa$ washes out in the final connection,
obtained after two iterations.

\begin{thm}\label{prolong}
Suppose we have, on a smooth manifold~$M$,
\begin{itemize}
\item a smooth vector bundle $U\to M$ equipped with a connection
$\nabla:U\to\Wedge^1\otimes U$, 
\item a subbundle $\partial:V\hookrightarrow\Wedge^1\otimes U$ equipped with a 
connection $\nabla:V\to\Wedge^1\otimes V$ such that 
$\partial\in\Gamma(\Wedge^1\otimes\Hom(V,U))$ satisfies 
$\nabla\wedge\partial=0$, where the coupled exterior derivative
$$\nabla\wedge\underbar{\enskip}:\Wedge^1\otimes\Hom(V,U)\to
\Wedge^2\otimes\Hom(V,U)$$
is that obtained from the induced connection
$\Hom(V,U)\to\Wedge^1\otimes\Hom(V,U)$.
\end{itemize}
Let 
$\partial\wedge\underbar{\enskip}:\Wedge^p\otimes V\to\Wedge^{p+1}\otimes U$
denote the induced homomorphisms, defined for all $p\geq0$
by $\psi_{bc\cdots d}\mapsto\partial_{[a}\psi_{bc\cdots d]}$. Let $\kappa$ 
denote the composition
$$U\xrightarrow{\,\nabla\,}\Wedge^1\otimes U
\xrightarrow{\,\nabla\wedge\underbar{\enskip}\,}\Wedge^2\otimes U,$$
the curvature of the connection on $U$, and suppose that we are given a
homomorphism $\tilde\kappa:U\to\Wedge^1\otimes V$ such that
$\partial\wedge\tilde\kappa:U\to\Wedge^2\otimes U$ coincides with~$\kappa$.
Let
\begin{equation}\label{this_is_W}
W\equiv\ker\partial\wedge\underbar{\enskip}:\Wedge^1\otimes 
V\to\Wedge^2\otimes U.\end{equation}
Then the `prolonged' connection
\begin{equation}\label{ProlongedConnection}
\begin{array}{c}U\\[-5pt] \oplus\\[-3pt] V\end{array}\ni
\left[\!\begin{array}{c}\phi\\ \psi\end{array}\!\right]
\stackrel{\nabla}{\longmapsto}
\left[\!\begin{array}{c}\nabla\phi-\partial\psi\\ 
\nabla\psi+\tilde\kappa\phi\end{array}\!\right]\end{equation}
on $U\oplus V$ has the following properties:
\begin{enumerate}
\item\label{Ein} $\phi\in\Gamma(U)$ satisfies 
$$\nabla\phi=\partial\psi\quad\mbox{for some (unique)}\enskip\psi\in\Gamma(V)$$
if and only if 
$$\nabla\left[\!\begin{array}{c}\phi\\ \psi\end{array}\!\right]
=\left[\!\begin{array}{c}0\\ \theta\end{array}\!\right]
\quad\mbox{for some (unique)}\enskip\theta\in\Gamma(W),$$
\item\label{Zwei} the curvature of the prolonged connection is of the form
$$\nabla\wedge\nabla\left[\!\begin{array}{c}\phi\\ \psi\end{array}\!\right]
=\left[\!\begin{array}{c}0\\ 
\tilde\kappa\bowtie\psi+(\nabla\tilde\kappa)\bowtie\phi
\end{array}\!\right],$$
where both $\tilde\kappa\bowtie\psi$ and $(\nabla\tilde\kappa)\bowtie\phi$ take
values in 
$\ker\partial\wedge\underbar{\enskip}:\Wedge^2\otimes V\to\Wedge^3\otimes U$.
\end{enumerate}
\end{thm}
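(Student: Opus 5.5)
The plan is a direct computation in abstract indices, with $\partial$ viewed as a $\Hom(V,U)$-valued $1$-form $\partial_a$. The hypothesis $\nabla\wedge\partial=0$ means $\nabla_{[a}\partial_{b]}=0$. It enters through the Leibniz rule: for $\psi\in\Gamma(\Wedge^p\otimes V)$,
$$\nabla\wedge(\partial\wedge\psi)=(\nabla\wedge\partial)\wedge\psi-\partial\wedge(\nabla\wedge\psi)=-\partial\wedge(\nabla\wedge\psi),$$
with the sign coming from moving the $1$-form $\partial$ past $\nabla$. I will establish this identity first, since it is used in both parts.

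\emph{Part~(\ref{Ein}).} Suppose $\nabla\phi=\partial\psi$; uniqueness of $\psi$ follows because $\partial$ is injective, $V$ being a subbundle. Set $\theta=\nabla\psi+\tilde\kappa\phi$. Then the top component of the prolonged connection vanishes, and it remains to show $\partial\wedge\theta=0$. Applying $\nabla\wedge$ to $\nabla\phi=\partial\psi$ gives
$$\kappa\phi=\nabla\wedge\partial\psi=-\partial\wedge\nabla\psi.$$
Hence
$$\partial\wedge\theta=\partial\wedge\nabla\psi+\partial\wedge\tilde\kappa\phi=-\kappa\phi+\kappa\phi=0,$$
so $\theta\in\Gamma(W)$. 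The converse is immediate from the top component. Uniqueness of $\theta$ is trivial, since $\theta$ is determined by $\phi$ and $\psi$.

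\emph{Part~(\ref{Zwei}).} I will apply the coupled exterior derivative $\nabla\wedge$ to the prolonged connection, i.e.\ take the full curvature of~(\ref{ProlongedConnection}), $\Wedge^1\otimes(U\oplus V)\to\Wedge^2\otimes(U\oplus V)$.

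\emph{Top component.} This is
$$\nabla\wedge(\nabla\phi-\partial\psi)-\partial\wedge(\nabla\psi+\tilde\kappa\phi)=\kappa\phi+\partial\wedge\nabla\psi-\partial\wedge\nabla\psi-\partial\wedge\tilde\kappa\phi=0.$$

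\emph{Bottom component.} This is
$$\nabla\wedge(\nabla\psi+\tilde\kappa\phi)+\tilde\kappa\wedge(\nabla\phi-\partial\psi)=\kappa_V\psi+(\nabla\tilde\kappa)\wedge\phi-\tilde\kappa\wedge\nabla\phi+\tilde\kappa\wedge\nabla\phi-\tilde\kappa\wedge\partial\psi,$$
where $\kappa_V$ denotes the curvature of $V$. The terms $\mp\tilde\kappa\wedge\nabla\phi$ cancel, which is exactly why no derivatives survive. So I define
$$\tilde\kappa\bowtie\psi:=\kappa_V\psi-\tilde\kappa\wedge\partial\psi,\qquad(\nabla\tilde\kappa)\bowtie\phi:=\nabla_{[a}\tilde\kappa_{b]}\phi,$$
and both are algebraic in their arguments.

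\emph{Main obstacle: showing both lie in $\ker\partial\wedge$.} The slickest route is the Bianchi identity: the curvature of the prolonged connection, applied to $\nabla$-closed data, is compatible with $\nabla\wedge$ of the connection. Concretely, I will verify each piece directly.

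For the $\phi$-term, differentiate the identity $\partial\wedge\tilde\kappa=\kappa$ and use the Leibniz rule together with $\nabla\wedge\partial=0$:
$$-\partial\wedge(\nabla\wedge\tilde\kappa)\phi+\text{(terms in }\nabla\phi)=\nabla\wedge(\kappa\phi).$$
By the Bianchi identity for $U$, $\nabla\wedge(\kappa\phi)=\kappa\wedge\nabla\phi$, i.e.\ $d^\nabla\kappa=0$. Matching the $\nabla\phi$ terms uses $\partial\wedge\tilde\kappa=\kappa$ again, which leaves $\partial\wedge(\nabla\tilde\kappa)\phi=0$.

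For the $\psi$-term, I need
$$\partial\wedge\kappa_V\psi=\kappa_U\wedge\partial\psi=\partial\wedge\tilde\kappa\wedge\partial\psi.$$
The first equality follows by applying $\nabla\wedge$ twice to $\partial\psi$, since curvature commutes with the parallel (closed) $\partial$ up to the $\nabla\wedge\partial=0$ condition: $(\nabla\wedge)^2(\partial\wedge\psi)=\kappa\wedge\partial\psi$, while the Leibniz rule twice gives $\partial\wedge\kappa_V\psi$. The second equality is the hypothesis.

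The delicate points are tracking signs in these graded manipulations and checking the index skew-symmetrisations. I would organise the check by writing every term with the indices $[abc]$ skewed and comparing term by term.
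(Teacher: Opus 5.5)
Your proposal is correct and follows the paper's proof essentially step for step. It derives the Leibniz/anticommutation identity from $\nabla\wedge\partial=0$, proves part~(1) by applying $\nabla\wedge$ to $\nabla\phi=\partial\psi$, and proves part~(2) by the same curvature expansion, with the kernel checks coming from $\partial\wedge\lambda=\kappa\wedge\partial$ and the Bianchi identity. The only cosmetic difference is that the paper handles the $\phi$-term at the level of homomorphisms, $\partial\wedge\nabla\wedge\tilde\kappa=-\nabla\wedge\partial\wedge\tilde\kappa=-\nabla\wedge\kappa=0$, whereas you evaluate on $\phi$ and cancel the $\nabla\phi$ terms, which is equivalent.
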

\begin{proof} Firstly, let us note that the compatibility condition 
$\nabla\wedge\partial=0$ is precisely that
\begin{equation}\label{anticommutes}
\begin{array}{ccccc}
&&\Wedge^{k+1}\otimes U&\xrightarrow{\,\nabla\,}&\Wedge^{k+2}\otimes U\\
&\raisebox{0pt}{\makebox[0pt][r]
         {$\!\partial\wedge\underbar{\enskip}$}}\nearrow&
&\nearrow\raisebox{-4pt}{\makebox[0pt][l]
         {$\!\partial\wedge\underbar{\enskip}$}}\\ 
\Wedge^k\otimes V&\xrightarrow{\,\nabla\,}&\Wedge^{k+1}\otimes V
\end{array}\end{equation}
anticommutes for $k=0$ (and then for all~$k\geq0$).
Now, suppose that $\phi\in\Gamma(U)$ satisfies
$$\nabla\phi=\partial\psi\quad\mbox{for some (unique)}\enskip
\psi\in\Gamma(V).$$
By applying $\nabla\wedge\underbar{\enskip}:\Wedge^1\otimes U\to
\Wedge^2\otimes U$ to both sides of this equation, we find that
$$\kappa\phi=\nabla\wedge\nabla\phi=\nabla\wedge\partial\psi
=-\partial\wedge\nabla\psi$$
and, therefore, conclude that
$$\partial\wedge(\nabla\psi+\tilde\kappa\phi)
=\partial\wedge\nabla\psi+\kappa\phi=0.$$
By definition, this means that $\nabla\psi+\tilde\kappa\phi$ is a section of
$W$ and we have established one direction of conclusion~(\ref{Ein}).  The other
direction, however, is trivial since the first line of the connection
(\ref{ProlongedConnection}) forces $\nabla\phi=\partial\psi$.  For
conclusion~(\ref{Zwei}), we should compute the curvature
of~(\ref{ProlongedConnection}) as follows:
$$\nabla\wedge\nabla\left[\!\begin{array}{c}\phi\\ \psi\end{array}\!\right]
=\nabla\wedge\left[\!\begin{array}{c}\nabla\phi-\partial\psi\\ 
\nabla\psi+\tilde\kappa\phi\end{array}\!\right]
=\left[\!\begin{array}{c}
\nabla\wedge(\nabla\phi-\partial\psi)
-\partial\wedge(\nabla\psi+\tilde\kappa\phi)\\ 
\nabla\wedge(\nabla\psi+\tilde\kappa\phi)
+\tilde\kappa\wedge(\nabla\phi-\partial\psi)\end{array}\!\right],$$
where 
$\tilde\kappa\wedge\underbar{\enskip}:\Wedge^1\otimes U\to\Wedge^2\otimes V$ 
is induced by $\tilde\kappa:U\to\Wedge^1\otimes V$.  The first line vanishes 
(by construction) and the second line yields
\begin{equation}\label{prolonged_curvature}
\lambda\psi+(\nabla\wedge\tilde\kappa)\phi-\tilde\kappa\wedge\nabla\phi
+\tilde\kappa\wedge\nabla\phi-\tilde\kappa\wedge\partial\psi
=\lambda\psi-\tilde\kappa\wedge\partial\psi+(\nabla\wedge\tilde\kappa)\phi,
\end{equation}
where $\lambda:\nabla\wedge\nabla:V\to\Wedge^2\otimes V$ is the curvature of 
$\nabla:V\to\Wedge^1\otimes V$. However, from (\ref{anticommutes}) we see that 
$\partial\wedge\lambda=\kappa\wedge\partial:V\to\Wedge^3\otimes U$ so
$$\partial\wedge(\lambda\psi-\tilde\kappa\wedge\partial\psi)
=\kappa\wedge\partial\psi-\partial\wedge\tilde\kappa\wedge\partial\psi=0,$$
as advertised.  Moreover,
$$\partial\wedge\nabla\wedge\tilde\kappa
=-\nabla\wedge\partial\wedge\tilde\kappa
=-\nabla\wedge\kappa=0,$$
by the Bianchi identity so $\partial\wedge(\nabla\wedge\tilde\kappa)\phi=0$, 
too.
\end{proof}
In operating Theorem~\ref{prolong}, the basic ingredients are the vector bundle
$U$ together with the subbundle $\partial:V\hookrightarrow\Wedge^1\otimes U$.
The connections on $U$ and $V$ are usually self-evident but there is sometimes
a choice of lift $\tilde\kappa:U\to\Wedge^1\otimes V$ of the curvature
$\kappa:U\to\Wedge^2\otimes U$, assuming such a lift exists (and we shall soon
encounter a case like this in~\S\ref{K_2tensors}).  So let us now consider the
freedom in $\tilde\kappa$ and especially how this freedom enters the picture if
we iterate Theorem~\ref{prolong}.

Clearly, if a lift exists, then the space of homomorphisms
$\tilde\kappa:U\to\Wedge^1\otimes V$ that lift to $\kappa$ form an affine space
over the vector space of homomorphisms $U\to W$, where W is defined
by~(\ref{this_is_W}).  Indeed, if $\tilde\kappa^1$ and $\tilde\kappa^2$ lift 
to $\kappa$, then $\tilde\kappa^1-\tilde\kappa^2:U\to W$.  In particular, if 
it so happens that $W=0$, then $\tilde\kappa$ is unique if it exists (as we 
shall encounter in~\S\ref{K_1forms}).   

Now let us see what happens if we iterate this process. For this, let us also 
denote by $\partial:W\hookrightarrow\Lambda^1 \otimes V$ the given injection 
and, in order to satisfy the assumptions of Theorem~\ref{prolong} assume that 
$\nabla:W\to \Lambda^1\otimes W$ is a connection such that
$$\begin{array}{ccccc}
&&\Wedge^{k+1}\otimes V&\xrightarrow{\,\nabla\,}&\Wedge^{k+2}\otimes V\\
&\raisebox{0pt}{\makebox[0pt][r]
         {$\!\partial\wedge\underbar{\enskip}$}}\nearrow&
&\nearrow\raisebox{-4pt}{\makebox[0pt][l]
         {$\!\partial\wedge\underbar{\enskip}$}}\\ 
\Wedge^k\otimes W&\xrightarrow{\,\nabla\,}&\Wedge^{k+1}\otimes W
\end{array}$$
anticommutes for $k=0$ (and then for all $k\geq 0$).
Let $\tilde{\kappa}^1$ and $\tilde{\kappa}^2$ be two lifts of the curvature 
$\kappa$ of the connection $\nabla$ on $U$. With the construction of 
Theorem~\ref{prolong}, this yields two connections 
$\nabla^1$ and $\nabla^2$ on $E=U\oplus V$ 
with curvatures, for $i=1,2$, 
$$\lambda_{ab}^i\left[\!\begin{array}{c}\phi\\ \psi\end{array}\!\right]
=\left[\!\begin{array}{c}0\\ 
\lambda_{ab}\psi
-\tilde\kappa_{[a}\makebox[0pt]{\hspace{-6pt}${}^i$}\partial_{b]}\psi
+(\nabla_{[a}\tilde\kappa_{b]}\makebox[0pt]{\hspace{-8pt}${}^i$})\phi
\end{array}\!\right]$$
following (\ref{prolonged_curvature}), with 
$\lambda_{ab}\psi
-\tilde\kappa_{[a}\makebox[0pt]{\hspace{-6pt}${}^i$}\partial_{b]}\psi$ and 
$(\nabla_{[a}\tilde\kappa_{b]}\makebox[0pt]{\hspace{-8pt}${}^i$})\phi$ 
taking values in
$$\ker\partial\wedge\underbar{\enskip}:
\Wedge^2\otimes V\to\Wedge^3\otimes U,$$
as in conclusion~(\ref{Zwei}).
In particular, if we set $S\equiv\tilde{\kappa}_2-\tilde{\kappa}_1$, then 
$$S:U\to W\stackrel{\partial}{\hookrightarrow}\Lambda^1\otimes V$$
and
\begin{equation}
\label{lambda12}
(\lambda^2_{ab}-\lambda^1_{ab})
\left[\!\begin{array}{c}\phi\\ \psi\end{array}\!\right]
=\left[\!\begin{array}{c}0\\
-S_{[a}\partial_{b]}\psi+(\nabla_{[a}S_{b]})\phi
\end{array}\!\right],\quad\mbox{where 
$S_a\equiv\partial_aS:U\to\Wedge^1\otimes V$}.
\end{equation}
Now, with a view to iteration, let $\tilde{\lambda}^i:E\to\Lambda^1\otimes W$,
$i=1,2$, be two lifts of $\lambda^i$, respectively, and let
$\overline{\nabla}^i$ be the corresponding connections on 
$E\oplus W=U\oplus V\oplus W$ constructed from Theorem~\ref{prolong} applied to
$E$ with the connections $\nabla^i$ and $W $ with the connection $\nabla$.

Using $S\equiv\tilde\kappa^2-\tilde\kappa^1:U\to W$ we now define an 
automorphism $\Phi$ of 
$E\oplus W=U\oplus V\oplus W$ by
$$\Phi\left[\!\begin{array}{c}\phi\\ \psi\\ \rho\end{array}\!\right]
\equiv\left[\!\begin{array}{c}\phi\\ \psi\\ \rho+S\phi
\end{array}\!\right],$$
and claim that 
\begin{equation}\label{claim}
\overline{\nabla}^2_b\circ\Phi-\Phi\circ\overline{\nabla}^1_b
:E\oplus W\longrightarrow 
\ker\partial\wedge\underbar{\enskip}:\Wedge^1\otimes W\to\Wedge^2\otimes E
\subset \Lambda^1\otimes (E\oplus W).
\end{equation}
To see this, we compute
$$\begin{array}{rcl}\overline\nabla_b^2\Phi\!\left[\!\begin{array}{c}
\phi\\ \psi\\ \rho\end{array}\!\right]
-\Phi\overline\nabla_b^1\!\left[\!\begin{array}{c}
\phi\\ \psi\\ \rho\end{array}\!\right]
&=&\left[\begin{array}{c}0\\ 
\tilde\kappa_b^2\phi-\tilde\kappa_b^2\phi-\partial_bS\phi\\ 
\tilde\lambda_b^2(\phi+\psi)-\tilde\lambda_b^1(\phi+\psi)
+\nabla_b(S\phi)-S(\nabla_b\phi-\partial_b\psi)\end{array}\!\right]\\[20pt]
&=&\left[\begin{array}{c}0\\ 
0\\ 
\tilde\lambda_b^2(\phi+\psi)-\tilde\lambda_b^1(\phi+\psi)
+(\nabla_bS)\phi+S\partial_b\psi\end{array}\!\right],
\end{array}$$ 
by the definition of $S$ and the chain rule for $\nabla_b(S\phi)$.  Applying
$\partial\wedge\underbar{\enskip}:\Wedge^1\otimes W\to\Wedge^2\otimes E$ to 
the last line and using (\ref{lambda12}) yields
$$-S_{[a}\partial_{b]}\psi+(\nabla_{[a}S_{b]})\phi
+(\nabla_{[b}S_{a]})\phi+S_{[a}\partial_{b]}\psi=0,$$
thus verifying claim~(\ref{claim}).  As a result, 
we obtain the following:

\medskip\noindent{\bf Addendum to Theorem~\ref{prolong}.}
{\em If 
$\partial\wedge\underbar{\enskip}:\Lambda^1\otimes W\to \Lambda^2\otimes E$
happens to be injective, then
\begin{itemize}
\item the lifts $\tilde{\lambda}^i$ of $\lambda^i$ are unique, respectively, 
\item the automorphism $\Phi$ is an affine map for the connections 
$\overline{\nabla}^1$ and $\overline{\nabla}^2$, i.e.\
$$\overline{\nabla}^2_b \circ \Phi =\Phi\circ \overline{\nabla}^1_b,$$
\item and solutions of\/ $\nabla\phi=\partial \psi$ are in one-to-one 
correspondence with parallel sections of either of these connections 
on $E\oplus W=U\oplus V\oplus W$.
\end{itemize}}

Theorem~\ref{prolong} and its Addendum may be brought to life by the following
examples.

\subsection{Killing one-forms}\label{K_1forms}
In Theorem~\ref{prolong}, let us take $U\equiv\Wedge^1$, equipped with a
torsion-free connection.  Let $V\equiv\Wedge^2$ and write
$\partial:\Wedge^2\hookrightarrow\Wedge^1\otimes\Wedge^1$ for the standard
inclusion.  Then $\sigma\in\Gamma(U)$ is a $1$-form
$\sigma_a\in\Gamma(\Wedge^1)$ and
$$\nabla_{(a}\sigma_{b)}=0\enskip\Leftrightarrow\enskip
\nabla_a\sigma_b=\mu_{ab}\enskip\Leftrightarrow\enskip
\nabla\sigma=\partial\mu,\enskip\mbox{for some (unique) $2$-form }
\mu\in\Gamma(V).$$
Theorem~\ref{prolong} allows us to interpret `Killing $1$-forms,' solutions of
the equation $\nabla_{(a}\sigma_{b)}=0$, as parallel sections of the prolonged
bundle $\Wedge^1\oplus\Wedge^2$.  The details are as follows.  We take the
connection on $V=\Wedge^2$ to be induced by our torsion free connection
on~$\Wedge^1$. In this case, we have $\nabla_a\partial_b=0$ and so 
$\nabla_{[a}\partial_{b]}=0$, as required in the statement of 
Theorem~\ref{prolong}. With conventions as in~\cite{OT}, the curvature on 
$\Wedge^1$ is given by
\begin{equation}\label{curvature_on_one-forms}
\textstyle\sigma_c\xrightarrow{\,\kappa\,}\nabla_{[a}\nabla_{b]}\sigma_c
=-\frac12R_{ab}{}^d{}_c\sigma_d\end{equation}
and, to use Theorem~\ref{prolong}, we are required to have a homomorphism 
$\tilde\kappa:U\to\Wedge^1\otimes V$ such that the following diagram
$$\begin{array}{cccccc}
U&\xrightarrow{\,\nabla\,}&\Wedge^1\otimes U&\xrightarrow{\,\nabla\,}
&\Wedge^2\otimes U\\
&\raisebox{-4pt}{\makebox[0pt]{$\tilde\kappa$}}\searrow
&&\nearrow\raisebox{-4pt}{\makebox[0pt][l]
                         {$\!\!\partial\wedge\underbar{\enskip}$}}\\[-4pt]
&&\Wedge^1\otimes V
\end{array}$$
commutes (with indices $\partial_{[a}\tilde\kappa_{b]}=\kappa_{ab}
=\nabla_{[a}\nabla_{b]}$).  For Killing $1$-forms this homomorphism is forced
because 
$\partial\wedge\underbar{\enskip}:\Wedge^1\otimes V\to\Wedge^2\otimes U$ is
actually an isomorphism.  Specifically, 
\begin{itemize}
\item $\partial:
\begin{array}[t]{ccc}V&\hookrightarrow&\Wedge^1\otimes U\\ \|&&\|\\ 
\Wedge^2&\hookrightarrow&\Wedge^1\otimes\Wedge^1
\makebox[0pt][l]{\quad is given by $\mu_{ab}\mapsto\mu_{ab}$}
\end{array}$
\item and implies that $\partial\wedge\underbar{\enskip}:
\begin{array}[t]{ccc}\Wedge^1\otimes V&\to&\Wedge^2\otimes U\\ \|&&\|\\ 
\Wedge^1\otimes\Wedge^2&\to&\Wedge^2\otimes\Wedge^1
\makebox[0pt][l]{\quad is given by $\mu_{abc}\mapsto\mu_{[ba]c}$,}
\end{array}$
\end{itemize}
which has an inverse given by
\begin{equation}\label{partial_inverse}
\Gamma(\Wedge^2\otimes\Wedge^1)\ni\theta_{abc}\longmapsto
\mu_{abc}\equiv\theta_{acb}+\theta_{bca}-\theta_{abc}.\end{equation}
This explicit inverse allows us to compute $\tilde\kappa$. 
Specifically, from (\ref{curvature_on_one-forms}) we find that
$$\textstyle\sigma_c\xrightarrow{\,\tilde\kappa\,}
-\frac12R_{ac}{}^d{}_b\sigma_d-\frac12R_{bc}{}^d{}_a\sigma_d
+\frac12R_{ab}{}^d{}_c\sigma_d,$$
which we may rewrite using the Bianchi symmetry 
$R_{a[b}{}^d{}_{c]}=-\frac12R_{bc}{}^d{}_a$ as 
\begin{equation}\label{kappatilde_for_Killing_fields}
\sigma_c\xrightarrow{\,\tilde\kappa\,}-R_{bc}{}^d{}_a\sigma_d.\end{equation}
The prolonged connection (\ref{ProlongedConnection}) from Theorem~\ref{prolong}
is, therefore,
\begin{equation}\label{Killing_connection}
\begin{array}{c}\Wedge^1\\[-5pt] \oplus\\[-3pt] \Wedge^2\end{array}\ni
\left[\!\begin{array}{c}\sigma_b\\ \mu_{bc}\end{array}\!\right]
\stackrel{\nabla_a\,}{\longmapsto}
\left[\!\begin{array}{c}\nabla_a\sigma_b-\mu_{ab}\\ 
\nabla_a\mu_{bc}-R_{bc}{}^d{}_a\sigma_d\end{array}\!\right].\end{equation}
Moreover, in this case, since 
$$W\equiv\ker\partial\wedge\underbar{\enskip}:\Wedge^1\otimes\Wedge^2\to
\Wedge^2\otimes\Wedge^1$$
vanishes, we arrive at the following classical result~\cite{CELM,K}.
\begin{thm}\label{killing_prolongation}
The Killing $1$-forms, namely $1$-forms $\sigma_b$ such that
$\nabla_{(a}\sigma_{b)}=0$ on a manifold equipped with a torsion-free affine
connection~$\nabla_a$, are in $1$-$1$ correspondence with parallel sections of
the bundle $\Wedge^1\oplus\Wedge^2$ equipped with the
connection~\eqref{Killing_connection}.
\end{thm}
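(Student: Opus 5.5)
The plan is to deduce the statement directly from conclusion~(\ref{Ein}) of Theorem~\ref{prolong}, with $U=\Wedge^1$ and $V=\Wedge^2$, $\partial$ the standard inclusion, the connection on $\Wedge^2$ induced from~$\nabla_a$, and $\tilde\kappa$ given by~(\ref{kappatilde_for_Killing_fields}). The hypotheses have already been checked in the preceding discussion. Since $\nabla_a\partial_b=0$, we get $\nabla\wedge\partial=0$. Also $\partial\wedge\tilde\kappa=\kappa$ holds by construction, because $\tilde\kappa$ was obtained by applying the explicit inverse~(\ref{partial_inverse}) to~$\kappa$. The prolonged connection~(\ref{ProlongedConnection}) is then exactly~(\ref{Killing_connection}).

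The first step is to observe that $\nabla_{(a}\sigma_{b)}=0$ holds if and only if $\nabla_a\sigma_b$ is skew. This is equivalent to $\nabla\sigma=\partial\mu$ for a $2$-form $\mu$, which is unique because $\partial$ is injective. Conclusion~(\ref{Ein}) then says that this happens if and only if
$$\nabla_a\left[\!\begin{array}{c}\sigma_b\\ \mu_{bc}\end{array}\!\right]=\left[\!\begin{array}{c}0\\ \theta_{abc}\end{array}\!\right]$$
for some $\theta\in\Gamma(W)$.

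The one genuinely new input is that $W=0$. The map $\partial\wedge\underbar{\enskip}:\Wedge^1\otimes\Wedge^2\to\Wedge^2\otimes\Wedge^1$, $\mu_{abc}\mapsto\mu_{[ba]c}$, is an isomorphism. To verify this, I would check that~(\ref{partial_inverse}) is a two-sided inverse. Both bundles have rank $n\cdot n(n-1)/2$, so it suffices to show that the composite $\theta\mapsto\mu\mapsto\mu_{[ba]c}$ is the identity on $\Wedge^2\otimes\Wedge^1$. This is a short index computation using $\theta_{abc}=\theta_{[ab]c}$: the skew part in $ab$ of $\theta_{acb}+\theta_{bca}-\theta_{abc}$, taken with the index swap, returns $\theta_{abc}$. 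Alternatively, one can argue directly. If $\mu_{abc}=\mu_{a[bc]}$ and $\mu_{[ab]c}=0$, then $\mu$ is symmetric in its first pair and skew in its last pair, and the standard ``$S_3$ braid'' argument forces $\mu=0$. Hence $\theta=0$, and Killing $1$-forms correspond exactly to $\nabla$-parallel sections $(\sigma_b,\mu_{bc})$.

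For bijectivity, the map from parallel sections to Killing $1$-forms is $(\sigma,\mu)\mapsto\sigma$. It is well defined because the top line of~(\ref{Killing_connection}) gives $\nabla_a\sigma_b=\mu_{ab}$, which is skew. It is injective because $\mu$ is determined by $\sigma$. It is surjective by the argument above.

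I expect no real obstacle here. The only step needing care is the sign and index bookkeeping in confirming that~(\ref{partial_inverse}) inverts $\partial\wedge\underbar{\enskip}$, together with the matching Bianchi-identity simplification leading to~(\ref{kappatilde_for_Killing_fields}). Since the correspondence only requires $\tilde\kappa$ to be some lift of~$\kappa$, any slip there would affect only the explicit formula in~(\ref{Killing_connection}), not the logic of the correspondence.
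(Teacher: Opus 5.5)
Your proposal is correct and follows the paper's route exactly. The paper's proof is the one-line observation that this is conclusion~(\ref{Ein}) of Theorem~\ref{prolong} with $U=\Wedge^1$, $V=\Wedge^2$, and $W=0$ because $\partial\wedge\underbar{\enskip}$ is an isomorphism with inverse~(\ref{partial_inverse}). Your rank-plus-right-inverse check and your alternative argument (a tensor symmetric in one index pair and skew in an overlapping pair must vanish) both correctly establish $W=0$.

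One small correction to your closing remark. Because $W=0$, the lift $\tilde\kappa$ is unique, so the theorem as stated for the specific connection~(\ref{Killing_connection}) does depend on~(\ref{kappatilde_for_Killing_fields}) being that lift. It is the correct lift, as the Bianchi symmetry $R_{[ab}{}^d{}_{c]}=0$ confirms.
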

This is what conclusion (\ref{Ein}) from Theorem~\ref{prolong} says. 
The `Killing connection' (\ref{Killing_connection}) also nicely illustrates
conclusion (\ref{Zwei}) as follows.  The explicit
formula (\ref{Killing_connection}) yields
$$\nabla_a\nabla_b
\left[\!\begin{array}{c}\sigma_c\\ \mu_{cd}\end{array}\!\right]
=\left[\!\begin{array}{c}\nabla_a(\nabla_b\sigma_c-\mu_{bc})
-(\nabla_b\mu_{ac}-R_{ac}{}^d{}_b\sigma_d)\\ 
\nabla_a(\nabla_b\mu_{cd}-R_{cd}{}^e{}_b\sigma_e)
-R_{cd}{}^e{}_a(\nabla_b\sigma_e-\mu_{be})\end{array}\!\right]$$
and, therefore, for the curvature of the Killing connection we find
\begin{equation}\label{full_Killing_curvature}
\nabla_{[a}\nabla_{b]}
\left[\!\begin{array}{c}\sigma_c\\ \mu_{cd}\end{array}\!\right]
=\left[\!\begin{array}{c}\nabla_{[a}\nabla_{b]}\sigma_c
-R_{c[a}{}^d{}_{b]}\sigma_d\\ 
\nabla_{[a}\nabla_{b]}\mu_{cd}-(\nabla_{[a}R_{|cd|}{}^e{}_{b]})\sigma_e
+R_{cd}{}^e{}_{[a}\mu_{b]e}\end{array}\!\right].\end{equation}
Sure enough, the first line vanishes by (\ref{curvature_on_one-forms}) and the
Bianchi symmetry $R_{c[a}{}^d{}_{b]}=-\frac12R_{ab}{}^d{}_c$. The second line 
splits into two parts. Firstly, there is
\begin{equation}\label{in_the_locally_symmetric_case}
\nabla_{[a}\nabla_{b]}\mu_{cd}+R_{cd}{}^e{}_{[a}\mu_{b]e}
=R_{ab}{}^e{}_{[c}\mu_{d]e}+R_{cd}{}^e{}_{[a}\mu_{b]e},\end{equation}
which is evidently a section of the symmetric product
$$\textstyle\Wedge^2\bigodot\Wedge^2
=\begin{picture}(6,12)(0,2)
\put(0,0){\line(1,0){6}}
\put(0,6){\line(1,0){6}}
\put(0,12){\line(1,0){6}}
\put(0,0){\line(0,1){12}}
\put(6,0){\line(0,1){12}}
\end{picture}\bigodot
\begin{picture}(6,12)(0,2)
\put(0,0){\line(1,0){6}}
\put(0,6){\line(1,0){6}}
\put(0,12){\line(1,0){6}}
\put(0,0){\line(0,1){12}}
\put(6,0){\line(0,1){12}}
\end{picture}
=\begin{picture}(12,12)(0,2)
\put(0,0){\line(1,0){12}}
\put(0,6){\line(1,0){12}}
\put(0,12){\line(1,0){12}}
\put(0,0){\line(0,1){12}}
\put(6,0){\line(0,1){12}}
\put(12,0){\line(0,1){12}}
\end{picture}\oplus\Wedge^4,$$
having the additional property that skewing over
all four indices vanishes by the Bianchi symmetry~$R_{[ab}{}^e{}_{c]}=0$. 
Therefore, this tensor has Riemann tensor symmetries
$$\begin{picture}(12,12)(0,2)
\put(0,0){\line(1,0){12}}
\put(0,6){\line(1,0){12}}
\put(0,12){\line(1,0){12}}
\put(0,0){\line(0,1){12}}
\put(6,0){\line(0,1){12}}
\put(12,0){\line(0,1){12}}
\end{picture}
=\{X_{abcd}=X_{[ab][cd]}\mid X_{[abc]d}=0\}
=\ker\partial:\Wedge^2\otimes\Wedge^2\to\Wedge^3\otimes\Wedge^1,$$
in accordance with~(\ref{Zwei}).  For the remaining term,
$$\textstyle(\nabla_{[a}R_{|cd|}{}^e{}_{b]})\sigma_e
=\frac12(\nabla_aR_{cd}{}^e{}_b)\sigma_e
-\frac12(\nabla_bR_{cd}{}^e{}_a)\sigma_e$$
and skewing over $bcd$ gives two terms
$$\textstyle\frac12(\nabla_aR_{[cd}{}^e{}_{b]})\sigma_e
-\frac12(\nabla_{[b}R_{cd]}{}^e{}_a)\sigma_e$$
both of which vanish by the Bianchi symmetry and Bianchi identity,
respectively. This is enough to force this term into 
$\begin{picture}(12,12)(0,2)
\put(0,0){\line(1,0){12}}
\put(0,6){\line(1,0){12}}
\put(0,12){\line(1,0){12}}
\put(0,0){\line(0,1){12}}
\put(6,0){\line(0,1){12}}
\put(12,0){\line(0,1){12}}
\end{picture}\,$, as expected.  In outline, by following through the proof of 
Theorem~\ref{prolong}, this construction for Killing $1$-forms is a matter of
chasing the following diagram concerning the prolonged `Killing bundle'
$E\equiv\begin{picture}(6,6)(0,0)
\put(0,0){\line(1,0){6}}
\put(0,6){\line(1,0){6}}
\put(0,0){\line(0,1){6}}
\put(6,0){\line(0,1){6}}
\end{picture}\oplus\begin{picture}(6,12)(0,2)
\put(0,0){\line(1,0){6}}
\put(0,6){\line(1,0){6}}
\put(0,12){\line(1,0){6}}
\put(0,0){\line(0,1){12}}
\put(6,0){\line(0,1){12}}
\end{picture}$
$$\begin{array}{c}E\\ \|\\
\begin{picture}(6,6)(0,0)
\put(0,0){\line(1,0){6}}
\put(0,6){\line(1,0){6}}
\put(0,0){\line(0,1){6}}
\put(6,0){\line(0,1){6}}
\end{picture}\\
\oplus\\[2pt]
\begin{picture}(6,12)(0,2)
\put(0,0){\line(1,0){6}}
\put(0,6){\line(1,0){6}}
\put(0,12){\line(1,0){6}}
\put(0,0){\line(0,1){12}}
\put(6,0){\line(0,1){12}}
\end{picture}
\end{array}
\begin{picture}(24,30)
\put(0,2){$\xrightarrow{\enskip\nabla\enskip}$}
\put(0,-22){\vector(3,2){28}}
\put(12,-8){\makebox(0,0){\footnotesize$\partial$}}
\put(0,-30){$\xrightarrow{\enskip\nabla\enskip}$}
\end{picture}
\begin{array}{c}\Wedge^1\otimes E\\ \|\\
\Wedge^1\otimes\begin{picture}(6,6)(0,0)
\put(0,0){\line(1,0){6}}
\put(0,6){\line(1,0){6}}
\put(0,0){\line(0,1){6}}
\put(6,0){\line(0,1){6}}
\end{picture}\\
\oplus\\[2pt]
\Wedge^1\otimes\begin{picture}(6,12)(0,2)
\put(0,0){\line(1,0){6}}
\put(0,6){\line(1,0){6}}
\put(0,12){\line(1,0){6}}
\put(0,0){\line(0,1){12}}
\put(6,0){\line(0,1){12}}
\end{picture}
\end{array}
\begin{picture}(24,30)
\put(0,2){$\xrightarrow{\enskip\nabla\enskip}$}
\put(0,-22){\vector(3,2){28}}
\put(12,-8){\makebox(0,0){\footnotesize$\partial$}}
\put(0,-30){$\xrightarrow{\enskip\nabla\enskip}$}
\end{picture}
\begin{array}{c}\Wedge^2\otimes E\\ \|\\
\Wedge^2\otimes\begin{picture}(6,6)(0,0)
\put(0,0){\line(1,0){6}}
\put(0,6){\line(1,0){6}}
\put(0,0){\line(0,1){6}}
\put(6,0){\line(0,1){6}}
\end{picture}\\
\oplus\\[2pt]
\Wedge^2\otimes\begin{picture}(6,12)(0,2)
\put(0,0){\line(1,0){6}}
\put(0,6){\line(1,0){6}}
\put(0,12){\line(1,0){6}}
\put(0,0){\line(0,1){12}}
\put(6,0){\line(0,1){12}}
\end{picture}
\end{array}
\begin{picture}(24,30)
\put(0,2){$\xrightarrow{\enskip\nabla\enskip}$}
\put(0,-22){\vector(3,2){28}}
\put(12,-8){\makebox(0,0){\footnotesize$\partial$}}
\end{picture}
\begin{array}{c}\Wedge^3\otimes E\\ \|\\
\Wedge^3\otimes\begin{picture}(6,6)(0,0)
\put(0,0){\line(1,0){6}}
\put(0,6){\line(1,0){6}}
\put(0,0){\line(0,1){6}}
\put(6,0){\line(0,1){6}}
\end{picture}\\
\oplus\\[2pt]
\begin{picture}(6,12)(0,2)
\put(3,8){\makebox(0,0){$\vdots$}}
\end{picture}\end{array}$$
and the location of its curvature is a consequence of the short exact sequence
$$0\to\begin{picture}(12,12)(0,2)
\put(0,0){\line(1,0){12}}
\put(0,6){\line(1,0){12}}
\put(0,12){\line(1,0){12}}
\put(0,0){\line(0,1){12}}
\put(6,0){\line(0,1){12}}
\put(12,0){\line(0,1){12}}
\end{picture}\to\Wedge^2\otimes\begin{picture}(6,12)(0,2)
\put(0,0){\line(1,0){6}}
\put(0,6){\line(1,0){6}}
\put(0,12){\line(1,0){6}}
\put(0,0){\line(0,1){12}}
\put(6,0){\line(0,1){12}}
\end{picture}\xrightarrow{\,\partial\,}
\Wedge^3\otimes\begin{picture}(6,6)(0,0)
\put(0,0){\line(1,0){6}}
\put(0,6){\line(1,0){6}}
\put(0,0){\line(0,1){6}}
\put(6,0){\line(0,1){6}}
\end{picture}\to 0.$$ 

\subsection{Killing two-tensors}\label{K_2tensors}
For Killing two-tensors,
$$\textstyle\sigma_{ab}\in\Gamma(\bigodot^2\!\Wedge^1)
\quad\mbox{such that}\enskip\nabla_{(a}\sigma_{bc)}=0,$$
there are some choices to be made regarding how certain bundles are realised.
The na\"{\i}ve interpretation of the Killing equation, for example, is to 
require that 
$$\nabla_a\sigma_{bc}=\tilde\mu_{abc}\quad\mbox{for some (unique)}\enskip
\tilde\mu_{abc}=\tilde\mu_{a(bc)}\mbox{ such that }\tilde\mu_{(abc)}=0,$$
in other words that $\nabla\sigma-\partial\mu=0$ for
$$\mu\in\Gamma\big(\,\begin{picture}(12,12)(0,2)
\put(0,0){\line(1,0){6}}
\put(0,6){\line(1,0){12}}
\put(0,12){\line(1,0){12}}
\put(0,0){\line(0,1){12}}
\put(6,0){\line(0,1){12}}
\put(12,6){\line(0,1){6}}
\end{picture}\,\big),
\enskip\mbox{where}\enskip
\begin{picture}(12,12)(0,2)
\put(0,0){\line(1,0){6}}
\put(0,6){\line(1,0){12}}
\put(0,12){\line(1,0){12}}
\put(0,0){\line(0,1){12}}
\put(6,0){\line(0,1){12}}
\put(12,6){\line(0,1){6}}
\end{picture}=\{\tilde\mu_{abc}=\tilde\mu_{a(bc)}\mid\tilde\mu_{(abc)}=0\}.$$
However, as an abstract bundle, we may alternatively adopt the realisation 
\begin{equation}\label{alternative_hook}\begin{picture}(12,12)(0,2)
\put(0,0){\line(1,0){6}}
\put(0,6){\line(1,0){12}}
\put(0,12){\line(1,0){12}}
\put(0,0){\line(0,1){12}}
\put(6,0){\line(0,1){12}}
\put(12,6){\line(0,1){6}}
\end{picture}=\{\mu_{abc}=\mu_{a[bc]}\mid\mu_{[abc]}=0\}
\end{equation}
and encode the Killing equation as $\nabla_a\sigma_{bc}+2\mu_{(bc)a}=0$, 
for example. Schematically, the first prolongation of the Killing equation 
$\nabla_{(a}\sigma_{bc)}=0$ now comes from the diagram
$$E=\begin{array}{c} U\\ \oplus\\[2pt] V\end{array}=
\begin{array}{c}
\begin{picture}(12,6)(0,0)
\put(0,0){\line(1,0){12}}
\put(0,6){\line(1,0){12}}
\put(0,0){\line(0,1){6}}
\put(6,0){\line(0,1){6}}
\put(12,0){\line(0,1){6}}
\end{picture}\\
\oplus\\[2pt]
\begin{picture}(12,12)(0,2)
\put(0,0){\line(1,0){6}}
\put(0,6){\line(1,0){12}}
\put(0,12){\line(1,0){12}}
\put(0,0){\line(0,1){12}}
\put(6,0){\line(0,1){12}}
\put(12,6){\line(0,1){6}}
\end{picture}
\end{array}
\begin{picture}(24,10)(0,-14)
\put(0,2){$\xrightarrow{\enskip\nabla\enskip}$}
\put(0,-22){\vector(3,2){28}}
\put(12,-8){\makebox(0,0){\footnotesize$\partial$}}
\put(0,-30){$\xrightarrow{\enskip\nabla\enskip}$}
\end{picture}
\begin{array}{c}
\Wedge^1\otimes\begin{picture}(12,6)(0,0)
\put(0,0){\line(1,0){12}}
\put(0,6){\line(1,0){12}}
\put(0,0){\line(0,1){6}}
\put(6,0){\line(0,1){6}}
\put(12,0){\line(0,1){6}}
\end{picture}\\
\oplus\\[2pt]
\Wedge^1\otimes\begin{picture}(12,12)(0,2)
\put(0,0){\line(1,0){6}}
\put(0,6){\line(1,0){12}}
\put(0,12){\line(1,0){12}}
\put(0,0){\line(0,1){12}}
\put(6,0){\line(0,1){12}}
\put(12,6){\line(0,1){6}}
\end{picture}
\end{array}
\begin{picture}(24,10)(0,-14)
\put(0,2){$\xrightarrow{\enskip\nabla\enskip}$}
\put(0,-22){\vector(3,2){28}}
\put(12,-8){\makebox(0,0){\footnotesize$\partial$}}
\put(0,-30){$\xrightarrow{\enskip\nabla\enskip}$}
\end{picture}
\begin{array}{c}
\Wedge^2\otimes\begin{picture}(12,6)(0,0)
\put(0,0){\line(1,0){12}}
\put(0,6){\line(1,0){12}}
\put(0,0){\line(0,1){6}}
\put(6,0){\line(0,1){6}}
\put(12,0){\line(0,1){6}}
\end{picture}\\
\oplus\\[2pt]
\Wedge^2\otimes\begin{picture}(12,12)(0,2)
\put(0,0){\line(1,0){6}}
\put(0,6){\line(1,0){12}}
\put(0,12){\line(1,0){12}}
\put(0,0){\line(0,1){12}}
\put(6,0){\line(0,1){12}}
\put(12,6){\line(0,1){6}}
\end{picture}
\end{array}
\begin{picture}(24,10)(0,-14)
\put(0,2){$\xrightarrow{\enskip\nabla\enskip}$}
\put(0,-22){\vector(3,2){28}}
\put(12,-8){\makebox(0,0){\footnotesize$\partial$}}
\end{picture}
\begin{array}{c}
\Wedge^3\otimes\begin{picture}(12,6)(0,0)
\put(0,0){\line(1,0){12}}
\put(0,6){\line(1,0){12}}
\put(0,0){\line(0,1){6}}
\put(6,0){\line(0,1){6}}
\put(12,0){\line(0,1){6}}
\end{picture}\,,\\
\oplus\\[2pt]
\begin{picture}(6,12)(0,2)
\put(3,8){\makebox(0,0){$\vdots$}}
\end{picture}\end{array}$$
provided that we have a factorisation of curvature 
$\kappa:\begin{picture}(12,6)(0,0)
\put(0,0){\line(1,0){12}}
\put(0,6){\line(1,0){12}}
\put(0,0){\line(0,1){6}}
\put(6,0){\line(0,1){6}}
\put(12,0){\line(0,1){6}}
\end{picture}\to\Wedge^2\otimes\begin{picture}(12,6)(0,0)
\put(0,0){\line(1,0){12}}
\put(0,6){\line(1,0){12}}
\put(0,0){\line(0,1){6}}
\put(6,0){\line(0,1){6}}
\put(12,0){\line(0,1){6}}
\end{picture}\,$
according to 
\begin{equation}\label{factorisation}\begin{array}{c}
\begin{picture}(12,6)(0,0)
\put(0,0){\line(1,0){12}}
\put(0,6){\line(1,0){12}}
\put(0,0){\line(0,1){6}}
\put(6,0){\line(0,1){6}}
\put(12,0){\line(0,1){6}}
\end{picture}\\
\phantom{\oplus}\\[2pt]
\phantom{\begin{picture}(12,12)(0,2)
\put(0,0){\line(1,0){6}}
\put(0,6){\line(1,0){12}}
\put(0,12){\line(1,0){12}}
\put(0,0){\line(0,1){12}}
\put(6,0){\line(0,1){12}}
\put(12,6){\line(0,1){6}}
\end{picture}}
\end{array}
\begin{picture}(24,10)(0,-14)
\put(0,2){$\xrightarrow{\enskip\nabla\enskip}$}
\put(0,-4){\vector(3,-2){28}}
\put(14,-6){\makebox(0,0){\footnotesize$\tilde\kappa$}}
\end{picture}
\begin{array}{c}
\Wedge^1\otimes\begin{picture}(12,6)(0,0)
\put(0,0){\line(1,0){12}}
\put(0,6){\line(1,0){12}}
\put(0,0){\line(0,1){6}}
\put(6,0){\line(0,1){6}}
\put(12,0){\line(0,1){6}}
\end{picture}\\
\oplus\\[2pt]
\Wedge^1\otimes\begin{picture}(12,12)(0,2)
\put(0,0){\line(1,0){6}}
\put(0,6){\line(1,0){12}}
\put(0,12){\line(1,0){12}}
\put(0,0){\line(0,1){12}}
\put(6,0){\line(0,1){12}}
\put(12,6){\line(0,1){6}}
\end{picture}
\end{array}
\begin{picture}(24,10)(0,-14)
\put(0,2){$\xrightarrow{\enskip\nabla\enskip}$}
\put(0,-22){\vector(3,2){28}}
\put(12,-8){\makebox(0,0){\footnotesize$\partial$}}
\end{picture}\begin{array}{c}
\Wedge^2\otimes\begin{picture}(12,6)(0,0)
\put(0,0){\line(1,0){12}}
\put(0,6){\line(1,0){12}}
\put(0,0){\line(0,1){6}}
\put(6,0){\line(0,1){6}}
\put(12,0){\line(0,1){6}}
\end{picture}\,,\\
\phantom{\oplus}\\[2pt]
\phantom{\Wedge^2\otimes\begin{picture}(12,12)(0,2)
\put(0,0){\line(1,0){6}}
\put(0,6){\line(1,0){12}}
\put(0,12){\line(1,0){12}}
\put(0,0){\line(0,1){12}}
\put(6,0){\line(0,1){12}}
\put(12,6){\line(0,1){6}}
\end{picture}}
\end{array}\end{equation}
(more on this shortly) with the short exact sequence
\begin{equation}\label{SES1}0\to\begin{picture}(12,12)(0,2)
\put(0,0){\line(1,0){12}}
\put(0,6){\line(1,0){12}}
\put(0,12){\line(1,0){12}}
\put(0,0){\line(0,1){12}}
\put(6,0){\line(0,1){12}}
\put(12,0){\line(0,1){12}}
\end{picture}\to\Wedge^1\otimes\begin{picture}(12,12)(0,2)
\put(0,0){\line(1,0){6}}
\put(0,6){\line(1,0){12}}
\put(0,12){\line(1,0){12}}
\put(0,0){\line(0,1){12}}
\put(6,0){\line(0,1){12}}
\put(12,6){\line(0,1){6}}
\end{picture}
\xrightarrow{\,\partial\,}
\Wedge^2\otimes
\begin{picture}(12,6)(0,0)
\put(0,0){\line(1,0){12}}
\put(0,6){\line(1,0){12}}
\put(0,0){\line(0,1){6}}
\put(6,0){\line(0,1){6}}
\put(12,0){\line(0,1){6}}
\end{picture}\to 0\end{equation}
identifying the bundle $W$ from Theorem~\ref{prolong} as 
\begin{picture}(12,12)(0,2)
\put(0,0){\line(1,0){12}}
\put(0,6){\line(1,0){12}}
\put(0,12){\line(1,0){12}}
\put(0,0){\line(0,1){12}}
\put(6,0){\line(0,1){12}}
\put(12,0){\line(0,1){12}}
\end{picture} and the short exact sequence
\begin{equation}\label{SES2}0\to\Wedge^1\otimes\begin{picture}(12,12)(0,2)
\put(0,0){\line(1,0){12}}
\put(0,6){\line(1,0){12}}
\put(0,12){\line(1,0){12}}
\put(0,0){\line(0,1){12}}
\put(6,0){\line(0,1){12}}
\put(12,0){\line(0,1){12}}
\end{picture}\to\Wedge^2\otimes\begin{picture}(12,12)(0,2)
\put(0,0){\line(1,0){6}}
\put(0,6){\line(1,0){12}}
\put(0,12){\line(1,0){12}}
\put(0,0){\line(0,1){12}}
\put(6,0){\line(0,1){12}}
\put(12,6){\line(0,1){6}}
\end{picture}
\xrightarrow{\,\partial\,}
\Wedge^3\otimes
\begin{picture}(12,6)(0,0)
\put(0,0){\line(1,0){12}}
\put(0,6){\line(1,0){12}}
\put(0,0){\line(0,1){6}}
\put(6,0){\line(0,1){6}}
\put(12,0){\line(0,1){6}}
\end{picture}\to 0\end{equation}
identifying the location of its curvature. To proceed, we now employ 
Theorem~\ref{prolong} once more (as presented abstractly following the proof 
of Theorem~\ref{prolong} and leading to its Addendum) to obtain the bundle 
\begin{equation}\label{second_prolonged_bundle}
E\oplus W=U\oplus V\oplus W=\begin{picture}(12,6)(0,0)
\put(0,0){\line(1,0){12}}
\put(0,6){\line(1,0){12}}
\put(0,0){\line(0,1){6}}
\put(6,0){\line(0,1){6}}
\put(12,0){\line(0,1){6}}
\end{picture}\oplus\begin{picture}(12,12)(0,2)
\put(0,0){\line(1,0){6}}
\put(0,6){\line(1,0){12}}
\put(0,12){\line(1,0){12}}
\put(0,0){\line(0,1){12}}
\put(6,0){\line(0,1){12}}
\put(12,6){\line(0,1){6}}
\end{picture}\oplus\begin{picture}(12,12)(0,2)
\put(0,0){\line(1,0){12}}
\put(0,6){\line(1,0){12}}
\put(0,12){\line(1,0){12}}
\put(0,0){\line(0,1){12}}
\put(6,0){\line(0,1){12}}
\put(12,0){\line(0,1){12}}
\end{picture}\end{equation}
and the diagram
$$\begin{array}{c} E\\ \oplus\\[2pt] W\end{array}=
\begin{array}{c}
E\\
\oplus\\[2pt]
\begin{picture}(12,12)(0,2)
\put(0,0){\line(1,0){12}}
\put(0,6){\line(1,0){12}}
\put(0,12){\line(1,0){12}}
\put(0,0){\line(0,1){12}}
\put(6,0){\line(0,1){12}}
\put(12,0){\line(0,1){12}}
\end{picture}
\end{array}
\begin{picture}(24,10)(0,-14)
\put(0,2){$\xrightarrow{\enskip\nabla\enskip}$}
\put(0,-22){\vector(3,2){28}}
\put(12,-8){\makebox(0,0){\footnotesize$\partial$}}
\put(0,-30){$\xrightarrow{\enskip\nabla\enskip}$}
\end{picture}
\begin{array}{c}
\Wedge^1\otimes E\\
\oplus\\[2pt]
\Wedge^1\otimes\begin{picture}(12,12)(0,2)
\put(0,0){\line(1,0){12}}
\put(0,6){\line(1,0){12}}
\put(0,12){\line(1,0){12}}
\put(0,0){\line(0,1){12}}
\put(6,0){\line(0,1){12}}
\put(12,0){\line(0,1){12}}
\end{picture}
\end{array}
\begin{picture}(24,10)(0,-14)
\put(0,2){$\xrightarrow{\enskip\nabla\enskip}$}
\put(0,-22){\vector(3,2){28}}
\put(12,-8){\makebox(0,0){\footnotesize$\partial$}}
\put(0,-30){$\xrightarrow{\enskip\nabla\enskip}$}
\end{picture}
\begin{array}{c}
\Wedge^2\otimes E\\
\oplus\\[2pt]
\Wedge^2\otimes\begin{picture}(12,12)(0,2)
\put(0,0){\line(1,0){12}}
\put(0,6){\line(1,0){12}}
\put(0,12){\line(1,0){12}}
\put(0,0){\line(0,1){12}}
\put(6,0){\line(0,1){12}}
\put(12,0){\line(0,1){12}}
\end{picture}
\end{array}
\begin{picture}(24,10)(0,-14)
\put(0,2){$\xrightarrow{\enskip\nabla\enskip}$}
\put(0,-22){\vector(3,2){28}}
\put(12,-8){\makebox(0,0){\footnotesize$\partial$}}
\end{picture}
\begin{array}{c}
\Wedge^3\otimes E,\\
\oplus\\[2pt]
\begin{picture}(6,12)(0,2)
\put(3,8){\makebox(0,0){$\vdots$}}
\end{picture}\end{array}$$
where $\nabla:E\to\Wedge^1\otimes E$ is the prolonged connection obtained
by one iteration of Theorem~\ref{prolong} and the inclusion
$\partial:W\to\Wedge^1\otimes E$ is obtained from the first homomorphism of the
exact sequence~(\ref{SES1}).  In order to invoke Theorem~\ref{prolong} a second
time, we just need to factor the curvature of the connection on $E$ through
$$\partial\wedge\underbar{\enskip}:\Wedge^1\otimes W
=\Wedge^1\otimes\begin{picture}(12,12)(0,2)
\put(0,0){\line(1,0){12}}
\put(0,6){\line(1,0){12}}
\put(0,12){\line(1,0){12}}
\put(0,0){\line(0,1){12}}
\put(6,0){\line(0,1){12}}
\put(12,0){\line(0,1){12}}
\end{picture}\xrightarrow{\,\partial\,}\Wedge^2\otimes E$$
but the short exact sequence (\ref{SES2}) implies that this is uniquely
possible.  Moreover, once this second prolongation is done, injectivity of this
same homomorphism implies that solutions of the original Killing equation
$\nabla_{(a}\sigma_{bc)}=0$ are now encoded by parallel sections of the
bundle~(\ref{second_prolonged_bundle}).  Note that the curvature of this bundle
is highly constrained.  According to conclusion~(\ref{Zwei}) of this second
implementation of Theorem~\ref{prolong}, we find, schematically, that
\begin{equation}\label{schematically}\nabla\wedge\nabla
\left[\!\begin{array}{c}\sigma\\ \mu\\ \rho\end{array}\!\right]=
\left[\!\begin{array}{c}0\\ 0\\ R\bowtie\rho+(\nabla R)\bowtie\mu+
(\nabla\nabla R)\bowtie\sigma\end{array}\!\right],\end{equation}
with each term on the last line lying in
\begin{equation}\label{bay_window}\begin{picture}(18,12)(0,2)
\put(0,0){\line(1,0){18}}
\put(0,6){\line(1,0){18}}
\put(0,12){\line(1,0){18}}
\put(0,0){\line(0,1){12}}
\put(6,0){\line(0,1){12}}
\put(12,0){\line(0,1){12}}
\put(18,0){\line(0,1){12}}
\end{picture}=\ker\partial\wedge\underbar{\enskip}:
\Wedge^2\otimes\begin{picture}(12,12)(0,2)
\put(0,0){\line(1,0){12}}
\put(0,6){\line(1,0){12}}
\put(0,12){\line(1,0){12}}
\put(0,0){\line(0,1){12}}
\put(6,0){\line(0,1){12}}
\put(12,0){\line(0,1){12}}
\end{picture}\longrightarrow\Wedge^3\otimes\begin{picture}(12,12)(0,2)
\put(0,0){\line(1,0){6}}
\put(0,6){\line(1,0){12}}
\put(0,12){\line(1,0){12}}
\put(0,0){\line(0,1){12}}
\put(6,0){\line(0,1){12}}
\put(12,6){\line(0,1){6}}
\end{picture}\,.\end{equation}
Finally, we return to the question of factorising the curvature
$$\nabla_{[a}\nabla_{b]}\sigma_{cd}=-R_{ab}{}^e{}_{(c}\sigma_{d)e}$$
as in~(\ref{factorisation}), noting that this is the only ambiguity in the 
double prolongation procedure leading to the final connection on 
$\begin{picture}(12,6)(0,0)
\put(0,0){\line(1,0){12}}
\put(0,6){\line(1,0){12}}
\put(0,0){\line(0,1){6}}
\put(6,0){\line(0,1){6}}
\put(12,0){\line(0,1){6}}
\end{picture}\oplus\begin{picture}(12,12)(0,2)
\put(0,0){\line(1,0){6}}
\put(0,6){\line(1,0){12}}
\put(0,12){\line(1,0){12}}
\put(0,0){\line(0,1){12}}
\put(6,0){\line(0,1){12}}
\put(12,6){\line(0,1){6}}
\end{picture}\oplus\begin{picture}(12,12)(0,2)
\put(0,0){\line(1,0){12}}
\put(0,6){\line(1,0){12}}
\put(0,12){\line(1,0){12}}
\put(0,0){\line(0,1){12}}
\put(6,0){\line(0,1){12}}
\put(12,0){\line(0,1){12}}
\end{picture}\,$.  Factorisations exist but, unfortunately, are not unique (as 
presented abstractly in the lead up to the Addendum to Theorem~\ref{prolong}).  
For example, if we realise $\begin{picture}(12,12)(0,2)
\put(0,0){\line(1,0){6}}
\put(0,6){\line(1,0){12}}
\put(0,12){\line(1,0){12}}
\put(0,0){\line(0,1){12}}
\put(6,0){\line(0,1){12}}
\put(12,6){\line(0,1){6}}
\end{picture}$ as (\ref{alternative_hook}) then, as a consequence of this 
freedom in choice of~$\tilde\kappa$, we can take the prolonged 
connection to be
\begin{equation}\label{first_prolongation}\begin{array}{c}
\begin{picture}(12,6)(0,0)
\put(0,0){\line(1,0){12}}
\put(0,6){\line(1,0){12}}
\put(0,0){\line(0,1){6}}
\put(6,0){\line(0,1){6}}
\put(12,0){\line(0,1){6}}
\end{picture}\\[-4pt]
\oplus\\[-2pt]
\begin{picture}(12,12)(0,2)
\put(0,0){\line(1,0){6}}
\put(0,6){\line(1,0){12}}
\put(0,12){\line(1,0){12}}
\put(0,0){\line(0,1){12}}
\put(6,0){\line(0,1){12}}
\put(12,6){\line(0,1){6}}
\end{picture}
\end{array}
\ni\left[\!\begin{array}{c}\sigma_{bc}\\ \mu_{bcd}\end{array}\!\right]
\stackrel{\nabla_a}{\longmapsto}
\left[\!\begin{array}{c}\nabla_a\sigma_{bc}+2\mu_{(bc)a}\\ 
\nabla_a\mu_{bcd}-(R\triangleleft\sigma)_{abcd}
\end{array}\!\right],\end{equation}
where
\begin{equation}\label{option_one}\textstyle (R\triangleleft\sigma)_{abcd}
=R_{cd}{}^e{}_a\sigma_{be}-R_{[cd}{}^e{}_{|a|}\sigma_{b]e}
=\frac23R_{cd}{}^e{}_a\sigma_{be}
-\frac13R_{bc}{}^e{}_a\sigma_{de}
+\frac13R_{bd}{}^e{}_a\sigma_{ce}\end{equation}
or
\begin{equation}\label{option_two}\textstyle (R\triangleleft\sigma)_{abcd}
=\frac5{12}R_{cd}{}^e{}_a\sigma_{be}+\frac14R_{cd}{}^e{}_b\sigma_{ae}
+\frac16R_{ab}{}^e{}_{[c}\sigma_{d]e}
+\frac13R_{ca}{}^e{}_b\sigma_{de}-\frac13R_{da}{}^e{}_b\sigma_{ce},
\end{equation}
the point being that these two options differ by 
\begin{equation}\label{freedom}
\textstyle\frac12R_{cd}{}^e{}_{[a}\sigma_{b]e}
+\frac12R_{ab}{}^e{}_{[c}\sigma_{d]e},\end{equation}
which lies in $\begin{picture}(12,12)(0,2)
\put(0,0){\line(1,0){12}}
\put(0,6){\line(1,0){12}}
\put(0,12){\line(1,0){12}}
\put(0,0){\line(0,1){12}}
\put(6,0){\line(0,1){12}}
\put(12,0){\line(0,1){12}}
\end{picture}$ and therefore, by~(\ref{SES1}), does not contribute
to~(\ref{factorisation}).  The second option is characterised by having
$(R\triangleleft\sigma)_{[ab]cd}+(R\triangleleft\sigma)_{[cd]ab}=0$.  At first 
glance, this option is the more natural since it is obtained by using the 
{\em canonical\/} splitting 
$$0\to\begin{picture}(12,12)(0,2)
\put(0,0){\line(1,0){12}}
\put(0,6){\line(1,0){12}}
\put(0,12){\line(1,0){12}}
\put(0,0){\line(0,1){12}}
\put(6,0){\line(0,1){12}}
\put(12,0){\line(0,1){12}}
\end{picture}\to\Wedge^1\otimes\begin{picture}(12,12)(0,2)
\put(0,0){\line(1,0){6}}
\put(0,6){\line(1,0){12}}
\put(0,12){\line(1,0){12}}
\put(0,0){\line(0,1){12}}
\put(6,0){\line(0,1){12}}
\put(12,6){\line(0,1){6}}
\end{picture}
\xrightarrow{\quad\,\partial\,\quad}
\Wedge^2\otimes
\begin{picture}(12,6)(0,0)
\put(0,0){\line(1,0){12}}
\put(0,6){\line(1,0){12}}
\put(0,0){\line(0,1){6}}
\put(6,0){\line(0,1){6}}
\put(12,0){\line(0,1){6}}
\end{picture}\to 0\begin{picture}(0,0)
\put(-44,-4){\line(0,-1){8}}
\put(-44,-12){\line(-1,0){85}}
\put(-129,-12){\vector(0,1){8}}
\put(-88,-6){\makebox(0,0){$\delta$}}
\end{picture}$$

\bigskip\noindent of (\ref{SES1}) and taking
$\tilde\kappa\equiv\delta\circ\kappa$.  It is the one employed
in~\cite{BCEG,E,GL,Heil17,HouriTomodaYasui18} but the first option
(\ref{option_one}) has the nice property that $2(R\triangleleft\sigma)_{b(cd)a}
=R_{ca}{}^f{}_b\sigma_{df}+R_{da}{}^f{}_b\sigma_{cf}$ and choosing this one
leads to the following prolongation connection for Killing $2$-tensors (which
will eventually (in Theorem~\ref{Phi_is_parallel} and its corollary) turn out
to be well adapted to the quadratic mapping (\ref{K1->K2}) from the Killing
$1$-forms to Killing $2$-tensors).

\begin{thm}\label{prolonged_2-tensors}
Suppose $\nabla_a$ is a torsion-free affine connection with curvature
$R_{ab}{}^c{}_d$. 
Then the
Killing $2$-tensors, namely $2$-tensors $\sigma_{bc}=\sigma_{(bc)}$ such that
$\nabla_{(a}\sigma_{bc)}=0$, are in $1$-$1$ correspondence with parallel
sections of the bundle
$\begin{picture}(12,6)(0,0)
\put(0,0){\line(1,0){12}}
\put(0,6){\line(1,0){12}}
\put(0,0){\line(0,1){6}}
\put(6,0){\line(0,1){6}}
\put(12,0){\line(0,1){6}}
\end{picture}\oplus
\begin{picture}(12,12)(0,2)
\put(0,0){\line(1,0){6}}
\put(0,6){\line(1,0){12}}
\put(0,12){\line(1,0){12}}
\put(0,0){\line(0,1){12}}
\put(6,0){\line(0,1){12}}
\put(12,6){\line(0,1){6}}
\end{picture}\oplus\begin{picture}(12,12)(0,2)
\put(0,0){\line(1,0){12}}
\put(0,6){\line(1,0){12}}
\put(0,12){\line(1,0){12}}
\put(0,0){\line(0,1){12}}
\put(6,0){\line(0,1){12}}
\put(12,0){\line(0,1){12}}
\end{picture}$\,,
realised as 
$$\begin{picture}(12,12)(0,2)
\put(0,0){\line(1,0){6}}
\put(0,6){\line(1,0){12}}
\put(0,12){\line(1,0){12}}
\put(0,0){\line(0,1){12}}
\put(6,0){\line(0,1){12}}
\put(12,6){\line(0,1){6}}
\end{picture}\equiv\{\mu_{bcd}=\mu_{b[cd]}\mid\mu_{[bcd]}=0\}
\quad\mbox{and}\quad
\begin{picture}(12,12)(0,2)
\put(0,0){\line(1,0){12}}
\put(0,6){\line(1,0){12}}
\put(0,12){\line(1,0){12}}
\put(0,0){\line(0,1){12}}
\put(6,0){\line(0,1){12}}
\put(12,0){\line(0,1){12}}
\end{picture}
=\{\rho_{bcde}=\rho_{[bc][de]}\mid\rho_{b[cde]}=0\}$$
and equipped with the following connection:
\begin{equation}\label{our_tricky_prolongation_connection}
\begin{array}{c}
\begin{picture}(12,6)(0,0)
\put(0,0){\line(1,0){12}}
\put(0,6){\line(1,0){12}}
\put(0,0){\line(0,1){6}}
\put(6,0){\line(0,1){6}}
\put(12,0){\line(0,1){6}}
\end{picture}\\[-4pt]
\oplus\\[-2pt]
\begin{picture}(12,12)(0,2)
\put(0,0){\line(1,0){6}}
\put(0,6){\line(1,0){12}}
\put(0,12){\line(1,0){12}}
\put(0,0){\line(0,1){12}}
\put(6,0){\line(0,1){12}}
\put(12,6){\line(0,1){6}}
\end{picture}\\[-2pt]
\oplus\\[-2pt]
\begin{picture}(12,12)(0,2)
\put(0,0){\line(1,0){12}}
\put(0,6){\line(1,0){12}}
\put(0,12){\line(1,0){12}}
\put(0,0){\line(0,1){12}}
\put(6,0){\line(0,1){12}}
\put(12,0){\line(0,1){12}}
\end{picture}
\end{array}
\ni\left[\!\begin{array}{c}\sigma_{bc}\\ \mu_{bcd}\\
\rho_{bcde}\end{array}\!\right]
\stackrel{\nabla_a}{\longmapsto}
\left[\!\begin{array}{c}\nabla_a\sigma_{bc}+2\mu_{(bc)a}\\ 
\nabla_a\mu_{bcd}-(R\triangleleft\sigma)_{abcd}-\rho_{abcd}\\
\nabla_a\rho_{bcde}-(R\diamond\mu)_{abcde}+(\nabla\!R \ast \sigma)_{abcde}
\end{array}\!\right],\end{equation}
where 
\begin{equation}\label{lefttriangle}\textstyle (R\triangleleft\sigma)_{abcd}
\equiv\frac23R_{cd}{}^e{}_a\sigma_{be}
-\frac13R_{bc}{}^e{}_a\sigma_{de}
+\frac13R_{bd}{}^e{}_a\sigma_{ce},\end{equation}
 $(R\diamond\mu)_{abcde}\equiv$
\begin{equation}
\label{R_diamond_mu}
\begin{array}{l}
R_{bc}{}^f{}_a\mu_{fde}+R_{de}{}^f{}_a\mu_{fbc}
-\frac12R_{be}{}^f{}_a\mu_{fcd}
-\frac12R_{cd}{}^f{}_a\mu_{fbe}
+\frac12R_{bd}{}^f{}_a\mu_{fce}
+\frac12R_{ce}{}^f{}_a\mu_{fbd}\\[4pt]
\enskip{}+\frac23
\big(R_{a(b}{}^f{}_{d)}\mu_{(ce)f}-R_{a(c}{}^f{}_{d)}\mu_{(be)f}
-R_{a(b}{}^f{}_{e)}\mu_{(cd)f}+R_{a(c}{}^f{}_{e)}\mu_{(bd)f}\big)\\[4pt]
\quad{}-\frac23\big(R_{bc}{}^f{}_d\mu_{(ea)f}-R_{bc}{}^f{}_e\mu_{(da)f}
+R_{de}{}^f{}_b\mu_{(ca)f}-R_{de}{}^f{}_c\mu_{(ba)f}\big),\end{array}
\end{equation}
and 
\begin{equation}\label{asterisk}
\begin{array}{rcl}(\nabla\!R\ast\sigma)_{abcde}
&\!\!\!\equiv\!\!\!&\frac23\left(
\sigma_{f[b}\nabla_{c]}R_{de}{}^f{}_a+(\nabla_aR_{de}{}^f{}_{[b})\sigma_{c]f}
+\sigma_{f[d}\nabla_{e]}R_{bc}{}^f{}_a
+(\nabla_aR_{bc}{}^f{}_{[d})\sigma_{e]f}\right)\\[5pt]
&\!\!&{}+\frac13\sigma_{af}
\left(\nabla_{c}R_{de}{}^f{}_{b}-\nabla_{b}R_{de}{}^f{}_{c}\right).
\end{array}\hspace{-8pt}
\end{equation}
\end{thm}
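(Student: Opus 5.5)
The plan is to apply Theorem~\ref{prolong} twice, using its Addendum, and then identify the resulting connection with \eqref{our_tricky_prolongation_connection} by explicit computation.

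\emph{First step.} Take $U=\bigodot^2\Wedge^1$ and $V$ the hook bundle realised as \eqref{alternative_hook}, with $\partial\mu=-2\mu_{(bc)a}$. Both carry the connection induced from $\nabla_a$. Then $\nabla_a\partial=0$, so the compatibility condition $\nabla\wedge\partial=0$ holds, and $\nabla_{(a}\sigma_{bc)}=0$ is equivalent to $\nabla\sigma=\partial\mu$ for a unique $\mu$. For the lift $\tilde\kappa$ I take \eqref{option_one}, i.e.\ \eqref{lefttriangle}. The only thing to verify is $\partial\wedge\tilde\kappa=\kappa$, that is,
$$-2\bigl(R\triangleleft\sigma\bigr)_{[a|(cd)|b]}=-R_{ab}{}^e{}_{(c}\sigma_{d)e}.$$
I would use the stated identity $2(R\triangleleft\sigma)_{b(cd)a}=R_{ca}{}^f{}_b\sigma_{df}+R_{da}{}^f{}_b\sigma_{cf}$, skew in $ab$, and apply the Bianchi symmetry $R_{c[a}{}^f{}_{b]}=-\frac12R_{ab}{}^f{}_c$. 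This gives the connection \eqref{first_prolongation} on $E=U\oplus V$ with $W$ the Riemann-symmetry bundle. By \eqref{SES1}, the inclusion $\partial:W\to\Wedge^1\otimes E$ is $\rho\mapsto(0,\rho_{abcd})$, with the sign chosen to match $-\rho_{abcd}$ in the second line.

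\emph{Second step.} Apply Theorem~\ref{prolong} to $E$ and $W$, again with the induced connection on $W$. By \eqref{SES2}, $\partial\wedge\underbar{\enskip}:\Wedge^1\otimes W\to\Wedge^2\otimes E$ is injective. Its image is exactly the kernel described in conclusion~(\ref{Zwei}) of the first step, which contains the curvature $\lambda$ of \eqref{first_prolongation}. Hence the lift $\tilde\lambda$ exists and is unique. The Addendum then shows that Killing $2$-tensors correspond bijectively to parallel sections of $U\oplus V\oplus W$ with the connection
$$(\sigma,\mu,\rho)\longmapsto\bigl(\nabla\sigma-\partial\mu,\ \nabla\mu+\tilde\kappa\sigma-\rho,\ \nabla\rho+\tilde\lambda(\sigma,\mu)\bigr).$$

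\emph{Third step: identification.} It remains to show that $\tilde\lambda(\sigma,\mu)=-(R\diamond\mu)+(\nabla R\ast\sigma)$. By the injectivity just used, it suffices to check two things.
\begin{enumerate}
\item The right-hand side has Riemann symmetries in $bcde$, i.e.\ it is skew in $bc$ and in $de$, and satisfies $X_{ab[cde]}=0$.
\item Applying $\partial\wedge\underbar{\enskip}$ to it reproduces the curvature \eqref{prolonged_curvature}, namely
$$\lambda\mu-\tilde\kappa\wedge\partial\mu+(\nabla\wedge\tilde\kappa)\sigma.$$
\end{enumerate}
The $\mu$ and $\sigma$ parts can be treated separately, since both sides are linear in each. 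For the $\mu$-terms, compute $\nabla_{[a}\nabla_{b]}\mu_{cde}$ using the Ricci identity on the three indices of $\mu$, add the $\tilde\kappa\wedge\partial\mu$ term, and match the result with $-2(R\diamond\mu)_{[a|\,\cdot\,|b]}$ after the sign and normalisation conventions of $\partial$ on the hook. For the $\sigma$-terms, $(\nabla_{[a}\tilde\kappa_{b]})\sigma$ is just $\nabla_{[a}$ applied to the coefficients of \eqref{lefttriangle}, so it is linear in $\nabla R$; match it against \eqref{asterisk} using the differential Bianchi identity $\nabla_{[a}R_{bc]}{}^d{}_e=0$.

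The main obstacle is this third step, specifically the $\mu$-part. The expression \eqref{R_diamond_mu} has many terms, and both the Riemann-symmetry check and the match with $\partial\wedge\underbar{\enskip}$ require systematic use of $\mu_{[bcd]}=0$ and the algebraic Bianchi symmetry. I would organise it by first verifying the Riemann symmetries of each bracketed group in \eqref{R_diamond_mu}, and then computing $\partial\wedge\underbar{\enskip}$ only on a spanning set of index contractions. If needed, I would check coefficients in a flat-plus-constant-curvature test case, where $R_{ab}{}^c{}_d=\delta_a{}^cg_{bd}-\delta_b{}^cg_{ad}$, to pin down the $\frac12$ and $\frac23$ factors.
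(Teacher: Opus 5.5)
Your plan is essentially the paper's proof: check $(R\triangleleft\sigma)_{b(cd)a}-(R\triangleleft\sigma)_{a(cd)b}=-R_{ab}{}^e{}_{(c}\sigma_{d)e}$ using the identity for $2(R\triangleleft\sigma)_{b(cd)a}$ and the Bianchi symmetry. Then pin down the unique second-stage lift by showing three things: the right-hand side lies in $\Wedge^1\otimes W$; $(R\diamond\mu)_{[ab]cde}$ equals the $\mu$-part $\lambda\mu-\tilde\kappa\wedge\partial\mu$ of the first-stage curvature; and $(\nabla\!R\ast\sigma)_{[ab]cde}=(\nabla_{[a}R\triangleleft\sigma)_{b]cde}$ by the Bianchi symmetry and Bianchi identity.

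One slip to fix: the second-stage $\partial$ is $\rho\mapsto(0,\rho_{abcd})$, as you note yourself. So the $\mu$-part of the curvature must be compared with $(R\diamond\mu)_{[ab]cde}$, skewed over the first two indices and with no factor $-2$. The $-2$ and the symmetrisation belong to the first-stage $\partial$ on the hook bundle.
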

\begin{proof} Firstly, we should check the formula for
$(R\triangleleft\sigma)_{abcd}$, already claimed as an option
in~(\ref{option_one}).  According to (\ref{factorisation}), since
$\nabla_{[a}\nabla_{b]}\sigma_{cd}=-R_{ab}{}^e{}_{(c}\sigma_{d)e}$, we are
required to check that
\begin{equation}\label{to_be_checked}
(R\triangleleft\sigma)_{b(cd)a}-(R\triangleleft\sigma)_{a(cd)b}
=-R_{ab}{}^e{}_{(c}\sigma_{d)e}\end{equation}
but we have already observed that
$2(R\triangleleft\sigma)_{b(cd)a}
=R_{ca}{}^e{}_b\sigma_{de}+R_{da}{}^e{}_b\sigma_{ce}$
so
$$(R\triangleleft\sigma)_{b(cd)a}-(R\triangleleft\sigma)_{a(cd)b}
=R_{c[a}{}^e{}_{b]}\sigma_{de}+R_{d[a}{}^e{}_{b]}\sigma_{ce}$$
and the Bianchi symmetry in the form 
$R_{c[a}{}^e{}_{b]}\sigma_{de}=-\frac12R_{ab}{}^e{}_c\sigma_{de}$ 
yields~(\ref{to_be_checked}), as required.

It remains to check (\ref{factorisation}) at the second stage and, for this, we 
need to know the curvature of the connection
\begin{equation}\label{first_stage_connection}
\nabla_a\left[\!\begin{array}{c}\sigma_{bc}\\ \mu_{bcd}\end{array}\!\right]
=\left[\!\begin{array}{c}\nabla_a\sigma_{bc}+2\mu_{(bc)a}\\ 
\nabla_a\mu_{bcd}-(R\triangleleft\sigma)_{abcd}
\end{array}\!\right]\end{equation}
with $(R\triangleleft\sigma)_{abcd}$ given by (\ref{option_one}).  For clarity,
we restrict momentarily to the locally symmetric
case~$\nabla_aR_{bc}{}^d{}_e=0$, in which the general formula
\begin{equation}
\label{general_formula}
\nabla\wedge\nabla\left[\!\begin{array}{c}\phi\\ 
\psi\end{array}\!\right]=\left[\!\begin{array}{c}0\\ 
\lambda\psi-\tilde\kappa\wedge\partial\psi+(\nabla\wedge\tilde\kappa)\phi
\end{array}\!\right]
\end{equation}
as in (\ref{prolonged_curvature}), reduces to
$$\nabla\wedge\nabla\left[\!\begin{array}{c}\sigma\\ 
\mu\end{array}\!\right]=\left[\!\begin{array}{c}0\\ 
\nabla\wedge\nabla\mu-\tilde\kappa\wedge\partial\mu\end{array}\!\right],$$
where $\tilde\kappa\wedge\underbar{\enskip}:
\Wedge^1\otimes\begin{picture}(12,6)(0,0)
\put(0,0){\line(1,0){12}}
\put(0,6){\line(1,0){12}}
\put(0,0){\line(0,1){6}}
\put(6,0){\line(0,1){6}}
\put(12,0){\line(0,1){6}}
\end{picture}\to
\Wedge^2\otimes\begin{picture}(12,12)(0,2)
\put(0,0){\line(1,0){6}}
\put(0,6){\line(1,0){12}}
\put(0,12){\line(1,0){12}}
\put(0,0){\line(0,1){12}}
\put(6,0){\line(0,1){12}}
\put(12,6){\line(0,1){6}}
\end{picture}$ is induced by~(\ref{option_one}).  Explicitly, we find
$$\textstyle(\nabla\wedge\nabla\mu)_{abcde}=\nabla_{[a}\nabla_{b]}\mu_{cde}
=-\frac12R_{ab}{}^f{}_c\mu_{fde}-\frac12R_{ab}{}^f{}_d\mu_{cfe}
-\frac12R_{ab}{}^f{}_e\mu_{cdf}$$
whilst
$$\begin{picture}(12,12)(0,2)
\put(0,0){\line(1,0){6}}
\put(0,6){\line(1,0){12}}
\put(0,12){\line(1,0){12}}
\put(0,0){\line(0,1){12}}
\put(6,0){\line(0,1){12}}
\put(12,6){\line(0,1){6}}
\end{picture}\ni\mu\mapsto
\partial\mu\in\Wedge^1\otimes
\begin{picture}(12,6)(0,0)
\put(0,0){\line(1,0){12}}
\put(0,6){\line(1,0){12}}
\put(0,0){\line(0,1){6}}
\put(6,0){\line(0,1){6}}
\put(12,0){\line(0,1){6}}
\end{picture}\quad\mbox{is}\enskip
\mu_{cde}\mapsto-2\mu_{(cd)b}$$
and so, according to~(\ref{option_one}), 
$$\Wedge^1\otimes\begin{picture}(12,6)(0,0)
\put(0,0){\line(1,0){12}}
\put(0,6){\line(1,0){12}}
\put(0,0){\line(0,1){6}}
\put(6,0){\line(0,1){6}}
\put(12,0){\line(0,1){6}}
\end{picture}\ni\sigma\mapsto\tilde\kappa\wedge\sigma\in
\Wedge^2\otimes\begin{picture}(12,12)(0,2)
\put(0,0){\line(1,0){6}}
\put(0,6){\line(1,0){12}}
\put(0,12){\line(1,0){12}}
\put(0,0){\line(0,1){12}}
\put(6,0){\line(0,1){12}}
\put(12,6){\line(0,1){6}}
\end{picture}\quad\mbox{is}\enskip
\sigma_{bcd}\mapsto\textstyle
-\frac23R_{de}{}^f{}_{[a}\sigma_{b]cf}
+\frac13R_{cd}{}^f{}_{[a}\sigma_{b]ef}
-\frac13R_{ce}{}^f{}_{[a}\sigma_{b]df}$$
whence $(\tilde\kappa\wedge\partial\mu)_{abcde}$ is given by
$$\textstyle
\frac23R_{de}{}^f{}_a\mu_{(cf)b}-\frac23R_{de}{}^f{}_b\mu_{(cf)a}
-\frac13R_{cd}{}^f{}_a\mu_{(ef)b}+\frac13R_{cd}{}^f{}_b\mu_{(ef)a}
+\frac13R_{ce}{}^f{}_a\mu_{(df)b}-\frac13R_{ce}{}^f{}_b\mu_{(df)a}.$$
Altogether then, the curvature of (\ref{first_stage_connection}) is
\begin{equation}\label{curvature_of_first_stage_connection}
\nabla_{[a}\nabla_{b]}
\left[\!\begin{array}{c}\sigma_{cd}\\ \mu_{cde}\end{array}\!\right]
=\left[\!\begin{array}{c}0\\ 
\left\lceil\!\!\!\!\begin{array}{l}
{}-\frac12R_{ab}{}^f{}_c\mu_{fde}-\frac12R_{ab}{}^f{}_d\mu_{cfe}
-\frac12R_{ab}{}^f{}_e\mu_{cdf}\\
\quad{}-\frac23R_{de}{}^f{}_a\mu_{(cf)b}+\frac23R_{de}{}^f{}_b\mu_{(cf)a}
+\frac13R_{cd}{}^f{}_a\mu_{(ef)b}\\
\qquad{}-\frac13R_{cd}{}^f{}_b\mu_{(ef)a}
-\frac13R_{ce}{}^f{}_a\mu_{(df)b}+\frac13R_{ce}{}^f{}_b\mu_{(df)a}
\end{array}\!\right\rfloor
\end{array}\!\right]\!.\end{equation}
On the other hand, from (\ref{R_diamond_mu}) we note that 
$R\diamond\mu\in\Wedge^1\otimes\begin{picture}(12,12)(0,2)
\put(0,0){\line(1,0){12}}
\put(0,6){\line(1,0){12}}
\put(0,12){\line(1,0){12}}
\put(0,0){\line(0,1){12}}
\put(6,0){\line(0,1){12}}
\put(12,0){\line(0,1){12}}
\end{picture}$\,, as it should be (indeed, each line of $R\diamond\mu$ has 
this property), and now let us compute
$(R\diamond\mu)_{[ab]cde}$. It is convenient to do this one line at a time, 
firstly obtaining (using Bianchi symmetry 
$R_{c[b}{}^f{}_{a]}=\frac12R_{ab}{}^f{}_c$)
$$\begin{array}{l}
-\frac12R_{ab}{}^f{}_c\mu_{fde}+\frac14R_{ab}{}^f{}_e\mu_{fcd}
-\frac14R_{ab}{}^f{}_d\mu_{fce}\\[4pt]
\enskip{}+\frac12R_{de}{}^f{}_a\mu_{fbc}
-\frac12R_{de}{}^f{}_b\mu_{fac}
-\frac14R_{cd}{}^f{}_a\mu_{fbe}+\frac14R_{cd}{}^f{}_b\mu_{fae}
+\frac14R_{ce}{}^f{}_a\mu_{fbd}-\frac14R_{ce}{}^f{}_b\mu_{fad},
\end{array}$$
secondly obtaining (again using Bianchi symmetry)
$$\begin{array}{l}
\frac14R_{ab}{}^f{}_d\mu_{cef}+\frac14R_{ab}{}^f{}_d\mu_{ecf}
-\frac14R_{ab}{}^f{}_e\mu_{cdf}-\frac14R_{ab}{}^f{}_e\mu_{dcf}\\[4pt]
\enskip{}-\frac1{12}R_{ac}{}^f{}_d\mu_{bef}-\frac1{12}R_{ac}{}^f{}_d\mu_{ebf}
-\frac1{12}R_{ad}{}^f{}_c\mu_{bef}-\frac1{12}R_{ad}{}^f{}_c\mu_{ebf}\\[4pt]
\quad{}+\frac1{12}R_{bc}{}^f{}_d\mu_{aef}+\frac1{12}R_{bc}{}^f{}_d\mu_{eaf}
+\frac1{12}R_{bd}{}^f{}_c\mu_{aef}+\frac1{12}R_{bd}{}^f{}_c\mu_{eaf}\\[4pt]
\enskip\quad{}
+\frac1{12}R_{ac}{}^f{}_e\mu_{bdf}+\frac1{12}R_{ac}{}^f{}_e\mu_{dbf}
+\frac1{12}R_{ae}{}^f{}_c\mu_{bdf}+\frac1{12}R_{ae}{}^f{}_c\mu_{dbf}\\[4pt]
\qquad{}-\frac1{12}R_{bc}{}^f{}_e\mu_{adf}-\frac1{12}R_{bc}{}^f{}_e\mu_{daf}
-\frac1{12}R_{be}{}^f{}_c\mu_{adf}-\frac1{12}R_{be}{}^f{}_c\mu_{abf},
\end{array}$$
and thirdly obtaining
$$\begin{array}{l}
-\frac16R_{bc}{}^f{}_d\mu_{eaf}-\frac16R_{bc}{}^f{}_d\mu_{aef}
+\frac16R_{bc}{}^f{}_e\mu_{daf}+\frac16R_{bc}{}^f{}_e\mu_{adf}
-\frac16R_{de}{}^f{}_b\mu_{caf}-\frac16R_{de}{}^f{}_b\mu_{acf}\\[4pt]
\enskip{}+\frac16R_{ac}{}^f{}_d\mu_{ebf}+\frac16R_{ac}{}^f{}_d\mu_{bef}
-\frac16R_{ac}{}^f{}_e\mu_{dbf}-\frac16R_{ac}{}^f{}_e\mu_{bdf}
+\frac16R_{de}{}^f{}_a\mu_{cbf}+\frac16R_{de}{}^f{}_a\mu_{bcf}.
\end{array}$$
It remains to verify that these quantities sum to the expression
$\left\lceil\quad\right\rfloor$ in the
curvature~(\ref{curvature_of_first_stage_connection}).  This verification may
be accomplished term-by-term.  For example,
\begin{itemize}
\item we see $-\frac12R_{ab}{}^f{}_c\mu_{fde}$ in both with no other 
occurrences of $R_{ab}{}^f{}_c$,
\item $\left\lceil\quad\right\rfloor$ contains 
$-\frac12R_{ab}{}^f{}_d\mu_{cfe}$ whereas the only occurrences of 
$R_{ab}{}^f{}_d$ in $(R\diamond\mu)_{[ab]cde}$ combine as
$\frac14R_{ab}{}^f{}_d(\mu_{cef}+\mu_{ecf}-\mu_{fce})$, whilst 
$\mu_{cef}+\mu_{ecf}-\mu_{fce}=-2\mu_{cfe}$ by the symmetries of $\mu_{cef}$,
\item In $\left\lceil\quad\right\rfloor$ we see 
$\frac13R_{cd}{}^f{}_a\mu_{(ef)b}$ as the only occurrence of terms with
indices $acd$ on the curvature tensor $R_{\bullet\bullet}{}^f{}_\bullet$ 
whereas the contributions from $(R\diamond\mu)_{[ab]cde}$ are
$$\begin{array}{l}-\frac14R_{cd}{}^f{}_a\mu_{fbe}
-\frac1{12}R_{ac}{}^f{}_d\mu_{bef}-\frac1{12}R_{ac}{}^f{}_d\mu_{ebf}
-\frac1{12}R_{ad}{}^f{}_c\mu_{bef}-\frac1{12}R_{ad}{}^f{}_c\mu_{ebf}\\[4pt]
\enskip{}+\frac16R_{ac}{}^f{}_d\mu_{ebf}+\frac16R_{ac}{}^f{}_d\mu_{bef},
\end{array}$$
which simplify, with Bianchi symmetry, as
$$\textstyle-\frac14R_{cd}{}^f{}_a\mu_{fbe}
-\frac1{12}R_{cd}{}^f{}_a\mu_{bef}
-\frac1{12}R_{cd}{}^f{}_a\mu_{ebf}.$$
However, the symmetries of $\mu_{bef}$ show that 
$3\mu_{fbe}+\mu_{bef}+\mu_{ebf}
=-4\mu_{(ef)b}$
whence these terms match perfectly.
\end{itemize}
The remaining verifications in the locally symmetric setting are left to the
reader.

\medskip

To complete the proof we must consider the general case when
$\nabla_aR_{bc}{}^d{}_e$ may not vanish.  In this case all terms involving
$\mu_{abc}$ remain unchanged, but the general formula~(\ref{general_formula})
for the curvature of the connection~(\ref{first_stage_connection}) contains the
extra term $(\nabla\wedge\tilde\kappa)\phi$.  Explicitly, this term is
\begin{equation}
\label{nabRtrisigma}
\begin{array}{rcl}
 (\nabla_{[a} R\triangleleft \sigma)_{b]cde}&=&
\frac13 \sigma_{fc}(\nabla_a R_{de}{}^f{}{}_b-\nabla_b R_{de}{}^f{}{}_a)\\[5pt]
&&{}+\frac16 \sigma_{fd}(\nabla_bR_{ec}{}^f{}{}_a- \nabla_a R_{ec}{}^f{}{}_b)
    +\frac16\sigma_{fe}(\nabla_aR_{dc}{}^f{}{}_b-\nabla_bR_{dc}{}^f{}{}_a).
\end{array}
\end{equation}
We have to compare this to 
\begin{eqnarray*}
3(\nabla\!R\ast \sigma)_{[ab]cde}
&=&\sigma_{f[b}\nabla_{c]}R_{de}{}^f{}_a
   +(\nabla_aR_{de}{}^f{}_{[b})\sigma_{c]f}
  +\sigma_{f[d}\nabla_{e]}R_{bc}{}^f{}_a 
   +(\nabla_aR_{bc}{}^f{}_{[d})\sigma_{e]f}\\
&&{}-\sigma_{f[a}\nabla_{c]}R_{de}{}^f{}_b
    -(\nabla_bR_{de}{}^f{}_{[a})\sigma_{c]f}
    -\sigma_{f[d}\nabla_{e]}R_{ac}{}^f{}_b 
    -(\nabla_bR_{ac}{}^f{}_{[d})\sigma_{e]f}\\
&&{}+\tfrac12 \sigma_{af}
  \left(\nabla_{c}R_{de}{}^f{}_{b}-\nabla_{b}R_{de}{}^f{}_{c}\right)
  -\tfrac12 \sigma_{bf}
  \left(\nabla_{c}R_{de}{}^f{}_{a}-\nabla_{a}R_{de}{}^f{}_{c}\right).
\end{eqnarray*}
Note that all $\sigma_{fa}$- and $\sigma_{fb}$-terms cancel, so that
\begin{eqnarray*}
3(\nabla\!R\ast \sigma)_{[ab]cde}
&=&\sigma_{fc}(-\nabla_{b}R_{de}{}^f{}_a+\nabla_aR_{de}{}^f{}_{b})\\
&&{}+\tfrac12 \sigma_{fd}(\nabla_{e}R_{bc}{}^f{}_a-\nabla_aR_{bc}{}^f{}_{e}
 -\nabla_{e}R_{ac}{}^f{}_b+\nabla_bR_{ac}{}^f{}_{e})\\
&&{}-\tfrac12\sigma_{fe}( \nabla_{d}R_{bc}{}^f{}_a-\nabla_aR_{bc}{}^f{}_{d}
 -\nabla_{d}R_{ac}{}^f{}_b +\nabla_bR_{ac}{}^f{}_{d})\\
&=&\sigma_{fc}(\nabla_aR_{de}{}^f{}_{b}-\nabla_{b}R_{de}{}^f{}_a)\\
&&{}+\tfrac12\sigma_{fd}(\nabla_cR_{ab}{}^f{}_{e}-\nabla_{e}R_{ab}{}^f{}_c)
    +\tfrac12\sigma_{fe}(\nabla_{d}R_{ab}{}^f{}_c-\nabla_cR_{ab}{}^f{}_{d}),
\end{eqnarray*}
by the Bianchi symmetry and the Bianchi identity.  Using both again to check
that
$$\nabla_bR_{ec}{}^f{}{}_a- \nabla_a R_{ec}{}^f{}{}_b=
\nabla_cR_{ab}{}^f{}{}_e- \nabla_e R_{ab}{}^f{}{}_c,$$
and comparing to~(\ref{nabRtrisigma}), we obtain 
$$(\nabla \!R\ast \sigma)_{[ab]cde}
=(\nabla_{[a} R\triangleleft \sigma)_{b]cde},$$
completing the proof.
\end{proof}

Given the somewhat complicated formul{\ae} in
Theorem~\ref{prolonged_2-tensors}, one might wonder whether there is a simpler
prolongation connection on this bundle
$\begin{picture}(12,6)(0,0)
\put(0,0){\line(1,0){12}}
\put(0,6){\line(1,0){12}}
\put(0,0){\line(0,1){6}}
\put(6,0){\line(0,1){6}}
\put(12,0){\line(0,1){6}}
\end{picture}\oplus
\begin{picture}(12,12)(0,2)
\put(0,0){\line(1,0){6}}
\put(0,6){\line(1,0){12}}
\put(0,12){\line(1,0){12}}
\put(0,0){\line(0,1){12}}
\put(6,0){\line(0,1){12}}
\put(12,6){\line(0,1){6}}
\end{picture}\oplus\begin{picture}(12,12)(0,2)
\put(0,0){\line(1,0){12}}
\put(0,6){\line(1,0){12}}
\put(0,12){\line(1,0){12}}
\put(0,0){\line(0,1){12}}
\put(6,0){\line(0,1){12}}
\put(12,0){\line(0,1){12}}
\end{picture}$\,. 
As already remarked, we may realise the constituent bundles 
$\begin{picture}(12,12)(0,2)
\put(0,0){\line(1,0){6}}
\put(0,6){\line(1,0){12}}
\put(0,12){\line(1,0){12}}
\put(0,0){\line(0,1){12}}
\put(6,0){\line(0,1){12}}
\put(12,6){\line(0,1){6}}
\end{picture}$ and/or
$\begin{picture}(12,12)(0,2)
\put(0,0){\line(1,0){12}}
\put(0,6){\line(1,0){12}}
\put(0,12){\line(1,0){12}}
\put(0,0){\line(0,1){12}}
\put(6,0){\line(0,1){12}}
\put(12,0){\line(0,1){12}}
\end{picture}$ 
differently.  Whilst this may yield different formul{\ae}, this will not
really affect the properties of this connection: it's still the same 
connection on the same abstract bundle.  More importantly, however,
there was a choice in our construction, where we preferred (\ref{option_one})
over (\ref{option_two}) in defining $(R\triangleleft\sigma)_{abcd}$ for
the connection (\ref{first_prolongation}) on 
$\begin{picture}(12,6)(0,0)
\put(0,0){\line(1,0){12}}
\put(0,6){\line(1,0){12}}
\put(0,0){\line(0,1){6}}
\put(6,0){\line(0,1){6}}
\put(12,0){\line(0,1){6}}
\end{picture}\oplus
\begin{picture}(12,12)(0,2)
\put(0,0){\line(1,0){6}}
\put(0,6){\line(1,0){12}}
\put(0,12){\line(1,0){12}}
\put(0,0){\line(0,1){12}}
\put(6,0){\line(0,1){12}}
\put(12,6){\line(0,1){6}}
\end{picture}$ 
en route to (\ref{our_tricky_prolongation_connection}) on
$\begin{picture}(12,6)(0,0)
\put(0,0){\line(1,0){12}}
\put(0,6){\line(1,0){12}}
\put(0,0){\line(0,1){6}}
\put(6,0){\line(0,1){6}}
\put(12,0){\line(0,1){6}}
\end{picture}\oplus
\begin{picture}(12,12)(0,2)
\put(0,0){\line(1,0){6}}
\put(0,6){\line(1,0){12}}
\put(0,12){\line(1,0){12}}
\put(0,0){\line(0,1){12}}
\put(6,0){\line(0,1){12}}
\put(12,6){\line(0,1){6}}
\end{picture}\oplus\begin{picture}(12,12)(0,2)
\put(0,0){\line(1,0){12}}
\put(0,6){\line(1,0){12}}
\put(0,12){\line(1,0){12}}
\put(0,0){\line(0,1){12}}
\put(6,0){\line(0,1){12}}
\put(12,0){\line(0,1){12}}
\end{picture}$\,. 
This choice really does  give different connections on 
$\begin{picture}(12,6)(0,0)
\put(0,0){\line(1,0){12}}
\put(0,6){\line(1,0){12}}
\put(0,0){\line(0,1){6}}
\put(6,0){\line(0,1){6}}
\put(12,0){\line(0,1){6}}
\end{picture}\oplus
\begin{picture}(12,12)(0,2)
\put(0,0){\line(1,0){6}}
\put(0,6){\line(1,0){12}}
\put(0,12){\line(1,0){12}}
\put(0,0){\line(0,1){12}}
\put(6,0){\line(0,1){12}}
\put(12,6){\line(0,1){6}}
\end{picture}$ 
but, as already noticed in~(\ref{freedom}) and detailed in the Addendum to 
Theorem~\ref{prolong}, this general freedom in
choosing $(R\triangleleft\sigma)_{abcd}$ can be absorbed into an automorphism 
$$\left[\!\begin{array}{c}\sigma_{ab}\\ \mu_{abc}\\
\rho_{abcd}\end{array}\!\right]\stackrel{\mbox{$\Phi$}}{\longmapsto}
\left[\!\begin{array}{c}\sigma_{ab}\\ \mu_{abc}\\
\rho_{abcd}
\end{array}\!\right]
+\frac12
\left[\!\begin{array}{c}0\\ 0\\
R_{ab}{}^e{}_{[c}\sigma_{d]e}+R_{cd}{}^e{}_{[a}\sigma_{b]e}
\end{array}\!\right]$$
of $\begin{picture}(12,6)(0,0)
\put(0,0){\line(1,0){12}}
\put(0,6){\line(1,0){12}}
\put(0,0){\line(0,1){6}}
\put(6,0){\line(0,1){6}}
\put(12,0){\line(0,1){6}}
\end{picture}\oplus
\begin{picture}(12,12)(0,2)
\put(0,0){\line(1,0){6}}
\put(0,6){\line(1,0){12}}
\put(0,12){\line(1,0){12}}
\put(0,0){\line(0,1){12}}
\put(6,0){\line(0,1){12}}
\put(12,6){\line(0,1){6}}
\end{picture}\oplus\begin{picture}(12,12)(0,2)
\put(0,0){\line(1,0){12}}
\put(0,6){\line(1,0){12}}
\put(0,12){\line(1,0){12}}
\put(0,0){\line(0,1){12}}
\put(6,0){\line(0,1){12}}
\put(12,0){\line(0,1){12}}
\end{picture}$\,.  
Hence, regarding this full blown prolongation construction, it's still the same
connection on the same abstract bundle and, as we shall see, has some very good
properties, especially concerning the short exact sequence~(\ref{SES}).

\subsection{Killing $2$-tensors from Killing $1$-forms}
\label{2from1-sec}
We are now in a position to relate the prolongation connection 
(\ref{our_tricky_prolongation_connection})
on $\begin{picture}(12,6)(0,0)
\put(0,0){\line(1,0){12}}
\put(0,6){\line(1,0){12}}
\put(0,0){\line(0,1){6}}
\put(6,0){\line(0,1){6}}
\put(12,0){\line(0,1){6}}
\end{picture}\oplus
\begin{picture}(12,12)(0,2)
\put(0,0){\line(1,0){6}}
\put(0,6){\line(1,0){12}}
\put(0,12){\line(1,0){12}}
\put(0,0){\line(0,1){12}}
\put(6,0){\line(0,1){12}}
\put(12,6){\line(0,1){6}}
\end{picture}\oplus\begin{picture}(12,12)(0,2)
\put(0,0){\line(1,0){12}}
\put(0,6){\line(1,0){12}}
\put(0,12){\line(1,0){12}}
\put(0,0){\line(0,1){12}}
\put(6,0){\line(0,1){12}}
\put(12,0){\line(0,1){12}}
\end{picture}$
for Killing $2$-tensors 
 obtained in
Theorem~\ref{prolonged_2-tensors} to the prolongation connection
(\ref{Killing_connection})
on $\begin{picture}(6,6)(0,0)
\put(0,0){\line(1,0){6}}
\put(0,6){\line(1,0){6}}
\put(0,0){\line(0,1){6}}
\put(6,0){\line(0,1){6}}
\end{picture}\oplus\begin{picture}(6,12)(0,2)
\put(0,0){\line(1,0){6}}
\put(0,6){\line(1,0){6}}
\put(0,12){\line(1,0){6}}
\put(0,0){\line(0,1){12}}
\put(6,0){\line(0,1){12}}
\end{picture}$ for Killing $1$-forms obtained in
Theorem~\ref{killing_prolongation}.  
We start by looking at the connection induced by (\ref{Killing_connection})
on its symmetric power:
\begin{equation}\label{symmetric_power}
\textstyle\bigodot^2\!\left(\!\begin{array}{c}
\begin{picture}(6,6)(0,0)
\put(0,0){\line(1,0){6}}
\put(0,6){\line(1,0){6}}
\put(0,0){\line(0,1){6}}
\put(6,0){\line(0,1){6}}
\end{picture}\\[-4pt] \oplus\\[-2pt]
\begin{picture}(6,12)(0,2)
\put(0,0){\line(1,0){6}}
\put(0,6){\line(1,0){6}}
\put(0,12){\line(1,0){6}}
\put(0,0){\line(0,1){12}}
\put(6,0){\line(0,1){12}}
\end{picture}\end{array}\!\right)=\begin{array}{c}
\begin{picture}(12,6)(0,0)
\put(0,0){\line(1,0){12}}
\put(0,6){\line(1,0){12}}
\put(0,0){\line(0,1){6}}
\put(6,0){\line(0,1){6}}
\put(12,0){\line(0,1){6}}
\end{picture}\\[-4pt]
\oplus\\[-2pt]
\begin{picture}(6,6)(0,0)
\put(0,0){\line(1,0){6}}
\put(0,6){\line(1,0){6}}
\put(0,0){\line(0,1){6}}
\put(6,0){\line(0,1){6}}
\end{picture}\otimes
\begin{picture}(6,12)(0,2)
\put(0,0){\line(1,0){6}}
\put(0,6){\line(1,0){6}}
\put(0,12){\line(1,0){6}}
\put(0,0){\line(0,1){12}}
\put(6,0){\line(0,1){12}}
\end{picture}\\[-2pt]
\oplus\\[-2pt]
\begin{picture}(6,12)(0,2)
\put(0,0){\line(1,0){6}}
\put(0,6){\line(1,0){6}}
\put(0,12){\line(1,0){6}}
\put(0,0){\line(0,1){12}}
\put(6,0){\line(0,1){12}}
\end{picture}\bigodot
\begin{picture}(6,12)(0,2)
\put(0,0){\line(1,0){6}}
\put(0,6){\line(1,0){6}}
\put(0,12){\line(1,0){6}}
\put(0,0){\line(0,1){12}}
\put(6,0){\line(0,1){12}}
\end{picture}
\end{array}
\ni\left[\!\begin{array}{c}\sigma_{bc}\\ \mu_{bcd}\\
\rho_{bcde}\end{array}\!\right]
\stackrel{\nabla_a}{\longmapsto}
\left[\!\begin{array}{c}\nabla_a\sigma_{bc}+2\mu_{(bc)a}\\ 
\nabla_a\mu_{bcd}-R_{cd}{}^e{}_a\sigma_{be}-\rho_{abcd}\\
\nabla_a\rho_{bcde}-R_{bc}{}^f{}_a\mu_{fde}-R_{de}{}^f{}_a\mu_{fbc}
\end{array}\!\right].\end{equation}
\begin{lemma}\label{symmetric_curvature}
The curvature of the connection \eqref{symmetric_power} is given by
\begin{equation}\label{induced_curvature}\nabla_{[a}\nabla_{b]}
\left[\!\begin{array}{c}\sigma_{cd}\\ \mu_{cde}\\
\rho_{cdef}\end{array}\!\right]
=\left[\!\begin{array}{c}0\\ 
R_{ab}{}^f{}_{[d}\mu_{|c|e]f}
+R_{de}{}^f{}_{[a}\mu_{|c|b]f}
-(\nabla_{[a}R_{|de|}{}^f{}_{b]})\sigma_{fc}\\[4pt]
\left\lceil\!\!\!\!\begin{array}{c}
R_{ab}{}^p{}_{[c}\rho_{d]pef}+R_{cd}{}^p{}_{[a}\rho_{b]pef}
+R_{ab}{}^p{}_{[e}\rho_{f]pcd}+R_{ef}{}^p{}_{[a}\rho_{b]pcd}\\[4pt]
{}-(\nabla_{[a}R_{|cd|}{}^p{}_{b]})\mu_{pef}
-(\nabla_{[a}R_{|ef|}{}^p{}_{b]})\mu_{pcd}
\end{array}\!\right\rfloor
\end{array}\!\right].\end{equation}
\end{lemma}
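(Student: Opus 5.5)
The plan is to avoid a direct computation from \eqref{symmetric_power}. Instead we use the fact that the curvature of an induced connection on a symmetric power is the derivation extension of the curvature on the factor. We write $\mathbb{E}\equiv\Wedge^1\oplus\Wedge^2$ with the Killing connection \eqref{Killing_connection}, and let $\Omega_{ab}$ denote its curvature \eqref{full_Killing_curvature}. Then on $\bigodot^2\mathbb{E}$ we have
$$\nabla_{[a}\nabla_{b]}(X\odot Y)=(\Omega_{ab}X)\odot Y+X\odot(\Omega_{ab}Y),$$
and by linearity this determines the curvature on all sections.

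The first step is to record $\Omega$ explicitly. By \eqref{full_Killing_curvature} and \eqref{in_the_locally_symmetric_case}, $\Omega_{ab}$ kills the $\Wedge^1$ slot. In the $\Wedge^2$ slot it returns
$$R_{ab}{}^e{}_{[c}\mu_{d]e}+R_{cd}{}^e{}_{[a}\mu_{b]e}-(\nabla_{[a}R_{|cd|}{}^e{}_{b]})\sigma_e.$$
So $\Omega_{ab}$ is strictly lower-triangular: it maps $\sigma\mapsto$ (a $\Wedge^2$-component) and $\mu\mapsto$ (a $\Wedge^2$-component), and its image lies in $\Wedge^2$ only.

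The second step is to confirm that the decomposition in \eqref{symmetric_power} is the one induced from $\bigodot^2$. Concretely, $\sigma_{bc}=\sigma_b\tilde\sigma_c+\tilde\sigma_b\sigma_c$ (up to the paper's normalisation), $\mu_{bcd}$ is the cross term $\sigma_b\tilde\mu_{cd}+\tilde\sigma_b\mu_{cd}$, and $\rho_{bcde}$ is $\mu_{bc}\tilde\mu_{de}+\tilde\mu_{bc}\mu_{de}$. To fix the normalisations, apply the tensor-product rule to the connection \eqref{Killing_connection} on decomposables and match the result against \eqref{symmetric_power}. For example, the middle line gives $-\sigma_b R_{cd}{}^e{}_a\tilde\sigma_e$ and similarly for the other term, which produces $-R_{cd}{}^e{}_a\sigma_{be}$. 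It also gives $-\rho_{abcd}$ from $-\mu_{ab}\tilde\mu_{cd}$, which fixes the sign conventions. One subtlety is that $\mu_{bcd}$ in \eqref{symmetric_power} lives in $\Wedge^1\otimes\Wedge^2$ rather than in the hook bundle; that is fine, since we are only computing the curvature of this induced connection.

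The third step applies the derivation rule term by term on decomposables and then extends by linearity.

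For the $\sigma$-slot: $\Omega$ of a $\Wedge^1$-factor vanishes, so this slot is $0$.

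For the $\mu$-slot (the $\Wedge^1\otimes\Wedge^2$ part): the only contribution is $\sigma_c\otimes\Omega(\tilde\sigma)_{de}+\tilde\sigma_c\otimes\Omega(\sigma)_{de}$. Here the two $R\cdot\mu$ terms come from $\Omega$ acting on the $\Wedge^2$ component of the other factor, and the $\nabla R\cdot\sigma$ term comes from $\Omega$ acting on its $\Wedge^1$ component. Replacing $\sigma_c\tilde\mu_{ef}$-type products by $\mu_{cef}$ and $\sigma_c\tilde\sigma_f$-type products by $\sigma_{cf}$ gives exactly the second line of \eqref{induced_curvature}, after renaming indices $c\leftrightarrow d\leftrightarrow e$ appropriately.

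For the $\rho$-slot: contributions arise from $\Omega$ applied to the $\Wedge^2$ component in $\mu_{cd}\tilde\mu_{ef}$ (giving the four $R\cdot\rho$ terms) and from the cross terms $\sigma\otimes\mu$ (giving the $\nabla R\cdot\mu$ terms). The latter appear because $\Omega$ sends a $\Wedge^1$ factor into $\Wedge^2$ and so moves a $\sigma\otimes\tilde\mu$ product into the $\Wedge^2\odot\Wedge^2$ slot.

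The main obstacle is bookkeeping. We must make sure the factor conventions relating $\sigma_{bc},\mu_{bcd},\rho_{bcde}$ to decomposables (including factors of $2$ from symmetrisation) are consistent in every slot, so that no stray factor of $2$ appears in the $\rho$-line. To guard against this, I would cross-check by directly computing $\nabla_{[a}\nabla_{b]}$ of the top line of \eqref{symmetric_power}, which must vanish, using the Bianchi symmetry exactly as in the verification after \eqref{full_Killing_curvature}. That check pins down the relative normalisation of $\mu$ and $\sigma$. A similar spot check of one $R\cdot\rho$ term then fixes the normalisation of $\rho$.
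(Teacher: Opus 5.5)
Your proposal is correct and is essentially the paper's proof. The paper also takes the curvature of the Killing connection from \eqref{full_Killing_curvature} and \eqref{in_the_locally_symmetric_case}, applies the derivation rule on decomposables ($\sigma_c\sigma_d$, $\sigma_c\mu_{de}$, $\mu_{cd}\mu_{ef}$, i.e.\ squares rather than your polarised products $X\odot Y$), and extends by linearity. Two small quibbles: $\Omega_{ab}$ is lower-triangular but not strictly so, since its $\Wedge^2\to\Wedge^2$ block is nonzero; and your top-line cross-check is redundant, because matching \eqref{symmetric_power} on decomposables already fixes every normalisation.
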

\begin{proof} It follows from (\ref{full_Killing_curvature}) and
(\ref{in_the_locally_symmetric_case}) that
$$\nabla_{[a}\nabla_{b]}
\left[\!\begin{array}{c}\sigma_d\\ \mu_{de}\end{array}\!\right]
=\left[\!\begin{array}{c}0\\ 
R_{ab}{}^f{}_{[d}\mu_{e]f}+R_{de}{}^f{}_{[a}\mu_{b]f} 
-(\nabla_{[a}R_{|de|}{}^f{}_{b]})\sigma_{f}\end{array}\!\right]$$
and (\ref{induced_curvature}) follows because it is true in case 
$\sigma_{cd}=\sigma_c\sigma_d$, $\mu_{cde}=\sigma_c\mu_{cd}$, and 
$\rho_{cdef}=\mu_{cd}\mu_{ef}$.
\end{proof}
It is remarkable that one can modify the induced connection
(\ref{symmetric_power}) by means of its own curvature (\ref{induced_curvature})
without changing its parallel sections. The key to this modification is the 
following algebraic fact.
\begin{lemma} The formula
\begin{equation}\label{fun_formula}
X_{abcde}\longmapsto X_{abcde}-2X_{[bc]ade}-2X_{[de]abc}
\end{equation}
defines an automorphism of  
$$\Wedge^1\otimes\begin{picture}(12,12)(0,2)
\put(0,0){\line(1,0){12}}
\put(0,6){\line(1,0){12}}
\put(0,12){\line(1,0){12}}
\put(0,0){\line(0,1){12}}
\put(6,0){\line(0,1){12}}
\put(12,0){\line(0,1){12}}
\end{picture}\cong
\begin{picture}(18,12)(0,2)
\put(0,0){\line(1,0){12}}
\put(0,6){\line(1,0){18}}
\put(0,12){\line(1,0){18}}
\put(0,0){\line(0,1){12}}
\put(6,0){\line(0,1){12}}
\put(12,0){\line(0,1){12}}
\put(18,6){\line(0,1){6}}
\end{picture}\oplus\begin{picture}(12,18)(0,5)
\put(0,0){\line(1,0){6}}
\put(0,6){\line(1,0){12}}
\put(0,12){\line(1,0){12}}
\put(0,18){\line(1,0){12}}
\put(0,0){\line(0,1){18}}
\put(6,0){\line(0,1){18}}
\put(12,6){\line(0,1){12}}
\end{picture}\,.$$
\end{lemma}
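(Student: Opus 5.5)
The plan is to exploit $\mathrm{GL}(n)$-equivariance together with Schur's lemma. The formula \eqref{fun_formula} is built only from index permutations, so it is a $\mathrm{GL}(n)$-equivariant endomorphism of $\Wedge^1\otimes\Wedge^1\otimes\Wedge^1\otimes\Wedge^1\otimes\Wedge^1$. The proof then has three steps: check that the map preserves $\Wedge^1\otimes\begin{picture}(12,12)(0,2)
\put(0,0){\line(1,0){12}}
\put(0,6){\line(1,0){12}}
\put(0,12){\line(1,0){12}}
\put(0,0){\line(0,1){12}}
\put(6,0){\line(0,1){12}}
\put(12,0){\line(0,1){12}}
\end{picture}$, use the multiplicity-free decomposition stated in the Lemma to reduce it to two scalars, and show both scalars are nonzero.

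First, invariance. Write $Y_{abcde}=X_{abcde}-2X_{[bc]ade}-2X_{[de]abc}$, where $X_{abcde}=X_{a[bc][de]}$, $X_{abcde}=X_{adebc}$ and $X_{a[bcd]e}=0$.
\begin{itemize}
\item Skewness of $Y$ in $bc$ and in $de$ is immediate: $X_{[bc]ade}$ is skew in $de$ because $X_{ade\,\cdot}$ is skew in its first pair, and similarly for the other term.
\item The interchange $(bc)\leftrightarrow(de)$ swaps the two correction terms, so pair symmetry is preserved.
\item The Bianchi symmetry $Y_{a[bcd]e}=0$ is the one thing needing calculation. One expands $X_{[bc]ade}$ and $X_{[de]abc}$ with $bcd$ skewed, and uses pair symmetry and Bianchi for $X$ to rewrite $X_{dabc e}$-type terms. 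Alternatively, one notes that the correction is built so that $Y$ is the Riemann-symmetrisation of an expression in $X$; if the direct check is messy, I would verify it by polarisation on decomposable test tensors.
\end{itemize}

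Second, Schur. By the stated isomorphism, $\Wedge^1\otimes\begin{picture}(12,12)(0,2)
\put(0,0){\line(1,0){12}}
\put(0,6){\line(1,0){12}}
\put(0,12){\line(1,0){12}}
\put(0,0){\line(0,1){12}}
\put(6,0){\line(0,1){12}}
\put(12,0){\line(0,1){12}}
\end{picture}$ is a sum of two inequivalent irreducibles, each with multiplicity one. Hence the map acts by a scalar on each, and it is an automorphism precisely when both scalars are nonzero. The first summand, the $(3,2)$ shape, is characterised as the kernel of $X\mapsto X_{[abc]de}$. For $X$ in this summand, $X_{abcde}+X_{bcade}+X_{cabde}=0$, and together with $X_{cabde}=-X_{cbade}$ this gives $2X_{[bc]ade}=-X_{abcde}$. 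By pair symmetry the same argument gives $2X_{[de]abc}=-X_{adebc}=-X_{abcde}$. Therefore $Y=3X$ on this summand, and the scalar is $3$.

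Third, the other summand, the $(2,2,1)$ shape. Here the main obstacle is to produce a convenient nonzero test element and to evaluate the map on it. I would take $X$ to be the projection of a decomposable tensor into the appropriate summand, for instance $X_{abcde}=\alpha_{[a}\beta_b\gamma_{c]}\cdots$ suitably Riemann-symmetrised in $bcde$. An alternative is to compute the trace of the map on the whole space and subtract $3\dim(3,2)$, which determines the remaining scalar as $c$ with $c\cdot\dim(2,2,1)=\operatorname{tr}-3\dim(3,2)$. The trace is computable from the permutation formula. I expect this scalar to be a small integer such as $-1$ or $-3$, in any case nonzero, which completes the proof.
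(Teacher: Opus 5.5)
\textbf{Verdict: genuine gap, easily closed.} Your Schur-lemma route differs from the paper's, and it is sound, but as written it stops at the decisive point. The lemma asserts invertibility, i.e.\ that both scalars are nonzero. You correctly get the scalar $3$ on the $(3,2)$ summand. For the $(2,2,1)$ summand you only say you \emph{expect} a small nonzero integer, and you list two computations (test element, trace) without doing either. The whole statement hinges on that step, so it cannot be left as an expectation.

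\textbf{How to close it.} Use the projection you already have. Set $\pi(X)=X_{[abc]de}$. You identify the $(3,2)$ summand as $\ker\pi$, and $\pi\neq0$ (try $\gamma_a\omega_{bc}\omega_{de}$ with $\omega=\alpha\wedge\beta$ and $\alpha,\beta,\gamma$ independent). So $\pi$ is injective on the $(2,2,1)$ summand, and it suffices to compute $Y_{[abc]de}$. Write $2X_{[bc]ade}=X_{bcade}-X_{cbade}$. Skewing over $abc$ sends each of these two terms to $+X_{[abc]de}$: the first is a cyclic relabelling, the second a transposition whose sign cancels the minus. The other two terms are $X_{deabc}$ and $X_{edabc}$. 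Skewing either over $abc$ skews its last three slots, i.e.\ $\rho_{e[abc]}$ or $\rho_{d[abc]}$ for the Riemann-type tensor $\rho$ in slots $2$--$5$. Both vanish by the Bianchi symmetry. Hence
\[
Y_{[abc]de}=X_{[abc]de}-2X_{[abc]de}=-X_{[abc]de},
\]
so the scalar is $-1$ and the map is invertible.

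\textbf{Step one.} The invariance check is only gestured at. Your suggestion that $Y$ is a ``Riemann-symmetrisation'' is not the right mechanism and is not needed. A clean argument: set $TX\equiv 2X_{[bc]ade}+2X_{[de]abc}$. It is visibly skew in $bc$ and in $de$, and pair-symmetric. For such tensors the Bianchi symmetry is equivalent to $(TX)_{a[bcde]}=0$. That holds because, in each term, skewing over $bcde$ includes skewing the Riemann-type factor over three of its four slots, which gives zero.

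\textbf{Comparison with the paper.} The paper simply exhibits the inverse $X=-\frac13\big(Y+2Y_{[bc]ade}+2Y_{[de]abc}\big)$, which amounts to checking $T^2=4$ by index manipulation. It then invokes finite-dimensionality; no decomposition is used. Completed, your route explains that formula: $T=-2$ on $(3,2)$ and $T=+2$ on $(2,2,1)$. So $1-T$ has eigenvalues $3$ and $-1$, and $(1-T)^{-1}=-\frac13(1+T)$. Your route costs the multiplicity-one decomposition and the identification of $\ker\pi$, but it finds the eigenvalues without having to guess the inverse.
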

\begin{proof} Certainly, this formula defines an endomorphism of 
$\Wedge^1\otimes\begin{picture}(12,12)(0,2)
\put(0,0){\line(1,0){12}}
\put(0,6){\line(1,0){12}}
\put(0,12){\line(1,0){12}}
\put(0,0){\line(0,1){12}}
\put(6,0){\line(0,1){12}}
\put(12,0){\line(0,1){12}}
\end{picture}$\,.  
Hence, a right inverse for (\ref{fun_formula}) will also be a left inverse and 
it is straightforward to check that if we set 
$$Y_{abcde}\equiv
X_{abcde}-2X_{[bc]ade}-2X_{[de]abc},$$
then  
$$\textstyle X_{abcde}
=-\frac13\big(Y_{abcde}+2Y_{[bc]ade}+2Y_{[de]abc}\big),$$
the verification of which will be left to the reader.
\end{proof}
\noindent We shall now use the automorphism (\ref{fun_formula}) to modify the
induced connection (\ref{symmetric_power}) on 
$\bigodot^2\!\big(\begin{picture}(6,6)(0,0)
\put(0,0){\line(1,0){6}}
\put(0,6){\line(1,0){6}}
\put(0,0){\line(0,1){6}}
\put(6,0){\line(0,1){6}}
\end{picture}\oplus\begin{picture}(6,12)(0,2)
\put(0,0){\line(1,0){6}}
\put(0,6){\line(1,0){6}}
\put(0,12){\line(1,0){6}}
\put(0,0){\line(0,1){12}}
\put(6,0){\line(0,1){12}}
\end{picture}\,\big)$ as follows:
\begin{equation}\label{modification}
\left[\!\begin{array}{c}\sigma_{bc}\\ \mu_{bcd}\\
\rho_{bcde}\end{array}\!\right]
\stackrel{\mbox{$\widetilde\nabla_a\;$}}{\longmapsto}
\left[\!\begin{array}{c}\nabla_a\sigma_{bc}+2\mu_{(bc)a}\\ 
\nabla_a\mu_{bcd}-R_{cd}{}^e{}_a\sigma_{be}-\rho_{abcd}\\
\nabla_a\rho_{bcde}-R_{bc}{}^f{}_a\mu_{fde}-R_{de}{}^f{}_a\mu_{fbc}
\end{array}\!\right]+\left[\!\begin{array}{c}0\\ 0\\
(R\pentagon(\mu,\sigma))_{abcde}\end{array}\!\right]\end{equation}
where 
\begin{equation}\label{this_is_pentagon}
\begin{array}{l}(R\pentagon(\mu,\sigma))_{abcde}\equiv
\frac13\big(X_{abcde}-2X_{[bc]ade}-2X_{[de]abc}\big),\\[3pt]
\enskip\mbox{for}\enskip
X_{abcde}\equiv 2 R_{bc}{}^f{}_{[d}\mu_{|a|e]f}
+2R_{de}{}^f{}_{[b}\mu_{|a|c]f}
-2(\nabla_{[b}R_{|de|}{}^f{}_{c]})\sigma_{fa}  
\in\Gamma (\Wedge^1 \otimes \begin{picture}(12,12)(0,2)
\put(0,0){\line(1,0){12}}
\put(0,6){\line(1,0){12}}
\put(0,12){\line(1,0){12}}
\put(0,0){\line(0,1){12}}
\put(6,0){\line(0,1){12}}
\put(12,0){\line(0,1){12}}
\end{picture}).\enskip\end{array}
\end{equation}
\begin{thm}\label{the_same_parallel_sections}
The connections 
\eqref{symmetric_power} and \eqref{modification} 
on $\bigodot^2\!\big(\begin{picture}(6,6)(0,0)
\put(0,0){\line(1,0){6}}
\put(0,6){\line(1,0){6}}
\put(0,0){\line(0,1){6}}
\put(6,0){\line(0,1){6}}
\end{picture}\oplus\begin{picture}(6,12)(0,2)
\put(0,0){\line(1,0){6}}
\put(0,6){\line(1,0){6}}
\put(0,12){\line(1,0){6}}
\put(0,0){\line(0,1){12}}
\put(6,0){\line(0,1){12}}
\end{picture}\,\big)$ have the same parallel \mbox{sections}.
\end{thm}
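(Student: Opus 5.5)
The two connections differ only in the third slot, by the algebraic term $(R\pentagon(\mu,\sigma))_{abcde}$. That term is built from the curvature of \eqref{symmetric_power}, as computed in Lemma~\ref{symmetric_curvature}. The plan is to show that this term vanishes whenever the section is parallel for either connection. Then the parallel-section equations for $\nabla$ and $\widetilde\nabla$ coincide, since each reduces to the equations of the other with an extra zero term.

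First, I would identify $X_{abcde}$ in \eqref{this_is_pentagon} with the $\mu$-component of the curvature \eqref{induced_curvature}, with the indices relabelled. The middle line of \eqref{induced_curvature}, with the skewed pair $ab$ renamed $bc$, the free index $c$ renamed $a$, and $de$ kept, is
$$R_{bc}{}^f{}_{[d}\mu_{|a|e]f}+R_{de}{}^f{}_{[b}\mu_{|a|c]f}-(\nabla_{[b}R_{|de|}{}^f{}_{c]})\sigma_{fa}.$$
Hence $X_{abcde}=2\,(\nabla_{[b}\nabla_{c]}\Sigma)^{\mu}_{ade}$, where $\Sigma=[\sigma,\mu,\rho]^T$ and the superscript $\mu$ denotes the middle component. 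If $\nabla\Sigma=0$, then $\nabla_{[b}\nabla_{c]}\Sigma=0$, so $X=0$ and therefore $R\pentagon(\mu,\sigma)=0$. In that case $\widetilde\nabla\Sigma=\nabla\Sigma=0$.

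For the converse, suppose $\widetilde\nabla\Sigma=0$. The first two components of $\widetilde\nabla$ and $\nabla$ agree, so the first two components of $\nabla\Sigma$ vanish, and we may write $\nabla\Sigma=[0,0,\theta]^T$ with $\theta_{abcde}=-(R\pentagon(\mu,\sigma))_{abcde}$. Now compute the curvature of $\nabla$ directly from \eqref{symmetric_power}. The middle component of $\nabla_{[b}\nabla_{c]}\Sigma$ is obtained by applying $\nabla_{[b}$ to the middle component of $\nabla_{c]}\Sigma$ and subtracting the term coming from the third slot. Since the $\sigma$ and $\mu$ slots of $\nabla\Sigma$ vanish, only $-\theta_{[bc]a\cdots}$-type terms survive: the $\rho$-term in the middle row contributes $-\theta$ skewed. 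Concretely, the middle component of the curvature equals $-\theta_{[bc]ade}$ after relabelling, so that
$$X_{abcde}=-2\theta_{[bc]ade}=2(R\pentagon)_{[bc]ade}.$$
Substituting the definition $R\pentagon=\frac13(X-2X_{[bc]ade}-2X_{[de]abc})$ gives a linear equation $X=L(X)$, where $L$ is an explicit algebraic map on $\Wedge^1\otimes\begin{picture}(12,12)(0,2)
\put(0,0){\line(1,0){12}}
\put(0,6){\line(1,0){12}}
\put(0,12){\line(1,0){12}}
\put(0,0){\line(0,1){12}}
\put(6,0){\line(0,1){12}}
\put(12,0){\line(0,1){12}}
\end{picture}$.

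The main obstacle is to show that this linear equation forces $X=0$. I would exploit the symmetries $X_{a[bc][de]}$ and $X_{a[bcd]e}=0$. I would also use that the middle slot $\mu_{ade}$ is an arbitrary tensor in $\Wedge^1\otimes\Wedge^2$ rather than one in the hook, so there is also a $\sigma$-compatibility: from the curvature of the first slot, $\nabla_{[b}\nabla_{c]}\Sigma$ has vanishing first component, which constrains $\theta$ through the symmetrisation $2\mu_{(bc)a}$. The aim is to reduce $X=L(X)$ to the statement that $(\mathrm{id}-L)$ is a nonzero multiple of the automorphism \eqref{fun_formula}, possibly composed with an injective map. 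The inverse formula $X=-\frac13(Y+2Y_{[bc]ade}+2Y_{[de]abc})$ from the preceding lemma should make this invertibility explicit, and so yield $X=0$. Then $R\pentagon=0$ and $\nabla\Sigma=0$.

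If this direct route is messy, the fallback is to decompose into the two irreducible pieces of the preceding lemma and check, on each piece, that the induced scalar is not equal to $1$.
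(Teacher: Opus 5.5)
Your forward direction is correct and is the paper's: on a $\nabla$-parallel section the middle entry of \eqref{induced_curvature}, which is $\tfrac12X_{cabde}$, vanishes, so $R\pentagon(\mu,\sigma)=0$ and $\widetilde\nabla\Sigma=0$. Your converse follows the paper's route as well: evaluate the middle slot of the curvature on a $\widetilde\nabla$-parallel section and reduce to a linear identity in $X$. The decisive step, however, is not merely unproved; with the indices placed correctly it is false.

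In $\nabla_c\Sigma=[0,0,\theta_{c\,\cdots}]$ the form index of $\theta$ is $c$. By the middle row of \eqref{symmetric_power}, the new derivative index $b$ goes into the first $\rho$-slot. So the middle entry of $\nabla_b\nabla_c\Sigma$ is $-\theta_{cbade}$, and $(\nabla_{[b}\nabla_{c]}\Sigma)^\mu_{ade}=+\theta_{[bc]ade}$, not $-\theta_{[bc]ade}$. Hence $X_{abcde}=-2(R\pentagon)_{[bc]ade}$.

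From \eqref{this_is_pentagon} and the symmetries of $X$ one gets $(R\pentagon)_{[bc]ade}=-\tfrac13X_{abcde}+\tfrac16X_{dbcea}-\tfrac16X_{ebcda}$. Your equation $X=L(X)$ is then just $X_{abcde}+X_{dbcea}-X_{ebcda}=0$, i.e.\ $X_{[a|bc|de]}=0$. So $\mathrm{id}-L$ is the map $X\mapsto X_{[a|bc|de]}$. It kills the whole summand of $\Wedge^1\otimes(\text{Riemann-type tensors})$ on which skewing over any three indices vanishes. On that summand \eqref{fun_formula} acts by $3$, so $R\pentagon(\mu,\sigma)=X$ and $L$ is the identity; your fallback check would find the scalar exactly $1$. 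The first slot gives no extra constraint, since it vanishes identically once $\nabla\Sigma=[0,0,\theta]$.

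This is structural, not a bookkeeping accident. By Theorem~\ref{Phi_is_parallel}, $\Phi$ intertwines $\widetilde\nabla$ with \eqref{our_tricky_prolongation_connection}, whose curvature has vanishing first two entries (cf.\ \eqref{schematically}). Hence the middle entry of the curvature of $\widetilde\nabla$ is annihilated by $\mu\mapsto\mu-\mu_{[bcd]}$, i.e.\ it is a $3$-form; indeed it equals $\tfrac12X_{[c|ab|de]}$. So no argument using only that slot can force $X=0$.

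The paper's own computation contains the same slip. In its display, the middle entry $-(R\pentagon)_{abcde}$ should be $-(R\pentagon)_{bacde}$. This reverses the sign of $(R\pentagon)_{[ab]cde}$ and replaces $\tfrac56X_{cdeab}-\tfrac13X_{[de]cab}$ by $\tfrac12X_{[c|ab|de]}$, to which Lemma~\ref{very_simple_algebra} does not apply. The converse therefore needs a genuinely new ingredient controlling the remaining component of $X$, for instance from the $\rho$-slot of the curvature or from differentiating the constraint. Your proposal does not supply one.

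Even granting your sign, the step you were missing is the trick of Lemma~\ref{very_simple_algebra}: skew the relation over its three free indices to kill the totally skew part, after which it collapses. Identifying $\mathrm{id}-L$ with a multiple of \eqref{fun_formula} does not work, since \eqref{fun_formula} has the two distinct eigenvalues $3$ and $-1$.
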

\begin{proof} For a section of
$\bigodot^2\!\big(\begin{picture}(6,6)(0,0)
\put(0,0){\line(1,0){6}}
\put(0,6){\line(1,0){6}}
\put(0,0){\line(0,1){6}}
\put(6,0){\line(0,1){6}}
\end{picture}\oplus\begin{picture}(6,12)(0,2)
\put(0,0){\line(1,0){6}}
\put(0,6){\line(1,0){6}}
\put(0,12){\line(1,0){6}}
\put(0,0){\line(0,1){12}}
\put(6,0){\line(0,1){12}}
\end{picture}\,\big)$
annihilated by (\ref{symmetric_power}), it follows from 
Lemma~\ref{symmetric_curvature} that 
$$R_{ab}{}^f{}_{[d}\mu_{|c|e]f}
+R_{de}{}^f{}_{[a}\mu_{|c|b]f}
-(\nabla_{[a}R_{|de|}{}^f{}_{b]})\sigma_{fc}.
$$
But the expression on left hand side of this equation is $X_{cabde}$ with
$X_{abcde}$ as in~(\ref{this_is_pentagon}).  It follows that
$(R\pentagon(\mu,\sigma))_{abcde}=0$ so this section is also annihilated by
$\widetilde\nabla_a$.  

For the converse, we should calculate the curvature of
$\widetilde\nabla_a$ as follows.  Writing (\ref{modification}) as
\begin{equation}\label{slicker_version}
\widetilde\nabla_b\left[\!\begin{array}{c}\sigma_{cd}\\ \mu_{cde}\\
\rho_{cdef}\end{array}\!\right]
=\nabla_b\left[\!\begin{array}{c}\sigma_{cd}\\ \mu_{cde}\\
\rho_{cdef}\end{array}\!\right]+\left[\!\begin{array}{c}0\\ 0\\
(R\pentagon(\mu,\sigma))_{bcdef}\end{array}\!\right]\end{equation}
we see that
$$\widetilde\nabla_a\widetilde\nabla_b
\left[\!\begin{array}{c}\sigma_{cd}\\ \mu_{cde}\\
\rho_{cdef}\end{array}\!\right]
=\widetilde\nabla_a\nabla_b\left[\!\begin{array}{c}\sigma_{cd}\\ \mu_{cde}\\
\rho_{cdef}\end{array}\!\right]
+\widetilde\nabla_a\left[\!\begin{array}{c}0\\ 0\\
(R\pentagon(\mu,\sigma))_{bcdef}\end{array}\!\right]$$
and hence that
$$\widetilde\nabla_a\widetilde\nabla_b
\left[\!\begin{array}{c}\sigma_{cd}\\ \mu_{cde}\\
\rho_{cdef}\end{array}\!\right]
=\nabla_a\nabla_b\left[\!\begin{array}{c}\sigma_{cd}\\ \mu_{cde}\\
\rho_{cdef}\end{array}\!\right]+\left[\!\begin{array}{c}0\\ 0\\ 
\ast\end{array}\!\right]
+\left[\!\begin{array}{c}0\\ {}-(R\pentagon(\mu,\sigma))_{abcde}\\
\ast\end{array}\!\right],$$
where $\ast$ denotes terms that we shall not need. 
Therefore, regarding the curvature of~$\widetilde\nabla_a$, we find that
$$\widetilde\nabla_{[a}\widetilde\nabla_{b]}
\left[\!\begin{array}{c}\sigma_{cd}\\ \mu_{cde}\\
\rho_{cdef}\end{array}\!\right]
=\nabla_{[a}\nabla_{b]}
\left[\!\begin{array}{c}\sigma_{cd}\\ \mu_{cde}\\
\rho_{cdef}\end{array}\!\right]
-\left[\!\begin{array}{c}0\\ 
(R\pentagon(\mu,\sigma))_{[ab]cde}\\
\ast\end{array}\!\right],$$
and thus we need to compute $(R\pentagon(\mu,\sigma))_{[ab]cde}$. 
Well, from~(\ref{this_is_pentagon}), we find that
$$\textstyle(R\pentagon(\mu,\sigma))_{[ab]cde}
=-\frac13X_{cabde}
-\frac13X_{de[ab]c}
+\frac13X_{ed[ab]c}
=-\frac13X_{cabde}
+\frac16X_{dabec}
-\frac16X_{eabdc}$$
and, from~(\ref{induced_curvature}), conclude that   
$$\widetilde\nabla_{[a}\widetilde\nabla_{b]}
\left[\!\begin{array}{c}\sigma_{cd}\\ \mu_{cde}\\
\rho_{cdef}\end{array}\!\right]
=\left[\!\begin{array}{c}0\\ 
\frac12X_{cabde}\\
\ast\end{array}\!\right]
+\left[\!\begin{array}{c}0\\ 
\frac13X_{cabde}
-\frac16X_{dabec}
+\frac16X_{eabdc}\\
\ast\end{array}\!\right],$$
whence
$$\widetilde\nabla_{[a}\widetilde\nabla_{b]}
\left[\!\begin{array}{c}\sigma_{cd}\\ \mu_{cde}\\
\rho_{cdef}\end{array}\!\right]
=\left[\!\begin{array}{c}0\\ 
\frac56X_{cdeab}
-\frac13X_{[de]cab}\\
\ast\end{array}\!\right],$$
where $X_{abcde}$ is defined by~(\ref{this_is_pentagon}).  According to
Lemma~\ref{very_simple_algebra} below, it follows that for any section of 
$\bigodot^2\!\big(\begin{picture}(6,6)(0,0)
\put(0,0){\line(1,0){6}}
\put(0,6){\line(1,0){6}}
\put(0,0){\line(0,1){6}}
\put(6,0){\line(0,1){6}}
\end{picture}\oplus\begin{picture}(6,12)(0,2)
\put(0,0){\line(1,0){6}}
\put(0,6){\line(1,0){6}}
\put(0,12){\line(1,0){6}}
\put(0,0){\line(0,1){12}}
\put(6,0){\line(0,1){12}}
\end{picture}\,\big)$
annihilated by $\widetilde\nabla_a$, we have $X_{cdeab}=0$. 
Hence~$(R\pentagon(\mu,\sigma))_{abcde}=0$ and from (\ref{modification}) it 
follows that this section is also annihilated by $\nabla_a$, as required.
\end{proof}

\begin{lemma}\label{very_simple_algebra} If the tensor $X_{cde}$ is skew in 
$de$ and\/ $5X_{cde}=2X_{[de]c}$, then $X_{cde}=0$.
\end{lemma}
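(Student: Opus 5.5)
Skew-symmetrising the hypothesis over the two index slots $de$ (or alternatively over all three indices) should force the totally skew part of $X_{cde}$ to vanish, after which the hypothesis itself gives the rest. Write $Z_{cde}\equiv X_{[de]c}=\frac12(X_{dec}-X_{edc})$, so the hypothesis reads $5X_{cde}=2Z_{cde}$.

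First I take the totally skew part. Since $X_{cde}$ is skew in $de$, its full skew part $X_{[cde]}$ equals $\frac13(X_{cde}+X_{dec}+X_{ecd})$. Skewing $Z_{cde}$ over $cde$ gives
$$Z_{[cde]}=\tfrac12\bigl(X_{[dec]}-X_{[edc]}\bigr)=X_{[cde]},$$
because $dec$ is an even permutation of $cde$ and $edc$ is odd. The hypothesis then yields $5X_{[cde]}=2X_{[cde]}$, hence $X_{[cde]}=0$.

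Next I use the vanishing of the skew part to simplify $Z$. From $X_{cde}+X_{dec}+X_{ecd}=0$ together with skewness in the last two indices, I get $X_{dec}-X_{edc}=X_{dec}+X_{ecd}=-X_{cde}$. Therefore $Z_{cde}=-\frac12X_{cde}$, and the hypothesis becomes $5X_{cde}=-X_{cde}$, i.e.\ $6X_{cde}=0$, so $X_{cde}=0$.

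The only delicate point is keeping the index conventions straight. One has to be careful that $X_{[de]c}$ means skewing the first two slots of $X$, with $c$ placed in the third slot, and must check the signs of the permutations $dec$ and $edc$ in the first step. Beyond that sign bookkeeping, the argument is purely linear algebra on the decomposition $\Wedge^1\otimes\Wedge^2=\Wedge^3\oplus(\text{hook})$. Equivalently, the map $X_{cde}\mapsto X_{[de]c}$ acts as $1$ on $\Wedge^3$ and as $-\frac12$ on the hook component, and neither eigenvalue equals $\frac52$.
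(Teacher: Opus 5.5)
Your proof is correct and is essentially the paper's: the paper also first notes $X_{[cde]}=0$ (a step it leaves implicit, which you justify by skewing the hypothesis over all three indices) and then expands it as $0=X_{cde}+2X_{[de]c}$, which combined with the hypothesis gives $6X_{cde}=0$. One small slip is in your opening sentence. Skewing over $de$ alone would be vacuous, since both sides of the hypothesis are already skew in $de$, so only the skew over $cde$ that you actually carry out does the work.
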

\begin{proof} Firstly, observe that $X_{[cde]}=0$, which we may expand as
$$0=X_{cde}+X_{dec}-X_{edc}=X_{cde}+2X_{[de]c}$$
and immediately conclude that $X_{cde}=0$.	
\end{proof}

The true reason for our introducing the modified connection
(\ref{modification}) is not yet apparent.
Theorem~\ref{the_same_parallel_sections} says that this modification doesn't
change the parallel sections but this is true for many other modifications.
Some hint regarding our particular choice of modification can be spotted in the
following key piece of algebra, in which we may see
$(R\pentagon(\mu,\sigma))_{abcde}$ implicitly lurking and precisely tied to the
somewhat mysterious homomorphism
$$R\diamond\underbar{\enskip}:\begin{picture}(12,12)(0,2)
\put(0,0){\line(1,0){6}}
\put(0,6){\line(1,0){12}}
\put(0,12){\line(1,0){12}}
\put(0,0){\line(0,1){12}}
\put(6,0){\line(0,1){12}}
\put(12,6){\line(0,1){6}}
\end{picture}\longrightarrow
\Wedge^1\otimes\begin{picture}(12,12)(0,2)
\put(0,0){\line(1,0){12}}
\put(0,6){\line(1,0){12}}
\put(0,12){\line(1,0){12}}
\put(0,0){\line(0,1){12}}
\put(6,0){\line(0,1){12}}
\put(12,0){\line(0,1){12}}
\end{picture}$$
arising in our prolongation 
connection~(\ref{our_tricky_prolongation_connection}). 

\begin{lemma}\label{key_piece}
Suppose that $\mu_{bcd}\in\Gamma\big(\,
\begin{picture}(6,6)(0,0)
\put(0,0){\line(1,0){6}}
\put(0,6){\line(1,0){6}}
\put(0,0){\line(0,1){6}}
\put(6,0){\line(0,1){6}}
\end{picture}\otimes\begin{picture}(6,12)(0,2)
\put(0,0){\line(1,0){6}}
\put(0,6){\line(1,0){6}}
\put(0,12){\line(1,0){6}}
\put(0,0){\line(0,1){12}}
\put(6,0){\line(0,1){12}}
\end{picture}\,\big)$ and let\/ 
$\widetilde\mu_{bcd}\equiv\mu_{bcd}-\mu_{[bcd]}\in\Gamma\big(\,
\begin{picture}(12,12)(0,2)
\put(0,0){\line(1,0){6}}
\put(0,6){\line(1,0){12}}
\put(0,12){\line(1,0){12}}
\put(0,0){\line(0,1){12}}
\put(6,0){\line(0,1){12}}
\put(12,6){\line(0,1){6}}
\end{picture}\,\big)$. 
For $X_{abcde}$ and $(R\pentagon(\mu,\sigma))_{abcde}$ as
in~\eqref{this_is_pentagon}, it holds
$$\begin{array}{rcl}
(R\diamond\widetilde\mu)_{abcde}
-  (\nabla\!R \ast \sigma)_{abcde} 
&\!\!=\!\!&R_{bc}{}^f{}_a\mu_{fde}+R_{de}{}^f{}_a\mu_{fbc}
-2R_{[bc}{}^f{}_{|a}\mu_{f|de]}
-(R\pentagon(\mu,\sigma))_{abcde}.
\end{array}$$
\end{lemma}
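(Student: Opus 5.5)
The identity is linear in the pair $(\mu,\sigma)$, and it contains no terms mixing the two. I would therefore split it into a $\sigma$-part and a $\mu$-part and verify each separately. I write $X_{abcde}=X^\mu_{abcde}+X^\sigma_{abcde}$ with
$$X^\mu_{abcde}\equiv 2R_{bc}{}^f{}_{[d}\mu_{|a|e]f}+2R_{de}{}^f{}_{[b}\mu_{|a|c]f},\qquad X^\sigma_{abcde}\equiv-2(\nabla_{[b}R_{|de|}{}^f{}_{c]})\sigma_{fa},$$
and correspondingly $R\pentagon(\mu,\sigma)=R\pentagon(\mu,0)+R\pentagon(0,\sigma)$.

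\emph{The $\sigma$-part.} Here the claim reads $(\nabla\!R\ast\sigma)_{abcde}=\frac13\big(X^\sigma_{abcde}-2X^\sigma_{[bc]ade}-2X^\sigma_{[de]abc}\big)$.
\begin{itemize}
\item The first term $\frac13X^\sigma_{abcde}=-\frac13\sigma_{af}(\nabla_bR_{de}{}^f{}_c-\nabla_cR_{de}{}^f{}_b)$ is exactly the last line of~\eqref{asterisk}, using the symmetry $\sigma_{fa}=\sigma_{af}$.
\item Expanding $-\frac23X^\sigma_{[bc]ade}$ produces the terms $\frac23\big(\sigma_{f[b}\nabla_{c]}R_{de}{}^f{}_a+(\nabla_aR_{de}{}^f{}_{[b})\sigma_{c]f}\big)$. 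To see this, I would rewrite $\nabla_{[a}R_{|de|}{}^f{}_{b]}$ using the Bianchi identity $\nabla_{[a}R_{de]}{}^f{}_b=0$ followed by the Bianchi symmetry. This is the same manipulation that closes the proof of Theorem~\ref{prolonged_2-tensors}.
\item By the same argument with the pairs $bc$ and $de$ interchanged, $-\frac23X^\sigma_{[de]abc}$ gives the two remaining terms of~\eqref{asterisk}.
\end{itemize}
So this part is a short, self-contained computation.

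\emph{The $\mu$-part.} I would further decompose $\mu_{bcd}=\widetilde\mu_{bcd}+\omega_{bcd}$, where $\omega=\mu_{[bcd]}$ is a $3$-form, and check the two pieces separately.

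For $\omega$ the left-hand side vanishes, since $R\diamond$ only sees $\widetilde\mu$. So one must show
$$R_{bc}{}^f{}_a\omega_{fde}+R_{de}{}^f{}_a\omega_{fbc}-2R_{[bc}{}^f{}_{|a|}\omega_{de]f}=(R\pentagon(\omega,0))_{abcde}.$$
The plan for this is as follows.
\begin{itemize}
\item Compute $X^\omega$. Using total skewness of $\omega$, it becomes a combination of $R_{bc}{}^f{}_{[d}\omega_{e]af}$ and $R_{de}{}^f{}_{[b}\omega_{c]af}$.
\item Apply the explicit automorphism~\eqref{fun_formula}.
\item Simplify with the Bianchi symmetry $R_{[ab}{}^f{}_{c]}=0$ until it matches the left-hand side.
\end{itemize}
An alternative is to apply the inverse of~\eqref{fun_formula} from the previous Lemma to the left-hand side and compare with $\frac13X^\omega$. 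This is probably cleaner, because $X^\omega$ has fewer terms.

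For $\widetilde\mu$, which is the bulk of the work, one must match the three lines of~\eqref{R_diamond_mu} against
$$R_{bc}{}^f{}_a\widetilde\mu_{fde}+R_{de}{}^f{}_a\widetilde\mu_{fbc}-2R_{[bc}{}^f{}_{|a}\widetilde\mu_{f|de]}-(R\pentagon(\widetilde\mu,0))_{abcde}.$$
\begin{itemize}
\item The first two terms of~\eqref{R_diamond_mu} appear verbatim.
\item The next four terms of the first line should come from expanding the full skew $-2R_{[bc}{}^f{}_{|a}\widetilde\mu_{f|de]}$ and using $\widetilde\mu_{[fde]}=0$.
\item The $\frac23$-lines should come from $-R\pentagon$, after expanding the symmetrised pieces $\widetilde\mu_{(ce)f}$ via $\widetilde\mu_{b[cd]}$ and cyclic identities.
\end{itemize}
I would organise this verification exactly as in the proof of Theorem~\ref{prolonged_2-tensors}. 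That is, I would group terms according to which three free indices sit on $R_{\bullet\bullet}{}^f{}_\bullet$, normalise each group with the Bianchi symmetry, and then reduce the $\widetilde\mu$ factors using $\widetilde\mu_{b[cd]}=\widetilde\mu_{bcd}$ and $\widetilde\mu_{[bcd]}=0$.

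The main obstacle is this last bookkeeping step. There are many index groupings, and the antisymmetrisations $[bc]$ and $[de]$ in~\eqref{fun_formula} generate numerous terms. I would first check that both sides lie in $\Wedge^1\otimes$ of the Riemann-symmetry bundle: the left-hand side does, as already noted after~\eqref{curvature_of_first_stage_connection}. Given this, it would suffice to compare both sides after a projection that is injective on that bundle, for instance after contracting against the representative groupings $\{a,b,c\}$, $\{a,b,d\}$, $\{b,c,d\}$ of indices on $R$, thereby reducing the number of independent checks.
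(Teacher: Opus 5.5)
Your linearity split and your $\sigma$-part match the paper. In fact the $\sigma$-part is simpler than you expect: expanding $-\frac23X^\sigma_{[bc]ade}$ and $-\frac23X^\sigma_{[de]abc}$ literally reproduces the first four terms of \eqref{asterisk} term by term, with no Bianchi identity needed.

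The gap is in the $\mu$-part, which is the real content of the lemma. You give only a plan there, and the term matching it anticipates is false. Expanding the skew over $bcde$ into its six terms, each with coefficient $\pm\frac16$, shows that for every $\mu\in\Gamma(\Wedge^1\otimes\Wedge^2)$
\[
R_{bc}{}^f{}_a\mu_{fde}+R_{de}{}^f{}_a\mu_{fbc}-2R_{[bc}{}^f{}_{|a}\mu_{f|de]}
=\frac23\times\big(\mbox{first line of \eqref{R_diamond_mu}}\big).
\]
So on the right-hand side the first two terms of \eqref{R_diamond_mu} carry coefficient $\frac23$, not $1$. The next four carry $\pm\frac13$, not $\pm\frac12$. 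Using $\widetilde\mu_{[fde]}=0$ cannot make up the difference, because it moves the contracted index $f$ out of the first slot of $\widetilde\mu$ and so produces terms of a different type. For the hook part you must actually show
$\frac13(\mbox{line }1)+(\mbox{line }2)+(\mbox{line }3)=-(R\pentagon(\widetilde\mu,0))_{abcde}$.
That requires trading part of $-R\pentagon$ against first-line terms via the cyclic identity, which is exactly the step your allocation omits. It is also what makes a hook-based normalisation heavy: the cyclic identity destroys the simplest invariant of each term, namely which index occupies the first slot of $\mu$.

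The paper avoids this by never splitting off the $3$-form. It keeps $\mu$ general, skew only in its last two indices and so with no cyclic identity. It substitutes $\widetilde\mu_{bcd}=\frac23\mu_{bcd}+\frac13\mu_{dcb}-\frac13\mu_{cdb}$ into \eqref{R_diamond_mu}, noting that $\mu_{[bcd]}$ enters only the first line, since the other two lines see only $\mu_{(\cdot\cdot)f}$. The $\frac23\mu_{bcd}$ contribution to the first line is then exactly the display above. The Riemann symmetries of $X_{abcde}$ in its last four indices let one rewrite \eqref{this_is_pentagon} as $\frac13(X_{abcde}+X_{bacde}-X_{cabde}+X_{daebc}-X_{eadbc})$. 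The remaining terms are matched against these five $X$'s by sorting on the free index in the first slot of $\mu$. This sorting is legitimate precisely because a general $\mu$ has no cyclic identity, and it disposes of your separate $\omega$-identity for free.

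Finally, the ``projection'' in your last paragraph does not work. Which free indices sit on $R$ is a feature of how a term is written, not of the tensor itself. So contracting against such groupings is not a well-defined linear map on $\Wedge^1\otimes(\mbox{Riemann-type tensors})$ and cannot legitimately reduce the checks.
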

\begin{proof} Since this equation is linear in $\mu_{abc}$ and $\sigma_{ab}$,
it suffice to verify it in the cases $\sigma_{ab}=0$ and $\mu_{abc}=0$.
In the case $\mu_{abc}=0$, the equality is straightforward to check by writing
out $(\nabla\!R \ast \sigma)_{abcde}$ from~(\ref{asterisk}) and comparing it to
the relevant terms in $(R\pentagon(\mu,\sigma))_{abcde}$.

In the case $\sigma_{ab}=0$ let us observe that
$$\textstyle
\widetilde\mu_{bcd}=\frac23\mu_{bcd}+\frac13\mu_{dcb}-\frac13\mu_{cdb}$$
and that $\mu_{[bcd]}$ only contributes to the first line of 
$(R\diamond\widetilde\mu)_{abcde}$, which therefore becomes
$$\begin{array}{l}
\frac23\big(R_{bc}{}^f{}_a\mu_{fde}+R_{de}{}^f{}_a\mu_{fbc}
-\frac12R_{be}{}^f{}_a\mu_{fcd}
-\frac12R_{cd}{}^f{}_a\mu_{fbe}
+\frac12R_{bd}{}^f{}_a\mu_{fce}
+\frac12R_{ce}{}^f{}_a\mu_{fbd}\big)\\[4pt]
\enskip{}+\frac13\big(R_{bc}{}^f{}_a\mu_{edf}+R_{de}{}^f{}_a\mu_{cbf}
-\frac12R_{be}{}^f{}_a\mu_{dcf}
-\frac12R_{cd}{}^f{}_a\mu_{ebf}
+\frac12R_{bd}{}^f{}_a\mu_{ecf}
+\frac12R_{ce}{}^f{}_a\mu_{dbf}\big)\\[4pt]
\quad{}-\frac13\big(R_{bc}{}^f{}_a\mu_{def}+R_{de}{}^f{}_a\mu_{bcf}
-\frac12R_{be}{}^f{}_a\mu_{cdf}
-\frac12R_{cd}{}^f{}_a\mu_{bef}
+\frac12R_{bd}{}^f{}_a\mu_{cef}
+\frac12R_{ce}{}^f{}_a\mu_{bdf}\big)\\[4pt]
\quad\enskip{}+\frac23
\big(R_{a(b}{}^f{}_{d)}\mu_{(ce)f}-R_{a(c}{}^f{}_{d)}\mu_{(be)f}
-R_{a(b}{}^f{}_{e)}\mu_{(cd)f}+R_{a(c}{}^f{}_{e)}\mu_{(bd)f}\big)\\[4pt]
\qquad{}-\frac23\big(R_{bc}{}^f{}_d\mu_{(ea)f}-R_{bc}{}^f{}_e\mu_{(da)f}
+R_{de}{}^f{}_b\mu_{(ca)f}-R_{de}{}^f{}_c\mu_{(ba)f}\big),\end{array}$$
in which we recognise the first line as
$R_{bc}{}^f{}_a\mu_{fde}+R_{de}{}^f{}_a\mu_{fbc}
-2R_{[bc}{}^f{}_{|a}\mu_{f|de]}$. Hence, we are required to check that the 
remaining four lines may be collected as
$$\textstyle{}
-\frac13\big(X_{abcde}+X_{bacde}-X_{cabde}+X_{daebc}-X_{eadbc}\big).$$
Notice that each of the five terms in this expression has a different index in
the first position and so it is straightforward to find them in the four lines 
in question. We may find contributions to $X_{abcde}$, for example, only on 
the last line, where we encounter 
$$\textstyle{}
-\frac13\big(R_{bc}{}^f{}_d\mu_{aef}-R_{bc}{}^f{}_e\mu_{adf}
+R_{de}{}^f{}_b\mu_{acf}-R_{de}{}^f{}_c\mu_{abf}\big)=-\frac13X_{abcde},$$
as required. Next, in seeking~$X_{bacde}$, we are obliged to consider
these underlined terms
$$\begin{array}{l}
\frac13\big(R_{bc}{}^f{}_a\mu_{edf}+R_{de}{}^f{}_a\mu_{cbf}
-\frac12R_{be}{}^f{}_a\mu_{dcf}
-\frac12R_{cd}{}^f{}_a\mu_{ebf}
+\frac12R_{bd}{}^f{}_a\mu_{ecf}
+\frac12R_{ce}{}^f{}_a\mu_{dbf}\big)\\[4pt]
\enskip{}-\frac13\big(R_{bc}{}^f{}_a\mu_{def}
+\underline{R_{de}{}^f{}_a\mu_{bcf}}
-\frac12R_{be}{}^f{}_a\mu_{cdf}
-\frac12\underline{R_{cd}{}^f{}_a\mu_{bef}}
+\frac12R_{bd}{}^f{}_a\mu_{cef}
+\frac12\underline{R_{ce}{}^f{}_a\mu_{bdf}}\big)\\[4pt]
\quad{}+\frac23
\big(R_{a(b}{}^f{}_{d)}\mu_{(ce)f}-\underline{R_{a(c}{}^f{}_{d)}\mu_{(be)f}}
-R_{a(b}{}^f{}_{e)}\mu_{(cd)f}
+\underline{R_{a(c}{}^f{}_{e)}\mu_{(bd)f}}\big)\\[4pt]
\quad\enskip{}-\frac23\big(R_{bc}{}^f{}_d\mu_{(ea)f}-R_{bc}{}^f{}_e\mu_{(da)f}
+R_{de}{}^f{}_b\mu_{(ca)f}-\underline{R_{de}{}^f{}_c\mu_{(ba)f}}\big),
\end{array}$$
leading to
$$\textstyle{}
-\frac13R_{de}{}^f{}_a\mu_{bcf}+\frac16R_{cd}{}^f{}_a\mu_{bef}
-\frac16R_{ce}{}^f{}_a\mu_{bdf}-\frac13R_{a(c}{}^f{}_{d)}\mu_{bef}
+\frac13R_{a(c}{}^f{}_{e)}\mu_{bdf}+\frac13R_{de}{}^f{}_c\mu_{baf},$$
which may be rearranged using the Bianchi symmetry 
$\frac16R_{cd}{}^f{}_a-\frac13R_{a(c}{}^f{}_{d)}=-\frac13R_{ac}{}^f{}_d$ as
$$\textstyle{}
-\frac13R_{de}{}^f{}_a\mu_{bcf}-\frac13R_{ac}{}^f{}_d\mu_{bef}
+\frac13R_{ac}{}^f{}_e\mu_{bdf}+\frac13R_{de}{}^f{}_c\mu_{baf}
=-\frac13X_{bacde},$$
as required. The remaining three $X$-terms are left to the reader. 
\end{proof}

\begin{thm}\label{Phi_is_parallel}
Let us consider the homomorphism of vector bundles
\begin{equation}\label{this_is_Phi}
\textstyle\bigodot^2\!\left(\!\begin{array}{c}
\begin{picture}(6,6)(0,0)
\put(0,0){\line(1,0){6}}
\put(0,6){\line(1,0){6}}
\put(0,0){\line(0,1){6}}
\put(6,0){\line(0,1){6}}
\end{picture}\\[-4pt] \oplus\\[-2pt]
\begin{picture}(6,12)(0,2)
\put(0,0){\line(1,0){6}}
\put(0,6){\line(1,0){6}}
\put(0,12){\line(1,0){6}}
\put(0,0){\line(0,1){12}}
\put(6,0){\line(0,1){12}}
\end{picture}\end{array}\!\right)=\begin{array}{c}
\begin{picture}(12,6)(0,0)
\put(0,0){\line(1,0){12}}
\put(0,6){\line(1,0){12}}
\put(0,0){\line(0,1){6}}
\put(6,0){\line(0,1){6}}
\put(12,0){\line(0,1){6}}
\end{picture}\\[-4pt]
\oplus\\[-2pt]
\begin{picture}(6,6)(0,0)
\put(0,0){\line(1,0){6}}
\put(0,6){\line(1,0){6}}
\put(0,0){\line(0,1){6}}
\put(6,0){\line(0,1){6}}
\end{picture}\otimes
\begin{picture}(6,12)(0,2)
\put(0,0){\line(1,0){6}}
\put(0,6){\line(1,0){6}}
\put(0,12){\line(1,0){6}}
\put(0,0){\line(0,1){12}}
\put(6,0){\line(0,1){12}}
\end{picture}\\[-2pt]
\oplus\\[-2pt]
\begin{picture}(6,12)(0,2)
\put(0,0){\line(1,0){6}}
\put(0,6){\line(1,0){6}}
\put(0,12){\line(1,0){6}}
\put(0,0){\line(0,1){12}}
\put(6,0){\line(0,1){12}}
\end{picture}\bigodot
\begin{picture}(6,12)(0,2)
\put(0,0){\line(1,0){6}}
\put(0,6){\line(1,0){6}}
\put(0,12){\line(1,0){6}}
\put(0,0){\line(0,1){12}}
\put(6,0){\line(0,1){12}}
\end{picture}
\end{array}
\ni\left[\!\begin{array}{c}\sigma_{bc}\\ \mu_{bcd}\\
\rho_{bcde}\end{array}\!\right]\stackrel{\mbox{$\Phi$}}{\longmapsto}
\left[\!\begin{array}{c}\sigma_{bc}\\ \mu_{bcd}-\mu_{[bcd]}\\
\rho_{bcde}-\rho_{[abcd]}\end{array}\!\right]\in
\begin{array}{c}
\begin{picture}(12,6)(0,0)
\put(0,0){\line(1,0){12}}
\put(0,6){\line(1,0){12}}
\put(0,0){\line(0,1){6}}
\put(6,0){\line(0,1){6}}
\put(12,0){\line(0,1){6}}
\end{picture}\\[-4pt]
\oplus\\[-2pt]
\begin{picture}(12,12)(0,2)
\put(0,0){\line(1,0){6}}
\put(0,6){\line(1,0){12}}
\put(0,12){\line(1,0){12}}
\put(0,0){\line(0,1){12}}
\put(6,0){\line(0,1){12}}
\put(12,6){\line(0,1){6}}
\end{picture}\\[-2pt]
\oplus\\[-2pt]
\begin{picture}(12,12)(0,2)
\put(0,0){\line(1,0){12}}
\put(0,6){\line(1,0){12}}
\put(0,12){\line(1,0){12}}
\put(0,0){\line(0,1){12}}
\put(6,0){\line(0,1){12}}
\put(12,0){\line(0,1){12}}
\end{picture}
\end{array},\end{equation}
where the bundles on the right are realised as in
Theorem~\ref{prolonged_2-tensors}. 
If we equip the bundle on the left with our modified connection
$\widetilde\nabla_a$ from \eqref{modification} and the bundle on the right with
our prolongation connection $\nabla_a$ from
\eqref{our_tricky_prolongation_connection}, then
$\nabla_a\circ\Phi=\Phi\circ\widetilde\nabla_a$.
\end{thm}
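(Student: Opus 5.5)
The plan is to compare both sides of $\nabla_a\circ\Phi=\Phi\circ\widetilde\nabla_a$ component by component, applied to a general section $(\sigma_{bc},\mu_{bcd},\rho_{bcde})$ of $\bigodot^2(\Wedge^1\oplus\Wedge^2)$. First we note that $\Phi$ is built from the natural projections $\mu\mapsto\mu-\mu_{[bcd]}$ and $\rho\mapsto\rho_{bcde}-\rho_{[bcde]}$ (the latter is the projection of $\Wedge^2\bigodot\Wedge^2$ onto the Riemann-symmetric part, killing the $\Wedge^4$ summand). These projections are built from the identity by symmetrisation, so they commute with the Levi-Civita/affine covariant derivative $\nabla_a$. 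Hence in every component the pure derivative terms agree, and only the algebraic (zeroth order) terms need to be compared.

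\emph{First component.} On the left we have $\nabla_a\sigma_{bc}+2\widetilde\mu_{(bc)a}$, and on the right $\nabla_a\sigma_{bc}+2\mu_{(bc)a}$. These agree because $\mu_{[bca]}$ is totally skew, so its $(bc)$-symmetrisation vanishes.

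\emph{Second component.} The left side is $\nabla_a\widetilde\mu_{bcd}-(R\triangleleft\sigma)_{abcd}-\widetilde\rho_{abcd}$, where $\widetilde\rho=\rho-\rho_{[\cdot]}$. The right side is the projection of $\nabla_a\mu_{bcd}-R_{cd}{}^e{}_a\sigma_{be}-\rho_{abcd}$ onto the hook (removing the $[bcd]$ part). We check two things:
\begin{itemize}
\item the $\rho$-terms match, since removing $\rho_{[abcd]}$ and then removing the $[bcd]$-skew part of $\rho_{abcd}$ agree, both being the hook projection of $\rho$;
\item $R_{cd}{}^e{}_a\sigma_{be}$ minus its $[bcd]$ part equals $(R\triangleleft\sigma)_{abcd}$, which is exactly the first expression in \eqref{option_one}.
\end{itemize}

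\emph{Third component.} The left side is $\nabla_a\widetilde\rho_{bcde}-(R\diamond\widetilde\mu)_{abcde}+(\nabla\!R\ast\sigma)_{abcde}$. The right side is the projection of
\[
\nabla_a\rho_{bcde}-R_{bc}{}^f{}_a\mu_{fde}-R_{de}{}^f{}_a\mu_{fbc}+(R\pentagon(\mu,\sigma))_{abcde}.
\]
The projection subtracts the $[bcde]$-skew part. On $R\pentagon$, which already lies in $\Wedge^1\otimes$ (Riemann-symmetric tensors) by construction, being the image of the automorphism \eqref{fun_formula}, this projection acts as the identity. On the two $R\mu$ terms it subtracts $2R_{[bc}{}^f{}_{|a}\mu_{f|de]}$. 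Equality of the two sides is then precisely the identity of Lemma~\ref{key_piece}, after moving terms across. This is where the substance lies, and that lemma has already been established.

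The main obstacle is therefore bookkeeping: verifying carefully that $\Phi$ commutes with $\nabla_a$ and that $R\pentagon(\mu,\sigma)$ genuinely has no $\Wedge^4$ component. For the latter, I would first check that $X$ lies in $\Wedge^1\otimes$ (Riemann-symmetric tensors), using the Bianchi symmetry as in the discussion after \eqref{in_the_locally_symmetric_case}. I would then note that the endomorphism \eqref{fun_formula} preserves that bundle.
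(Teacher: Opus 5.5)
Your proposal is correct and is essentially the paper's own proof. The paper also compares the two sides component by component. The first two components reduce to $\widetilde\mu_{(bc)a}=\mu_{(bc)a}$, to $\widetilde\rho_{abcd}=\rho_{abcd}-\rho_{a[bcd]}$, and to the rewriting $(R\triangleleft\sigma)_{abcd}=R_{cd}{}^e{}_a\sigma_{be}-R_{[cd}{}^e{}_{|a|}\sigma_{b]e}$. The third component is exactly Lemma~\ref{key_piece} together with $(R\pentagon(\mu,\sigma))_{a[bcde]}=0$, and your justification of that vanishing is the one the paper's final step relies on: $X$ lies in the Riemann-symmetric part, and the automorphism of the preceding lemma preserves it.
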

\begin{proof}
We compute from (\ref{our_tricky_prolongation_connection}) that
$$\nabla_a\circ\Phi\left[\!\begin{array}{c}\sigma_{bc}\\ \mu_{bcd}\\
\rho_{bcde}\end{array}\!\right]=
\nabla_a\left[\!\begin{array}{c}\sigma_{bc}\\ \widetilde\mu_{bcd}\\
\widetilde\rho_{bcde}\end{array}\!\right]
=\left[\!\begin{array}{c}\nabla_a\sigma_{bc}+2\widetilde\mu_{(bc)a}\\ 
\nabla_a\widetilde\mu_{bcd}-(R\triangleleft\sigma)_{abcd}
-\widetilde\rho_{abcd}\\
\nabla_a\widetilde\rho_{bcde}-(R\diamond\widetilde\mu)_{abcde}
+(\nabla\!R \ast \sigma)_{abcde}
\end{array}\!\right]$$
where $\widetilde\mu_{bcd}\equiv\mu_{bcd}-\mu_{[bcd]}$ and 
$\widetilde\rho_{bcde}\equiv\rho_{bcde}-\rho_{[bcde]}$.  Notice that 
$\widetilde\mu_{(bc)a}=\mu_{(bc)a}$
and also that $\widetilde\rho_{abcd}=\rho_{abcd}-\rho_{a[bcd]}$. 
Therefore,
$$\nabla_a\circ\Phi\left[\!\begin{array}{c}\sigma_{bc}\\ \mu_{bcd}\\
\rho_{bcde}\end{array}\!\right]
=\left[\!\begin{array}{c}\nabla_a\sigma_{bc}+2\mu_{(bc)a}\\ 
\nabla_a\mu_{bcd}-(R\triangleleft\sigma)_{abcd}
-\rho_{abcd}\\
\nabla_a\rho_{bcde}-(R\diamond\widetilde\mu)_{abcde}
+(\nabla\!R \ast\sigma)_{abcde}
\end{array}\!\right]
-\left[\!\begin{array}{c}0\\ \nabla_a\mu_{[bcd]}-\rho_{a[bcd]}\\
\nabla_a\rho_{[bcde]}\end{array}\!\right].$$
However, applying  
Lemma~\ref{key_piece} to the third line
and recalling that
$$\textstyle (R\triangleleft\sigma)_{abcd}
\equiv\frac23R_{cd}{}^e{}_a\sigma_{be}
-\frac13R_{bc}{}^e{}_a\sigma_{de}
+\frac13R_{bd}{}^e{}_a\sigma_{ce}
=R_{cd}{}^e{}_a\sigma_{be}-R_{[cd}{}^e{}_{|a|}\sigma_{b]e},$$
it follows from (\ref{symmetric_power}) that
$$\nabla_a\circ\Phi\left[\!\begin{array}{c}\sigma_{bc}\\ \mu_{bcd}\\
\rho_{bcde}\end{array}\!\right]
=\nabla_a\left[\!\begin{array}{c}\sigma_{bc}\\ \mu_{bcd}\\
\rho_{bcde}\end{array}\!\right]
+\left[\!\begin{array}{c}0\\ 
R_{[cd}{}^e{}_{|a|}\sigma_{b]e}\\
2R_{[bc}{}^f{}_{|a}\mu_{f|de]}
+(R\pentagon(\mu,\sigma)_{abcde}\end{array}\!\right]
-\left[\!\begin{array}{c}0\\ \nabla_a\mu_{[bcd]}-\rho_{a[bcd]}\\
\nabla_a\rho_{[bcde]}\end{array}\!\right],$$
which, according to (\ref{symmetric_power}) and (\ref{slicker_version}), is  
$$\Phi\circ\nabla_a\left[\!\begin{array}{c}\sigma_{bc}\\ \mu_{bcd}\\
\rho_{bcde}\end{array}\!\right]
+\left[\!\begin{array}{c}0\\ 0\\
(R\pentagon(\mu,\sigma))_{abcde}\end{array}\!\right]
=\Phi\circ\widetilde\nabla_a\left[\!\begin{array}{c}\sigma_{bc}\\ \mu_{bcd}\\
\rho_{bcde}\end{array}\!\right]$$
because $(R\pentagon(\mu,\sigma))_{a[bcde]}=0$.\end{proof}

\begin{cor}\label{SES_of_connections} 
There is a short exact sequence of bundles with compatible connections
\begin{equation}\label{SES}
\begin{array}{ccccccccc}
0&\to&\begin{array}{c}\Wedge^3\\[-3pt] \oplus\\[-1pt] \Wedge^4\end{array}
&\longrightarrow&
\bigodot^2\!\left(\!
\begin{array}{c}\Wedge^1\\[-3pt] \oplus\\[-2pt] \Wedge^2\end{array}\!\right)
&\stackrel{\mbox{$\Phi$}}{\longrightarrow}&
\raisebox{8pt}{$\begin{array}{c}
\begin{picture}(12,6)(0,0)
\put(0,0){\line(1,0){12}}
\put(0,6){\line(1,0){12}}
\put(0,0){\line(0,1){6}}
\put(6,0){\line(0,1){6}}
\put(12,0){\line(0,1){6}}
\end{picture}\\[-4pt]
\oplus\\[-2pt]
\begin{picture}(12,12)(0,2)
\put(0,0){\line(1,0){6}}
\put(0,6){\line(1,0){12}}
\put(0,12){\line(1,0){12}}
\put(0,0){\line(0,1){12}}
\put(6,0){\line(0,1){12}}
\put(12,6){\line(0,1){6}}
\end{picture}\\[-2pt]
\oplus\\[-2pt]
\begin{picture}(12,12)(0,2)
\put(0,0){\line(1,0){12}}
\put(0,6){\line(1,0){12}}
\put(0,12){\line(1,0){12}}
\put(0,0){\line(0,1){12}}
\put(6,0){\line(0,1){12}}
\put(12,0){\line(0,1){12}}
\end{picture}
\end{array}$}&\to&0,\\
&&\makebox[0pt][r]{$\nabla_a$}\mbox{\large$\,\downarrow$}
&&\makebox[0pt][r]{$\widetilde\nabla_a$}\mbox{\large$\,\downarrow$}
&&\makebox[0pt][r]{$\nabla_a$}\mbox{\large$\,\downarrow$}
\end{array}\end{equation}
where the connection $\nabla_a$ on $\Wedge^3\oplus\Wedge^4$ is given by 
\begin{equation}\label{killing_yano_prolongation}
\begin{array}{c}\Wedge^3\\[-4pt] \oplus\\[-2pt] \Wedge^4\end{array}\ni
\left[\!\begin{array}{c}\mu_{bcd}\\ \rho_{bcde}\end{array}\!\right]
\stackrel{\nabla_a\,}{\longmapsto}
\left[\!\begin{array}{c}\nabla_a\mu_{bcd}-\rho_{abcd}\\ 
\nabla_a\rho_{bcde}+4R_{a[b}{}^f{}_c\mu_{de]f}\end{array}\!\right],
\end{equation}
whilst the connections on the other two bundles are given by 
\eqref{modification} and \eqref{our_tricky_prolongation_connection}, 
respectively.
\end{cor}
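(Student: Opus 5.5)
The plan is to separate a pointwise algebraic statement from a differential one. The algebraic part is that $\Phi$ is surjective with kernel exactly $\Wedge^3\oplus\Wedge^4$. The differential part is that $\widetilde\nabla_a$ preserves this kernel, and that the connection it induces there is \eqref{killing_yano_prolongation}. Compatibility of $\widetilde\nabla_a$ with the connection on the quotient is already Theorem~\ref{Phi_is_parallel}, so nothing further is needed on that side.

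\emph{Exactness.} I will check this fibrewise from the formula \eqref{this_is_Phi}.
\begin{itemize}
\item On the first component $\Phi$ is the identity on $\bigodot^2\Wedge^1$.
\item On the second component it is $\mu_{bcd}\mapsto\mu_{bcd}-\mu_{[bcd]}$ on $\Wedge^1\otimes\Wedge^2$. This is the projection onto the complement of $\Wedge^3$, so it has kernel $\Wedge^3$ and image the hook bundle realised as in \eqref{alternative_hook}.
\item On the third component it is $\rho_{bcde}\mapsto\rho_{bcde}-\rho_{[bcde]}$ on $\Wedge^2\bigodot\Wedge^2$. Its kernel is $\Wedge^4$. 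Its image is the bundle with Riemann-tensor symmetries: once the totally skew part vanishes, the algebraic Bianchi condition follows from the pair symmetry.
\end{itemize}
This uses the decomposition $\Wedge^2\bigodot\Wedge^2=(\text{Riemann-type})\oplus\Wedge^4$ already noted after \eqref{in_the_locally_symmetric_case}. Hence the kernel of $\Phi$ is the set of triples $[0,\mu,\rho]$ with $\mu\in\Wedge^3$ and $\rho\in\Wedge^4$, and the sequence \eqref{SES} is exact.

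\emph{Kernel is preserved.} Since $\nabla_a\circ\Phi=\Phi\circ\widetilde\nabla_a$ by Theorem~\ref{Phi_is_parallel}, $\widetilde\nabla_a$ maps sections of $\ker\Phi$ to sections of $\Wedge^1\otimes\ker\Phi$. So $\widetilde\nabla_a$ restricts to a connection on $\Wedge^3\oplus\Wedge^4$. Knowing in advance that the answer lies in $\Wedge^1\otimes(\Wedge^3\oplus\Wedge^4)$, I may freely skew-symmetrise over the last three (respectively four) indices when computing it.

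\emph{Computing the induced connection.} Apply \eqref{modification} to $[0,\mu_{bcd},\rho_{bcde}]$ with $\mu$ and $\rho$ totally skew, and take the components in turn.
\begin{itemize}
\item The first component is $2\mu_{(bc)a}=0$, since $\mu$ is totally skew.
\item The second component is $\nabla_a\mu_{bcd}-\rho_{abcd}$, exactly as claimed.
\item The third component is $\nabla_a\rho_{bcde}-R_{bc}{}^f{}_a\mu_{fde}-R_{de}{}^f{}_a\mu_{fbc}+(R\pentagon(\mu,0))_{abcde}$.
\end{itemize}
Skew this third component over $bcde$. The pentagon term then drops out, because $(R\pentagon(\mu,\sigma))_{a[bcde]}=0$ as noted at the end of the proof of Theorem~\ref{Phi_is_parallel}. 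The two curvature terms become equal after skewing and combine to $-2R_{[bc}{}^f{}_{|a|}\mu_{|f|de]}$. Next, the Bianchi symmetry $R_{bc}{}^f{}_a=-2R_{a[b}{}^f{}_{c]}$ turns this into $4R_{a[b}{}^f{}_c\mu_{|f|de]}$. Finally, cyclically moving $f$ in the $3$-form $\mu$ gives $\mu_{fde}=\mu_{def}$, which yields \eqref{killing_yano_prolongation}.

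The only delicate point is justifying that it is legitimate to skew-symmetrise the third component and discard the pentagon term. I will rely on the invariance of the kernel from Theorem~\ref{Phi_is_parallel} rather than evaluating $R\pentagon(\mu,0)$ directly. As a safeguard I will also confirm that the signs and the factor $4$ are consistent with the standard Killing--Yano prolongation.
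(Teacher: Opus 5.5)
Your proof is correct, and its skeleton is the paper's. Exactness is checked fibrewise, compatibility on the quotient is Theorem~\ref{Phi_is_parallel}, and the induced connection is read off from \eqref{modification} with $\sigma_{bc}=0$ and $\mu,\rho$ totally skew, finishing with $R_{bc}{}^f{}_a=-2R_{a[b}{}^f{}_{c]}$.

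The one genuine difference is how you dispose of the $R\pentagon(\mu,0)$ term.

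\emph{The paper's route.} It evaluates this term outright. For totally skew $\mu$ one has $\widetilde\mu=0$, so the left side of Lemma~\ref{key_piece} vanishes. Hence $R\pentagon(\mu,0)=R_{bc}{}^f{}_a\mu_{fde}+R_{de}{}^f{}_a\mu_{fbc}-2R_{[bc}{}^f{}_{|a}\mu_{f|de]}$. This cancels the two unskewed curvature terms exactly, so the third component is visibly $\nabla_a\rho_{bcde}-2R_{[bc}{}^f{}_{|a}\mu_{f|de]}$. Invariance of $\ker\Phi$ is therefore established by direct computation rather than imported.

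\emph{Your route.} You obtain invariance of $\ker\Phi$ abstractly from $\nabla_a\circ\Phi=\Phi\circ\widetilde\nabla_a$, and then only need the $\Wedge^1\otimes\Wedge^4$ projection. There $R\pentagon$ drops out simply because it takes values in $\Wedge^1\otimes\{\text{tensors with Riemann symmetries}\}$.

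\emph{Comparison.} Your version needs no further tensor manipulation. The price is leaning on the full strength of Theorem~\ref{Phi_is_parallel}, whose proof itself rests on Lemma~\ref{key_piece}. The paper's version makes the corollary's computation self-contained given that lemma, and it yields the unprojected identity as a by-product. Your remaining checks are all sound: the two curvature terms agree after skewing because $bcde\mapsto debc$ is even, and the kernel and image of $\Phi$ are as you describe.
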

\begin{proof} It is clear that (\ref{SES}) is an exact sequence of bundles and
Theorem~\ref{Phi_is_parallel} states precisely the compatibility between the
connection $\widetilde\nabla_a$ and the prolongation
connection~(\ref{our_tricky_prolongation_connection}) for Killing $2$-tensors.
It remains to examine the connection induced by $\widetilde\nabla_a$ on the
kernel of~$\Phi$.  By definition, this is given by~(\ref{modification}) when
$\sigma_{bc}=0$, $\mu_{bcd}=\mu_{[bcd]}$, and $\rho_{bcde}=\rho_{[bcde]}$.  
Under these circumstances, Lemma~\ref{key_piece} 
implies that
$$(R\pentagon(\mu,\sigma))_{abcde}
=R_{bc}{}^f{}_a\mu_{fde}+R_{de}{}^f{}_a\mu_{fbc}
-2R_{[bc}{}^f{}_{|a}\mu_{f|de]}$$
and (\ref{modification}) reduces to
$$\left[\!\begin{array}{c}0\\
\nabla_a\mu_{bcd}-\rho_{abcd}\\
\nabla_a\rho_{bcde}-R_{bc}{}^f{}_a\mu_{fde}-R_{de}{}^f{}_a\mu_{fbc}
\end{array}\!\right]+\left[\!\begin{array}{c}0\\ 0\\
R_{bc}{}^f{}_a\mu_{fde}+R_{de}{}^f{}_a\mu_{fbc}
-2R_{[bc}{}^f{}_{|a}\mu_{f|de]}
\end{array}\!\right],$$
which yields (\ref{killing_yano_prolongation}) by the Bianchi
symmetry.\end{proof}

\noindent{\bf Remarks}\quad Several remarks are in order.
\begin{itemize}
\item Theorems~\ref{the_same_parallel_sections} and~\ref{Phi_is_parallel}
nicely combine as follows. Suppose that $\sigma_b$ and $\widetilde\sigma_b$ 
are Killing $1$-forms. Setting $\mu_{bc}\equiv\nabla_b\sigma_c$ and 
$\widetilde\mu_{bc}\equiv\nabla_b\widetilde\sigma_c$ gives, in accordance with 
Theorem~\ref{killing_prolongation}, parallel sections of 
$\Wedge^1\oplus\Wedge^2$ equipped with the
connection~\eqref{Killing_connection}. Hence
$$\left[\!\begin{array}{c}
\sigma_b\widetilde\sigma_c+\widetilde\sigma_b\sigma_c\\ 
\sigma_b\widetilde\mu_{cd}+\widetilde\sigma_b\mu_{cd}\\
\mu_{bc}\widetilde\mu_{de}+\widetilde\mu_{bc}\mu_{de}\end{array}\!\right]
\in\begin{array}{c}
\begin{picture}(12,6)(0,0)
\put(0,0){\line(1,0){12}}
\put(0,6){\line(1,0){12}}
\put(0,0){\line(0,1){6}}
\put(6,0){\line(0,1){6}}
\put(12,0){\line(0,1){6}}
\end{picture}\\[-4pt]
\oplus\\[-2pt]
\begin{picture}(6,6)(0,0)
\put(0,0){\line(1,0){6}}
\put(0,6){\line(1,0){6}}
\put(0,0){\line(0,1){6}}
\put(6,0){\line(0,1){6}}
\end{picture}\otimes
\begin{picture}(6,12)(0,2)
\put(0,0){\line(1,0){6}}
\put(0,6){\line(1,0){6}}
\put(0,12){\line(1,0){6}}
\put(0,0){\line(0,1){12}}
\put(6,0){\line(0,1){12}}
\end{picture}\\[-2pt]
\oplus\\[-2pt]
\begin{picture}(6,12)(0,2)
\put(0,0){\line(1,0){6}}
\put(0,6){\line(1,0){6}}
\put(0,12){\line(1,0){6}}
\put(0,0){\line(0,1){12}}
\put(6,0){\line(0,1){12}}
\end{picture}\bigodot
\begin{picture}(6,12)(0,2)
\put(0,0){\line(1,0){6}}
\put(0,6){\line(1,0){6}}
\put(0,12){\line(1,0){6}}
\put(0,0){\line(0,1){12}}
\put(6,0){\line(0,1){12}}
\end{picture}
\end{array}
=\textstyle\bigodot^2\!\left(\!\begin{array}{c}
\Wedge^1\\[-3pt]
\oplus\\[-1pt]
\Wedge^2\end{array}\!\right)$$
is parallel for the connection~(\ref{symmetric_power}). Now, according to 
Theorem~\ref{the_same_parallel_sections}, this same section is also parallel 
for the connection~$\widetilde\nabla_a$ defined by~(\ref{modification}). 
Finally, as a consequence of Theorem~\ref{Phi_is_parallel} we obtain a 
parallel section 
$$\left[\!\begin{array}{c}
\sigma_b\widetilde\sigma_c+\widetilde\sigma_b\sigma_c\\ 
\sigma_b\widetilde\mu_{cd}+\widetilde\sigma_b\mu_{cd}
-\sigma_{[b}\widetilde\mu_{cd]}-\widetilde\sigma_{[b}\mu_{cd]}\\
\mu_{bc}\widetilde\mu_{de}+\widetilde\mu_{bc}\mu_{de}
-2\mu_{[bc}\widetilde\mu_{de]}\end{array}\!\right]
\in\begin{array}{c}
\begin{picture}(12,6)(0,0)
\put(0,0){\line(1,0){12}}
\put(0,6){\line(1,0){12}}
\put(0,0){\line(0,1){6}}
\put(6,0){\line(0,1){6}}
\put(12,0){\line(0,1){6}}
\end{picture}\\[-4pt]
\oplus\\[-2pt]
\begin{picture}(12,12)(0,2)
\put(0,0){\line(1,0){6}}
\put(0,6){\line(1,0){12}}
\put(0,12){\line(1,0){12}}
\put(0,0){\line(0,1){12}}
\put(6,0){\line(0,1){12}}
\put(12,6){\line(0,1){6}}
\end{picture}\\[-2pt]
\oplus\\[-2pt]
\begin{picture}(12,12)(0,2)
\put(0,0){\line(1,0){12}}
\put(0,6){\line(1,0){12}}
\put(0,12){\line(1,0){12}}
\put(0,0){\line(0,1){12}}
\put(6,0){\line(0,1){12}}
\put(12,0){\line(0,1){12}}
\end{picture}
\end{array}$$
for the prolongation connection~(\ref{our_tricky_prolongation_connection}).
This implies, of course, that $\sigma_{(b}\widetilde\sigma_{c)}$ is a Killing
field in accordance with Theorem~\ref{prolonged_2-tensors}.  Although this much
is clear without prolongation:
$$\nabla_{(a}\sigma_{b)}=0=\nabla_{(a}\widetilde\sigma_{b)}\implies
\nabla_{(a}\sigma_b\widetilde\sigma_{c)}=0,$$
we shall find that this line of reasoning via prolongation connections yields
more complete information.
\item 
The connection (\ref{killing_yano_prolongation}) on $\Wedge^3\oplus\Wedge^4$
has a self-contained interpretation as follows.  It is clear from the first
line that its parallel sections satisfy
\begin{equation}\label{KY_3forms}\nabla_{(a}\mu_{b)de}=0\end{equation}
and, indeed, this is precisely the constraint on $\mu_{bcd}\in\Wedge^3$ imposed
by this first line.  These are the {\em Killing-Yano\/} three-forms and
(\ref{killing_yano_prolongation}) is a prolongation connection for the
overdetermined system~(\ref{KY_3forms}):
$$\nabla_a\mu_{bcd}=\rho_{abcd}\iff
\nabla_a\left[\!\begin{array}{c}\mu_{bcd}\\ \rho_{bcde}\end{array}\!\right]
\equiv\left[\!\begin{array}{c}\nabla_a\mu_{bcd}-\rho_{abcd}\\ 
\nabla_a\rho_{bcde}+4R_{a[b}{}^f{}_c\mu_{de]f}\end{array}\!\right]
=0.$$
More precisely, from~(\ref{curvature_on_one-forms}), the induced curvature
$\kappa:\Wedge^3\to\Wedge^2\otimes\Wedge^3$ implies that
$$\textstyle\nabla_{[a}\rho_{b]cde}=\nabla_{[a}\nabla_{b]}\mu_{cde}=
-\frac32R_{ab}{}^f{}_{[c}\mu_{de]f},$$
whilst the de~Rham complex implies that $\nabla_{[a}\rho_{bcde]}=0$ and, by
dint of Lemma~\ref{mindless_algebra} below, these constraints are equivalent to
$\nabla_a\rho_{bcde}+4R_{a[b}{}^f{}_c\mu_{de]f}=0$.
\begin{lemma}\label{mindless_algebra} 
Suppose that $\phi_{abcde}=\phi_{a[bcde]}$ and\/ $\phi_{[abcde]}=0$.  Then
$\phi_{abcde}$ can be recovered from $\theta_{abcde}\equiv\phi_{[ab]cde}$
according to the formula $\phi_{abcde}=\frac83\theta_{a[bcde]}$.
\end{lemma}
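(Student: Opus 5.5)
The identity is linear in $\phi_{abcde}$, so the plan is to compute $\theta_{a[bcde]}$ directly in terms of $\phi$ and then use the hypothesis $\phi_{[abcde]}=0$ to remove the one term that is not already $\phi_{abcde}$.

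\emph{Expanding $\theta_{a[bcde]}$.} By definition,
$$\textstyle\theta_{a[bcde]}=\frac12\big(\phi_{a[bcde]}-\phi_{[b|a|cde]}\big)=\frac12\phi_{abcde}-\frac12\phi_{[b|a|cde]},$$
where the bars exclude $a$ and the skewing is over $bcde$ only. The first simplification uses $\phi_{abcde}=\phi_{a[bcde]}$.

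\emph{Using $\phi_{[abcde]}=0$.} Write the total skew-symmetrisation by grouping the permutations according to which index lands in the first slot; the remaining four indices are already skewed, since $\phi$ is skew in its last four slots. The group with $a$ in the first slot contributes $\frac15\phi_{abcde}$. Putting $b$, $c$, $d$ or $e$ in the first slot costs one transposition with $a$, so each of these groups gives the same contribution after skewing over $bcde$. Together they give $-\frac45\phi_{[b|a|cde]}$. Hence
$$\textstyle 0=\phi_{[abcde]}=\frac15\big(\phi_{abcde}-4\phi_{[b|a|cde]}\big),
\quad\mbox{so}\quad \phi_{[b|a|cde]}=\frac14\phi_{abcde}.$$

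\emph{Conclusion.} Substituting back gives $\theta_{a[bcde]}=\frac12\phi_{abcde}-\frac18\phi_{abcde}=\frac38\phi_{abcde}$, that is, $\phi_{abcde}=\frac83\theta_{a[bcde]}$.

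The only delicate step is the bookkeeping of signs and weights in the decomposition of $\phi_{[abcde]}$. I would double-check it with a concrete example: take $\phi_{abcde}=\omega_a\eta_{bcde}$ for a $1$-form $\omega$ and a $4$-form $\eta$, and subtract its totally skew part. For such a $\phi$, $\phi_{[b|a|cde]}$ equals $\eta_{a[cde}\omega_{b]}$, and the factor $\frac14$ can be checked directly.
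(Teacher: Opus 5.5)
Your argument is correct and is essentially the paper's proof. The paper's identity $\phi_{abcde}=\phi_{bacde}-\phi_{cabde}+\phi_{dabce}-\phi_{eabcd}$ is exactly your relation $\phi_{[b|a|cde]}=\frac14\phi_{abcde}$ written out term by term, and its eight-term expansion of $\theta_{a[bcde]}$ is your $\frac12\phi_{abcde}-\frac12\phi_{[b|a|cde]}$, giving $\frac18(4-1)=\frac38$.

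One small caution, which concerns only your optional sanity check. Once the totally skew part $\Omega_{abcde}=\omega_{[a}\eta_{bcde]}$ has been subtracted, you get $\phi_{[b|a|cde]}=\omega_{[b}\eta_{|a|cde]}+\Omega_{abcde}$ rather than $\omega_{[b}\eta_{|a|cde]}$ alone, so include that term when you verify the factor $\frac14$.
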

\begin{proof} Firstly, from the symmetries of $\phi_{abcde}$, let us note that 
$$\phi_{abcde}=\phi_{bacde}-\phi_{cabde}+\phi_{dabce}-\phi_{eabcd}$$ 
and now we compute
$$\begin{array}{rcl}\theta_{a[bcde]}
&\!\!\!=\!\!\!&\frac14
\big[\theta_{abcde}-\theta_{acbde}+\theta_{adbce}-\theta_{aebcd}\big]\\[5pt]
&\!\!\!=\!\!\!&\frac18
\big[4\phi_{abcde}-\phi_{bacde}+\phi_{cabde}
-\phi_{dabce}+\phi_{eabcd}\big]=\frac38\phi_{abcde},
\end{array}$$
as required.\end{proof}
\item If there are no Killing-Yano $3$-forms, it follows from
Theorem~\ref{the_same_parallel_sections} and
Corollary~\ref{SES_of_connections} that the parallel sections of 
$\bigodot^2\!\big(\begin{picture}(6,6)(0,0)
\put(0,0){\line(1,0){6}}
\put(0,6){\line(1,0){6}}
\put(0,0){\line(0,1){6}}
\put(6,0){\line(0,1){6}}
\end{picture}\oplus\begin{picture}(6,12)(0,2)
\put(0,0){\line(1,0){6}}
\put(0,6){\line(1,0){6}}
\put(0,12){\line(1,0){6}}
\put(0,0){\line(0,1){12}}
\put(6,0){\line(0,1){12}}
\end{picture}\,\big),$
with respect to either $\nabla_a$ as in~(\ref{symmetric_power}) or
$\widetilde\nabla_a$ as in~(\ref{modification}), inject into the parallel
sections of the connection~(\ref{our_tricky_prolongation_connection}).  On 
the other hand, Theorem~\ref{max_parallel_flat_on_symmetric_product} below 
identifies the parallel sections of 
$\bigodot^2\!\big(\begin{picture}(6,6)(0,0)
\put(0,0){\line(1,0){6}}
\put(0,6){\line(1,0){6}}
\put(0,0){\line(0,1){6}}
\put(6,0){\line(0,1){6}}
\end{picture}\oplus\begin{picture}(6,12)(0,2)
\put(0,0){\line(1,0){6}}
\put(0,6){\line(1,0){6}}
\put(0,12){\line(1,0){6}}
\put(0,0){\line(0,1){12}}
\put(6,0){\line(0,1){12}}
\end{picture}\,\big)$. Looking ahead to \S\ref{decomposables}, we find a 
criterion for the injectivity of (\ref{K1->K2}), the homomorphism creating 
Killing $2$-tensors from Killing $1$-forms. 

\item If we try to feed the Killing-Yano equation (\ref{KY_3forms}) directly
into Theorem~\ref{prolong}, then we find that we are already obliged to impose
$$\pi\big(R_{ab}{}^f{}_{[c}\mu_{de]f}\big)=0\quad\mbox{for the natural 
projection}\enskip
\pi:\begin{picture}(6,12)(0,2)
\put(0,0){\line(1,0){6}}
\put(0,6){\line(1,0){6}}
\put(0,12){\line(1,0){6}}
\put(0,0){\line(0,1){12}}
\put(6,0){\line(0,1){12}}
\end{picture}\otimes\begin{picture}(6,18)(0,5)
\put(0,0){\line(1,0){6}}
\put(0,6){\line(1,0){6}}
\put(0,12){\line(1,0){6}}
\put(0,18){\line(1,0){6}}
\put(0,0){\line(0,1){18}}
\put(6,0){\line(0,1){18}}
\end{picture}
\to\begin{picture}(12,18)(0,5)
\put(0,0){\line(1,0){6}}
\put(0,6){\line(1,0){12}}
\put(0,12){\line(1,0){12}}
\put(0,18){\line(1,0){12}}
\put(0,0){\line(0,1){18}}
\put(6,0){\line(0,1){18}}
\put(12,6){\line(0,1){12}}
\end{picture}$$
in order to find a lifted homomorphism~$\tilde\kappa$.  Thus, we encounter a
subbundle of the prolongation bundle~$\Wedge^3\oplus\Wedge^4$ but, otherwise,
end up with the connection~(\ref{killing_yano_prolongation}).
\end{itemize}

\section{Decomposable Killing two-tensors}\label{decomposables}
We are now in a position to investigate the space of {\em decomposable\/}
Killing two-tensors on an affine locally symmetric space by means of the
connections developed in the previous section.  More precisely, following
Matveev and Nikolayevsky~\cite{MN}, let us write
$$\textstyle 
K^k(M,\nabla)\equiv\{\sigma_{bc\cdots d}\in\Gamma(M,\bigodot^k\!\Wedge^1)\mid
\nabla_{(a}\sigma_{bc\cdots d)}=0\}$$
for the space of Killing tensors of rank $k$ on the smooth manifold $M$
equipped with a torsion-free affine connection~$\nabla$.  As parallel sections
of prolongation bundles (e.g.,~\cite{BCEG} for general~$d$), these are
finite-dimensional vector spaces.  If $\sigma_b$ and $\widetilde\sigma_b$ are
Killing $1$-forms, it is clear that their symmetric product
$\sigma_{(b}\widetilde\sigma_{c)}$ is a Killing $2$-tensor.  We obtain,
therefore, a canonical homomorphism
\begin{equation}\label{K1->K2}\textstyle 
\bigodot^2\!K^1(M,\nabla)\to K^2(M,\nabla)\end{equation}
and the {\em decomposable\/} Killing $2$-tensors constitute, by definition, its
range.  Now we may bring in the connections developed in~\S\ref{prolongation}.

Firstly, according to Theorem~\ref{killing_prolongation}, we have a canonical 
identification
$$K^1(M,\nabla)
=\{\Sigma\in\Gamma(M,\Wedge^1\oplus\Wedge^2\mid\nabla_a\Sigma=0\},$$
where $\nabla_a$ is the Killing connection (\ref{Killing_connection}).
Locally, or if $M$ is simply-connected, we may suppress $M$ from the notation
and write, more informally, that the Killing $1$-forms $K^1(\nabla)$ may be
identified as the `parallel sections of $\Wedge^1\oplus\Wedge^2$.'  In the
affine locally symmetric case, i.e.~when $\nabla_aR_{bc}{}^d{}_e=0$, we may
identify these parallel sections more explicitly as follows.  If
$\nabla_b\Sigma=0$, then of course $\nabla_{[a}\nabla_{b]}\Sigma=0$, in other
words that $\Sigma$ is in the kernel of the curvature.  But in the affine
locally symmetric case, we have a formula
\begin{equation}\label{killing_curvature} \nabla_{[a}\nabla_{b]}
\left[\!\begin{array}{c}\sigma_c\\ \mu_{cd}\end{array}\!\right]
=\left[\!\begin{array}{c}0\\ 
R_{ab}{}^e{}_{[c}\mu_{d]e}+R_{cd}{}^e{}_{[a}\mu_{b]e}
\end{array}\!\right]\end{equation}
obtained from (\ref{full_Killing_curvature}) and
(\ref{in_the_locally_symmetric_case}), which immediately identifies this kernel
as $\Wedge^1\oplus K$, where
\begin{equation}\label{this_is_K}
K\equiv\{\mu_{cd}\in\Wedge^2\mid
R_{ab}{}^e{}_{[c}\mu_{d]e}+R_{cd}{}^e{}_{[a}\mu_{b]e}=0\}.\end{equation}
\begin{thm}\label{LTS} 
In the affine locally symmetric case $\nabla_aR_{bc}{}^d{}_e=0$, we have
$$R_{ab}{}^c{}_d\in\Gamma(K\otimes\End(\Wedge^1)).$$
\end{thm}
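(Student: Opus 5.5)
The plan is to read the defining condition of $K$ in (\ref{this_is_K}) as a symmetrised integrability condition, and to get it directly from the fact that $R$ itself is parallel. For fixed endomorphism indices $p,q$, we must show that $\mu_{cd}\equiv R_{cd}{}^p{}_q$ satisfies
$$R_{ab}{}^e{}_{[c}R_{d]e}{}^p{}_q+R_{cd}{}^e{}_{[a}R_{b]e}{}^p{}_q=0.$$

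Since $\nabla_aR_{cd}{}^p{}_q=0$, we have $\nabla_{[a}\nabla_{b]}R_{cd}{}^p{}_q=0$. I will expand the left-hand side using the curvature conventions (\ref{curvature_on_one-forms}), extended to tensors in the usual way: a term $-\frac12R_{ab}{}^e{}_{\bullet}$ for each lower index, and the corresponding term of opposite sign for the upper index.

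The two contributions from the $2$-form indices $c,d$ are
$$-\tfrac12R_{ab}{}^e{}_cR_{ed}{}^p{}_q-\tfrac12R_{ab}{}^e{}_dR_{ce}{}^p{}_q .$$
Using skewness of $R_{ed}$ in $ed$, these combine to $R_{ab}{}^e{}_{[c}R_{d]e}{}^p{}_q$, which is exactly the first term of the condition defining $K$. The contributions from the endomorphism indices $p,q$ assemble into a multiple of the commutator of $R_{ab}$ and $R_{cd}$ as endomorphisms of $\Wedge^1$. Write $C_{abcd}{}^p{}_q$ for this commutator term. Because it is a commutator, it is antisymmetric under interchanging the pairs $ab\leftrightarrow cd$. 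So the first identity reads
$$R_{ab}{}^e{}_{[c}R_{d]e}{}^p{}_q+C_{abcd}{}^p{}_q=0.$$

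Next I will write the same identity with the pairs $ab$ and $cd$ interchanged, i.e.\ the expansion of $\nabla_{[c}\nabla_{d]}R_{ab}{}^p{}_q=0$:
$$R_{cd}{}^e{}_{[a}R_{b]e}{}^p{}_q+C_{cdab}{}^p{}_q=0.$$
Adding the two identities, the commutator terms cancel because $C_{cdab}=-C_{abcd}$. What remains is precisely the required condition, so $R_{ab}{}^p{}_q$, viewed as a $2$-form in $ab$ for each fixed $p,q$, lies in $K$. This gives $R_{ab}{}^c{}_d\in\Gamma(K\otimes\End(\Wedge^1))$.

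The only delicate step is bookkeeping of signs and factors. I need to check that the $c,d$ contributions produce exactly $R_{ab}{}^e{}_{[c}R_{d]e}$ with coefficient $+1$, matching (\ref{this_is_K}) (no Bianchi symmetry is needed here, only skewness of $R$ in its first pair). I also need to confirm that the $p,q$ contributions really form a pure commutator, so that they cancel under the swap. No other ingredients are needed, since only $\nabla R=0$ and the action of curvature on tensors are used.
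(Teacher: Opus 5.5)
Your proposal is correct and is essentially the paper's own proof. The paper expands $(\nabla_a\nabla_b-\nabla_b\nabla_a)R_{cd}{}^p{}_q=0$ to get $2R_{ab}{}^e{}_{[c}R_{d]e}{}^p{}_q$ equal to a commutator of $R_{ab}$ and $R_{cd}$, notes that this is skew under $ab\leftrightarrow cd$, and concludes membership in $K$. Your bookkeeping also checks out: the coefficient on $R_{ab}{}^e{}_{[c}R_{d]e}{}^p{}_q$ is $+1$, and the $p,q$ terms give exactly $\tfrac12[R_{ab},R_{cd}]^p{}_q$.
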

\begin{proof} Expanding $(\nabla_a\nabla_b-\nabla_b\nabla_a)R_{cd}{}^p{}_q=0$
gives
\begin{equation}\label{gives}2R_{ab}{}^e{}_{[c}R_{d]e}{}^p{}_q
=R_{ab}{}^e{}_qR_{cd}{}^p{}_e-R_{cd}{}^e{}_qR_{ab}{}^p{}_e\end{equation}
and the RHS is skew under the interchange of indices $ab\leftrightarrow cd$. 
\end{proof}
\begin{cor}\label{parallel_subbundle} In the affine locally symmetric case,
the subbundle $\Wedge^1\oplus
K\subseteq\Wedge^1\oplus\Wedge^2$ is parallel, i.e.~if\/
$\Sigma\in\Gamma(\Wedge^1\oplus K)$, then
$\nabla_a\Sigma\in\Gamma(\Wedge^1\otimes(\Wedge^1\oplus K))$.
\end{cor}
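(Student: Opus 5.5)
Apply the Killing connection \eqref{Killing_connection} to a section $\Sigma=[\sigma_c,\mu_{cd}]$ with $\mu_{cd}\in\Gamma(K)$. The first component of $\nabla_a\Sigma$ is $\nabla_a\sigma_b-\mu_{ab}$, which is an arbitrary section of $\Wedge^1\otimes\Wedge^1$ and so imposes nothing. The second component is
$$\nabla_a\mu_{bc}-R_{bc}{}^d{}_a\sigma_d,$$
and we must show that, for each fixed $a$, it lies in $K$. This splits into two claims:
\begin{itemize}
\item[(i)] for any $1$-form $\sigma_d$ and any fixed $a$, the $2$-form $R_{bc}{}^d{}_a\sigma_d$ lies in $K$;
\item[(ii)] $\nabla_a$ preserves sections of $K$.
\end{itemize}

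For (i), Theorem~\ref{LTS} says that $R_{ab}{}^c{}_d\in\Gamma(K\otimes\End(\Wedge^1))$, i.e.\ the first two indices land in $K$ for every fixed choice of the endomorphism indices. Contracting the upper index with $\sigma_d$ and freezing the lower endomorphism index at $a$ is an operation on the $\End(\Wedge^1)$ factor only. Hence $R_{bc}{}^d{}_a\sigma_d$, as a $2$-form in $bc$, is a section of $K$.

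For (ii), note that $K$ is defined in \eqref{this_is_K} as the kernel of the bundle map
$$\mu_{cd}\longmapsto R_{ab}{}^e{}_{[c}\mu_{d]e}+R_{cd}{}^e{}_{[a}\mu_{b]e},\qquad \Wedge^2\to\Wedge^2\otimes\Wedge^2,$$
built linearly from $R$. Since $\nabla R=0$, this map is parallel, so
$$\nabla_p(\text{map}(\mu))=\text{map}(\nabla_p\mu).$$
If $\mu\in\Gamma(K)$, the left side vanishes, and therefore $\nabla_p\mu\in\Gamma(\Wedge^1\otimes K)$. Because the map is parallel, its kernel is a parallel subbundle of locally constant rank (by parallel transport on connected $M$), so $K$ is a genuine smooth subbundle and this argument is legitimate.

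Combining (i) and (ii), $\nabla_a\mu_{bc}-R_{bc}{}^d{}_a\sigma_d$ lies in $\Wedge^1\otimes K$, which proves the corollary. The main point needing care is (i): matching the index positions of Theorem~\ref{LTS} (which ones form the $K$-pair) against those appearing in the connection term $R_{bc}{}^d{}_a$. Here they agree directly, with the $K$-pair being the first two indices, so no Bianchi rearrangement should be needed.
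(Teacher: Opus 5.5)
Your proof is correct and is essentially the paper's argument: the paper simply says ``by inspection'' of \eqref{Killing_connection}, relying on exactly your two ingredients, namely that $R_{bc}{}^d{}_a\sigma_d$ takes values in $\Wedge^1\otimes K$ by Theorem~\ref{LTS}, and that $K$ is a parallel subbundle because it is cut out by the parallel tensor $R_{ab}{}^c{}_d$. Your index check is also right: the $K$-pair in Theorem~\ref{LTS} is the first two lower indices, and these are the indices $bc$ appearing in the connection term.
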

\begin{proof} By inspection, from the formula~(\ref{Killing_connection}).
\end{proof} 
Thus, in the affine locally symmetric case, we have completely identified the
space $K^1(\nabla)$ of local Killing $1$-forms.  On the one hand, a parallel
section of $\Wedge^1\oplus\Wedge^2$ must be a parallel section of
$\Wedge^1\oplus K$ in accordance with (\ref{killing_curvature})
and~(\ref{this_is_K}).  On the other hand, Corollary~\ref{parallel_subbundle}
tells us that this subbundle is already parallel so there are no more
restrictions.  More precisely, at a chosen basepoint~$p\in M$, the value
$\sigma_b(p)$ of a Killing $1$-form $\sigma_b$ is arbitrary and the value
$(\nabla_a\sigma_b)(p)$ of its covariant derivative is an arbitrary $2$-form
in~$K_p$.  Parallel propagation within $\Wedge^1\oplus K$ then consistently
gives $\sigma_b$ and $\nabla_a\sigma_b$ in a neighbourhood of $p$.  In other
words, we have proved the following.

\begin{thm}\label{classical_locally_symmetric_conclusion} In case that
$\nabla_aR_{bc}{}^d{}_e=0$, the maximal parallel flat subbundle of
$\Wedge^1\oplus\Wedge^2$ is $\Wedge^1\oplus K$, where $K\subseteq\Wedge^2$ is
the subbundle defined by~\eqref{this_is_K}.
\end{thm}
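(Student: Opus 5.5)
The plan is to recall what ``maximal parallel flat subbundle'' means and show that $\Wedge^1\oplus K$ fits the description. Such a subbundle $F\subseteq\Wedge^1\oplus\Wedge^2$ should be invariant under $\nabla_a$ from \eqref{Killing_connection}, carry flat induced connection, and contain every other subbundle with these two properties; equivalently, its fibre at each point consists exactly of the values there of local parallel sections. I will check three things: that $\Wedge^1\oplus K$ is a subbundle, that it is parallel, and that it is flat and maximal.

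\textbf{Subbundle and parallelism.} First, $K$ has constant rank. Since $\nabla_aR_{bc}{}^d{}_e=0$, the defining equation \eqref{this_is_K} has parallel coefficients. Hence parallel transport for the Levi-Civita-type connection on $\Wedge^2$ preserves $K$, so $K$ is a genuine smooth subbundle on the connected manifold~$M$. Parallelism of $\Wedge^1\oplus K$ under \eqref{Killing_connection} is exactly Corollary~\ref{parallel_subbundle}. Its key input is Theorem~\ref{LTS}, which ensures that the term $R_{bc}{}^d{}_a\sigma_d$ takes values in $K$ in the indices $bc$.

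\textbf{Flatness.} By \eqref{killing_curvature}, the curvature of \eqref{Killing_connection} acting on $[\sigma_c,\mu_{cd}]$ equals $R_{ab}{}^e{}_{[c}\mu_{d]e}+R_{cd}{}^e{}_{[a}\mu_{b]e}$ in the second slot. This vanishes precisely when $\mu\in K$. So the restricted connection on $\Wedge^1\oplus K$ is flat. Flatness plus parallelism then gives, locally, a parallel section through every point of every fibre.

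\textbf{Maximality.} Suppose $\Sigma$ is a local parallel section of $\Wedge^1\oplus\Wedge^2$. Then $\nabla_{[a}\nabla_{b]}\Sigma=0$, and \eqref{killing_curvature} forces its $\Wedge^2$-component to lie in $K$, so $\Sigma$ takes values in $\Wedge^1\oplus K$. Now let $F$ be any parallel flat subbundle. Every point of $F$ is the value of a local parallel section of $F$, and such a section is also parallel in $\Wedge^1\oplus\Wedge^2$. It therefore lies in $\Wedge^1\oplus K$, giving $F\subseteq\Wedge^1\oplus K$.

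The only step needing care is checking that $K$ really is of constant rank, which is what makes $\Wedge^1\oplus K$ a subbundle; the argument via parallel coefficients above handles it. Everything else is assembled directly from Theorem~\ref{LTS}, Corollary~\ref{parallel_subbundle} and \eqref{killing_curvature}.
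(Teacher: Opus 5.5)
Your proof is correct and follows essentially the same route as the paper. Parallel sections are forced by \eqref{killing_curvature} into the kernel $\Wedge^1\oplus K$ of the curvature, Corollary~\ref{parallel_subbundle} (via Theorem~\ref{LTS}) shows this kernel is already parallel and hence flat, and so arbitrary initial data in $(\Wedge^1\oplus K)_p$ propagate to parallel sections. Your explicit argument that $K$ has constant rank (the kernel of a parallel bundle map is preserved by parallel transport) is a small point the paper leaves implicit.
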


The corresponding result for $\bigodot^2(\Wedge^1\oplus\Wedge^2)$ is as
follows.  
\begin{thm}\label{max_parallel_flat_on_symmetric_product} In case\/
$\nabla_aR_{bc}{}^d{}_e=0$, the maximal parallel flat subbundle of\/
$\bigodot^2(\Wedge^1\oplus\Wedge^2)$ equipped with either the induced
connection $\nabla_a$ from \eqref{symmetric_power}, or its modification
$\widetilde\nabla_a$ defined by~\eqref{modification}, is
$\bigodot^2(\Wedge^1\oplus K)$.
\end{thm}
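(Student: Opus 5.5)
Since $\bigodot^2$ is a functor compatible with connections, my plan is to derive this from Theorem~\ref{classical_locally_symmetric_conclusion} plus a curvature computation. I will treat the induced connection $\nabla_a$ of \eqref{symmetric_power} first and then transfer the conclusion to $\widetilde\nabla_a$.

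\emph{Step 1: $\bigodot^2(\Wedge^1\oplus K)$ is parallel and flat for $\nabla_a$.} By Corollary~\ref{parallel_subbundle}, $\Wedge^1\oplus K$ is preserved by the Killing connection \eqref{Killing_connection}. By Theorem~\ref{classical_locally_symmetric_conclusion} it is flat: its curvature \eqref{killing_curvature} vanishes exactly on it, since $\nabla_aR=0$. The induced connection on $\bigodot^2$ is the tensor product connection. Hence it preserves $\bigodot^2(\Wedge^1\oplus K)$, and its curvature acts as a derivation, so it is flat there. Equivalently, Lemma~\ref{symmetric_curvature} with $\nabla R=0$ shows that the curvature \eqref{induced_curvature} vanishes on this subbundle.

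\emph{Step 2: maximality for $\nabla_a$.} Let $F$ be any parallel flat subbundle. Its fibre is spanned by values of local parallel sections, so it lies in the kernel of the curvature and of all its iterated covariant derivatives. Because $\nabla R=0$, it suffices to show that the common kernel of the curvature endomorphisms $\Omega_{ab}$ on $\bigodot^2(\Wedge^1\oplus\Wedge^2)$, together with stability under the connection, forces $F\subseteq\bigodot^2(\Wedge^1\oplus K)$. A cleaner route avoids computing kernels directly. The connection \eqref{Killing_connection} is built so that $\Wedge^1\oplus K$ is the maximal parallel flat subbundle, and its fibre at $p$ equals the span of the values of local parallel sections, whose dimension is that of $K^1$. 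Any local parallel section of $\bigodot^2(\Wedge^1\oplus\Wedge^2)$ can then be identified using holonomy. The local holonomy algebra $\mathfrak h$ of the Killing connection is spanned by the curvature endomorphisms, since $\nabla R=0$. The flat part of a tensor product is the $\mathfrak h$-invariants of $\bigodot^2 E_p$, where $E_p=\Wedge^1_p\oplus\Wedge^2_p$. I will then have to show that every $\mathfrak h$-invariant in $\bigodot^2E_p$ lies in $\bigodot^2(E_p^{\mathfrak h})$, where $E_p^{\mathfrak h}=\Wedge^1\oplus K$. This is the main obstacle. It is false for general representations, so here I must exploit the explicit form \eqref{induced_curvature}. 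I will decompose $E_p=E_p^{\mathfrak h}\oplus C$, with $C$ a complementary subspace on which, by \eqref{killing_curvature}, $\mathfrak h$ acts by maps landing in $\Wedge^2$ and vanishing on $\Wedge^1$, i.e.\ nilpotently with values in $\Wedge^2$. I then compute the components of $\Omega_{ab}$ applied to a section of $\bigodot^2$: the second line of \eqref{induced_curvature} forces the $\Wedge^1\otimes\Wedge^2$ component to take values in $\Wedge^1\otimes K$. The third line, evaluated on elements with $\mu$ already in $\Wedge^1\otimes K$, forces $\rho$ into $K\odot K$; a polarisation argument makes this step work. I expect this linear-algebra step to need care.

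\emph{Step 3: transfer to $\widetilde\nabla_a$.} On $\bigodot^2(\Wedge^1\oplus K)$ the tensor $X_{abcde}$ of \eqref{this_is_pentagon} is built from the curvature expression defining $K$, with $\nabla R=0$. It therefore vanishes, so $R\pentagon(\mu,\sigma)=0$ and $\widetilde\nabla_a=\nabla_a$ there. Hence the subbundle is parallel and flat for $\widetilde\nabla_a$ as well. Conversely, Theorem~\ref{the_same_parallel_sections} says the two connections have the same local parallel sections. Since a flat parallel subbundle is spanned pointwise by local parallel sections, the maximal flat subbundles coincide, and the result for $\widetilde\nabla_a$ follows from Step~2.
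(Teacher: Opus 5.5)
Your Steps 1 and 3 are fine; Step 3 is the paper's own reduction via Theorem~\ref{the_same_parallel_sections}. The gap is in Step 2.

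\textbf{The holonomy claim is false, and unnecessary.} The hypothesis $\nabla_aR_{bc}{}^d{}_e=0$ concerns the affine connection, not the Killing connection \eqref{Killing_connection}. Write $\Omega_{pq}(\sigma,\mu)=(0,\mathcal{R}(\mu)_{pq\bullet\bullet})$ with $\mathcal{R}(\mu)_{abcd}=R_{ab}{}^e{}_{[c}\mu_{d]e}+R_{cd}{}^e{}_{[a}\mu_{b]e}$. Differentiating with \eqref{Killing_connection}, $\nabla_a\Omega_{pq}$ acquires the $\Wedge^1$-component $-\mathcal{R}(\mu)_{pqab}$. This is nonzero unless $K=\Wedge^2$, so the holonomy algebra is strictly larger than the span of the $\Omega_{pq}$. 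Nor do the $\Omega_{pq}$ act nilpotently on $\Wedge^2$ in general. You do not need holonomy at all: a parallel flat subbundle lies in the curvature kernel. Given Step~1, it therefore suffices to show that the kernel of \eqref{induced_curvature} is exactly $\bigodot^2(\Wedge^1\oplus K)$.

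\textbf{The third-line step is only asserted.} The second line gives $\mu\in\Wedge^1\otimes K$, as you say. Everything then rests on your claim that the third line forces $\rho\in K\bigodot K$, and you do not prove it. For a general family of operators vanishing on a common kernel this fails: an invariant quadratic form on a complement is the typical obstruction. So a naive polarisation cannot work, and you need structure specific to this curvature.

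\textbf{How to close it.} Put $P_{abcdef}\equiv R_{ab}{}^p{}_{[c}\rho_{d]pef}+R_{cd}{}^p{}_{[a}\rho_{b]pef}$, which is $\mathcal{R}$ applied to the first pair of $\rho$. With $\nabla R=0$ the third line is $P_{abcdef}+P_{abefcd}$, and $P_{abcdef}=P_{cdabef}$ by inspection. So $P$ is symmetric under the first pair-swap and skew under the second, whence
\[ P_{abcdef}=-P_{abefcd}=-P_{efabcd}=P_{efcdab}=P_{cdefab}=-P_{cdabef}=-P_{abcdef}. \]
Thus $P=0$, i.e.\ $\rho_{\bullet\bullet ef}\in K$ for every $ef$, and pair symmetry gives $\rho\in K\bigodot K$. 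Your remark about evaluating on $\mu\in\Wedge^1\otimes K$ is irrelevant: with $\nabla R=0$ the third line does not involve $\mu$.

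\textbf{Comparison with the paper.} With this repair your route is correct and proves slightly more: the curvature kernel of \eqref{symmetric_power} is already $\bigodot^2(\Wedge^1\oplus K)$, so no first-order information is needed. The paper never analyses the kernel of the third line. It reads off $\mu\in\Wedge^1\otimes K$ from the second line and then uses parallelism. The middle component $\nabla_a\mu_{bcd}-R_{cd}{}^e{}_a\sigma_{be}-\rho_{abcd}$ of \eqref{symmetric_power} must lie in $\Wedge^1\otimes\Wedge^1\otimes K$. Its first two terms already do, because $K$ is parallel and by Theorem~\ref{LTS}. This forces $\rho\in\Wedge^2\otimes K$, hence $\rho\in K\bigodot K$. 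The third line is then used only to confirm flatness. The paper thus replaces your linear-algebra lemma with one use of the connection formula and Theorem~\ref{LTS}.
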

\begin{proof} It follows from Theorem~\ref{the_same_parallel_sections} that
these two connections (\ref{symmetric_power}) and (\ref{modification}) have the
same maximal parallel flat subbundle.  Hence it suffices to consider the
induced connection~(\ref{symmetric_power}).  From the second line of its
curvature~(\ref{induced_curvature}), we read off that
$\mu_{cde}\in\Gamma(\Wedge^1\otimes K$) and then, from (\ref{symmetric_power})
and Theorem~\ref{LTS}, conclude that $\rho_{bcde}\in\Gamma(\Wedge^2\otimes K)$.
But $\rho_{bcde}$ is symmetric under the interchange $bc\leftrightarrow de$ so
we conclude that $\rho_{bcde}\in\Gamma(K\bigodot K)$.  Finally, such tensors 
$\rho_{bcde}$ are annihilated by the last line of (\ref{induced_curvature}) so 
there are no more constraints.
\end{proof}

Returning now to the homomorphism (\ref{K1->K2}), we see from
Theorems~\ref{classical_locally_symmetric_conclusion}
and~\ref{max_parallel_flat_on_symmetric_product} that, in the simply connected
and affine locally symmetric case, we may realise the vector space
$\bigodot^2\!K^1(M,\nabla)$ as the parallel sections of the subbundle
$$\begin{array}{c}
\begin{picture}(12,6)(0,0)
\put(0,0){\line(1,0){12}}
\put(0,6){\line(1,0){12}}
\put(0,0){\line(0,1){6}}
\put(6,0){\line(0,1){6}}
\put(12,0){\line(0,1){6}}
\end{picture}\\[-2pt]
\oplus\\
\begin{picture}(6,6)(0,0)
\put(0,0){\line(1,0){6}}
\put(0,6){\line(1,0){6}}
\put(0,0){\line(0,1){6}}
\put(6,0){\line(0,1){6}}
\end{picture}\otimes\! K\\[-2pt]
\oplus\\
K\bigodot K
\end{array}\subseteq
\begin{array}{c}
\begin{picture}(12,6)(0,0)
\put(0,0){\line(1,0){12}}
\put(0,6){\line(1,0){12}}
\put(0,0){\line(0,1){6}}
\put(6,0){\line(0,1){6}}
\put(12,0){\line(0,1){6}}
\end{picture}\\[-4pt]
\oplus\\[-2pt]
\begin{picture}(6,6)(0,0)
\put(0,0){\line(1,0){6}}
\put(0,6){\line(1,0){6}}
\put(0,0){\line(0,1){6}}
\put(6,0){\line(0,1){6}}
\end{picture}\otimes
\begin{picture}(6,12)(0,2)
\put(0,0){\line(1,0){6}}
\put(0,6){\line(1,0){6}}
\put(0,12){\line(1,0){6}}
\put(0,0){\line(0,1){12}}
\put(6,0){\line(0,1){12}}
\end{picture}\\[-2pt]
\oplus\\[-2pt]
\begin{picture}(6,12)(0,2)
\put(0,0){\line(1,0){6}}
\put(0,6){\line(1,0){6}}
\put(0,12){\line(1,0){6}}
\put(0,0){\line(0,1){12}}
\put(6,0){\line(0,1){12}}
\end{picture}\bigodot
\begin{picture}(6,12)(0,2)
\put(0,0){\line(1,0){6}}
\put(0,6){\line(1,0){6}}
\put(0,12){\line(1,0){6}}
\put(0,0){\line(0,1){12}}
\put(6,0){\line(0,1){12}}
\end{picture}
\end{array}$$
with respect to the connection~$\widetilde\nabla_a$, which is flat when
restricted to this subbundle.  Thus, in order to understand the 
homomorphism~(\ref{K1->K2}) we may employ Corollary~\ref{SES_of_connections} 
to identify the space of decomposable Killing $2$-tensors as the parallel 
sections of the bundle
$$\mbox{\large$\Phi$}\!\!\left(\!\!\!\begin{array}{c}
\begin{picture}(12,6)(0,0)
\put(0,0){\line(1,0){12}}
\put(0,6){\line(1,0){12}}
\put(0,0){\line(0,1){6}}
\put(6,0){\line(0,1){6}}
\put(12,0){\line(0,1){6}}
\end{picture}\\[-2pt]
\oplus\\
\begin{picture}(6,6)(0,0)
\put(0,0){\line(1,0){6}}
\put(0,6){\line(1,0){6}}
\put(0,0){\line(0,1){6}}
\put(6,0){\line(0,1){6}}
\end{picture}\otimes\! K\\[-2pt]
\oplus\\
K\bigodot K
\end{array}\!\!\!\!\right)\equiv\Delta\subseteq
\begin{array}{c}
\begin{picture}(12,6)(0,0)
\put(0,0){\line(1,0){12}}
\put(0,6){\line(1,0){12}}
\put(0,0){\line(0,1){6}}
\put(6,0){\line(0,1){6}}
\put(12,0){\line(0,1){6}}
\end{picture}\\[-4pt]
\oplus\\[-2pt]
\begin{picture}(12,12)(0,2)
\put(0,0){\line(1,0){6}}
\put(0,6){\line(1,0){12}}
\put(0,12){\line(1,0){12}}
\put(0,0){\line(0,1){12}}
\put(6,0){\line(0,1){12}}
\put(12,6){\line(0,1){6}}
\end{picture}\\[-2pt]
\oplus\\[-2pt]
\begin{picture}(12,12)(0,2)
\put(0,0){\line(1,0){12}}
\put(0,6){\line(1,0){12}}
\put(0,12){\line(1,0){12}}
\put(0,0){\line(0,1){12}}
\put(6,0){\line(0,1){12}}
\put(12,0){\line(0,1){12}}
\end{picture}
\end{array},$$
where $\Phi$ is the homomorphism (\ref{this_is_Phi}) from
Theorem~\ref{Phi_is_parallel}.  
\begin{thm}\label{small_subbundle} In case $\nabla_aR_{bc}{}^d{}_e=0$, the
subbundle
$\Delta\subseteq\begin{picture}(12,6)(0,0)
\put(0,0){\line(1,0){12}}
\put(0,6){\line(1,0){12}}
\put(0,0){\line(0,1){6}}
\put(6,0){\line(0,1){6}}
\put(12,0){\line(0,1){6}}
\end{picture}\oplus
\begin{picture}(12,12)(0,2)
\put(0,0){\line(1,0){6}}
\put(0,6){\line(1,0){12}}
\put(0,12){\line(1,0){12}}
\put(0,0){\line(0,1){12}}
\put(6,0){\line(0,1){12}}
\put(12,6){\line(0,1){6}}
\end{picture}\oplus\begin{picture}(12,12)(0,2)
\put(0,0){\line(1,0){12}}
\put(0,6){\line(1,0){12}}
\put(0,12){\line(1,0){12}}
\put(0,0){\line(0,1){12}}
\put(6,0){\line(0,1){12}}
\put(12,0){\line(0,1){12}}
\end{picture}$ of the prolongation bundle is parallel and flat with respect to 
the prolongation connection~\eqref{our_tricky_prolongation_connection}.  All 
Killing $2$-tensors are decomposable if and only if $\Delta$ is the maximal 
subbundle with these properties. 
\end{thm}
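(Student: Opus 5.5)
The argument transfers everything through the homomorphism $\Phi$ of Theorem~\ref{Phi_is_parallel}, using $\nabla_a\circ\Phi=\Phi\circ\widetilde\nabla_a$. Write $B\equiv\bigodot^2(\Wedge^1\oplus K)$. By Theorem~\ref{max_parallel_flat_on_symmetric_product}, $B$ is parallel and flat for $\widetilde\nabla_a$, and $\Delta=\Phi(B)$.

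\emph{$\Delta$ is parallel.} Any section of $\Delta$ can be written locally as $\Phi(\Sigma)$ with $\Sigma\in\Gamma(B)$. Choose a smooth splitting of $\Phi|_B$ onto its image; this assumes $\Delta$ has constant rank. Constant rank holds because, in the locally symmetric case, $B$ is spanned by a parallel frame and $\Phi$ is a bundle map intertwining parallel transport. Then
\[
\nabla_a\Phi(\Sigma)=\Phi(\widetilde\nabla_a\Sigma)\in\Gamma\big(\Wedge^1\otimes\Phi(B)\big),
\]
so $\Delta$ is parallel.

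\emph{$\Delta$ is flat.} The same identity gives $\nabla_{[a}\nabla_{b]}\Phi(\Sigma)=\Phi\big(\widetilde\nabla_{[a}\widetilde\nabla_{b]}\Sigma\big)=0$, since $\widetilde\nabla$ is flat on $B$. A cleaner way to say both points: $B$ has a local frame of $\widetilde\nabla$-parallel sections, and $\Phi$ carries these to $\nabla$-parallel sections which span $\Delta$ pointwise. A subbundle spanned by parallel sections is automatically parallel and flat, and its rank is constant because parallel sections are determined by their value at one point.

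\emph{Characterisation of decomposability.} Locally, with $M$ simply connected, Theorem~\ref{prolonged_2-tensors} identifies $K^2(\nabla)$ with the parallel sections of the prolongation bundle. The parallel sections of any parallel flat subbundle $F$ then form a vector space of dimension $\rank F$. There is a unique maximal parallel flat subbundle, namely the subbundle spanned pointwise by all parallel sections, and its rank equals $\dim K^2(\nabla)$. The decomposable Killing $2$-tensors are the image of $\bigodot^2K^1(\nabla)$. By the Remarks following Corollary~\ref{SES_of_connections}, together with Theorems~\ref{classical_locally_symmetric_conclusion} and~\ref{max_parallel_flat_on_symmetric_product}, this image is exactly $\Phi$ applied to the parallel sections of $B$, i.e.\ the parallel sections of $\Delta$, of dimension $\rank\Delta$.

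Hence all Killing $2$-tensors are decomposable if and only if every parallel section lies in $\Delta$, if and only if $\rank\Delta=\dim K^2(\nabla)$, if and only if $\Delta$ equals the maximal parallel flat subbundle. One inclusion is automatic: $\Delta$ is contained in the maximal one because $\Delta$ is spanned by parallel sections.

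The main obstacle I expect is the step claiming that the image of $\bigodot^2K^1$ is exactly the space of parallel sections of $\Delta$. This needs the parallel sections of $B$ for $\widetilde\nabla_a$ to coincide with $\bigodot^2$ of the parallel sections of $\Wedge^1\oplus K$. I would argue this via Theorem~\ref{the_same_parallel_sections}: it reduces the question to the induced connection~\eqref{symmetric_power}, whose flat frame on $B$ is the symmetric square of the flat frame on $\Wedge^1\oplus K$. I would also be careful to state that the whole argument is local, or assumes $M$ simply connected, as the surrounding text does.
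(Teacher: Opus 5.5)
Your proposal is correct and follows essentially the same route as the paper: $\Delta=\Phi\big(\bigodot^2(\Wedge^1\oplus K)\big)$ is spanned by the $\Phi$-images of $\widetilde\nabla$-parallel sections, which are $\nabla$-parallel by Theorem~\ref{Phi_is_parallel}, so $\Delta$ is parallel and flat. For the characterisation, the paper adjoins the prolonged section of a hidden symmetry to obtain a strictly larger parallel flat subbundle; your rank comparison with the unique maximal subbundle spanned by all parallel sections is the same idea, and it makes both implications explicit.
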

\begin{proof} According to
Theorem~\ref{max_parallel_flat_on_symmetric_product}, the subbundle
$\bigodot^2(\Wedge^1\oplus K)$ is generated by its parallel sections and
according to Theorem~\ref{Phi_is_parallel}, these sections have parallel images
under~$\Phi$.  It follows that $\Delta$ is parallel and flat.  But now, if
(\ref{K1->K2}) is not surjective and $\sigma_{bc}$ is a {\em hidden symmetry},
i.e.~a Killing $2$-tensor not in its range, then we may adjoin the
prolonged section 
$\big(\sigma_{bc},\mu_{bcd}\equiv\frac23\nabla_{[c}\sigma_{d]b},
\rho_{bcde}\equiv\nabla_b\mu_{cde}-(R\triangleleft\sigma)_{bcde}\big)$ 
to obtain a subbundle, strictly larger than~$\Delta$, that is still parallel
and flat.
\end{proof}
\noindent{\bf Remark}\quad In summary, here is the parallel flat subbundle of
the prolongation bundle from Theorem~\ref{prolonged_2-tensors} whose parallel
sections constitute the decomposable Killing $2$-tensors in the affine locally
symmetric case ($\nabla_aR_{bc}{}^d{}_e=0$).
\begin{equation}\label{decomposable_killing_2-tensors}
\begin{array}{ccc}
\begin{picture}(12,6)(0,0)
\put(0,0){\line(1,0){12}}
\put(0,6){\line(1,0){12}}
\put(0,0){\line(0,1){6}}
\put(6,0){\line(0,1){6}}
\put(12,0){\line(0,1){6}}
\end{picture}&\equiv&\big\{\sigma_{bc}=\sigma_{(bc)}\big\}\\ \oplus&&\oplus\\
\Phi(\,\begin{picture}(6,6)(0,0)
\put(0,0){\line(1,0){6}}
\put(0,6){\line(1,0){6}}
\put(0,0){\line(0,1){6}}
\put(6,0){\line(0,1){6}}
\end{picture}\otimes\! K)&\equiv&
\big\{\mu_{bcd}=\widehat\mu_{bcd}-\widehat\mu_{[bcd]},
\mbox{ for }
\widehat\mu_{bcd}\in
\Gamma(\,\begin{picture}(6,6)(0,0)
\put(0,0){\line(1,0){6}}
\put(0,6){\line(1,0){6}}
\put(0,0){\line(0,1){6}}
\put(6,0){\line(0,1){6}}
\end{picture}\otimes K)\big\}\\ \oplus&&\oplus\\
\Phi(K\bigodot K)
&\equiv&\big\{\rho_{bcde}=\widehat\rho_{bcde}-\widehat\rho_{[bcde]},
\mbox{ for }\widehat\rho_{bcde}\in\Gamma(K\bigodot K)\big\},
\end{array}\end{equation}
where $K\subseteq\Wedge^2$ is defined by (\ref{this_is_K}).  We may check
more directly that this subbundle is parallel and flat (rather than employing
Theorems~\ref{the_same_parallel_sections} and~\ref{Phi_is_parallel}).  In any
case, it is useful to have an abstract characterisation of the curvature of the
connection~(\ref{our_tricky_prolongation_connection}) as follows.  Recall that,
as a consequence of Theorem~\ref{prolong}, the curvature of
(\ref{our_tricky_prolongation_connection}) has the form~(\ref{schematically}),
which, in the affine locally symmetric case, is carried by
\begin{equation}\label{abstract_bowtie}
\begin{picture}(12,12)(0,2)
\put(0,0){\line(1,0){12}} 
\put(0,6){\line(1,0){12}}
\put(0,12){\line(1,0){12}}
\put(0,0){\line(0,1){12}}
\put(6,0){\line(0,1){12}}
\put(12,0){\line(0,1){12}}
\end{picture}\ni\rho\longmapsto R\bowtie\rho\in
\begin{picture}(18,12)(0,2)
\put(0,0){\line(1,0){18}}
\put(0,6){\line(1,0){18}}
\put(0,12){\line(1,0){18}}
\put(0,0){\line(0,1){12}}
\put(6,0){\line(0,1){12}}
\put(12,0){\line(0,1){12}}
\put(18,0){\line(0,1){12}}
\end{picture}\end{equation}
in accordance with~(\ref{bay_window}).  The symmetries of $\rho_{defg}$ ensure 
that any contraction with the contravariant index of $R_{ab}{}^g{}_c$ may as 
well be with its last index and now notice that 
$$R_{ab}{}^g{}_c\rho_{defg}\in
\begin{picture}(12,12)(0,2)
\put(0,0){\line(1,0){6}}
\put(0,6){\line(1,0){12}}
\put(0,12){\line(1,0){12}}
\put(0,0){\line(0,1){12}}
\put(6,0){\line(0,1){12}}
\put(12,6){\line(0,1){6}}
\end{picture}\otimes\begin{picture}(12,12)(0,2)
\put(0,0){\line(1,0){6}}
\put(0,6){\line(1,0){12}}
\put(0,12){\line(1,0){12}}
\put(0,0){\line(0,1){12}}
\put(6,0){\line(0,1){12}}
\put(12,6){\line(0,1){6}}
\end{picture}=\begin{picture}(18,12)(0,2)
\put(0,0){\line(1,0){18}}
\put(0,6){\line(1,0){18}}
\put(0,12){\line(1,0){18}}
\put(0,0){\line(0,1){12}}
\put(6,0){\line(0,1){12}}
\put(12,0){\line(0,1){12}}
\put(18,0){\line(0,1){12}}
\end{picture}\oplus\cdots,\enskip\mbox{where}\enskip
\begin{picture}(18,12)(0,2)
\put(0,0){\line(1,0){18}}
\put(0,6){\line(1,0){18}}
\put(0,12){\line(1,0){18}}
\put(0,0){\line(0,1){12}}
\put(6,0){\line(0,1){12}}
\put(12,0){\line(0,1){12}}
\put(18,0){\line(0,1){12}}
\end{picture}\enskip\mbox{occurs with multiplicity one}.$$
Up to scale, therefore, $R\bowtie\rho$ is the projection of 
$R_{ab}{}^g{}_c\rho_{defg}$ onto this 
$\begin{picture}(18,12)(0,2)
\put(0,0){\line(1,0){18}}
\put(0,6){\line(1,0){18}}
\put(0,12){\line(1,0){18}}
\put(0,0){\line(0,1){12}}
\put(6,0){\line(0,1){12}}
\put(12,0){\line(0,1){12}}
\put(18,0){\line(0,1){12}}
\end{picture}$ component. Similarly, the bundle $K$ has an 
abstract characterisation as the kernel of 
$$\begin{picture}(6,12)(0,2)
\put(0,0){\line(1,0){6}}
\put(0,6){\line(1,0){6}}
\put(0,12){\line(1,0){6}}
\put(0,0){\line(0,1){12}}
\put(6,0){\line(0,1){12}}
\end{picture}\ni\mu_{fg}\longmapsto
R_{ab}{}^g{}_c\mu_{fg}\in\begin{picture}(12,12)(0,2)
\put(0,0){\line(1,0){6}}
\put(0,6){\line(1,0){12}}
\put(0,12){\line(1,0){12}}
\put(0,0){\line(0,1){12}}
\put(6,0){\line(0,1){12}}
\put(12,6){\line(0,1){6}}
\end{picture}\otimes\begin{picture}(6,6)(0,0)
\put(0,0){\line(1,0){6}}
\put(0,6){\line(1,0){6}}
\put(0,0){\line(0,1){6}}
\put(6,0){\line(0,1){6}}
\end{picture}=\begin{picture}(12,12)(0,2)
\put(0,0){\line(1,0){12}}
\put(0,6){\line(1,0){12}}
\put(0,12){\line(1,0){12}}
\put(0,0){\line(0,1){12}}
\put(6,0){\line(0,1){12}}
\put(12,0){\line(0,1){12}}
\end{picture}\oplus\cdots
\xrightarrow{\,\mbox{projection}\,}
\begin{picture}(12,12)(0,2)
\put(0,0){\line(1,0){12}}
\put(0,6){\line(1,0){12}}
\put(0,12){\line(1,0){12}}
\put(0,0){\line(0,1){12}}
\put(6,0){\line(0,1){12}}
\put(12,0){\line(0,1){12}}
\end{picture}.$$
Now, since
$$\textstyle\begin{picture}(6,12)(0,2)
\put(0,0){\line(1,0){6}}
\put(0,6){\line(1,0){6}}
\put(0,12){\line(1,0){6}}
\put(0,0){\line(0,1){12}}
\put(6,0){\line(0,1){12}}
\end{picture}\bigodot
\begin{picture}(6,12)(0,2)
\put(0,0){\line(1,0){6}}
\put(0,6){\line(1,0){6}}
\put(0,12){\line(1,0){6}}
\put(0,0){\line(0,1){12}}
\put(6,0){\line(0,1){12}}
\end{picture}=\begin{picture}(12,12)(0,2)
\put(0,0){\line(1,0){12}}
\put(0,6){\line(1,0){12}}
\put(0,12){\line(1,0){12}}
\put(0,0){\line(0,1){12}}
\put(6,0){\line(0,1){12}}
\put(12,0){\line(0,1){12}}
\end{picture}\oplus\begin{picture}(6,18)(0,8)
\put(0,0){\line(1,0){6}}
\put(0,6){\line(1,0){6}}
\put(0,12){\line(1,0){6}}
\put(0,18){\line(1,0){6}}
\put(0,24){\line(1,0){6}}
\put(0,0){\line(0,1){24}}
\put(6,0){\line(0,1){24}}
\end{picture}\enskip\mbox{and}\enskip\begin{picture}(12,12)(0,2)
\put(0,0){\line(1,0){12}}
\put(0,6){\line(1,0){12}}
\put(0,12){\line(1,0){12}}
\put(0,0){\line(0,1){12}}
\put(6,0){\line(0,1){12}}
\put(12,0){\line(0,1){12}}
\end{picture}
\enskip\mbox{does not occur in the decomposition of}
\enskip
\begin{picture}(12,12)(0,2)
\put(0,0){\line(1,0){6}}
\put(0,6){\line(1,0){12}}
\put(0,12){\line(1,0){12}}
\put(0,0){\line(0,1){12}}
\put(6,0){\line(0,1){12}}
\put(12,6){\line(0,1){6}}
\end{picture}\otimes\begin{picture}(6,18)(0,5)
\put(0,0){\line(1,0){6}}
\put(0,6){\line(1,0){6}}
\put(0,12){\line(1,0){6}}
\put(0,18){\line(1,0){6}}
\put(0,0){\line(0,1){18}}
\put(6,0){\line(0,1){18}}
\end{picture}$$
we conclude that (\ref{abstract_bowtie}) lifts to
$\begin{picture}(6,12)(0,2)
\put(0,0){\line(1,0){6}}
\put(0,6){\line(1,0){6}}
\put(0,12){\line(1,0){6}}
\put(0,0){\line(0,1){12}}
\put(6,0){\line(0,1){12}}
\end{picture}\bigodot
\begin{picture}(6,12)(0,2)
\put(0,0){\line(1,0){6}}
\put(0,6){\line(1,0){6}}
\put(0,12){\line(1,0){6}}
\put(0,0){\line(0,1){12}}
\put(6,0){\line(0,1){12}}
\end{picture}$ and factors through
$$\textstyle\begin{picture}(6,12)(0,2)
\put(0,0){\line(1,0){6}}
\put(0,6){\line(1,0){6}}
\put(0,12){\line(1,0){6}}
\put(0,0){\line(0,1){12}}
\put(6,0){\line(0,1){12}}
\end{picture}\bigodot
\begin{picture}(6,12)(0,2)
\put(0,0){\line(1,0){6}}
\put(0,6){\line(1,0){6}}
\put(0,12){\line(1,0){6}}
\put(0,0){\line(0,1){12}}
\put(6,0){\line(0,1){12}}
\end{picture}\ni\rho_{abfg}\longmapsto R_{cd}{}^g{}_e\rho_{abfg}\in
\begin{picture}(6,12)(0,2)
\put(0,0){\line(1,0){6}}
\put(0,6){\line(1,0){6}}
\put(0,12){\line(1,0){6}}
\put(0,0){\line(0,1){12}}
\put(6,0){\line(0,1){12}}
\end{picture}\otimes\begin{picture}(12,12)(0,2)
\put(0,0){\line(1,0){12}}
\put(0,6){\line(1,0){12}}
\put(0,12){\line(1,0){12}}
\put(0,0){\line(0,1){12}}
\put(6,0){\line(0,1){12}}
\put(12,0){\line(0,1){12}}
\end{picture}=\begin{picture}(18,12)(0,2)
\put(0,0){\line(1,0){18}}
\put(0,6){\line(1,0){18}}
\put(0,12){\line(1,0){18}}
\put(0,0){\line(0,1){12}}
\put(6,0){\line(0,1){12}}
\put(12,0){\line(0,1){12}}
\put(18,0){\line(0,1){12}}
\end{picture}\oplus\cdots\xrightarrow{\,\mbox{projection}\,}
\begin{picture}(18,12)(0,2)
\put(0,0){\line(1,0){18}}
\put(0,6){\line(1,0){18}}
\put(0,12){\line(1,0){18}}
\put(0,0){\line(0,1){12}}
\put(6,0){\line(0,1){12}}
\put(12,0){\line(0,1){12}}
\put(18,0){\line(0,1){12}}
\end{picture}.$$
It follows that $\rho\mapsto R\bowtie\rho$ vanishes on 
$\rho_{bcde}=\widehat\rho_{bcde}-\widehat\rho_{[bcde]}$ for 
$\widehat\rho_{bcde}\in \Gamma(K\bigodot K)$. Hence the subbundle 
(\ref{decomposable_killing_2-tensors}) is flat provided that it is parallel.  
Since $\nabla_aR_{bc}{}^d{}_e=0$, any natural tensor bundle such as $K$, or 
$\Phi(\,\begin{picture}(6,6)(0,0)
\put(0,0){\line(1,0){6}}
\put(0,6){\line(1,0){6}}
\put(0,0){\line(0,1){6}}
\put(6,0){\line(0,1){6}}
\end{picture}\otimes\! K)$, or $\Phi(K\bigodot K)$
defined in terms of $R_{bc}{}^d{}_e$ is automatically parallel.  Looking at the
formula (\ref{our_tricky_prolongation_connection}) therefore, to verify that 
(\ref{decomposable_killing_2-tensors}) is parallel, we must show that
\begin{itemize}
\item $(R\triangleleft\sigma)_{abcd}\in
\Wedge^1\otimes\Phi(\,\begin{picture}(6,6)(0,0)
\put(0,0){\line(1,0){6}}
\put(0,6){\line(1,0){6}}
\put(0,0){\line(0,1){6}}
\put(6,0){\line(0,1){6}}
\end{picture}\otimes\! K)$, 
for all $\sigma_{bc}\in\bigodot^2\!\Wedge^1$, 
\item $\rho_{abcd}\in\Wedge^1\otimes\Phi(\,\begin{picture}(6,6)(0,0)
\put(0,0){\line(1,0){6}}
\put(0,6){\line(1,0){6}}
\put(0,0){\line(0,1){6}}
\put(6,0){\line(0,1){6}}
\end{picture}\otimes\! K)$, for all $\rho_{bcde}\in\Phi(K\bigodot K)$,
\item $(R\diamond\mu)_{abcde}\in\Wedge^1\otimes\Phi(K\bigodot K)$, for all
$\mu_{bcd}\in\Phi(\,\begin{picture}(6,6)(0,0)
\put(0,0){\line(1,0){6}}
\put(0,6){\line(1,0){6}}
\put(0,0){\line(0,1){6}}
\put(6,0){\line(0,1){6}}
\end{picture}\otimes\! K)$. 
\end{itemize}
The first of these is immediate from (\ref{option_one}) in conjunction with 
Theorem~\ref{LTS}.  For the second constraint, it suffices to observe that 
$$\widehat\rho_{abcd}=\widehat\rho_{(ab)(cd)}\enskip\mbox{and}\enskip
\widehat\rho_{abcd}=\widehat\rho_{cdab}\enskip\implies\enskip
\widehat\rho_{abcd}-\widehat\rho_{[abcd]}
=\widehat\rho_{abcd}-\widehat\rho_{a[bcd]}.$$
The third constraint is an easy consequence of Lemma~\ref{key_piece}. 
Specifically, notice that
$$\widehat{X}_{abcde}\equiv 
R_{bc}{}^f{}_d\widehat\mu_{aef}-R_{bc}{}^f{}_e\widehat\mu_{adf}
+R_{de}{}^f{}_b\widehat\mu_{acf}-R_{de}{}^f{}_c\widehat\mu_{abf}$$
vanishes if and only if 
$\widehat\mu_{bcd}\in
\begin{picture}(6,6)(0,0)
\put(0,0){\line(1,0){6}}
\put(0,6){\line(1,0){6}}
\put(0,0){\line(0,1){6}}
\put(6,0){\line(0,1){6}}
\end{picture}\otimes\! K$, in which case Lemma~\ref{key_piece} says that
$$(R\diamond\mu)_{abcde}=
R_{bc}{}^f{}_a\widehat\mu_{fde}+R_{de}{}^f{}_a\widehat\mu_{fbc}
-2R_{[bc}{}^f{}_{|a}\widehat\mu_{f|de]},$$
which is evidently a section of $\Wedge^1\otimes\Phi(K\bigodot K)$, as 
required.

Roughly speaking, Theorem~\ref{small_subbundle} gives in $\Delta$ a natural
lower bound concerning the parallel flat subbundles of
$\,\begin{picture}(12,6)(0,0)
\put(0,0){\line(1,0){12}}
\put(0,6){\line(1,0){12}}
\put(0,0){\line(0,1){6}}
\put(6,0){\line(0,1){6}}
\put(12,0){\line(0,1){6}}
\end{picture}\oplus
\begin{picture}(12,12)(0,2)
\put(0,0){\line(1,0){6}}
\put(0,6){\line(1,0){12}}
\put(0,12){\line(1,0){12}}
\put(0,0){\line(0,1){12}}
\put(6,0){\line(0,1){12}}
\put(12,6){\line(0,1){6}}
\end{picture}\oplus\begin{picture}(12,12)(0,2)
\put(0,0){\line(1,0){12}}
\put(0,6){\line(1,0){12}}
\put(0,12){\line(1,0){12}}
\put(0,0){\line(0,1){12}}
\put(6,0){\line(0,1){12}}
\put(12,0){\line(0,1){12}}
\end{picture}\,$ in the affine locally symmetric case.  Concerning an upper 
bound, recall that the connection 
(\ref{our_tricky_prolongation_connection}) on\/
$\begin{picture}(12,6)(0,0)
\put(0,0){\line(1,0){12}}
\put(0,6){\line(1,0){12}}
\put(0,0){\line(0,1){6}}
\put(6,0){\line(0,1){6}}
\put(12,0){\line(0,1){6}}
\end{picture}\oplus
\begin{picture}(12,12)(0,2)
\put(0,0){\line(1,0){6}}
\put(0,6){\line(1,0){12}}
\put(0,12){\line(1,0){12}}
\put(0,0){\line(0,1){12}}
\put(6,0){\line(0,1){12}}
\put(12,6){\line(0,1){6}}
\end{picture}\oplus\begin{picture}(12,12)(0,2)
\put(0,0){\line(1,0){12}}
\put(0,6){\line(1,0){12}}
\put(0,12){\line(1,0){12}}
\put(0,0){\line(0,1){12}}
\put(6,0){\line(0,1){12}}
\put(12,0){\line(0,1){12}}
\end{picture}$ is built from
$$R\triangleleft\underline{\enskip}:\begin{picture}(6,12)(0,2)
\put(0,0){\line(1,0){6}}
\put(0,6){\line(1,0){6}}
\put(0,12){\line(1,0){6}}
\put(0,0){\line(0,1){12}}
\put(6,0){\line(0,1){12}}
\end{picture}\to\Wedge^1\otimes
\begin{picture}(12,12)(0,2)
\put(0,0){\line(1,0){6}}
\put(0,6){\line(1,0){12}}
\put(0,12){\line(1,0){12}}
\put(0,0){\line(0,1){12}}
\put(6,0){\line(0,1){12}}
\put(12,6){\line(0,1){6}}
\end{picture},\enskip
R\diamond\underline{\enskip}:\begin{picture}(12,12)(0,2)
\put(0,0){\line(1,0){6}}
\put(0,6){\line(1,0){12}}
\put(0,12){\line(1,0){12}}
\put(0,0){\line(0,1){12}}
\put(6,0){\line(0,1){12}}
\put(12,6){\line(0,1){6}}
\end{picture}\to\Wedge^1\otimes\begin{picture}(12,12)(0,2)
\put(0,0){\line(1,0){12}}
\put(0,6){\line(1,0){12}}
\put(0,12){\line(1,0){12}}
\put(0,0){\line(0,1){12}}
\put(6,0){\line(0,1){12}}
\put(12,0){\line(0,1){12}}
\end{picture}\,,\enskip\mbox{and}\enskip
\partial:\begin{picture}(12,12)(0,2)
\put(0,0){\line(1,0){12}}
\put(0,6){\line(1,0){12}}
\put(0,12){\line(1,0){12}}
\put(0,0){\line(0,1){12}}
\put(6,0){\line(0,1){12}}
\put(12,0){\line(0,1){12}}
\end{picture}\hookrightarrow
\Wedge^1\otimes\begin{picture}(12,12)(0,2)
\put(0,0){\line(1,0){6}}
\put(0,6){\line(1,0){12}}
\put(0,12){\line(1,0){12}}
\put(0,0){\line(0,1){12}}
\put(6,0){\line(0,1){12}}
\put(12,6){\line(0,1){6}}
\end{picture}$$
with curvature carried by 
$R\bowtie\underline{\enskip}:\begin{picture}(12,12)(0,2)
\put(0,0){\line(1,0){12}} 
\put(0,6){\line(1,0){12}}
\put(0,12){\line(1,0){12}}
\put(0,0){\line(0,1){12}}
\put(6,0){\line(0,1){12}}
\put(12,0){\line(0,1){12}}
\end{picture}\to\begin{picture}(18,12)(0,2)
\put(0,0){\line(1,0){18}}
\put(0,6){\line(1,0){18}}
\put(0,12){\line(1,0){18}}
\put(0,0){\line(0,1){12}}
\put(6,0){\line(0,1){12}}
\put(12,0){\line(0,1){12}}
\put(18,0){\line(0,1){12}}
\end{picture}\,.$

\begin{thm} A parallel flat subbundle of\/
$\begin{picture}(12,6)(0,0)
\put(0,0){\line(1,0){12}}
\put(0,6){\line(1,0){12}}
\put(0,0){\line(0,1){6}}
\put(6,0){\line(0,1){6}}
\put(12,0){\line(0,1){6}}
\end{picture}\oplus
\begin{picture}(12,12)(0,2)
\put(0,0){\line(1,0){6}}
\put(0,6){\line(1,0){12}}
\put(0,12){\line(1,0){12}}
\put(0,0){\line(0,1){12}}
\put(6,0){\line(0,1){12}}
\put(12,6){\line(0,1){6}}
\end{picture}\oplus\begin{picture}(12,12)(0,2)
\put(0,0){\line(1,0){12}}
\put(0,6){\line(1,0){12}}
\put(0,12){\line(1,0){12}}
\put(0,0){\line(0,1){12}}
\put(6,0){\line(0,1){12}}
\put(12,0){\line(0,1){12}}
\end{picture}\,$ must be a subbundle of 
\begin{equation}\label{first_damper}
\left\{(\sigma,\mu,\rho)\raisebox{-1pt}{\LARGE$\mid$} 
\begin{array}{l}\rho\in\ker(R\bowtie\underline{\quad}),\enskip
(R\diamond\mu)\in\Wedge^1\otimes\ker(R\bowtie\underline{\quad}),\\[2pt]
\partial\rho\in\Wedge^1\otimes
\ker\big(({\mathrm{Id}}\otimes R\bowtie\underline{\enskip})\circ
(R\diamond\underline{\enskip})\big)\end{array}\right\}.\end{equation}
\end{thm}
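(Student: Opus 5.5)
Let $F\subseteq E\equiv\bigodot^2\!\Wedge^1\oplus(\text{hook})\oplus(\text{window})$ be a parallel flat subbundle, and work in the affine locally symmetric setting, where the curvature of (\ref{our_tricky_prolongation_connection}) is carried entirely by $R\bowtie\underline{\enskip}$ as in (\ref{abstract_bowtie}).

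The plan is to extract the three conditions of (\ref{first_damper}) one at a time. Each condition comes from one of two facts. First, flatness means the curvature annihilates $F$. Second, parallelism means that if $\Sigma\in\Gamma(F)$, then each component of $\nabla_a\Sigma$ lies in $\Wedge^1\otimes F$, and hence is again killed by the curvature. Locally $F$ is spanned by parallel sections, so it suffices to consider a parallel section $\Sigma=(\sigma,\mu,\rho)$ of $F$ and then apply the argument pointwise.

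\emph{Step 1.} By (\ref{schematically}) with $\nabla R=0$, the curvature applied to $(\sigma,\mu,\rho)$ is $(0,0,R\bowtie\rho)$. Flatness of $F$ therefore gives $R\bowtie\rho=0$, that is, $\rho\in\ker(R\bowtie\underline{\quad})$.

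\emph{Step 2.} Since $F$ is parallel, $\nabla_a\Sigma\in\Gamma(\Wedge^1\otimes F)$ for any section $\Sigma$ of $F$. Because $\nabla R=0$, the curvature map $\Omega\equiv R\bowtie\underline{\enskip}$, applied to the third slot, is itself parallel. Differentiating $\Omega(\Sigma)=0$ gives $(\mathrm{Id}\otimes\Omega)(\nabla_a\Sigma)=0$. Reading off the third component of (\ref{our_tricky_prolongation_connection}) yields
$$(\mathrm{Id}\otimes R\bowtie)(\nabla_a\rho-R\diamond\mu)=0.$$
If $\Sigma$ is chosen parallel, then $\nabla_a\rho=(R\diamond\mu)_a$. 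More invariantly, Step 1 applied to every section of $F$ shows that $\nabla_a\rho$ itself lies in $\Wedge^1\otimes\ker(R\bowtie)$, because the $\rho$-component of $\nabla\Sigma$ is again the $\rho$-component of a section of $\Wedge^1\otimes F$. Taking the difference, $R\diamond\mu\in\Wedge^1\otimes\ker(R\bowtie\underline{\quad})$.

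\emph{Step 3.} Now apply Step 2 to the $\mu$-components of $\nabla_b\Sigma\in\Wedge^1\otimes F$. The $\mu$-component is $\nabla_b\mu-R\triangleleft\sigma-\partial\rho$. Step 2 shows that $R\diamond(\text{$\mu$-component})$ lies in $\Wedge^1\otimes\Wedge^1\otimes\ker(R\bowtie)$, so
$$\big((\mathrm{Id}\otimes R\bowtie)\circ R\diamond\big)\big(\nabla\mu-R\triangleleft\sigma-\partial\rho\big)=0.$$
For a parallel section both $\nabla\mu-R\triangleleft\sigma-\partial\rho$ and $\nabla\mu-R\triangleleft\sigma-\rho$ vanish, which gives no information directly. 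The useful version instead combines Step 2 for sections of $F$ and for their derivatives. Step 2 says $\mu'\in\Wedge^1\otimes\ker\big((\mathrm{Id}\otimes R\bowtie)\circ R\diamond\big)$ for every $\mu$-component $\mu'$ of a section of $F$. This condition is preserved under $\nabla$ by parallelism of $R$. It also holds for the $\mu$-component $\nabla\mu-R\triangleleft\sigma-\partial\rho$ of $\nabla\Sigma$. Subtracting the parallel derivative $\nabla\mu$, the condition holds for $R\triangleleft\sigma+\partial\rho$.

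\emph{Main obstacle.} The remaining task is to discard $R\triangleleft\sigma$, i.e.\ to check that $(\mathrm{Id}\otimes R\bowtie)\circ R\diamond\circ R\triangleleft$ vanishes identically. I would attempt this by the same representation-theoretic argument used in the Remark above. The composite is a natural map, quadratic in $R$ and linear in $\sigma$, and it factors through the curvature of the first-stage connection (\ref{first_stage_connection}). Its image in $\Wedge^1\otimes\Wedge^1\otimes(\text{bay window})$ should vanish by the curvature identities of Theorem~\ref{prolong} applied at the first stage: the curvature of the doubly prolonged connection has no $\sigma$-term beyond $(\nabla\nabla R)\bowtie\sigma$, and that term is zero here. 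If this identification works, then $\partial\rho$ lies in the required kernel and (\ref{first_damper}) follows. I expect the bookkeeping in this last identification to be the hardest part. The fallback is to verify it directly using the explicit formulae (\ref{lefttriangle}) and (\ref{R_diamond_mu}) together with the identity (\ref{gives}).
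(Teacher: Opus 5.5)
Your Steps~1--3 follow the paper's own three-step argument:
\begin{itemize}
\item flatness gives $\rho\in\ker(R\bowtie\underline{\enskip})$;
\item the third line of \eqref{our_tricky_prolongation_connection}, with parallelism of $F$ and of $\ker(R\bowtie\underline{\enskip})$ (since $\nabla R=0$), gives the condition on $R\diamond\mu$;
\item the second line shows that the composition \eqref{thorny} kills $R\triangleleft\sigma+\partial\rho$.
\end{itemize}
You are right that passing from this to $\partial\rho$ alone needs $(\mathrm{Id}\otimes R\bowtie\underline{\enskip})\circ(R\diamond\underline{\enskip})\circ(R\triangleleft\underline{\enskip})=0$. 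The paper's ``again consulting'' leaves this implicit.

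A minor point on Step~2: your two justifications are swapped. The identity $(\mathrm{Id}\otimes R\bowtie)(\nabla\rho-R\diamond\mu)=0$ comes from $\nabla\Sigma\in\Wedge^1\otimes F$ together with Step~1. The identity $R\bowtie\nabla\rho=0$ comes from differentiating $R\bowtie\rho=0$ using $\nabla R=0$. Both are true, so the step stands.

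The gap is that your argument for the final vanishing does not work.
\begin{itemize}
\item The composite is cubic in $R$, not quadratic.
\item The curvature identities you invoke control only part of it. By Theorem~\ref{prolong}, the $\sigma$-part of the second-stage curvature is $(\nabla\wedge\tilde\lambda)(\sigma,\mu)$, with $\tilde\lambda=-R\diamond\underline{\enskip}$ here. Computing $\nabla\tilde\lambda$ with the first-stage connection shows this equals minus the part of $\big(R\diamond(R\triangleleft\sigma)_a\big)_b$ that is skew in $a$ and $b$.
\item Display \eqref{schematically} is only schematic and suppresses such $R\cdot R\cdot\sigma$ terms, so it cannot be quoted literally.
\item Even granting that this skew part vanishes, these identities say nothing about the part symmetric in $ab$. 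That part survives when $R\bowtie\underline{\enskip}$ is applied.
\end{itemize}
So the last step, as proposed, is unproved.

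The missing input is already in the Remark following Theorem~\ref{small_subbundle}, which gives three facts:
\begin{itemize}
\item $R\triangleleft\sigma\in\Wedge^1\otimes\Phi(\Wedge^1\otimes K)$ for every $\sigma$, by \eqref{option_one} and Theorem~\ref{LTS};
\item $R\diamond\underline{\enskip}$ maps $\Phi(\Wedge^1\otimes K)$ into $\Wedge^1\otimes\Phi(K\bigodot K)$, by Lemma~\ref{key_piece};
\item $R\bowtie\underline{\enskip}$ vanishes on $\Phi(K\bigodot K)$.
\end{itemize}
Composing these gives the required vanishing.

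Equivalently, apply your Steps~1--2 to the parallel flat subbundle $\Delta$ of Theorem~\ref{small_subbundle}. It contains $(\sigma,0,0)$ for every $\sigma$, and the $\mu$-component of $\nabla_a(\sigma,0,0)$ is $-(R\triangleleft\sigma)_a$. So $R\triangleleft\sigma$ lies in $\Wedge^1\otimes\Phi(\Wedge^1\otimes K)$, on which \eqref{thorny} vanishes by Step~2 for $\Delta$.

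With that inserted, your proof is complete. The brute-force fallback via \eqref{lefttriangle}, \eqref{R_diamond_mu} and \eqref{gives} is then unnecessary.
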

\begin{proof}The initial constraint that
$\rho\in\ker(R\bowtie\underline{\enskip})$ is precisely that the curvature of
(\ref{our_tricky_prolongation_connection}) vanish on $(\sigma,\mu,\rho)$.  For
a parallel subbundle, the formula (\ref{our_tricky_prolongation_connection})
now constrains~$\mu$, namely that $(R\diamond\mu)$ end up in 
$\Wedge^1\otimes\ker(R\bowtie\underline{\quad})$, as stated. In other words, 
the composition
\begin{equation}\label{thorny}\begin{picture}(12,12)(0,2)
\put(0,0){\line(1,0){6}}
\put(0,6){\line(1,0){12}}
\put(0,12){\line(1,0){12}}
\put(0,0){\line(0,1){12}}
\put(6,0){\line(0,1){12}}
\put(12,6){\line(0,1){6}}
\end{picture}
\xrightarrow{\,\mbox{$R\diamond\underline{\enskip}$}\,}
\Wedge^1\otimes\begin{picture}(12,12)(0,2)
\put(0,0){\line(1,0){12}} 
\put(0,6){\line(1,0){12}}
\put(0,12){\line(1,0){12}}
\put(0,0){\line(0,1){12}}
\put(6,0){\line(0,1){12}}
\put(12,0){\line(0,1){12}}
\end{picture}
\xrightarrow{\,\mbox{${\mathrm{Id}}\otimes R\bowtie\underline{\enskip}$}\,}
\Wedge^1\otimes\begin{picture}(18,12)(0,2)
\put(0,0){\line(1,0){18}}
\put(0,6){\line(1,0){18}}
\put(0,12){\line(1,0){18}}
\put(0,0){\line(0,1){12}}
\put(6,0){\line(0,1){12}}
\put(12,0){\line(0,1){12}}
\put(18,0){\line(0,1){12}}
\end{picture}\end{equation}
kills $\mu$ and now, again
consulting~(\ref{our_tricky_prolongation_connection}), it follows that
$$\partial\rho\in\Wedge^1\otimes
\ker\big(({\mathrm{Id}}\otimes R\bowtie\underline{\enskip})\circ
(R\diamond\underline{\enskip})\big),$$
as stated.
\end{proof}
We are not claiming that (\ref{first_damper}) {\em is\/} parallel
but only that this subbundle necessarily dominates any parallel flat
subbundle.  In fact, the composition (\ref{thorny}) simplifies a little: 
\begin{lemma} The first and third lines of \eqref{R_diamond_mu} do not 
contribute to the composition~\eqref{thorny}. 
\end{lemma}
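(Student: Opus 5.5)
The plan is to check separately that each of the two lines in question is annihilated by ${\mathrm{Id}}\otimes R\bowtie\underline{\enskip}$, regarding the free index $a$ as a spectator throughout. By the Remark following Theorem~\ref{small_subbundle}, $R\bowtie\rho$ is, up to scale, the projection of $R_{pq}{}^g{}_r\rho_{bcdg}$ onto the Young component $\begin{picture}(18,12)(0,2)
\put(0,0){\line(1,0){18}}
\put(0,6){\line(1,0){18}}
\put(0,12){\line(1,0){18}}
\put(0,0){\line(0,1){12}}
\put(6,0){\line(0,1){12}}
\put(12,0){\line(0,1){12}}
\put(18,0){\line(0,1){12}}
\end{picture}$. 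So, for each of the first and third lines $L_{abcde}$ of~\eqref{R_diamond_mu}, it suffices to show that for fixed $a$ the quadratic expression $R_{pq}{}^g{}_rL_{abcdg}$ has vanishing projection onto this component. Throughout I would use the locally symmetric identity~\eqref{gives} from the proof of Theorem~\ref{LTS}: it expresses $R_{ab}{}^e{}_{[c}R_{d]e}{}^p{}_q$ as a commutator of curvature endomorphisms, and that commutator is skew under $ab\leftrightarrow cd$.

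For the third line, rewrite it as
$$\textstyle-\frac23\big(R_{bc}{}^f{}_{[d}\tau_{e]f}+R_{de}{}^f{}_{[b}\tau_{c]f}\big)\cdot 2,\qquad \tau_{ef}\equiv\mu_{(ea)f}.$$
This is exactly the shape of~\eqref{in_the_locally_symmetric_case}, namely the operator whose kernel on $2$-forms defines $K$ in~\eqref{this_is_K}, but now applied to a general $2$-tensor $\tau$. Contracting this with a further $R_{pq}{}^g{}_r$, the plan is to push $R$ through by~\eqref{gives}. This writes the product as $R$ acting on $\tau$ in the slots $de$, contracted against $R\cdot\rho$-type commutators. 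Those commutators are skew under the pair interchange, whereas the target component has the $\Wedge^2\bigodot\Wedge^2$-type symmetry in its two column-pairs. The projection should therefore vanish. Equivalently, the third line is, for fixed $a$, the image of the lifted map on $\Wedge^2\bigodot\Wedge^2$ applied to something of the form $R\cdot\tau$, and the factorisation of $R\bowtie$ through $\Wedge^2\bigodot\Wedge^2$ established in the Remark kills it.

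For the first line, recall from the proof of Lemma~\ref{key_piece} that it equals
$$R_{bc}{}^f{}_a\mu_{fde}+R_{de}{}^f{}_a\mu_{fbc}-2R_{[bc}{}^f{}_{|a}\mu_{f|de]},$$
evaluated on the part of $\mu$ that survives. Up to its $\Wedge^4$ part, this is the symmetric product of $R_{bc}{}^f{}_a$ with $\mu_{fde}$ over the contracted index $f$. I would handle it by polarisation. View $R_{bc}{}^f{}_a$, for fixed $a,f$, as a $2$-form $\omega_{bc}$, and $\mu_{f\cdot\cdot}$ as a $2$-form $\nu$. The first line is then $\omega\odot\nu-\omega\wedge\nu$, summed over $f$. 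By Theorem~\ref{LTS}, $\omega$ lies in $K$; by the Remark, $R\bowtie$ vanishes on $\Phi(K\bigodot\Wedge^2)$-type symmetric products, via the factorisation through $\Wedge^2\otimes\begin{picture}(12,12)(0,2)
\put(0,0){\line(1,0){12}}
\put(0,6){\line(1,0){12}}
\put(0,12){\line(1,0){12}}
\put(0,0){\line(0,1){12}}
\put(6,0){\line(0,1){12}}
\put(12,0){\line(0,1){12}}
\end{picture}$. Since $\omega\in K$ kills the relevant projection, this term contributes nothing.

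The main obstacle is that last step. The Remark only shows that $R\bowtie$ vanishes on $K\bigodot K$, whereas here just one factor lies in $K$. I would first try to show directly that $\rho\mapsto R\bowtie\rho$ annihilates $K\bigodot\Wedge^2$. The argument would expand $R_{pq}{}^g{}_r(\omega_{bc}\nu_{dg}+\omega_{dg}\nu_{bc})$, project, and use the defining equation of $K$ on $\omega$ contracted with $R$. If that fails in general, the fallback is to keep the contraction over $f$ and apply~\eqref{gives} once more. That would convert the $R\cdot R$ product into a pair-skew commutator, exactly as in the treatment of the third line.
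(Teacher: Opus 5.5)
\noindent You set up the first line correctly, and in the same way as the paper. For fixed $a$ it is a multiple of the Riemann-symmetric part of $\sum_f\omega^{(f)}\otimes\nu^{(f)}$, where $\omega^{(f)}_{bc}=R_{bc}{}^f{}_a\in K$ by Theorem~\ref{LTS} and $\nu^{(f)}_{de}=\mu_{fde}$.

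Everything then rests on the claim that $R\bowtie$ kills Riemann-parts of $K\otimes\Wedge^2$ when only \emph{one} factor lies in $K$. You do not prove this. The Remark after Theorem~\ref{small_subbundle} covers only $K\bigodot K$, and your direct expansion disposes only of the term in which the curvature is contracted into the $K$-factor. Your third-line argument, pair-skew commutators against a pair-symmetric target, is not an argument either.

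The paper closes this gap with a different idea. By uniqueness of the invariant pairing, it views $R\bowtie$, up to scale, as the $(3,3)$-part of $\mathcal{R}\otimes\mathrm{Id}$ on the Riemann-type tensors $\mathrm{Riem}\subset\Wedge^2\otimes\Wedge^2$. Since $\ker\mathcal{R}=K$, it declares the composite zero on $K\otimes\Wedge^2$. Both lines are then, with $a$ as a passenger, Riemann-parts of elements of $K\otimes\Wedge^2$ or $\Wedge^2\otimes K$.

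\noindent As far as I can check, that step is not valid, so the obstacle you flagged is genuine. Take $\omega\in K$ and $\nu\in\Wedge^2$.
\begin{itemize}
\item The Riemann-part of $\omega\otimes\nu$ is $\frac12(\omega\otimes\nu+\nu\otimes\omega)$ minus a $4$-form. Applying $\mathcal{R}\otimes\mathrm{Id}$ therefore leaves $\frac12\mathcal{R}(\nu)\otimes\omega$; the $4$-form contributes nothing.
\item Since $(3,3)$ is the only two-row constituent of $\bigodot^3\Wedge^2$, the $(3,3)$-part of $W\in\mathrm{Riem}\otimes\Wedge^2$ vanishes if and only if $W_{abcdef}x^ay^bx^cy^dx^ey^f\equiv0$.
\item Here that expression is $\frac12\,\omega(x,y)\,\mathcal{R}(\nu)(x,y,x,y)$, where $\mathcal{R}(\nu)(x,y,x,y)=-\nu(\hat x,y)-\nu(x,\hat y)$, with $\hat x^d\equiv x^py^qR_{pq}{}^d{}_cx^c$ and $\hat y^d\equiv x^py^qR_{pq}{}^d{}_cy^c$.
\item This is a product of two non-zero polynomials whenever $0\neq\omega\in K$ and $\nu\notin K$. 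Granting, as the paper does, that $R\bowtie$ is a non-zero multiple of this pairing, it does not vanish.
\end{itemize}

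\noindent Applied to the first line of \eqref{R_diamond_mu}, the same test confirms this. Theorem~\ref{LTS}, i.e.\ \eqref{gives}, does remove the term in which $R(x,y)$ acts on $R_{bc}{}^f{}_a$; this is where your idea of invoking \eqref{gives} is right. What survives, however, is a non-zero multiple of $x^py^qR_{pq}{}^f{}_a\,\mu_{fde}(\hat x^dy^e+x^d\hat y^e)$. On $S^2\times\mathbb{R}$, take $x=e_1$, $y=e_2+e_3$, $a=1$ and $\mu=e^2\otimes(e^2\wedge e^3)$, which lies in the hook because its totally skew part vanishes. Then this expression is non-zero. An analogous check with $a=2$ and the hook-part of $e^3\otimes(e^2\wedge e^1)$ shows the third line also contributes.

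\noindent So, unless I have mis-computed, neither your fallback nor the paper's shortcut can succeed, and the statement itself appears to need modification.
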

\begin{proof} As already discussed around (\ref{abstract_bowtie}), the pairing
$R\otimes\rho\mapsto R\bowtie\rho$ is, up to scale, the only non-zero
${\mathrm{GL}}(n,{\mathbb{R}})$-invariant possibility.  Hence, we may also view
$\begin{picture}(12,12)(0,2)
\put(0,0){\line(1,0){12}} 
\put(0,6){\line(1,0){12}}
\put(0,12){\line(1,0){12}}
\put(0,0){\line(0,1){12}}
\put(6,0){\line(0,1){12}}
\put(12,0){\line(0,1){12}}
\end{picture}
\xrightarrow{\,\mbox{$R\bowtie\underline{\enskip}$}\,}
\begin{picture}(18,12)(0,2)
\put(0,0){\line(1,0){18}}
\put(0,6){\line(1,0){18}}
\put(0,12){\line(1,0){18}}
\put(0,0){\line(0,1){12}}
\put(6,0){\line(0,1){12}}
\put(12,0){\line(0,1){12}}
\put(18,0){\line(0,1){12}}
\end{picture}$\,, up to scale, as the composition
$$\begin{picture}(12,12)(0,2)
\put(0,0){\line(1,0){12}} 
\put(0,6){\line(1,0){12}}
\put(0,12){\line(1,0){12}}
\put(0,0){\line(0,1){12}}
\put(6,0){\line(0,1){12}}
\put(12,0){\line(0,1){12}}
\end{picture}\subset\begin{picture}(6,12)(0,2)
\put(0,0){\line(1,0){6}}
\put(0,6){\line(1,0){6}}
\put(0,12){\line(1,0){6}}
\put(0,0){\line(0,1){12}}
\put(6,0){\line(0,1){12}}
\end{picture}\otimes\begin{picture}(6,12)(0,2)
\put(0,0){\line(1,0){6}}
\put(0,6){\line(1,0){6}}
\put(0,12){\line(1,0){6}}
\put(0,0){\line(0,1){12}}
\put(6,0){\line(0,1){12}}
\end{picture}
\xrightarrow{\,\mbox{\small${\mathcal{R}}\otimes{\mathrm{Id}}$}\,}
\begin{picture}(12,12)(0,2)
\put(0,0){\line(1,0){12}} 
\put(0,6){\line(1,0){12}}
\put(0,12){\line(1,0){12}}
\put(0,0){\line(0,1){12}}
\put(6,0){\line(0,1){12}}
\put(12,0){\line(0,1){12}}
\end{picture}\otimes\begin{picture}(6,12)(0,2)
\put(0,0){\line(1,0){6}}
\put(0,6){\line(1,0){6}}
\put(0,12){\line(1,0){6}}
\put(0,0){\line(0,1){12}}
\put(6,0){\line(0,1){12}}
\end{picture}=
\begin{picture}(18,12)(0,2)
\put(0,0){\line(1,0){18}}
\put(0,6){\line(1,0){18}}
\put(0,12){\line(1,0){18}}
\put(0,0){\line(0,1){12}}
\put(6,0){\line(0,1){12}}
\put(12,0){\line(0,1){12}}
\put(18,0){\line(0,1){12}}
\end{picture}\oplus\cdots
\xrightarrow{\,\mbox{projection}\,}
\begin{picture}(18,12)(0,2)
\put(0,0){\line(1,0){18}}
\put(0,6){\line(1,0){18}}
\put(0,12){\line(1,0){18}}
\put(0,0){\line(0,1){12}}
\put(6,0){\line(0,1){12}}
\put(12,0){\line(0,1){12}}
\put(18,0){\line(0,1){12}}
\end{picture}\,,$$
where 
$${\mathcal{R}}:\begin{picture}(6,12)(0,2)
\put(0,0){\line(1,0){6}}
\put(0,6){\line(1,0){6}}
\put(0,12){\line(1,0){6}}
\put(0,0){\line(0,1){12}}
\put(6,0){\line(0,1){12}}
\end{picture}\to\begin{picture}(12,12)(0,2)
\put(0,0){\line(1,0){12}} 
\put(0,6){\line(1,0){12}}
\put(0,12){\line(1,0){12}}
\put(0,0){\line(0,1){12}}
\put(6,0){\line(0,1){12}}
\put(12,0){\line(0,1){12}}
\end{picture}\quad\mbox{is given by}\enskip
\mu_{bc}\longmapsto
R_{ab}{}^e{}_{[c}\mu_{d]e}+R_{cd}{}^e{}_{[a}\mu_{b]e}.$$
According to (\ref{this_is_K}), the kernel of ${\mathcal{R}}$ is
$K$.  In particular, we see that
$$\textstyle\begin{picture}(6,12)(0,2)
\put(0,0){\line(1,0){6}}
\put(0,6){\line(1,0){6}}
\put(0,12){\line(1,0){6}}
\put(0,0){\line(0,1){12}}
\put(6,0){\line(0,1){12}}
\end{picture}\otimes\begin{picture}(6,12)(0,2)
\put(0,0){\line(1,0){6}}
\put(0,6){\line(1,0){6}}
\put(0,12){\line(1,0){6}}
\put(0,0){\line(0,1){12}}
\put(6,0){\line(0,1){12}}
\end{picture}\to\begin{picture}(6,12)(0,2)
\put(0,0){\line(1,0){6}}
\put(0,6){\line(1,0){6}}
\put(0,12){\line(1,0){6}}
\put(0,0){\line(0,1){12}}
\put(6,0){\line(0,1){12}}
\end{picture}\bigodot\begin{picture}(6,12)(0,2)
\put(0,0){\line(1,0){6}}
\put(0,6){\line(1,0){6}}
\put(0,12){\line(1,0){6}}
\put(0,0){\line(0,1){12}}
\put(6,0){\line(0,1){12}}
\end{picture}\to
\begin{picture}(12,12)(0,2)
\put(0,0){\line(1,0){12}} 
\put(0,6){\line(1,0){12}}
\put(0,12){\line(1,0){12}}
\put(0,0){\line(0,1){12}}
\put(6,0){\line(0,1){12}}
\put(12,0){\line(0,1){12}}
\end{picture}
\xrightarrow{\,\mbox{$R\bowtie\underline{\enskip}$}\,}
\begin{picture}(18,12)(0,2)
\put(0,0){\line(1,0){18}}
\put(0,6){\line(1,0){18}}
\put(0,12){\line(1,0){18}}
\put(0,0){\line(0,1){12}}
\put(6,0){\line(0,1){12}}
\put(12,0){\line(0,1){12}}
\put(18,0){\line(0,1){12}}
\end{picture}$$
vanishes on 
$K\otimes\begin{picture}(6,12)(0,2)
\put(0,0){\line(1,0){6}}
\put(0,6){\line(1,0){6}}
\put(0,12){\line(1,0){6}}
\put(0,0){\line(0,1){12}}
\put(6,0){\line(0,1){12}}
\end{picture}$\,.
Now, the first line of (\ref{R_diamond_mu}) starts with 
$$\textstyle R_{bc}{}^f{}_a\mu_{fde}\in
\Wedge^1\otimes K\otimes\begin{picture}(6,12)(0,2)
\put(0,0){\line(1,0){6}}
\put(0,6){\line(1,0){6}}
\put(0,12){\line(1,0){6}}
\put(0,0){\line(0,1){12}}
\put(6,0){\line(0,1){12}}
\end{picture}$$
and the rest of this line is, in fact, the composition
$\textstyle
K\otimes\begin{picture}(6,12)(0,2)
\put(0,0){\line(1,0){6}}
\put(0,6){\line(1,0){6}}
\put(0,12){\line(1,0){6}}
\put(0,0){\line(0,1){12}}
\put(6,0){\line(0,1){12}}
\end{picture}\subseteq\begin{picture}(6,12)(0,2)
\put(0,0){\line(1,0){6}}
\put(0,6){\line(1,0){6}}
\put(0,12){\line(1,0){6}}
\put(0,0){\line(0,1){12}}
\put(6,0){\line(0,1){12}}
\end{picture}\otimes\begin{picture}(6,12)(0,2)
\put(0,0){\line(1,0){6}}
\put(0,6){\line(1,0){6}}
\put(0,12){\line(1,0){6}}
\put(0,0){\line(0,1){12}}
\put(6,0){\line(0,1){12}}
\end{picture}\to\begin{picture}(6,12)(0,2)
\put(0,0){\line(1,0){6}}
\put(0,6){\line(1,0){6}}
\put(0,12){\line(1,0){6}}
\put(0,0){\line(0,1){12}}
\put(6,0){\line(0,1){12}}
\end{picture}\bigodot\begin{picture}(6,12)(0,2)
\put(0,0){\line(1,0){6}}
\put(0,6){\line(1,0){6}}
\put(0,12){\line(1,0){6}}
\put(0,0){\line(0,1){12}}
\put(6,0){\line(0,1){12}}
\end{picture}\to
\begin{picture}(12,12)(0,2)
\put(0,0){\line(1,0){12}} 
\put(0,6){\line(1,0){12}}
\put(0,12){\line(1,0){12}}
\put(0,0){\line(0,1){12}}
\put(6,0){\line(0,1){12}}
\put(12,0){\line(0,1){12}}
\end{picture}$ with the index $a$ as a passenger.  We have just checked that 
(\ref{abstract_bowtie}) vanishes on such tensors. 

Similarly, the third line is a linear combination of tensors such as 
$$R_{bc}{}^f{}_d\mu_{eaf}-R_{bc}{}^f{}_e\mu_{daf}
\quad\mbox{or}\quad
R_{bc}{}^f{}_d\mu_{aef}-R_{bc}{}^f{}_e\mu_{adf}\quad\mbox{lying in}\enskip
\Wedge^1\otimes K\otimes\begin{picture}(6,12)(0,2)
\put(0,0){\line(1,0){6}}
\put(0,6){\line(1,0){6}}
\put(0,12){\line(1,0){6}}
\put(0,0){\line(0,1){12}}
\put(6,0){\line(0,1){12}}
\end{picture}$$
together with
$$R_{de}{}^f{}_b\mu_{caf}-R_{de}{}^f{}_c\mu_{baf}
\quad\mbox{or}\quad
R_{de}{}^f{}_b\mu_{acf}-R_{de}{}^f{}_c\mu_{abf}\quad\mbox{lying in}\enskip
\Wedge^1\otimes \begin{picture}(6,12)(0,2)
\put(0,0){\line(1,0){6}}
\put(0,6){\line(1,0){6}}
\put(0,12){\line(1,0){6}}
\put(0,0){\line(0,1){12}}
\put(6,0){\line(0,1){12}}
\end{picture}\otimes K,$$
and we are done.\end{proof} 

Thus, neglecting the first and third lines of (\ref{R_diamond_mu}) gives an
equivalent set of constraints on a parallel section $(\sigma,\mu,\rho)$ and, in 
particular, constraints on the component 
$\rho\in\begin{picture}(12,12)(0,2)
\put(0,0){\line(1,0){12}} 
\put(0,6){\line(1,0){12}}
\put(0,12){\line(1,0){12}}
\put(0,0){\line(0,1){12}}
\put(6,0){\line(0,1){12}}
\put(12,0){\line(0,1){12}}
\end{picture}$\,, firstly that
$$\rho\in\ker(R\bowtie\underline\enskip)$$
and secondly that 
$$R_{b(c}{}^p{}_{e)}\rho_{a(df)p}-R_{b(d}{}^p{}_{e)}\rho_{a(cf)p}
-R_{b(c}{}^p{}_{f)}\rho_{a(de)p}+R_{b(d}{}^p{}_{f)}\rho_{a(ce)p}\,,$$
which evidently lies in $\Wedge^1\otimes\Wedge^1\otimes
\begin{picture}(12,12)(0,2)
\put(0,0){\line(1,0){12}} 
\put(0,6){\line(1,0){12}}
\put(0,12){\line(1,0){12}}
\put(0,0){\line(0,1){12}}
\put(6,0){\line(0,1){12}}
\put(12,0){\line(0,1){12}}
\end{picture}$\,, actually lies in 
$\Wedge^1\otimes\Wedge^1\otimes\ker(R\bowtie\underline\enskip)$.  These
constraints on $\rho$ have also been obtained by Matveev and
Nikolayevsky~\cite{MNprivate}.  They phrase them as conditions on the second
derivative $\nabla_a\nabla_b\sigma_{cd}$ of a Killing $2$-tensor $\sigma_{cd}$
at $p\in M$ under the assumption that $\sigma_{cd}|_p=0$ and
$\nabla_b\sigma_{cd}|_p=0$.

\section{Interactions with representation theory}\label{interactions}
We pause our exposition of the general theory to consider what it tells us
about specific examples and also how our various constructions interact with
Lie derivative in case we have a locally symmetric (pseudo-)Riemannian metric.
In this case, by definition, not only does our connection annihilate its
curvature $\nabla_aR_{bc}{}^d{}_e=0$ (as we have been supposing) but this
connection is also supposed to be the Levi-Civita connection for a
metric~$g_{ab}$.  By `raising and lowering indices' with $g_{ab}$ and its
inverse $g^{ab}$, there is no distinction between a $1$-form $\sigma_b$ and the
corresponding vector field $\sigma^a\equiv g^{ab}\sigma_b$.  The {\em Lie
derivative\/} of a covariant $2$-tensor $\phi_{bc}$ along a vector field $X^a$
can be written as
$${\mathcal{L}}_X\phi_{bc}
=X^a\nabla_a\phi_{bc}+(\nabla_bX^a)\phi_{ac}+(\nabla_cX^a)\phi_{ba}$$
for any torsion-free connection~$\nabla_a$. In particular, if we take 
$\phi_{bc}$ to be our metric $g_{bc}$ and $\nabla_a$ to be its Levi-Civita 
connection, then we conclude that 
$${\mathcal{L}}_Xg_{bc}=(\nabla_bX^a)g_{ac}+(\nabla_cX^a)g_{ba}
=\nabla_b(X^ag_{ac})+\nabla_c(X^ag_{ba})
=\nabla_bX_c+\nabla_cX_b=2\nabla_{(a}X_{b)}.$$
We obtain, therefore, a geometric interpretation of a Killing $1$-form
$\nabla_{(a}X_{b)}=0$, namely that the Lie derivative of the corresponding
vector field $X^a$ annihilate the metric.  One usually says that $X^a$ is a
{\em Killing field\/} and it follows that the local flow of $X^a$ is an
isometry.  In particular, if $\phi_{bc}$ is a Killing $2$-tensor, then so
is~${\mathcal{L}}_X\phi_{bc}$.
Of course, this will be reflected in all of our constructions and the Killing
connection (\ref{Killing_connection}) is the tool that makes it all work,
especially in conjunction with Theorem~\ref{LTS} above.  
\begin{thm}
On a locally symmetric (pseudo-)Riemannian manifold, the action of the local
isometries is transitive.
\end{thm}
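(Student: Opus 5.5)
Local transitivity amounts to showing that, near any basepoint $p\in M$, the values at $p$ of the Killing fields span $T_pM$. I will derive this from Theorem~\ref{classical_locally_symmetric_conclusion}. A locally symmetric (pseudo-)Riemannian manifold has $\nabla_aR_{bc}{}^d{}_e=0$, so that theorem applies. On a simply connected neighbourhood $U$ of $p$, the Killing $1$-forms are in bijection with parallel sections of $\Wedge^1\oplus K$, and $\Wedge^1\oplus K$ is flat for the Killing connection~\eqref{Killing_connection}. Flatness is the content of Corollary~\ref{parallel_subbundle} together with~\eqref{killing_curvature}, since that curvature vanishes on $\Wedge^1\oplus K$ by the definition~\eqref{this_is_K} of~$K$. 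Consequently, for any $v_b\in\Wedge^1_p$ the pair $(v_b,0)$ extends uniquely to a parallel section $(\sigma_b,\mu_{bc})$ over $U$, with $\sigma_b(p)=v_b$. By Theorem~\ref{killing_prolongation}, $\sigma_b$ is a Killing $1$-form, and raising its index with $g^{ab}$ gives a Killing field $X^a$ with $X^a(p)=g^{ab}v_b$. Every tangent vector at $p$ is therefore the value of a Killing field.

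Next I convert this into transitivity of local isometries. Pick Killing fields $X_1,\dots,X_n$ on $U$ whose values at $p$ form a basis of $T_pM$. The map $(t_1,\dots,t_n)\mapsto \phi^{X_1}_{t_1}\circ\cdots\circ\phi^{X_n}_{t_n}(p)$ has differential at $0$ sending the standard basis to $X_i(p)$. By the inverse function theorem, its image contains a neighbourhood of $p$. Each flow $\phi^{X_i}_t$ is a local isometry, because $\mathcal{L}_{X_i}g_{bc}=2\nabla_{(b}X_{c)}=0$ as computed at the start of \S\ref{interactions}. Hence local isometries carry $p$ to every point of a neighbourhood of $p$.

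Finally I pass to all of $M$. Say $p\sim q$ if some composition of local isometries (each defined on an open set) carries $p$ to $q$. This is an equivalence relation, and the previous paragraph shows its classes are open. Since $M$ is connected, there is a single class, and the action is transitive.

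The step needing the most care is the flatness of $\Wedge^1\oplus K$, which is what allows $(v_b,0)$ to be propagated. It rests on Theorem~\ref{LTS}, through Corollary~\ref{parallel_subbundle}, and on the vanishing of~\eqref{killing_curvature} on $K$. The remaining steps, namely the inverse-function-theorem argument and working in a simply connected neighbourhood so that parallel transport is path independent, are routine. Note also that the $\Wedge^1$ component at $p$ is entirely unconstrained: this holds because $\sigma$ does not appear in the curvature~\eqref{killing_curvature} in the locally symmetric case.
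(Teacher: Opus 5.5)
Your proposal is correct and follows the paper's proof: both use that $\Wedge^1\oplus K$ is the kernel of the curvature~\eqref{killing_curvature} and is parallel by Corollary~\ref{parallel_subbundle}, hence flat, so an arbitrary value at $p$ (you take $(v_b,0)$) propagates to a Killing field with that value at $p$. The paper stops once it has shown that ``the Killing fields near $p$ point in any direction at $p$''; your inverse-function-theorem and connectedness steps just write out the routine passage from there to transitivity of the local isometries.
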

\begin{proof} We have already observed, from the formula
(\ref{killing_curvature}), that the kernel of the curvature of
(\ref{Killing_connection}) is $\Wedge^1\oplus K$ where $K$ is given
by~(\ref{this_is_K}).  But, according to Corollary~\ref{parallel_subbundle}
this subbundle is also parallel.  So we can generate the Killing fields near
any $p\in M$ by choosing an arbitrary value for $X^a|_p\in T_pM\cong\Wedge_p^1$
and for $\nabla_aX_b|_p\in K_p$ and then propagating these using the flat 
Killing connection (\ref{Killing_connection}) on $\Wedge^1\oplus K$.  In 
particular, the Killing fields near $p$ point in any direction at~$p$. 
\end{proof}

Either working locally or supposing that $M$ is connected and simply 
connected, let us write the Killing fields as~${\mathfrak{g}}$, noting that 
${\mathfrak{g}}$ is a Lie algebra under Lie bracket
$$[X,Y]^b\equiv X^a\nabla_aY^b-(\nabla_aX^b)Y^a.$$
We have just observed that, for any point $p\in M$, we can identify 
${\mathfrak{g}}$ as a vector space with $(\Wedge^1\oplus K)_p$ and it is 
traditional to write this direct sum decomposition as
\begin{equation}\label{traditional}
{\mathfrak{g}}={\mathfrak{m}}\oplus{\mathfrak{k}}.
\end{equation}
Since $K_p$ may be identified with the Killing fields near $p$ that vanish
at~$p$, it is clear that ${\mathfrak{k}}$ is Lie subalgebra
of~${\mathfrak{g}}$.  We can confirm this directly---it
is~(\ref{K_is_a_bundle_of_subalgebras}) in the following theorem, viewed at any
chosen basepoint~$p\in M$.  
\begin{thm} In the locally symmetric (pseudo-)Riemannian case, suppose
$(\sigma_b,\mu_{bc})$ and $(\widetilde\sigma_b,\widetilde\mu_{bc})$ are
parallel sections of\/ $\Wedge^1\oplus K$ for the Killing
connection~\eqref{Killing_connection}.  Then
\begin{equation}\label{Lie_bracket}\big(
\sigma^a\widetilde\mu_{ab}-\widetilde\sigma^a\mu_{ab},
2\mu{}^a{}_{[b}\widetilde\mu_{c]a}
-R_{bc}{}^{ad}\sigma_a\widetilde\sigma_d
\big),\end{equation}
is also a parallel section of $\Wedge^1\oplus K$.
\begin{equation}\label{K_is_a_bundle_of_subalgebras}
\mbox{If}\enskip\mu_{bc},\widetilde\mu_{bc}\in\Gamma(K),\ \mbox{then}
\enskip\mu^a{}_{[b}\widetilde\mu_{c]a}\in\Gamma(K).\end{equation}
\end{thm}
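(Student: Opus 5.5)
Since $(\sigma_b,\mu_{bc})$ and $(\widetilde\sigma_b,\widetilde\mu_{bc})$ are parallel for \eqref{Killing_connection}, they are Killing $1$-forms with $\mu_{ab}=\nabla_a\sigma_b$ and $\widetilde\mu_{ab}=\nabla_a\widetilde\sigma_b$. The first component of \eqref{Lie_bracket} is
$$\sigma^a\nabla_a\widetilde\sigma_b-\widetilde\sigma^a\nabla_a\sigma_b,$$
which is the Lie bracket $[\sigma,\widetilde\sigma]_b$ with the index lowered. It is therefore a Killing $1$-form, since Killing fields are closed under bracket. This can also be checked directly, as follows.

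Write $\tau_b$ for the first component and differentiate using the parallel equations $\nabla_a\sigma_b=\mu_{ab}$ and $\nabla_a\mu_{bc}=R_{bc}{}^d{}_a\sigma_d$. This gives
$$\nabla_c\tau_b=\mu_c{}^a\widetilde\mu_{ab}-\widetilde\mu_c{}^a\mu_{ab}+\sigma^a R_{ab}{}^d{}_c\widetilde\sigma_d-\widetilde\sigma^aR_{ab}{}^d{}_c\sigma_d.$$
The $\mu\widetilde\mu$ terms equal $-2\mu^a{}_{[b}\widetilde\mu_{c]a}$ up to the ordering convention, and they are skew in $bc$. The curvature terms, after lowering indices with the pair symmetry $R_{abdc}=R_{dcab}$ and applying the Bianchi symmetry, combine to $R_{bc}{}^{ad}\sigma_a\widetilde\sigma_d$ with the correct sign. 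Hence $\nabla_c\tau_b$ is skew and equals the second component $\nu_{bc}$ of \eqref{Lie_bracket}, up to the convention $\nabla_c\tau_b=\nu_{cb}$ versus $\nu_{bc}$, which must be tracked carefully. This establishes the first line of parallelism. By Theorem~\ref{killing_prolongation}, the second line $\nabla_a\nu_{bc}=R_{bc}{}^d{}_a\tau_d$ is then automatic, because $\tau$ is Killing and $\nu$ is its derivative. Finally, by Theorem~\ref{classical_locally_symmetric_conclusion}, every parallel section lies in $\Wedge^1\oplus K$, so $\nu_{bc}\in\Gamma(K)$.

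For \eqref{K_is_a_bundle_of_subalgebras} we work pointwise at $p$ and use transitivity. Given $\mu_p,\widetilde\mu_p\in K_p$, take parallel sections with $\sigma_b(p)=0=\widetilde\sigma_b(p)$ and these $\mu$-values, which is possible because $\Wedge^1\oplus K$ is parallel and flat. Evaluating the first part at $p$, the curvature term vanishes, so $2\mu^a{}_{[b}\widetilde\mu_{c]a}\in K_p$. Since $p$ and the elements of $K_p$ are arbitrary, this holds for sections. An alternative is a direct algebraic argument from Theorem~\ref{LTS}: $K$ is the annihilator of the derivation action of $R$, and commutators of elements of the holonomy-type algebra in $\End$ preserve this, via the identity \eqref{gives}. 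The propagation argument is cleaner, however.

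The main obstacle is the sign and index bookkeeping in the curvature terms, namely matching $\sigma^aR_{ab}{}^d{}_c\widetilde\sigma_d-(\sigma\leftrightarrow\widetilde\sigma)$ with $-R_{bc}{}^{ad}\sigma_a\widetilde\sigma_d$. I would handle this by lowering all indices to $R_{abdc}$, using the pair symmetry available in the Riemannian case, and then applying $R_{a[bc]d}$ Bianchi symmetry to convert the skew part in $a\leftrightarrow d$ into the required form.
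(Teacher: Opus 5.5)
Your proof is correct. For the first claim it takes a different route from the paper; for \eqref{K_is_a_bundle_of_subalgebras} it is the same.

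The paper sets $X^a=g^{ab}\sigma_b$ and applies ${\mathcal{L}}_X$ to the parallel section $(\widetilde\sigma_b,\widetilde\mu_{bc})$. It appeals to the naturality principle that a Killing field carries parallel sections of any metrically constructed bundle-with-connection to parallel sections. Parallelism, and hence membership in $\Wedge^1\oplus K$, is then automatic. The only computation is to expand ${\mathcal{L}}_X$ and substitute $\nabla_a\widetilde\mu_{bc}=R_{bc}{}^d{}_a\widetilde\sigma_d$. This yields the curvature term directly as $\sigma^aR_{bc}{}^d{}_a\widetilde\sigma_d$, with no Bianchi manipulation.

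You instead differentiate $\tau_b$ by hand. This costs you the identity $\sigma^a\widetilde\sigma^d(R_{abdc}-R_{dbac})=R_{bcad}\sigma^a\widetilde\sigma^d$, which does hold. Pair symmetry turns $R_{dbac}$ into $R_{acdb}$, and then $R_{abdc}-R_{acdb}=-(R_{abcd}+R_{acdb})=R_{adbc}=R_{bcad}$ by $R_{a[bcd]}=0$. In exchange you never use the naturality of ${\mathcal{L}}_X$, which the paper does not spell out. Theorem~\ref{killing_prolongation} then supplies the second line of \eqref{Killing_connection} for free, and the curvature kernel puts $\nu_{bc}$ in $K$.

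The index bookkeeping you were worried about closes up exactly. The $\mu\widetilde\mu$ terms are $-2\mu^a{}_{[b}\widetilde\mu_{c]a}$ and the curvature terms are $+R_{bc}{}^{ad}\sigma_a\widetilde\sigma_d$, so $\nabla_c\tau_b=\nu_{cb}$. This is precisely what the first line $\nabla_a\sigma_b-\mu_{ab}$ of \eqref{Killing_connection} requires. Your closing $-R_{bc}{}^{ad}\sigma_a\widetilde\sigma_d$ is the same statement written for $\nu_{bc}$.
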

\begin{proof}
In the (pseudo-)Riemannian locally symmetric case,
Theorem~\ref{killing_prolongation} allows us to regard the parallel section
$(\sigma_b,\mu_{bc})$ of $\Wedge^1\oplus K$ as a Killing field 
$X^a\equiv g^{ab}\sigma_b$ with covariant derivative 
$\mu_b{}^c\equiv\nabla_bX^c$. Applying ${\mathcal{L}}_X$ to 
$(\widetilde\sigma_b,\widetilde\mu_{bc})$ gives
$$\big(X^a\nabla_a\widetilde\sigma_b+(\nabla_bX^a)\widetilde\sigma_a,
X^a\nabla_a\widetilde\mu_{bc}
+(\nabla_bX^a)\widetilde\mu_{ac}+(\nabla_cX^a)\widetilde\mu_{ba}
\big),$$
but now $\nabla_a\widetilde\mu_{bc}=R_{bc}{}^d{}_a\widetilde\sigma_d$ since
$(\widetilde\sigma_b,\widetilde\mu_{bc})$ is parallel
for~(\ref{Killing_connection}). Thus, we obtain
$$\big(
\sigma^a\widetilde\mu_{ab}+\mu_b{}^a\widetilde\sigma_a,
\sigma^aR_{bc}{}^d{}_a\widetilde\sigma_d
+\mu_b{}^a\widetilde\mu_{ac}+\mu_c{}^a\widetilde\mu_{ba}
\big),$$
equivalently,
$$\big(
\sigma^a\widetilde\mu_{ab}-\widetilde\sigma^a\mu_{ab},
\sigma^aR_{bc}{}^d{}_a\widetilde\sigma_d
+2\mu{}^a{}_{[b}\widetilde\mu_{c]a}
\big).$$
Finally, the conclusion (\ref{K_is_a_bundle_of_subalgebras}) follows from
(\ref{Lie_bracket}) because it is true at any basepoint $p\in M$, in a
neighbourhood of which we may take parallel sections $(\sigma_b,\mu_{bc})$ and
$(\widetilde\sigma_b,\widetilde\mu_{bc})$ so that
$\sigma_b(p)=0=\widetilde\sigma_b(p)$ but 
$\mu_{bc}(p),\widetilde\mu_{bc}(p)\in K_p$ are, otherwise, unconstrained. 
Alternatively, it is an interesting exercise to check 
(\ref{K_is_a_bundle_of_subalgebras}) algebraically from~(\ref{this_is_K}). 
\end{proof}
In fact, the Lie derivative ${\mathcal{L}}_X$ acts on any of our constructions
of bundles with their connections and, in the (pseudo-)Riemannian case, being
constructed functorially from the metric, any Killing field will take parallel
sections to parallel sections.  In particular, this applies to the connection
(\ref{our_tricky_prolongation_connection}).  This will enable us to draw
conclusions concerning the action of the local isometries on the space of
Killing $2$-tensors.

To see how this works, let us consider a specific example in more detail.  For
the rest of this section, let $M={\mathrm{SU}}(6)/{\mathrm{Sp}}(3)$, the space
of quaternionic structures on ${\mathbb{C}}^6$ compatible with a fixed
Hermitian metric.  It is a Riemannian symmetric space of dimension~$14$.  We
shall denote the semisimple Lie algebras by their Dynkin diagrams and record
their finite-dimensional irreducible representations by attaching non-negative
integers to the nodes following the conventions of~\cite{Beastwood}.  (Strictly
speaking, this notation concerns {\em complex\/} semisimple Lie algebras and
their {\em complex\/} representations so we should at least work with
complex-valued tensor fields and perhaps also complexify the manifold~$M$ but,
for the sake of brevity, we shall leave such niceties to the reader.)  The free
software LiE~\cite{LiE} does an excellent job of working with semisimple
algebras and their representations in terms of roots and weights.  In
particular, we may restrict the adjoint representation of ${\mathfrak{su}}(6)$
to ${\mathfrak{sp}}(3)$ and write
\begin{equation}\label{adjoint_A5}
\afive{1}{0}{0}{0}{1}=\cthree{0}{1}{0}\oplus\cthree{2}{0}{0}\end{equation}
for the resulting branching, which in view of
Theorem~\ref{classical_locally_symmetric_conclusion}, we may regard as
identifying $\Wedge^1\oplus K$ as a homogeneous bundle on 
${\mathrm{SU}}(6)/{\mathrm{Sp}}(3)$, specifically
$$\begin{array}{ccccc}\Wedge^1&=&\Wedge^1&=&\cthree{0}{1}{0}\\[-3pt] 
\oplus&&\oplus&&\oplus\\
\Wedge^2&\supset&K&=&\cthree{2}{0}{0}\end{array}.$$
LiE programs for branching an irreducible representation of any compact Lie
group to any symmetric subgroup are given in~\cite{EW} (and the possible
compact symmetric spaces $G/K$ are listed in~\cite[\S8.11]{W}).  A discussion
of these and other relevant LiE routines are confined to an
appendix~\S\ref{LiE_appendix}.  LiE checks, for example, that
$$\Wedge^2=\Wedge^2(\cthree{0}{1}{0})=\cthree{1}{0}{1}\oplus\cthree{2}{0}{0}$$
and so the whole prolonged connection (\ref{Killing_connection}) is defined on 
the homogeneous bundle
$$\begin{array}{ccc}\Wedge^1&=&\cthree{0}{1}{0}\\[-3pt]
\oplus&&\oplus\\
\Wedge^2&=&\cthree{1}{0}{1}\oplus\cthree{2}{0}{0}
\end{array}$$
and the flat parallel subbundle $\Wedge^1\oplus K$ is neatly viewed as
\begin{equation}\label{neat_view}\begin{array}{c}\cthree{0}{1}{0}\\[-3pt]
\oplus\\
\cthree{2}{0}{0}
\end{array}\begin{picture}(35,0)
\put(0,10){$\xrightarrow{\quad\nabla\quad}$}
\put(0,-16){$\xrightarrow{\quad\nabla\quad}$}
\thicklines
\put(2,-5){\vector(3,1){33}}
\put(2,6){\vector(3,-1){33}}
\end{picture}
\begin{array}{c}\cthree{2}{0}{0}\oplus\cdots\\[-3pt]
\oplus\phantom{\oplus\cdots{}}\\
\cthree{0}{1}{0}\oplus\cdots\end{array}
\begin{array}{c}=\Wedge^1\otimes\cthree{0}{1}{0}\\[-3pt]
\phantom{=\Wedge^1\otimes{}}\oplus\\
=\Wedge^1\otimes\cthree{2}{0}{0}\end{array},\end{equation}
where the homomorphisms 
$\begin{picture}(35,0)(0,-3) 
\thicklines
\put(2,-5){\vector(3,1){33}} 
\put(2,6){\vector(3,-1){33}} \end{picture}$ 
are canonical inclusions, unique up to scale by Schur's Lemma.  {From} this
point of view, it looks like the two lines in this connection are on an equal
footing and, indeed, since ${\mathrm{SU}}(6)/{\mathrm{Sp}}(3)$ is neither
Hermitian nor flat, this is in accordance with~\cite[Lemma~4]{CELM}, which
says that for $\mu_{bc}$ a $2$-form under these circumstances,  
$$\nabla_a\mu_{bc}=R_{bc}{}^d{}_a\sigma_d\implies\nabla_a\sigma_b=\mu_{ab}
\enskip\mbox{(so $\sigma_b$ is a Killing $1$-form).}$$
In particular, although the two homomorphisms making 
up~(\ref{neat_view}), namely
$$K\hookrightarrow\Wedge^2\ni\mu_{bc}\stackrel{\partial}{\longmapsto}
\mu_{ab}\in\Wedge^1\otimes\Wedge^1$$
and
$$\Wedge^1\ni\sigma_b\longmapsto 
R_{bc}{}^d{}_a\sigma_d\in\Wedge^1\otimes K$$
have different origins, both are certain equivariant homomorphisms on the 
homogeneous space~${\mathrm{SU}}(6)/{\mathrm{Sp}}(3)$ (and are very much 
constrained by this equivariance).
On any tensor field $\phi_{bc\cdots d}$, if $X^a$ is a Killing field such that 
$\nabla_aX^b$ vanishes at $p\in M$, then 
$${\mathcal{L}}_X\phi_{bc\cdots d}|_p=X^a\nabla_a\phi_{bc\cdots d}|_p$$ 
and so this same diagram~(\ref{neat_view}) also records the action of the
Killing fields on the space of parallel sections viewed as propagated
from~$p\in M$, specifically that the action of ${\mathfrak{k}}$ in the
decomposition (\ref{traditional}) of ${\mathfrak{g}}$ is just the infinitesimal
action of ${\mathrm{Sp}}(3)$ on the homogeneous bundles (as written) and the
action of $X^a\in{\mathfrak{m}}$ is by $X^a\nabla_a$ with the result viewed
back in $\Wedge^1\oplus K$ via the canonical inclusions
$\begin{picture}(35,0)(0,-3) \thicklines \put(2,-5){\vector(3,1){33}}
\put(2,6){\vector(3,-1){33}} \end{picture}$.  In particular, at $p\in M$, we
see that the Lie bracket
$[\enskip\,,\enskip]:{\mathfrak{g}}\times{\mathfrak{g}}\to{\mathfrak{g}}$ 
interacts with the decomposition (\ref{traditional}) as follows
$$\begin{array}{ll}
{}[\enskip\,,\enskip]:{\mathfrak{k}}\times{\mathfrak{m}}\to{\mathfrak{m}}
&\qquad{}
[\enskip\,,\enskip]:{\mathfrak{m}}\times{\mathfrak{m}}\to{\mathfrak{k}}\\[2pt]
{}[\enskip\,,\enskip]:{\mathfrak{k}}\times{\mathfrak{k}}\to{\mathfrak{k}}
&\qquad{}
[\enskip\,,\enskip]:{\mathfrak{m}}\times{\mathfrak{k}}\to{\mathfrak{m}}\,,
\end{array}$$
which is a general feature~\cite[Theorem~3]{eschenbu}
of~(\ref{traditional}).

This observation regarding $\Wedge^1\oplus K$ (that the connection encodes the
action of ${\mathfrak{g}}$ as Killing fields) applies to all our flat
prolongation bundles, for example $\bigodot^2(\Wedge^1\oplus K)$, as seen in
Theorem~\ref{max_parallel_flat_on_symmetric_product}.  Here, for example, is
the bundle $\bigodot^2(\Wedge^1\oplus K)$ for
${\mathrm{SU}}(6)/{\mathrm{Sp}}(3)$ viewed in this way:
\begin{equation}\label{viewed_as_homogeneous}\begin{array}{ccc}
\bigodot^2\!\Wedge^1
&=&\cthree{0}{0}{0}\oplus\cthree{0}{1}{0}\oplus\cthree{0}{2}{0}\\
\oplus\\
\Wedge^1\otimes K
&=&\cthree{0}{1}{0}\oplus\cthree{1}{0}{1}
\oplus\cthree{2}{0}{0}\oplus\cthree{2}{1}{0}\\
\oplus\\
K\bigodot K
&=&\cthree{0}{0}{0}\oplus\cthree{0}{1}{0}
\oplus\cthree{0}{2}{0}\oplus\cthree{4}{0}{0}
\end{array}\end{equation}
and we can spot some parallel subbundles by representation theory.  Firstly,
since the metric itself is parallel $\nabla_ag_{bc}=0$, this is certainly a
Killing $2$-tensor. The corresponding parallel section for the connection 
(\ref{our_tricky_prolongation_connection}) is
\begin{equation}\label{metric_as_parallel_section}
\left[\!\begin{array}{c}\sigma_{bc}\\ \mu_{bcd}\\
\rho_{bcde}\end{array}\!\right]=
\left[\!\begin{array}{c}g_{bc}\\ 0\\
R_{bcde}\end{array}\!\right]\stackrel{\nabla_a}{\longmapsto}
\left[\!\begin{array}{c}\nabla_ag_{bc}\\
-(R\triangleleft g)_{abcd}-R_{abcd}\\
\nabla_aR_{bcde}
\end{array}\!\right]
=\left[\!\begin{array}{c}0\\
-(R\triangleleft g)_{abcd}-R_{abcd}\\
0\end{array}\!\right]\end{equation}
since, according to (\ref{lefttriangle}) and the Bianchi symmetry,
$$\textstyle(R\triangleleft g)_{abcd}
\equiv\frac23R_{cdba}-\frac13R_{bcda}+\frac13R_{bdca}
=-\frac23R_{cdab}-\frac13R_{abcd}=-R_{abcd}.$$
It is clear from the decomposition (\ref{viewed_as_homogeneous}) how
(\ref{metric_as_parallel_section}) fits nicely as a parallel section and, in
particular, that the metric $g_{ab}$ is decomposable as a Killing $2$-tensor.
This observation, that
$$\textstyle [g_{bc},0,R_{bcde}]\quad\mbox{is a parallel section of}\enskip
\bigodot^2(\Wedge^1\oplus K),$$
works for all Riemannian locally symmetric spaces.  As for the case
$M={\mathrm{SU}}(6)/{\mathrm{Sp}}(3)$, from (\ref{branch20002}) we can spot
another flat bundle with connection
\begin{equation}\label{well_spotted}
\begin{array}{c}\cthree{0}{2}{0}\\[-3pt]
\oplus\\
\cthree{2}{1}{0}\\[-3pt]
\oplus\\
\cthree{4}{0}{0}
\end{array}\begin{picture}(35,0)
\put(0,25){$\xrightarrow{\qquad\nabla\qquad}$}
\put(0,-2){$\xrightarrow{\quad\enskip\nabla\enskip}$}
\put(0,-29){$\xrightarrow{\hspace{58pt}\nabla\quad\enskip}$}
\thicklines
\put(2,9){\vector(4,1){56}}
\put(2,21){\vector(3,-1){33}}
\put(2,-22){\line(6,1){102}}\put(104,-5){\vector(4,1){0}}
\put(2,-2){\line(6,-1){84}}\put(86,-16){\vector(4,-1){0}}
\end{picture}
\begin{array}{c}\cthree{2}{1}{0}\oplus\cdots\\[-3pt]
\oplus\phantom{\oplus\cdots{}}\\
\cthree{0}{2}{0}\oplus\cthree{4}{0}{0}\oplus\cdots\\[-3pt]
\quad\oplus\\
\qquad\qquad\cthree{2}{1}{0}\oplus\cdots\end{array}
\begin{array}{c}=\Wedge^1\otimes\cthree{0}{2}{0}\\[-3pt]
\phantom{=\Wedge^1\otimes{}}\oplus\\
=\Wedge^1\otimes\cthree{2}{1}{0}\\[-3pt]
\phantom{=\Wedge^1\otimes{}}\oplus\\
=\Wedge^1\otimes\cthree{4}{0}{0}\end{array}\end{equation}
sitting as a subbundle of (\ref{viewed_as_homogeneous}) by means of
\begin{equation}\label{how_it_sits}
\begin{array}{c}\cthree{0}{2}{0}\\[-3pt]
\oplus\\
\cthree{2}{1}{0}\\[-3pt]
\oplus\\
\cthree{4}{0}{0}
\end{array}\ni\left[\!\begin{array}{c}\sigma_{bc}\\ \mu_{bcd}\\
\rho_{bcde}\end{array}\!\right]\longmapsto
\left[\!\begin{array}{c}\sigma_{bc}\\ \mu_{bcd}\\
\rho_{bcde}+CR_{bc}{}^{pq}R_{dep}{}^r\sigma_{qr}
\end{array}\!\right]\end{equation}
for some suitable constant~$C$ (as detailed in~\S\ref{A5/C3}).

\section{Riemannian locally symmetric spaces of compact type}
%
\underline{\underbar{For the rest of this article}} (save
for~\S\ref{affine_appendix}), we shall suppose that $g_{ab}$ is an irreducible
locally symmetric Riemannian metric of compact type, normalised so that
$R_{ab}=g_{ab}$.
\begin{thm}\label{non-zero_KY_three-forms}
The only $(M,g_{ab})$ that admit a non-zero Killing-Yano\/ $3$-form
\begin{equation}\label{KY_equation}
\mu_{bcd}\in\Gamma(M,\Wedge^3)\enskip\mbox{such that}\enskip
\nabla_{(a}\mu_{b)cd}=0,\end{equation}
are
\begin{itemize}
\item $M\subseteq$ the sphere $S^n$ with its round metric for $n\geq3$,
\item $M\subseteq G$, a compact Lie group with its bi-invariant metric.
\end{itemize}
In this second case, the $3$-form $\mu_{bcd}$ is a constant multiple of the 
Cartan $3$-form and satisfies\/~$\nabla_a\mu_{bcd}=0$. 
\end{thm}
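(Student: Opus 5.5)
The plan is to pass from Killing-Yano $3$-forms to parallel sections and then to reduce the problem to finite-dimensional representation theory at a basepoint. By the second remark after Corollary~\ref{SES_of_connections}, solutions of \eqref{KY_equation} correspond one-to-one to parallel sections $(\mu_{bcd},\rho_{bcde})$ of $\Wedge^3\oplus\Wedge^4$ for the connection \eqref{killing_yano_prolongation}; in particular $\rho_{abcd}=\nabla_a\mu_{bcd}$. Since $\nabla_aR_{bc}{}^d{}_e=0$, I would argue exactly as for Theorems~\ref{classical_locally_symmetric_conclusion} and~\ref{max_parallel_flat_on_symmetric_product}. First compute the curvature of \eqref{killing_yano_prolongation}. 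Its first line vanishes identically, by Lemma~\ref{mindless_algebra} and the Bianchi symmetry. The second line gives a linear algebraic map
$$(\mu,\rho)\mapsto \nabla_{[a}\nabla_{b]}\rho_{cdef}+4R_{[a|[c}{}^p{}_d\,\mu_{ef]p|b]}-4R_{[a|[c}{}^p{}_d\,\rho_{|b]ef]p}.$$
Its kernel, and the iterated kernels obtained by applying the connection, give the maximal parallel flat subbundle. At a basepoint $p$, with ${\mathfrak g}={\mathfrak m}\oplus{\mathfrak k}$ as in \eqref{traditional}, this subbundle is a ${\mathfrak k}$-submodule $F\subseteq\Wedge^3{\mathfrak m}\oplus\Wedge^4{\mathfrak m}$. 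It is stable under the ${\mathfrak m}$-action $X\mapsto X^a\nabla_a$, which carries the $\mu$-component to $\rho$ and the $\rho$-component to $-4X^aR_{a[b}{}^f{}_c\mu_{de]f}$. In this way the Killing-Yano $3$-forms become a ${\mathfrak g}$-module $F$, as in \S\ref{interactions}.

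The argument then splits according to whether the $\rho$-component of $F$ vanishes.

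In the first case, $\rho=0$. Then $\mu$ is parallel and the second line of \eqref{killing_yano_prolongation} forces $R_{a[b}{}^f{}_c\mu_{de]f}=0$. At $p$, $\mu$ is a ${\mathfrak k}$-invariant $3$-form on ${\mathfrak m}$ satisfying this identity. I would use $\mu$ and the metric to define a bracket $[X,Y]_\mu$ on ${\mathfrak m}$. The identity $R_{a[b}{}^f{}_c\mu_{de]f}=0$, together with ${\mathfrak k}$-invariance and the formula $R(X,Y)=-\mathrm{ad}[X,Y]$ on ${\mathfrak m}$, should say that $[\enskip,\enskip]_\mu$ satisfies the Jacobi identity and that ${\mathfrak k}$ acts by derivations. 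Irreducibility of the isotropy then gives two facts: ${\mathfrak m}\cong{\mathfrak k}$ as ${\mathfrak k}$-modules, and $G/K=(H\times H)/\Delta H$. That is the group case, and $\mu$ is a multiple of the Cartan $3$-form by Schur's lemma. Alternatively, I would simply check, using the list in~\cite[\S8.11]{W} together with LiE, that $(\Wedge^3{\mathfrak m})^{\mathfrak k}=0$ for every irreducible symmetric space that is not a group.

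In the second case, some $\rho\neq0$. For constant curvature, $R_{abcd}=g_{ac}g_{bd}-g_{ad}g_{bc}$ up to normalisation, and the term $4R_{a[b}{}^f{}_c\mu_{de]f}$ collapses to a multiple of $g_{a[b}\mu_{cde]}$. The curvature obstruction then vanishes identically, so $F=\Wedge^3\oplus\Wedge^4$: this is the sphere, with the expected dimension of Killing-Yano $3$-forms. For other spaces, I would show that the ${\mathfrak m}$-stability condition forces $\rho=0$. If $\rho_p\neq0$, then applying the ${\mathfrak m}$-action twice shows that the map $\mu\mapsto X^aR_{a[b}{}^f{}_c\mu_{de]f}$ must be proportional to $X\wedge\mu$ on the $\mu$-components of $F$. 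Averaging over ${\mathfrak k}$ and contracting with $R_{ab}=g_{ab}$ should then force the trace-free (Weyl) part of $R$ to annihilate a nonzero $3$-form. Irreducibility rules this out unless that Weyl part is zero, i.e.\ unless the space has constant curvature. As a fallback, Gallot's cone argument also works: a non-parallel Killing-Yano form on a compact-type space yields a parallel form on the metric cone, whose holonomy is then reducible or special, and this leads to the sphere.

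I expect this last algebraic step to be the main obstacle. It requires showing that the curvature of a non-constant-curvature irreducible symmetric space cannot act on $3$-forms like $X\wedge\underbar{\enskip}$. My first attempt would be the contraction argument just described. If that does not close the gap, I would fall back on a case-by-case LiE branching over the list in~\cite{W}. Finally, I would check that in the group case $\rho=0$ is the only possibility, so that $\nabla_a\mu_{bcd}=0$ as claimed.
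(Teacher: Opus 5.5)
The genuine gap is your second case: showing that on an irreducible compact-type space not of constant curvature every Killing--Yano $3$-form is parallel. This is the real content of the theorem. The paper does not prove it; it imports it from Belgun, Moroianu and Semmelmann~\cite{BMS}. None of your routes supplies it.

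\begin{itemize}
\item Stability of $F$ under the $\mathfrak{m}$-action $(\mu,\rho)\mapsto(\iota_X\rho,\,-4X^aR_{a[b}{}^f{}_c\mu_{de]f})$ only says that $F$ is a $\mathfrak{g}$-module. Nothing in it, applied once or twice, forces $X^aR_{a[b}{}^f{}_c\mu_{de]f}$ to be proportional to $X\wedge\mu$.
\item The claim that irreducibility stops the Weyl part from annihilating a nonzero $3$-form has no basis. Irreducibility of $\mathfrak{k}$ on $\mathfrak{m}$ says nothing about kernels of $\mathfrak{k}$-equivariant maps on $\Wedge^3\mathfrak{m}$, which is typically reducible.
\item The cone fallback is circular. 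Only \emph{special} Killing forms, those with $\nabla_a\rho_{bcde}$ a constant multiple of $g_{a[b}\mu_{cde]}$, correspond to parallel forms on the cone to which Gallot's theorem applies. That is exactly the proportionality you have not proved.
\item A LiE search cannot cover the infinite classical families in~\cite[\S8.11]{W}.
\end{itemize}
The same gap recurs in your last step, where you need $\rho=0$ on group manifolds. A side slip: with $\nabla R=0$, the $\mu$-terms in the second line of the curvature of \eqref{killing_yano_prolongation} cancel, being symmetric in $ab$. So that curvature depends on $\rho$ alone, in line with Theorem~\ref{prolong}.

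Your first case is also incomplete. Invariance of $\mu$ does give that $\mathfrak{k}$ acts by derivations of $[\enskip,\enskip]_\mu$. But nothing you write yields the Jacobi identity, nor that $\iota_X\mu$ lies in $K$ rather than in a complement of $K$ in $\Wedge^2$, and these are what you need for $\mathfrak{m}\cong\mathfrak{k}$.

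The missing idea, which the paper uses, is to trace the invariance identity $R_{ab}{}^f{}_c\phi_{fde}+R_{ab}{}^f{}_d\phi_{cfe}+R_{ab}{}^f{}_e\phi_{cdf}=0$ over $bc$. With the Bianchi symmetry and \eqref{partial_inverse} this gives $R^{cf}{}_{ad}\phi_{ecf}=\phi_{ead}$. So each $X^e\phi_{eab}$ is a curvature eigenvector with eigenvalue $1$ and hence lies in $K$. A trace count with $R_{ab}{}^{ab}=n$ then makes $X\mapsto X^e\phi_{eab}$ an isomorphism $\Wedge^1\cong K$. The paper finishes with the involution \eqref{really_nice_endomorphism}, which commutes with the Killing connection and splits the Killing fields into two commuting ideals.

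Once $[X,\cdot\,]_\phi$ is known to be the action of an element of $\mathfrak{k}$, your Jacobi identity is just the derivation property, so your route would also close. Your LiE alternative, checking $(\Wedge^3\mathfrak{m})^{\mathfrak{k}}=0$ space by space, again cannot handle the infinite families.
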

\begin{proof} If $M$ is a subset of the round sphere \big(so
$R_{abcd}=\frac1{n-1}(g_{ac}g_{bd}-g_{bc}g_{ad})$\big), then it is easy to
check that the prolongation connection (\ref{killing_yano_prolongation}) for
$3$-forms satisfying (\ref{KY_equation}) is flat.  It follows that the 
solution space has dimension 
$$\textstyle\rank\Wedge^3+\rank\Wedge^4=\frac16n(n-1)(n-2)
+\frac1{24}n(n-1)(n-2)(n-3)=\frac1{24}(n+1)n(n-1)(n-2)$$
(and can be identified with $\Wedge^4({\mathbb{R}}^{n+1})$ under the natural
action of ${\mathrm{SL}}(n+1,{\mathbb{R}})$ on~$S^n$).  Otherwise, Belgun,
Moroianu, and Semmelmann~\cite{BMS} have shown that the Killing-Yano forms of
any rank are parallel.  Suppose $\phi_{bcd}$ is such a parallel $3$-form
$\nabla_a\phi_{bcd}=0$.  Compact Lie groups certainly carry such a form, namely
the Cartan $3$-form:
$$\phi_{bcd}X^bY^cZ^d\equiv\left<[X,Y],Z\right>,\quad\mbox{where }
\begin{array}[t]{l}
X^b,Y^c,Z^d\enskip\mbox{are left-invariant vector fields},\\
\mbox{and }\left<\enskip\,,\enskip\right>\enskip\mbox{is the Killing form}.
\end{array}$$
Conversely, a parallel form $\phi_{bcd}$ may
be used to break up the Killing fields into two types.  Specifically, if we
normalise $\phi_{bcd}$ such that $\phi_a{}^{cd}\phi_{bcd}=g_{ab}$ (see, for
example~\cite[Lemma~1]{CELM}, which says that any parallel symmetric $2$-tensor
on an irreducible locally symmetric space must be a constant multiple of the
metric), then we claim that
\begin{equation}\label{claimed_splitting}
K^1(M,\nabla)=\{\sigma_b\mid\nabla_a\sigma_b=\phi_{abc}\sigma^c\}\oplus
\{\sigma_b\mid\nabla_a\sigma_b=-\phi_{abc}\sigma^c\}.\end{equation}
To see this, let us firstly observe that $\nabla_b\phi_{cde}=0$ implies
$$R_{ab}{}^f{}_c\phi_{fde}+R_{ab}{}^f{}_d\phi_{cfe}
+R_{ab}{}^f{}_e\phi_{cdf}=0.$$
Tracing over $bc$ and employing the Bianchi symmetry in the form 
$R_a{}^{cf}{}_d=-\frac12R^{cf}{}_{ad}$ yields
$$\phi_{ade}=R^{cf}{}_{a[d}\phi_{e]cf}.$$
However, since $R^{cf}{}_{ad}\phi_{ecf}$ is skew in~$ad$, it 
follows from (\ref{partial_inverse}) that  
$$R^{cf}{}_{ad}\phi_{ecf}=\phi_{ead},$$
i.e.~that each $X^e\phi_{eab}$ is an eigenvector of $R_{ab}{}^{cd}$ as an
operator $\Wedge^2\ni\mu_{ab}\mapsto R_{ab}{}^{cd}\mu_{cd}\in\Wedge^2$, with
eigenvalue~$1$.  But, for example~\cite[Theorem~2]{CELM}, implies that $K$ is
the direct sum of the eigenspaces of $R_{ab}{}^{cd}$ with non-zero eigenvalues.
In particular, we conclude that
$$\phi_{eab}\in\Gamma\big(\Wedge^3\cap(\Wedge^1\otimes K)\big).$$
Furthermore, since $R_{ab}{}^{ab}=\dim M$, it follows that $X^e\phi_{eab}$ 
spans all eigenspaces with eigenvalue~$1$. In other words 
\begin{equation}\label{group_isomorphism}
\Wedge^1\ni\sigma_b\longmapsto\phi_{bcd}\sigma^d\in K\subset\Wedge^2
\end{equation}
is an isomorphism.  Our normalisation $\phi_a{}^{bc}\phi_{bc}{}^d=\delta_a{}^d$
now implies that the endomorphism
\begin{equation}\label{really_nice_endomorphism}
\begin{array}{c}\Wedge^1\\[-4pt] \oplus\\[-2pt] \Wedge^2\end{array}\ni
\left[\!\begin{array}{c}\sigma_b\\ \mu_{bc}\end{array}\!\right]\longmapsto
\left[\!\begin{array}{c}\phi_b{}^{cd}\mu_{cd}\\ 
\phi_{bc}{}^d\sigma_d\end{array}\!\right]\end{equation}
squares to the identity on $\Wedge^1\oplus K$. Therefore, we may write 
\begin{equation}\label{splitting}
\begin{array}{c}\Wedge^1\\[-4pt] \oplus\\[-1pt] K\end{array}=
\left\{\left[\!\begin{array}{c}\sigma_b\\ \mu_{bc}\end{array}\!\right]
\mid\mu_{ab}=\phi_{abc}\sigma^c\right\}\oplus
\left\{\left[\!\begin{array}{c}\sigma_b\\ \mu_{bc}\end{array}\!\right]
\mid\mu_{ab}=-\phi_{abc}\sigma^c\right\}.\end{equation}
But now it is straightforward to check that the endomorphism
(\ref{really_nice_endomorphism}) commutes with the 
connection (\ref{Killing_connection}) and so the splitting (\ref{splitting}) 
decomposes $\Wedge^1\oplus K$ into parallel subbundles.  The decomposition
(\ref{claimed_splitting}) follows. Finally, we may employ the formula 
(\ref{Lie_bracket}) to conclude that the subspaces in 
(\ref{claimed_splitting}) are closed under Lie bracket and these two subspaces 
commute. We have found the left-invariant and right-invariant vector fields 
on~$G$.
\end{proof}

\begin{cor}\label{injection}
The homomorphism \eqref{K1->K2} is injective unless
\begin{itemize}
\item $M\subseteq$ the sphere with its round metric,
\item $M\subseteq G$, a compact Lie group with its bi-invariant metric. 
\end{itemize}
\end{cor}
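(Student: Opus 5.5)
The plan is to combine Theorem~\ref{non-zero_KY_three-forms} with the short exact sequence of Corollary~\ref{SES_of_connections}, following the third Remark after that corollary. Work locally, or pass to the simply connected cover; injectivity on a connected open set suffices, since Killing fields are determined by their germs. Suppose $M$ is neither locally a round sphere nor locally a compact Lie group with bi-invariant metric. Then Theorem~\ref{non-zero_KY_three-forms} says there are no non-zero Killing--Yano $3$-forms on any connected open subset of~$M$.

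The first step is to identify the parallel sections of the kernel bundle $\Wedge^3\oplus\Wedge^4$ with respect to the connection~\eqref{killing_yano_prolongation}. The first line of that connection shows that a parallel section $(\mu_{bcd},\rho_{bcde})$ has $\nabla_a\mu_{bcd}=\rho_{abcd}$, which is totally skew. Hence $\mu_{bcd}$ is a Killing--Yano $3$-form, and $\rho$ is determined by~$\mu$. So this kernel bundle has no non-zero parallel sections.

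The second step is to realise $\bigodot^2K^1(M,\nabla)$ as parallel sections. By Theorem~\ref{classical_locally_symmetric_conclusion}, $K^1$ is the space of parallel sections of $\Wedge^1\oplus K$ for the Killing connection. Its symmetric square is therefore the space of parallel sections of $\bigodot^2(\Wedge^1\oplus K)$ for the induced connection~\eqref{symmetric_power}; Theorem~\ref{max_parallel_flat_on_symmetric_product} confirms that these account for everything. The map $(\sigma,\mu)\odot(\widetilde\sigma,\widetilde\mu)\mapsto(\sigma\odot\widetilde\sigma,\ldots)$ is injective at the level of sections.

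By Theorem~\ref{the_same_parallel_sections}, these sections are also $\widetilde\nabla$-parallel. By Theorem~\ref{Phi_is_parallel}, $\Phi$ carries them to parallel sections of the prolongation bundle of Theorem~\ref{prolonged_2-tensors}. The first remark after Corollary~\ref{SES_of_connections} records that the first component of the image is exactly $2\sigma_{(b}\widetilde\sigma_{c)}$. Consequently, under the identifications of Theorems~\ref{killing_prolongation} and~\ref{prolonged_2-tensors}, the map~\eqref{K1->K2} equals, up to a factor of~$2$, the map $\Sigma\mapsto\Phi\Sigma$ on $\widetilde\nabla$-parallel sections.

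The third step is the injectivity itself. Suppose $\Sigma$ is $\widetilde\nabla$-parallel and $\Phi\Sigma=0$. Then $\Sigma$ is a section of $\Wedge^3\oplus\Wedge^4$. Because the connections in~\eqref{SES} are compatible, $\widetilde\nabla$ restricted to that subbundle is the connection~\eqref{killing_yano_prolongation}, so $\Sigma$ is parallel there. By the first step $\Sigma=0$. Hence $\bigodot^2K^1\to K^2$ is injective.

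The main point needing care is this last compatibility: the kernel of $\Phi$ must be $\widetilde\nabla$-invariant with induced connection~\eqref{killing_yano_prolongation}. This is exactly what Corollary~\ref{SES_of_connections} asserts, and its proof verifies it. The case of a general, non-simply-connected $M$ then follows by restricting to a simply connected open subset, since Killing tensors are determined by their restrictions to open subsets.
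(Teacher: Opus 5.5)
Your proposal is correct and follows the paper's own route. The paper's proof simply cites the third Remark after Corollary~\ref{SES_of_connections}, which combines Theorems~\ref{the_same_parallel_sections}, \ref{Phi_is_parallel}, \ref{max_parallel_flat_on_symmetric_product} and~\ref{non-zero_KY_three-forms} exactly as you do: anything killed by $\Phi$ is a $\widetilde\nabla$-parallel section of $\Wedge^3\oplus\Wedge^4$, hence a Killing--Yano $3$-form, hence zero.
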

\begin{proof} As already remarked in the third bullet point just after
Corollary~\ref{SES_of_connections}, this follows from
Theorems~\ref{max_parallel_flat_on_symmetric_product}
and~\ref{non-zero_KY_three-forms}.  Alternatively, Nagy~\cite[Theorem~3.1]{N}
gives a purely algebraic argument to show that $\Wedge^3\cap(\Wedge^1\otimes
K)=0$ except for the sphere or a Lie group, in which case it is $1$-dimensional
and spanned by the Cartan form.  He also shows that
$\Wedge^4\cap(\Wedge^2\otimes K)=0$ except for the sphere.  (His arguments also
cover the non-compact case and hyperbolic space.)
\end{proof}
Bearing in mind the arguments in the proof of
Theorem~\ref{non-zero_KY_three-forms}, especially the emergence of the parallel
Cartan $3$-form $\phi_{bcd}$ in case $M\subseteq G$, it is straightforward to
write down explicitly, a non-zero element in the kernel of the homomorphism
(\ref{K1->K2}) in this case.  The key observation is that
$R_{bc}{}^{de}\phi_{ade}=\phi_{abc}$.  In particular, the Cartan $3$-form
$\phi_{bcd}$ is a section of $\Wedge^1\otimes K$.  Also, we can identify the
Riemann curvature tensor as follows.  
\begin{lemma}\label{group_curvature} The Riemann curvature tensor on a compact
Lie group $G$ with its bi-invariant metric is given by
\begin{equation}\label{group_curvature_formula}
R_{abcd}=\phi_{ab}{}^e\phi_{cde},\end{equation}
where $\phi_{abc}$ is the Cartan form normalised so that 
$\phi_a{}^{cd}\phi_{bcd}=g_{ab}$.
\end{lemma}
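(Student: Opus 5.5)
The plan is to avoid computing with left-invariant vector fields and instead read off (\ref{group_curvature_formula}) from the spectral information already obtained in the proof of Theorem~\ref{non-zero_KY_three-forms}. Regard the curvature as a symmetric endomorphism
$$\mathcal{R}:\Wedge^2\ni\mu_{ab}\longmapsto R_{ab}{}^{cd}\mu_{cd}\in\Wedge^2.$$
It is symmetric by the pair symmetry $R_{abcd}=R_{cdab}$ of a Levi-Civita curvature. The claim is then the identity of endomorphisms
$$R_{ab}{}^{cd}=\phi_{ab}{}^e\phi^{cd}{}_e .$$
Lowering indices gives $R_{abcd}=\phi_{ab}{}^e\phi_{cde}$.

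The steps, in order, are as follows.

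\emph{Step 1.} From the proof of Theorem~\ref{non-zero_KY_three-forms}, with $\phi$ the Cartan form normalised by $\phi_a{}^{cd}\phi_{bcd}=g_{ab}$, we have $R^{cf}{}_{ad}\phi_{ecf}=\phi_{ead}$. So every $2$-form $\phi_{abe}X^e$ is an eigenvector of $\mathcal{R}$ with eigenvalue~$1$. Moreover, (\ref{group_isomorphism}) shows that these $2$-forms fill out all of~$K$.

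\emph{Step 2.} By \cite[Theorem~2]{CELM}, $K$ is the direct sum of the eigenspaces of $\mathcal{R}$ with non-zero eigenvalues. Step~1 shows that $K$ is entirely eigenvalue~$1$. Since $\mathcal{R}$ is symmetric, it therefore vanishes on the orthogonal complement of $K$ in $\Wedge^2$. Hence $\mathcal{R}$ is the orthogonal projection onto $K$, for the pairing $\mu_{cd}\nu^{cd}$.

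\emph{Step 3.} Consider $P_{ab}{}^{cd}\equiv\phi_{ab}{}^e\phi^{cd}{}_e$. This operator kills every $\mu$ orthogonal to all the $\phi^{cd}{}_e$, that is, it kills $K^\perp$. On $K$ it acts as the identity, because the normalisation gives
$$P_{ab}{}^{cd}\,\phi_{cdf}\sigma^f=\phi_{ab}{}^e g_{ef}\sigma^f=\phi_{abe}\sigma^e .$$
So $P$ is the same orthogonal projection as $\mathcal{R}$, and therefore $P=\mathcal{R}$.

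The main point needing care is Step~2. First, the inner product on $\Wedge^2$ implicit in the contraction $R_{ab}{}^{cd}\mu_{cd}$ must be the one for which $\mathcal{R}$ is self-adjoint, so that ``projection'' in Steps~2 and~3 means the same thing; the raw contraction $\mu_{cd}\nu^{cd}$ makes both $\mathcal{R}$ and $P$ self-adjoint, so the two projections do coincide. Second, the eigenvalue statement in Step~1 must use the same sign and normalisation conventions as the Ricci normalisation $R_{ab}=g_{ab}$. As a consistency check, take the trace: $R_{ab}{}^{ab}=\phi_{ab}{}^e\phi^{ab}{}_e=\dim M$, which matches the rank of $K$.

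If one preferred a more classical route, the fallback is the formula $R(X,Y)Z=-\frac14[[X,Y],Z]$ for left-invariant fields under a bi-invariant metric. Expressing the bracket through $\phi$ gives $R_{abcd}=c\,\phi_{ab}{}^e\phi_{cde}$ for some constant~$c$, and $c=1$ is then fixed by contracting to the Ricci tensor and using $R_{ab}=g_{ab}$ together with the normalisation of~$\phi$.
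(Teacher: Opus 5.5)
Your proof is correct and is essentially the paper's: the paper notes that both sides lie in $K\bigodot K$ and, since $\sigma_b\mapsto\phi_{bcd}\sigma^d$ is an isomorphism onto $K$, it only checks $R_{abcd}\phi^{cd}{}_p=\phi_{ab}{}^e\phi_{cde}\phi^{cd}{}_p$ using $R_{bc}{}^{de}\phi_{ade}=\phi_{abc}$ and the normalisation, which is exactly your \emph{both are the orthogonal projection onto $K$} argument, since vanishing on $K^\perp$ is the same as lying in $K\bigodot K$. The only cosmetic difference is that you get $\mathcal{R}=0$ on $K^\perp$ from the eigenspace description of $K$ in \cite{CELM}, whereas the paper's membership $R\in K\bigodot K$ would come from Theorem~\ref{LTS} and pair symmetry.
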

\begin{proof} Since both sides of (\ref{group_curvature_formula}) are sections
of $K\bigodot K$ and since (\ref{group_isomorphism}) is an isomorphism, it
suffices to check that
$$R_{abcd}\phi^{cd}{}_p=\phi_{ab}{}^e\phi_{cde}\phi^{cd}{}_p,$$
which is true by the same key observation.
\end{proof} 
Now we claim that
$$\left[\!\begin{array}{c}0\\ \phi_{bcd}\\
0\end{array}\!\right]\in
\begin{array}{c}
\bigodot^2\!\Wedge^1\\[-4pt] \oplus\\[-2pt]
\Wedge^1\otimes K\\[-2pt]
\oplus\\[-1pt]
\bigodot^2\!K
\end{array}
=\textstyle\bigodot^2\!\left(\!\begin{array}{c}
\Wedge^1\\[-4pt] \oplus\\[-2pt]
K\end{array}\!\right)$$
is parallel for the connection (\ref{symmetric_power}).  Accordingly, we only
need check that
$$R_{bc}{}^f{}_a\phi_{fde}+R_{de}{}^f{}_a\phi_{fbc}=0$$
and, by Lemma~\ref{group_curvature}, this expression becomes
$$\phi_{bc}{}^p\phi_{fap}\phi^f{}_{de}+\phi_{de}{}^p\phi_{fap}\phi^f{}_{bc}
=\phi_{bc}{}^p\phi_{fap}\phi^f{}_{de}+\phi_{de}{}^f\phi_{paf}\phi^p{}_{bc}
=0,$$
as required.

More abstractly, on any compact symmetric space $M=G/K$, the Killing form on
${\mathfrak{g}}$ regarded as a element of
$\bigodot^2\!{\mathfrak{g}}=\bigodot^2\!K^1(M,\nabla)$, maps to the metric
$g_{ab}$ under the canonical homomorphism~(\ref{K1->K2}).  On a Lie group $G$,
however, there are two possible realisations of~${\mathfrak{g}}$, namely as the
left-invariant or right-invariant vector fields.  Consequently, there are two 
different elements in $\bigodot^2\!K^1(M,\nabla)$ that map to $g_{ab}$ 
under~(\ref{K1->K2}) and their difference hence lies in the kernel. 

As noted in the alternative proof of Corollary~\ref{injection}, if $M$ is
neither spherical nor a {\em group manifold\/}~\cite[Table~8.11.5]{W}, then
Nagy~\cite[Theorem~3.1]{N} implies that the compositions
$$\Wedge^1\otimes K\hookrightarrow\Wedge^1\otimes\Wedge^2\to
\begin{picture}(12,12)(0,2)
\put(0,0){\line(1,0){6}}
\put(0,6){\line(1,0){12}}
\put(0,12){\line(1,0){12}}
\put(0,0){\line(0,1){12}}
\put(6,0){\line(0,1){12}}
\put(12,6){\line(0,1){6}}
\end{picture}$$
and 
$$\textstyle K\bigodot K\hookrightarrow\Wedge^2\bigodot\Wedge^2\to
\begin{picture}(12,12)(0,2)
\put(0,0){\line(1,0){12}}
\put(0,6){\line(1,0){12}}
\put(0,12){\line(1,0){12}}
\put(0,0){\line(0,1){12}}
\put(6,0){\line(0,1){12}}
\put(12,0){\line(0,1){12}}
\end{picture}$$
are injective. Let us write $P$ and $Q$ for the respective cokernels
$$\Wedge^1\otimes K\hookrightarrow
\begin{picture}(12,12)(0,2)
\put(0,0){\line(1,0){6}}
\put(0,6){\line(1,0){12}}
\put(0,12){\line(1,0){12}}
\put(0,0){\line(0,1){12}}
\put(6,0){\line(0,1){12}}
\put(12,6){\line(0,1){6}}
\end{picture}\to P\to 0\quad\mbox{and}\quad
\textstyle K\bigodot K\hookrightarrow
\begin{picture}(12,12)(0,2)
\put(0,0){\line(1,0){12}}
\put(0,6){\line(1,0){12}}
\put(0,12){\line(1,0){12}}
\put(0,0){\line(0,1){12}}
\put(6,0){\line(0,1){12}}
\put(12,0){\line(0,1){12}}
\end{picture}\to Q\to 0.$$
Thus, according to Theorem~\ref{Phi_is_parallel} and the alternative proof
of Corollary~\ref{injection} above, we find
$$\textstyle\bigodot^2\!\left(\!\begin{array}{c}
\Wedge^1\\[-4pt] \oplus\\[-2pt]
K\end{array}\!\right)=\begin{array}{c}
\bigodot^2\!\Wedge^1\\[-4pt] \oplus\\[-2pt]
\Wedge^1\otimes K\\[-2pt]
\oplus\\[-1pt]
K\bigodot K
\end{array}$$
realised as a flat parallel subbundle of 
$\,\begin{picture}(12,6)(0,0)
\put(0,0){\line(1,0){12}}
\put(0,6){\line(1,0){12}}
\put(0,0){\line(0,1){6}}
\put(6,0){\line(0,1){6}}
\put(12,0){\line(0,1){6}}
\end{picture}\oplus
\begin{picture}(12,12)(0,2)
\put(0,0){\line(1,0){6}}
\put(0,6){\line(1,0){12}}
\put(0,12){\line(1,0){12}}
\put(0,0){\line(0,1){12}}
\put(6,0){\line(0,1){12}}
\put(12,6){\line(0,1){6}}
\end{picture}\oplus\begin{picture}(12,12)(0,2)
\put(0,0){\line(1,0){12}}
\put(0,6){\line(1,0){12}}
\put(0,12){\line(1,0){12}}
\put(0,0){\line(0,1){12}}
\put(6,0){\line(0,1){12}}
\put(12,0){\line(0,1){12}}
\end{picture}\,$ with its prolongation connection 
(\ref{our_tricky_prolongation_connection}) from 
Theorem~\ref{prolonged_2-tensors}. We obtain, therefore, a connection on the 
quotient bundle $P\oplus Q$.
\begin{thm}\label{hidden_criterion}
Suppose $M$ is neither spherical nor a group manifold. If there 
are hidden symmetries on $M$, then $P\oplus Q$ admits non-trivial parallel 
sections. \end{thm}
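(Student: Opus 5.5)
Let $E$ denote the prolongation bundle of Theorem~\ref{prolonged_2-tensors} with its connection (\ref{our_tricky_prolongation_connection}), and $F\equiv\Phi\big(\bigodot^2(\Wedge^1\oplus K)\big)=\Delta\subseteq E$. The plan is to use the short exact sequence of bundles with connection
$$0\to F\to E\to E/F\to0,$$
where, since $M$ is neither spherical nor a group manifold, Nagy's injectivity statements make $\Phi$ an isomorphism of $\bigodot^2(\Wedge^1\oplus K)$ onto $F$. With the $\bigodot^2\!\Wedge^1$ components agreeing, $E/F\cong P\oplus Q$ as bundles. By Theorem~\ref{small_subbundle}, $F$ is parallel for (\ref{our_tricky_prolongation_connection}), so the connection on $E$ descends to $E/F=P\oplus Q$; this is the connection on $P\oplus Q$ in the statement. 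The key property is that the projection $\pi:E\to P\oplus Q$ intertwines the connections, $\nabla\circ\pi=\pi\circ\nabla$, so it maps parallel sections of $E$ to parallel sections of $P\oplus Q$.

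Now suppose $\sigma_{bc}$ is a hidden symmetry, i.e.\ a Killing $2$-tensor outside the range of (\ref{K1->K2}). Working locally, or on the simply connected cover, Theorem~\ref{prolonged_2-tensors} gives a parallel section $\Sigma=(\sigma,\mu,\rho)$ of $E$. Then $\pi(\Sigma)$ is a parallel section of $P\oplus Q$, and it remains to show $\pi(\Sigma)\neq0$. If instead $\pi(\Sigma)=0$, then $\Sigma$ would be a section of $F$ that is parallel for the restricted connection. By Theorem~\ref{Phi_is_parallel}, $\Phi$ carries $\widetilde\nabla_a$ on $\bigodot^2(\Wedge^1\oplus K)$ to the prolongation connection, and since $\Phi$ is an isomorphism onto $F$, $\Sigma=\Phi(\widehat\Sigma)$ for a unique $\widetilde\nabla$-parallel section $\widehat\Sigma$ of $\bigodot^2(\Wedge^1\oplus K)$.

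The step needing most care is identifying such $\widehat\Sigma$ with elements of $\bigodot^2 K^1(M,\nabla)$. By Theorem~\ref{the_same_parallel_sections}, $\widehat\Sigma$ is also parallel for the induced connection (\ref{symmetric_power}). That connection is the symmetric square of the flat Killing connection on $\Wedge^1\oplus K$, by Theorem~\ref{classical_locally_symmetric_conclusion} and Theorem~\ref{max_parallel_flat_on_symmetric_product}. Its parallel sections are therefore exactly the symmetric square of the parallel sections of $\Wedge^1\oplus K$, namely $\bigodot^2 K^1$, and their first components are sums of products $\sigma_{(b}\widetilde\sigma_{c)}$ of Killing $1$-forms. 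Since $\Phi$ does not change the first component, $\sigma_{bc}$ would then be decomposable, contradicting the assumption. Hence $\pi(\Sigma)$ is a non-trivial parallel section of $P\oplus Q$.

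Finally, one point needs checking: that the quotient really is $P\oplus Q$ compatibly. On the $\Wedge^1\otimes K$ component $\Phi$ is the composite $\Wedge^1\otimes K\to\Wedge^1\otimes\Wedge^2\to$ (hook), and on $K\bigodot K$ it is $K\bigodot K\to$ (window). This is exactly how $P$ and $Q$ were defined, so the identification holds.
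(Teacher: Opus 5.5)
Your proposal is correct and is the paper's argument: a hidden symmetry gives a parallel section of the prolongation bundle that does not lie in the parallel subbundle $\Delta=\Phi\big(\bigodot^2(\Wedge^1\oplus K)\big)$, and so it projects to a non-zero parallel section of the quotient $P\oplus Q$. The paper states this in one line, whereas you spell out why such a section cannot lie in $\Delta$, via Theorems~\ref{Phi_is_parallel}, \ref{the_same_parallel_sections} and~\ref{max_parallel_flat_on_symmetric_product}. The last of these is also what guarantees that $\widetilde\nabla$ preserves $\bigodot^2(\Wedge^1\oplus K)$, which you use implicitly when injectivity of $\Phi$ there is taken to make $\widehat\Sigma$ parallel.
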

\begin{proof} A hidden symmetry gives a parallel section of 
$\,\begin{picture}(12,6)(0,0)
\put(0,0){\line(1,0){12}}
\put(0,6){\line(1,0){12}}
\put(0,0){\line(0,1){6}}
\put(6,0){\line(0,1){6}}
\put(12,0){\line(0,1){6}}
\end{picture}\oplus
\begin{picture}(12,12)(0,2)
\put(0,0){\line(1,0){6}}
\put(0,6){\line(1,0){12}}
\put(0,12){\line(1,0){12}}
\put(0,0){\line(0,1){12}}
\put(6,0){\line(0,1){12}}
\put(12,6){\line(0,1){6}}
\end{picture}\oplus\begin{picture}(12,12)(0,2)
\put(0,0){\line(1,0){12}}
\put(0,6){\line(1,0){12}}
\put(0,12){\line(1,0){12}}
\put(0,0){\line(0,1){12}}
\put(6,0){\line(0,1){12}}
\put(12,0){\line(0,1){12}}
\end{picture}\,$ not contained in the subbundle 
$\bigodot^2(\Wedge^1\oplus K)$ and hence a parallel section of the quotient 
bundle.
\end{proof}
Conversely, we do not know whether a parallel section of $P\oplus Q$ need lift
to a parallel section of the prolongation bundle
$\,\begin{picture}(12,6)(0,0)
\put(0,0){\line(1,0){12}}
\put(0,6){\line(1,0){12}}
\put(0,0){\line(0,1){6}}
\put(6,0){\line(0,1){6}}
\put(12,0){\line(0,1){6}}
\end{picture}\oplus
\begin{picture}(12,12)(0,2)
\put(0,0){\line(1,0){6}}
\put(0,6){\line(1,0){12}}
\put(0,12){\line(1,0){12}}
\put(0,0){\line(0,1){12}}
\put(6,0){\line(0,1){12}}
\put(12,6){\line(0,1){6}}
\end{picture}\oplus\begin{picture}(12,12)(0,2)
\put(0,0){\line(1,0){12}}
\put(0,6){\line(1,0){12}}
\put(0,12){\line(1,0){12}}
\put(0,0){\line(0,1){12}}
\put(6,0){\line(0,1){12}}
\put(12,0){\line(0,1){12}}
\end{picture}\,$ 
(and therefore to a genuine hidden symmetry).  But in any particular case, if
$P\oplus Q$ has parallel sections, then, in conjunction with the computation of
$P$ and $Q$ as homogeneous bundles via the LiE routines
in~\S\ref{LiE_appendix}, we will know where to look (see, for example,
\S\ref{answer_for_E6/F4}).  Moreover, with a view to finding hidden symmetries,
the following construction is sometimes useful (see, for example, the proof of
Theorem~\ref{E6_F4_hidden}).
\begin{thm}\label{using_a_parallel_symmetric_3-tensor}
As a homogeneous space $M\subseteq G/K$, let us suppose that the bundle
$\bigodot^3\!\Wedge^1$ of symmetric $3$-tensors contains the trivial bundle
$\Wedge^0$ as a homogeneous subbundle.  Let $\phi_{bcd}$ denote the parallel
symmetric $3$-tensor corresponding to a constant section of\/~$\Wedge^0$.
Suppose $X^b$ is a Killing field on~$M$.  Then $\sigma_{bc}\equiv\phi_{bcd}X^d$
is a Killing $2$-tensor.
\end{thm}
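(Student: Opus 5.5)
Since $\sigma_{bc}\equiv\phi_{bcd}X^d$ is manifestly symmetric, it suffices to check $\nabla_{(a}\sigma_{bc)}=0$ directly. The only preliminary is that $\phi_{bcd}$ is parallel, and we establish that first. On the homogeneous space $M\subseteq G/K$, the Levi-Civita connection is the canonical connection of the symmetric space, because $\nabla_aR_{bc}{}^d{}_e=0$ and the torsion vanishes. For this connection, sections of a homogeneous tensor bundle that correspond to $K$-invariant elements of the fibre at the basepoint are parallel. Equivalently, holonomy is contained in $K$, and a $K$-invariant tensor at $p$ extends by parallel transport to a well-defined parallel tensor. So a copy of the trivial representation $\Wedge^0\subseteq\bigodot^3\!\Wedge^1$ gives, from a constant section, a symmetric tensor with $\nabla_a\phi_{bcd}=0$. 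This is the same mechanism used above when identifying $g_{ab}$, or the Cartan form on a group manifold, as parallel. This is also exactly what the statement asserts by calling $\phi_{bcd}$ ``the parallel symmetric $3$-tensor''. If a more hands-on justification is wanted, I would note that ${\mathfrak{m}}$ acts through the connection as in~(\ref{neat_view}). A constant section is annihilated by both ${\mathfrak{k}}$ and ${\mathfrak{m}}$, the latter because $\Wedge^1\otimes\Wedge^0$ receives no component from a trivial summand via any canonical inclusion of the kind appearing in~(\ref{neat_view}).

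Granting $\nabla_a\phi_{bcd}=0$, the computation is one line:
$$\nabla_a\sigma_{bc}=\phi_{bcd}\nabla_aX^d,\qquad
\nabla_{(a}\sigma_{bc)}=\phi_{d(bc}\nabla_{a)}X^d .$$
Lowering the index with the metric, $\nabla_aX^d$ corresponds to $\nabla_aX_e=\mu_{ae}$, which is skew because $X$ is a Killing field (Theorem~\ref{killing_prolongation}). Hence the expression above is $\phi^e{}_{(bc}\mu_{a)e}$. Symmetrising over $abc$, this equals $\frac13\big(\phi^e{}_{bc}\mu_{ae}+\phi^e{}_{ca}\mu_{be}+\phi^e{}_{ab}\mu_{ce}\big)$. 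That tensor is precisely the infinitesimal action of the skew endomorphism $\mu_a{}^e\in{\mathfrak{so}}(T_pM)$ on $\phi$, up to sign and a factor of~$3$.

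So we need $\mu$ to annihilate $\phi$ at each point. Here $\mu_{ab}=\nabla_aX_b$ lies in $K$ by Theorem~\ref{classical_locally_symmetric_conclusion}, which means $\mu$ acts infinitesimally through ${\mathfrak{k}}$ in the decomposition~(\ref{traditional}). Since $\phi$ spans a trivial $K$-subbundle, ${\mathfrak{k}}$ kills $\phi$, and the expression vanishes.

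An equivalent route is through the Lie derivative. We have ${\mathcal{L}}_X\phi=X^e\nabla_e\phi+(\nabla X)\cdot\phi$. Isometries preserve the $G$-invariant tensor $\phi$, so ${\mathcal{L}}_X\phi=0$, and with $\nabla\phi=0$ this gives $(\nabla X)\cdot\phi=0$, as required.

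The main obstacle is the identification of the algebraic action of $K_p\subseteq\Wedge^2$ on tensors at $p$ with the isotropy representation of ${\mathfrak{k}}$. I would cite~(\ref{neat_view}) and the surrounding discussion: there the action of ${\mathfrak{k}}$ on homogeneous bundles is the infinitesimal isotropy action, and $\nabla_aX_b|_p\in K_p$ corresponds to the element of ${\mathfrak{k}}$ given by the Killing field. The rest is the routine symmetrisation check above.
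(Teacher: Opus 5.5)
Your proof is correct and is essentially the paper's argument: using $\nabla_a\phi_{bcd}=0$ (which the statement grants), you get $\nabla_{(a}\sigma_{bc)}=\mu_{(a}{}^d\phi_{bc)d}$ with $\mu_{ab}=\nabla_aX_b\in K$, and this is the infinitesimal action of ${\mathfrak{k}}$ on the invariant tensor $\phi$, hence zero. One caution about your optional `hands-on' aside on why $\phi$ is parallel: a Schur-type argument cannot by itself exclude an ${\mathfrak{m}}$-component, since on $E_6/F_4$, for instance, $\bigodot^3\!\Wedge^1$ contains a copy of $\Wedge^1$; rely instead on your canonical-connection argument, or on ${\mathcal{L}}_X\phi=0$ for a Killing field $X$ with $\nabla X|_p=0$, which gives $X^a\nabla_a\phi|_p=0$.
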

\begin{proof} Since $X^d$ is a Killing field, we know that
$\mu_{ab}\equiv\nabla_aX_b$ is section of $K\subseteq\Wedge^2$ and since
$\nabla_a\phi_{bcd}=0$, we find that
$$\nabla_{(a}\sigma_{bc)}=\mu_{(a}{}^d\phi_{bc)d}.$$
However, the right hand side of this equation is the action of 
${\mathfrak{k}}$ on $\phi_{bcd}\in\Gamma(\bigodot^3\!\Wedge^1)$, which is 
trivial by assumption.
\end{proof}

\subsection{The round sphere}\label{the_sphere} The general answer, well-known
since~\cite{D,MMS,Takeuchi, Thompson}, is that the Killing $k$-tensors on $S^n$
form an irreducible representation
$$\begin{picture}(48,30)(0,-10)
\put(0,0){\line(1,0){48}}
\put(0,8){\line(1,0){48}}
\put(0,16){\line(1,0){48}}
\put(0,0){\line(0,1){16}}
\put(8,0){\line(0,1){16}}
\put(16,0){\line(0,1){16}}
\put(40,0){\line(0,1){16}}
\put(48,0){\line(0,1){16}}
\put(29,3.5){\makebox(0,0){$\cdots$}}
\put(29,11.5){\makebox(0,0){$\cdots$}}
\put(67,8){\makebox(0,0){$({\mathbb{R}^{n+1}})$}}
\put(24,-9){\makebox(0,0){$\underbrace{\hspace{48pt}}_k$}}
\end{picture}\hspace{50pt}
\raisebox{12pt}{of\enskip${\mathrm{SL}}(n+1,{\mathbb{R}})$,}$$
with ${\mathrm{SL}}(n+1,{\mathbb{R}})$ acting on $S^n$ by projective
transformations. Thus,
$$\rule[-9pt]{0pt}{19pt}\textstyle\bigodot^2\!K^1(S^n)
=\bigodot^2\begin{picture}(8,16)(0,3)
\put(0,0){\line(1,0){8}}
\put(0,8){\line(1,0){8}}
\put(0,16){\line(1,0){8}}
\put(0,0){\line(0,1){16}}
\put(8,0){\line(0,1){16}}
\end{picture}\:({\mathbb{R}^{n+1}})
=\begin{picture}(16,16)(0,3)
\put(0,0){\line(1,0){16}}
\put(0,8){\line(1,0){16}}
\put(0,16){\line(1,0){16}}
\put(0,0){\line(0,1){16}}
\put(8,0){\line(0,1){16}}
\put(16,0){\line(0,1){16}}
\end{picture}\:({\mathbb{R}^{n+1}})
\;\oplus\;
\begin{picture}(8,18)(0,3)
\put(0,-8){\line(1,0){8}}
\put(0,0){\line(1,0){8}}
\put(0,8){\line(1,0){8}}
\put(0,16){\line(1,0){8}}
\put(0,24){\line(1,0){8}}
\put(0,-8){\line(0,1){32}}
\put(8,-8){\line(0,1){32}}
\end{picture}\:({\mathbb{R}^{n+1}})
=K^2(S^n)\oplus KY^3(S^2),$$
where $KY^3(S^n)$ denotes the space of Killing-Yano $3$-forms on the round 
$n$-sphere. In particular, there are no hidden symmetries. Viewed under 
${\mathrm{SO}}(n+1)$, the Riemannian motions of $S^n$, we find a finer 
decomposition
$$K^2(S^n)=\begin{picture}(16,16)(0,3)
\put(0,0){\line(1,0){16}}
\put(0,8){\line(1,0){16}}
\put(0,16){\line(1,0){16}}
\put(0,0){\line(0,1){16}}
\put(8,0){\line(0,1){16}}
\put(16,0){\line(0,1){16}}
\put(20,0){\makebox(0,0){$\circ$}}
\end{picture}\enskip({\mathbb{R}^{n+1}})
\;\oplus\;
\begin{picture}(16,8)(0,1)
\put(0,0){\line(1,0){16}}
\put(0,8){\line(1,0){16}}
\put(0,0){\line(0,1){8}}
\put(8,0){\line(0,1){8}}
\put(16,0){\line(0,1){8}}
\put(20,0){\makebox(0,0){$\circ$}}
\end{picture}\enskip({\mathbb{R}^{n+1}})
\;\oplus\;{\mathbb{R}},$$
each of which can be identified with special types of Killing $2$-tensors 
according to their trace $\sigma\equiv g^{ab}\sigma_{ab}$, specifically
\begin{itemize}
\item \rule[-8pt]{0pt}{8pt}$\begin{picture}(16,16)(0,3)
\put(0,0){\line(1,0){16}}
\put(0,8){\line(1,0){16}}
\put(0,16){\line(1,0){16}}
\put(0,0){\line(0,1){16}}
\put(8,0){\line(0,1){16}}
\put(16,0){\line(0,1){16}}
\put(20,0){\makebox(0,0){$\circ$}}
\end{picture}\enskip({\mathbb{R}^{n+1}})\;\leftrightarrow \sigma=0$, with 
dimension $(n-2)(n+1)(n+2)(n+3)/12$,
\item \rule[-8pt]{0pt}{8pt}$\begin{picture}(16,8)(0,1)
\put(0,0){\line(1,0){16}}
\put(0,8){\line(1,0){16}}
\put(0,0){\line(0,1){8}}
\put(8,0){\line(0,1){8}}
\put(16,0){\line(0,1){8}}
\put(20,0){\makebox(0,0){$\circ$}}
\end{picture}\enskip({\mathbb{R}^{n+1}})\;\leftrightarrow 
\nabla^a\nabla_a\sigma+2\frac{n+1}{n-1}\sigma=0$, with dimension $n(n+3)/2$,
\item ${\mathbb{R}}\,\leftrightarrow\sigma$ is constant, with dimension $1$.
\end{itemize}

\subsection{Complex projective space} 
Sumitomo and Tandai \cite[Theorem 2.2, case $p=2$]{ST} show that
there are no hidden symmetries on ${\mathbb{CP}}_m$.  Alternatively, the space
of Killing $k$-tensors on ${\mathbb{CP}}_m$ is identified in 
\cite{E_cpn} as   
\begin{equation}\label{k-tensors_on_cpm}
\rule[-20pt]{0pt}{20pt}\Big(\begin{picture}(90,10)(-2,5)
\put(0,0){\line(1,0){48}}
\put(0,8){\line(1,0){48}}
\put(0,16){\line(1,0){48}}
\put(0,0){\line(0,1){16}}
\put(8,0){\line(0,1){16}}
\put(16,0){\line(0,1){16}}
\put(40,0){\line(0,1){16}}
\put(48,0){\line(0,1){16}}
\put(29,3.5){\makebox(0,0){$\cdots$}}
\put(29,11.5){\makebox(0,0){$\cdots$}}
\put(72,8){\makebox(0,0){$({\mathbb{C}^{m+1}})$}}
\put(24,-9){\makebox(0,0){$\underbrace{\hspace{48pt}}_k$}}
\put(52,-1){\makebox(0,0){$\perp$}}
\end{picture}\Big)_{\mathbb{R}}^{k,k},\end{equation}
where the subscript $\perp$ means to take $J_{\alpha\beta}$-trace-free part, 
the superscript $k,k$ means to take the $J_\alpha{}^\beta$-type-$(k,k)$ part, 
and the subscript ${\mathbb{R}}$ means to take the real subspace. For example, 
if $k=2$, this is 
$$\Wedge_{\perp,{\mathbb{R}}}^{1,1}({\mathbb{C}}^{m+1})$$
of dimension $(m+1)^2-1=m(m+2)=\dim{\mathfrak{su}}(m+1)$.  As detailed
in~\cite{E_cpn}, the dimension of (\ref{k-tensors_on_cpm}) when $k=2$ is
$$\frac{m(m+1)^2(m+2)}2=\frac{[m(m+2)][m(m+2)+1]}2$$
and, in conjunction with Corollary~\ref{injection}, we conclude that 
$\bigodot^2\!K^1({\mathbb{C}}_m)\to K^2({\mathbb{CP}}_m)$ is an isomorphism.

\subsection{Quaternionic projective space} Matveev and Nikolayevsky~\cite{MN} 
show that ${\mathbb{HP}}_k$ has hidden symmetries, more precisely that the
cokernel of 
$\bigodot^2\!K^1({\mathbb{HP}}_k)\hookrightarrow K^2({\mathbb{HP}}_k)$ is an 
irreducible representation of ${\mathrm{Sp}}(k+1)$ of dimension 
$(k-2)(k+1)(2k+1)(2k+3)/6$. 

\subsection{The Cayley plane}\label{OP_2} Matveev and Nikolayevsky~\cite{MN}
show that ${\mathbb{OP}}_2$ has hidden symmetries, more precisely that the
cokernel of $\bigodot^2\!K^1({\mathbb{OP}}_2)\hookrightarrow
K^2({\mathbb{OP}}_2)$ is an irreducible representation of $F_4$ dimension $26$.
We shall come back to this example in~\S\ref{answer_for_F4/B4} using the
machinery of this article and some parabolic geometry.

\subsection{The homogeneous space \mbox{$E_6/F_4$}}\label{answer_for_E6/F4}
We can completely treat this case using the machinery of this article.  All the
computations in this subsection are done using the computer program LiE, as
explained in \S\ref{LiE_appendix},

\begin{thm}\label{E6_F4_hidden} The homogeneous space $E_6/F_4$ admits hidden
symmetries, i.e.~Killing $2$-tensors not in the range of
$$\textstyle\bigodot^2\!K^1(E_6/F_4)\to K^2(E_6/F_4).$$
Indeed, we may find a $78$-dimensional space of hidden symmetries
that may be identified with the irreducible representation
$\rule[-6pt]{0pt}{28pt}\esix{0}{1}{0}{0}{0}{0}$ of $E_6$.
\end{thm}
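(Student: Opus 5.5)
Since $E_6/F_4$ is an irreducible simply connected symmetric space of compact type that is neither a sphere nor a group manifold, Corollary~\ref{injection} shows that $\bigodot^2\!K^1(E_6/F_4)\to K^2(E_6/F_4)$ is injective. Its image is therefore isomorphic to $\bigodot^2\!{\mathfrak{g}}$ with ${\mathfrak{g}}={\mathfrak{e}}_6$, which has dimension $78\cdot79/2=3081$. To exhibit hidden symmetries, I will construct Killing $2$-tensors explicitly using Theorem~\ref{using_a_parallel_symmetric_3-tensor}. Then I will use representation theory to show that these tensors cannot lie in the image.

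The first step is to check, with the LiE branching routines of \S\ref{LiE_appendix}, that $\Wedge^1$ on $E_6/F_4$ is the $26$-dimensional representation of $F_4$ and that $\bigodot^3\!\Wedge^1$ contains the trivial representation. This is the classical fact that the traceless Jordan algebra ${\mathfrak{h}}_3({\mathbb{O}})_0$ carries an $F_4$-invariant symmetric cubic form, coming from the determinant. The resulting parallel symmetric $3$-tensor $\phi_{bcd}$ is nonzero. For each Killing field $X^d$, Theorem~\ref{using_a_parallel_symmetric_3-tensor} then gives a Killing $2$-tensor $\phi_{bcd}X^d$. The map ${\mathfrak{g}}\to K^2(E_6/F_4)$ so obtained is ${\mathfrak{g}}$-equivariant, because $\phi$ is built functorially from the metric and the $E_6$-invariant structure (it is $E_6$-invariant, not just $F_4$-invariant, since it is parallel and the isometries act transitively). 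It is nonzero: at a basepoint we may choose $X$ with $X(p)$ arbitrary, and $\phi_{bcd}X^d(p)\neq0$ for suitable $X(p)$. Since ${\mathfrak{g}}=\esix{0}{1}{0}{0}{0}{0}$ is irreducible, this map is injective and has a $78$-dimensional image.

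The second step is to show that this image meets the decomposable Killing tensors only in $0$. Since the image is irreducible, it suffices to show that the isotypic component of $\esix{0}{1}{0}{0}{0}{0}$ in $\bigodot^2\!{\mathfrak{g}}$ is zero. I will compute $\bigodot^2$ of the adjoint representation of $E_6$ with LiE. I expect the decomposition $\bigodot^2({\mathfrak{e}}_6)=1\oplus650\oplus2430$ (dimensions summing to $3081$), in which the adjoint does not occur; the adjoint lies in $\Wedge^2$. Schur's lemma then shows that the image of ${\mathfrak{g}}$ meets the decomposables trivially, so it is a $78$-dimensional space of hidden symmetries, as claimed.

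I expect the main obstacle to be the nonvanishing and equivariance argument for $X\mapsto\phi_{bcd}X^d$, namely making sure the map is genuinely $E_6$-equivariant and not identically zero on the $\mathfrak{k}$-part. Nonvanishing is immediate from the $\mathfrak{m}$-part. For equivariance, I would argue that ${\mathcal{L}}_Y\phi=0$ for every Killing field $Y$: we have ${\mathcal{L}}_Y\phi=\mu\cdot\phi$, which is the ${\mathfrak{k}}$-action and vanishes by $F_4$-invariance. As an optional consistency check one could also invoke Theorem~\ref{hidden_criterion}, computing $P\oplus Q$ via LiE to confirm that it contains a copy of $\esix{0}{1}{0}{0}{0}{0}$ carrying parallel sections, but this is not needed for the statement.
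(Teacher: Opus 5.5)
Your proposal is correct and follows essentially the same route as the paper. In both, LiE exhibits a trivial summand in $\bigodot^3\!\Wedge^1$, and Theorem~\ref{using_a_parallel_symmetric_3-tensor} produces Killing $2$-tensors $\phi_{bcd}X^d$ indexed by ${\mathfrak{e}}_6$. Complementarity to the decomposables then follows from the decomposition $\bigodot^2{\mathfrak{e}}_6=1\oplus650\oplus2430$, which has no adjoint summand; your added checks (${\mathcal{L}}_Y\phi=0$, and nonvanishing plus irreducibility giving injectivity of $X\mapsto\phi_{bcd}X^d$) usefully spell out what the paper leaves implicit when it writes ${\mathfrak{e}}_6\hookrightarrow K^2(E_6/F_4)$.
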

\begin{proof}
For $E_6/F_4$, we have 
$${\mathfrak{e}}_6=\esix{0}{1}{0}{0}{0}{0}
=\begin{picture}(6,6)(0,0)
\put(0,0){\line(1,0){6}}
\put(0,6){\line(1,0){6}}
\put(0,0){\line(0,1){6}}
\put(6,0){\line(0,1){6}}
\end{picture}\oplus K
=\ffour{0}{0}{0}{1}\oplus\ffour{1}{0}{0}{0}$$
and we may, therefore, identify various other tensor bundles on $E_6/F_4$ as
follows:
$$\begin{array}{rcl}
\begin{picture}(6,12)(0,2)
\put(0,0){\line(1,0){6}}
\put(0,6){\line(1,0){6}}
\put(0,12){\line(1,0){6}}
\put(0,0){\line(0,1){12}}
\put(6,0){\line(0,1){12}}
\end{picture}&\!=\!&\ffour{0}{0}{1}{0}\oplus\ffour{1}{0}{0}{0}\\[4pt]
\begin{picture}(12,6)(0,0)
\put(0,0){\line(1,0){12}}
\put(0,6){\line(1,0){12}}
\put(0,0){\line(0,1){6}}
\put(6,0){\line(0,1){6}}
\put(12,0){\line(0,1){6}}
\end{picture}&\!=\!&
\ffour{0}{0}{0}{0}\oplus\ffour{0}{0}{0}{1}\oplus\ffour{0}{0}{0}{2}\\[4pt]
\begin{picture}(18,6)(0,0)
\put(0,0){\line(1,0){18}}
\put(0,6){\line(1,0){18}}
\put(0,0){\line(0,1){6}}
\put(6,0){\line(0,1){6}}
\put(12,0){\line(0,1){6}}
\put(18,0){\line(0,1){6}}
\end{picture}&\!=\!&
\ffour{0}{0}{0}{0}\oplus\ffour{0}{0}{0}{1}\oplus\ffour{0}{0}{0}{2}
\oplus\ffour{0}{0}{0}{3}\oplus\ffour{0}{0}{1}{0}\\[4pt]
\begin{picture}(12,12)(0,2)
\put(0,0){\line(1,0){6}}
\put(0,6){\line(1,0){12}}
\put(0,12){\line(1,0){12}}
\put(0,0){\line(0,1){12}}
\put(6,0){\line(0,1){12}}
\put(12,6){\line(0,1){6}}
\end{picture}&\!=\!&
2\ffour{0}{0}{0}{1}\oplus\ffour{0}{0}{0}{2}\oplus\ffour{0}{0}{1}{0}
\oplus\ffour{0}{0}{1}{1}\\[4pt]
&&{}\oplus\ffour{1}{0}{0}{0}\oplus\ffour{1}{0}{0}{1}
\end{array}$$
In particular, notice that, the bundle $\,\begin{picture}(18,6)(0,0)
\put(0,0){\line(1,0){18}}
\put(0,6){\line(1,0){18}}
\put(0,0){\line(0,1){6}}
\put(6,0){\line(0,1){6}}
\put(12,0){\line(0,1){6}}
\put(18,0){\line(0,1){6}}
\end{picture}=\bigodot^3\!\Wedge^1$ admits a (unique) trivial subbundle and we 
may invoke Theorem~\ref{using_a_parallel_symmetric_3-tensor} to construct 
$${\mathfrak{e}}_6=K^1(E_6/F_4)\hookrightarrow K^2(E_6/F_4).$$
These Killing $2$-tensors are complementary to the range of
$\bigodot^2\!K^1(E_6/F_4)\to K^2(E_6/F_4)$ because, from
Corollary~\ref{injection}, this homomorphism is injective and its range,
therefore, splits into
$$\textstyle\bigodot^2\!\Big(\esix{0}{1}{0}{0}{0}{0}\Big)=
\esix{0}{0}{0}{0}{0}{0}\oplus\esix{0}{2}{0}{0}{0}{0}
\oplus\esix{1}{0}{0}{0}{0}{1},$$
none of which is ${\mathfrak{e}}_6$. 
\end{proof}
\begin{thm} Together with the hidden symmetries constructed in
Theorem~\ref{E6_F4_hidden}, the range of the canonical injection (according to 
Corollary~\ref{injection})
\begin{equation}\label{can_inject}
\textstyle\bigodot^2\!K^1(E_6/F_4)\hookrightarrow K^2(E_6/F_4)
\end{equation}
provides all of the Killing $2$-tensors on $E_6/F_4$.
\end{thm}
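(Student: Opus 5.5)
Our plan is to bound $\dim K^2(E_6/F_4)$ from above by $3081+78=3159$, the dimension already realised by the range of (\ref{can_inject}) plus the hidden symmetries of Theorem~\ref{E6_F4_hidden}. Since $E_6/F_4$ is neither spherical nor a group manifold, Theorem~\ref{hidden_criterion} applies. Any Killing $2$-tensor gives a parallel section of the prolongation bundle with connection (\ref{our_tricky_prolongation_connection}). Modulo the flat parallel subbundle $\bigodot^2(\Wedge^1\oplus K)$, it projects to a parallel section of $P\oplus Q$. Hence $\dim K^2\le 3081+\dim\{\mbox{parallel sections of }P\oplus Q\}$. It therefore suffices to show that the parallel sections of $P\oplus Q$ form a space of dimension at most $78$.

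First, we compute $P$ and $Q$ as homogeneous $F_4$-bundles using the LiE routines of \S\ref{LiE_appendix}. For $P$, we take the decomposition of the hook bundle displayed in the proof of Theorem~\ref{E6_F4_hidden} and remove $\Wedge^1\otimes K=\ffour{0}{0}{0}{1}\otimes\ffour{1}{0}{0}{0}$. For $Q$, we branch the Riemann-symmetry bundle and remove $K\bigodot K$. As in (\ref{neat_view}), the flat connection on a parallel flat subbundle at $p$ encodes the $\mathfrak{e}_6$-action. So the space of parallel sections of any parallel flat subbundle $F\subseteq P\oplus Q$ is an $E_6$-representation $V$ whose restriction to $F_4$ equals $F_p$. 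We then enumerate which $E_6$-representations could branch into a sub-$F_4$-representation of $P_p\oplus Q_p$ compatibly with $\mathfrak m$ acting via $X^a\nabla_a$. Concretely, we list the $F_4$-summands of $P\oplus Q$ and ask which unions of them form the branching of an $E_6$-module. We expect only $\esix{0}{1}{0}{0}{0}{0}=\ffour{0}{0}{0}{1}\oplus\ffour{1}{0}{0}{0}$ to survive, occurring once. This matches the hidden symmetries, whose image in $P\oplus Q$ we can check is non-zero.

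The main obstacle is that the representation-theoretic count may allow several candidate $E_6$-modules in $P\oplus Q$, for instance $\esix{1}{0}{0}{0}{0}{0}$ types or multiple copies. In that case we will rule them out by curvature. Parallel sections must lie in the kernel of the curvature of $P\oplus Q$, and we use the constraints (\ref{first_damper}): $\rho\in\ker(R\bowtie\underline{\enskip})$ and $(R\diamond\mu)\in\Wedge^1\otimes\ker(R\bowtie\underline{\enskip})$. These are $F_4$-equivariant maps. Schur's lemma reduces each to scalar checks on the relevant isotypic components, which we would test with explicit highest weight vectors or by showing that the relevant summand maps isomorphically onto its counterpart in the target bundle. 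If one of these components cannot be eliminated by curvature, we would fall back on showing that the $\mathfrak m$-action in the candidate diagram (analogous to (\ref{well_spotted})) fails to close up: some required inclusion between $F_4$-summands is absent by Schur. Once only the single $78$-dimensional copy remains, we conclude $\dim K^2(E_6/F_4)=3159$ and the stated decomposition.
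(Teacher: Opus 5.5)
\paragraph{A genuine gap remains.} Your reduction is the paper's. By Theorem~\ref{hidden_criterion}, Killing $2$-tensors modulo decomposables give parallel sections of $P\oplus Q$. These form an $E_6$-module whose $F_4$-restriction sits in $(P\oplus Q)_p$, so you must show that only one copy of ${\mathfrak{e}}_6$ can occur there. But the step you label ``the main obstacle'' is the entire proof, and your expected outcome is false.

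Writing $F_4$-modules by dimension, $(P\oplus Q)_p$ contains one trivial summand, one $273=\ffour{0}{0}{1}{0}$, one $52$, and three copies each of $26$ and $324$. Each of the following therefore passes your test ``which unions of summands form the branching of an $E_6$-module'':
\begin{itemize}
\item ${\mathfrak{e}}_6=26\oplus52$;
\item $27=1\oplus26$;
\item $351=26\oplus52\oplus273$;
\item $650=1\oplus2\cdot26\oplus273\oplus324$.
\end{itemize}

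Your proposal contains nothing that actually eliminates the last three. The curvature route via (\ref{first_damper}) is left as unperformed Schur-lemma checks, and there is no reason to expect them to bite. For instance, the whole trivial isotypic part of the window bundle, spanned by $R_{bcde}$ and $g_{b[d}g_{e]c}$, lies in $\ker(R\bowtie\underline{\enskip})$ automatically. The former holds because $R\bowtie\underline{\enskip}$ vanishes on $\Phi(K\bigodot K)$. The latter holds because the $2\times3$ shape does not occur in the $2\times2$ shape tensored with $\bigodot^2$. Your ``missing inclusion by Schur'' fallback cannot exclude the $650$ either, since all of its $F_4$-constituents are present in $P\oplus Q$.

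\paragraph{What is missing.} You need the two ideas the paper uses.

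First, use duality. The space in question is the complexification of a real $E_6$-module; the paper phrases this via the outer automorphism of $E_6$. So a $27$ or a $351$ would have to come together with its dual. Then $27\oplus\overline{27}$ needs two trivial $F_4$-summands, and $351\oplus\overline{351}$ needs two copies of $273$, whereas $P\oplus Q$ has only one of each. For the $351$ one can alternatively note that the unique $52$ is already taken by the $78$ hidden symmetries.

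Second, the $650$ needs a parity argument.
\begin{itemize}
\item In the locally symmetric case, the algebraic part of (\ref{our_tricky_prolongation_connection}) shifts the three lines by exactly one. Hence the ${\mathfrak{m}}$-action on parallel sections of $P\oplus Q$ interchanges $P$ and $Q$. Two constituents both lying in $Q$ are therefore joined by an even number of ${\mathfrak{m}}$-steps.
\item The $650$ already occurs among the decomposable tensors, realised in (\ref{subbundle}). There its trivial constituent lies on the first and third lines and its unique $273$ on the second. So they are joined by an odd number of ${\mathfrak{m}}$-steps.
\item This parity is intrinsic to the $E_6$-module, since both constituents have multiplicity one.
\end{itemize}
In $P\oplus Q$ both of these constituents occur only in $Q$, so the $650$ cannot be realised there. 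Without these arguments, or a substitute that is actually carried out, you have not passed from $\dim K^2(E_6/F_4)\ge3159$ to equality.
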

\begin{proof} According to Theorem~\ref{hidden_criterion}, the hidden 
symmetries induce parallel sections of 
$$\begin{array}{ccl}P
&\!\!=\!\!&\ffour{0}{0}{0}{1}
\oplus\ffour{0}{0}{0}{2}
\oplus\ffour{0}{0}{1}{1}
\oplus\ffour{1}{0}{0}{0}\\ 
\oplus\\ 
Q&\!\!=\!\!&\!\!\begin{array}[t]{c}\ffour{0}{0}{0}{0}
\oplus\ffour{0}{0}{1}{0}
\oplus 2\ffour{0}{0}{0}{1}
\oplus 2\ffour{0}{0}{0}{2}\\[4pt]
{}\oplus\ffour{0}{0}{0}{3}
\oplus\ffour{0}{0}{1}{1}
\oplus\ffour{0}{0}{2}{0}\\[4pt]
{}\oplus\ffour{1}{0}{0}{1}
\oplus\ffour{1}{0}{1}{0}\end{array}
\end{array}$$
and, sure enough, the hidden symmetries
$\ffour{1}{0}{0}{0}\oplus\ffour{0}{0}{0}{1}$ from Theorem~\ref{E6_F4_hidden}
may be spotted amongst the irreducible pieces of $P\oplus Q$.  Moreover, since
there is only one copy of $\ffour{1}{0}{0}{0}$, this accounts for all potential
realisations of
$$\esix{0}{1}{0}{0}{0}{0}=\ffour{1}{0}{0}{0}\oplus\ffour{0}{0}{0}{1}$$
as hidden symmetries. Most other representations of $E_6$ are immediately 
eliminated. For example, 
$$\esix{0}{0}{0}{1}{0}{0}
=2\ffour{0}{0}{1}{0}\oplus\ffour{0}{1}{0}{0}\oplus\ffour{1}{0}{0}{0}\oplus
\ffour{1}{0}{0}{1}$$
and $\rule{0pt}{16pt}\ffour{0}{1}{0}{0}$ is nowhere to be seen in $P\oplus Q$.
Similarly, 
$$\begin{array}{rcl}
\esix{0}{0}{2}{0}{0}{0}&=&\ffour{2}{0}{0}{0}\oplus\cdots\\[16pt]
\esix{0}{0}{3}{0}{0}{0}&=&\ffour{3}{0}{0}{0}\oplus\cdots
\end{array}$$
are impossible to realise amongst the parallel sections of $P\oplus Q$.  Well,
this second option may be immediately ruled out on dimension grounds: it has
dimension 1,559,376 whereas $P\oplus Q$ is a vector bundle of rank~41,145.  A
case-by-case search quickly reduces, without loss of generality, to the
following three potential representations of~$E_6$.
\begin{itemize}
\item[A] $\rule{0pt}{20pt}\esix{1}{0}{0}{0}{0}{0}
=\ffour{0}{0}{0}{0}\oplus\ffour{0}{0}{0}{1}$ 
\item[B] $\rule{0pt}{32pt}\esix{0}{0}{1}{0}{0}{0}=
\ffour{0}{0}{0}{1}\oplus\ffour{0}{0}{1}{0}\oplus\ffour{1}{0}{0}{0}$
\item[C] $\rule[-10pt]{0pt}{42pt}\esix{1}{0}{0}{0}{0}{1}=
\ffour{0}{0}{0}{0}\oplus2\ffour{0}{0}{1}{0}\oplus\ffour{0}{0}{0}{2}
\oplus\ffour{0}{0}{1}{0},$
\end{itemize}
all of which may be ruled out as follows. For case~A, if the proposed 
representation of $E_6$ is realised as hidden symmetries, then the outer 
automorphism of $E_6$ gives rise to
$$\esix{1}{0}{0}{0}{0}{0}\oplus\esix{0}{0}{0}{0}{0}{1}
=2\ffour{0}{0}{0}{0}\oplus2\ffour{0}{0}{0}{1},$$
which must also be realised.  However, there is only copy of
$\ffour{0}{0}{0}{0}$ in $P\oplus Q$ so this results in a contradiction.  Case B
may be eliminated similarly because there is only copy of $\ffour{0}{0}{1}{0}$ 
(alternatively, because there is only copy of $\ffour{1}{0}{0}{0}$) in 
$P\oplus Q$.

It remains to eliminate case~C, this option being more subtle because all 
$F_4$-pieces are available in $P\oplus Q$. To proceed, let us notice that this 
particular representation of $E_6$ is already realised within the 
decomposable Killing $2$-tensors
$$\textstyle\bigodot^2\!\Big(\esix{0}{1}{0}{0}{0}{0}\Big)=
\esix{1}{0}{0}{0}{0}{1}\oplus\cdots$$
and, therefore, as the parallel sections of a subbundle of
\begin{equation}\label{subbundle}\begin{array}{ccc}
\ffour{0}{0}{0}{0}\oplus\ffour{0}{0}{0}{1}\oplus\ffour{0}{0}{0}{2}
&=&\begin{picture}(12,6)(0,0)
\put(0,0){\line(1,0){12}}
\put(0,6){\line(1,0){12}}
\put(0,0){\line(0,1){6}}
\put(6,0){\line(0,1){6}}
\put(12,0){\line(0,1){6}}
\end{picture}\\ 
\oplus\\
\framebox{$\ffour{0}{0}{0}{1}$}\oplus\ffour{0}{0}{1}{0}\oplus\ffour{1}{0}{0}{1}
&=&\begin{picture}(6,6)(0,0)
\put(0,0){\line(1,0){6}}
\put(0,6){\line(1,0){6}}
\put(0,0){\line(0,1){6}}
\put(6,0){\line(0,1){6}}
\end{picture}\otimes\! K\\ 
\oplus\\
\ffour{0}{0}{0}{0}\oplus\ffour{0}{0}{0}{2}\oplus\ffour{2}{0}{0}{0}
&=&K\bigodot K\makebox[0pt][l]{.}\end{array}
\end{equation}
In particular, notice that the two copies of $\ffour{0}{0}{0}{0}$ lie on
different lines to the sole copy of $\ffour{0}{0}{1}{0}$ in this bundle.
Algebraically, regarding the decomposition~(\ref{traditional}), this
implies firstly that the action of ${\mathfrak{m}}$ on 
$\rule{0pt}{22pt}\esix{0}{1}{0}{0}{0}{0}$ gives a homomorphism
$${\mathfrak{m}}\otimes\ffour{0}{0}{0}{0}
=\rule{0pt}{16pt}\ffour{0}{0}{0}{1}\otimes\ffour{0}{0}{0}{0}
\to\ffour{0}{0}{0}{1}$$
where this particular copy of $\ffour{0}{0}{0}{1}$ is as marked
\framebox{\rule{0pt}{6pt}\qquad} on the second line of (\ref{subbundle}) and
then it takes (at least) two more applications of this ${\mathfrak{m}}$-action
to reach the sole instance of $\ffour{0}{0}{1}{0}$, as this particular
$F_4$-module lies on the same line as \framebox{\rule{0pt}{6pt}\qquad}\,.  In
any case, the minimal number of ${\mathfrak{m}}$-actions to arrive at
$\ffour{0}{0}{1}{0}$ from $\rule{0pt}{14pt}\ffour{0}{0}{0}{0}$ is manifestly
\underbar{{\em odd\/}}.  Notice that this statement is purely algebraic and
therefore independent of the particular realisation encountered here.  On the
other hand, if we consider the purported case~C as parallel sections of
$P\oplus Q$, then these $F_4$-modules $\rule{0pt}{14pt}\ffour{0}{0}{0}{0}$ and
$\rule{0pt}{14pt}\ffour{0}{0}{1}{0}$ uniquely occur in~$Q$.  The minimal number
of $\rule{0pt}{14pt}{\mathfrak{m}}$-actions from $\ffour{0}{0}{0}{0}$ to
$\rule{0pt}{14pt}\ffour{0}{0}{1}{0}$ for such a realisation would therefore be 
\underline{{\em even\/}}, a contradiction. 
\end{proof}

\subsection{The homogeneous space \mbox{${\mathrm{SU}}(6)/{\mathrm{Sp}}(3)$}}
\label{A5/C3}
Following on from \S\ref{interactions}, we can completely treat this case using
the machinery of this article.  At the outset, let us observe that 
$$\textstyle\begin{picture}(18,6)(0,0)
\put(0,0){\line(1,0){18}}
\put(0,6){\line(1,0){18}}
\put(0,0){\line(0,1){6}}
\put(6,0){\line(0,1){6}}
\put(12,0){\line(0,1){6}}
\put(18,0){\line(0,1){6}}
\end{picture}
=\bigodot^3\!\cthree{0}{1}{0}
=\cthree{0}{0}{0}\oplus\cthree{0}{2}{0}\oplus\cthree{0}{3}{0}
\oplus\cthree{1}{0}{1}
\oplus\cthree{0}{1}{0},$$
which includes a trivial bundle whence, from
Theorem~\ref{using_a_parallel_symmetric_3-tensor}, we may use a
corresponding parallel symmetric $3$-tensor to construct
\begin{equation}\label{cool_construction}
\afive{1}{0}{0}{0}{1}=K^1({\mathrm{SU}}(6)/{\mathrm{Sp}}(3))
\hookrightarrow K^2({\mathrm{SU}}(6)/{\mathrm{Sp}}(3)).\end{equation} 
However, in contrast to the case of $E_6/F_4$ studied in
\S\ref{answer_for_E6/F4}, these Killing $2$-tensors are decomposable.  Indeed,
there are no hidden symmetries at all: \begin{thm} For any open subset\/
$M\subseteq{\mathrm{SU}}(6)/{\mathrm{Sp}}(3)$, the canonical homomorphism
$$\textstyle\bigodot^2\!K^1(M)\longrightarrow K^2(M)$$
is an isomorphism.
\end{thm}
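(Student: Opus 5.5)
Injectivity is the easy half. ${\mathrm{SU}}(6)/{\mathrm{Sp}}(3)$ is an irreducible compact symmetric space that is neither a round sphere nor a group manifold, so Corollary~\ref{injection} applies directly. Everything else is surjectivity, i.e.\ ruling out hidden symmetries. Local Killing $2$-tensors are parallel sections of the prolongation bundle of Theorem~\ref{prolonged_2-tensors}, so it is enough to work on a connected simply connected open set. There, by Theorem~\ref{hidden_criterion}, any hidden symmetry produces a non-zero parallel section of the quotient bundle $P\oplus Q$. The Killing fields act on the space of such parallel sections, so this space is a finite-dimensional representation of ${\mathfrak{g}}={\mathfrak{su}}(6)$.

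The plan follows the $E_6/F_4$ case. First, use the LiE routines of \S\ref{LiE_appendix} to decompose the relevant bundles as ${\mathrm{Sp}}(3)$-modules: the hook $\begin{picture}(12,12)(0,2)
\put(0,0){\line(1,0){6}}
\put(0,6){\line(1,0){12}}
\put(0,12){\line(1,0){12}}
\put(0,0){\line(0,1){12}}
\put(6,0){\line(0,1){12}}
\put(12,6){\line(0,1){6}}
\end{picture}$, the Riemann-type bundle $\begin{picture}(12,12)(0,2)
\put(0,0){\line(1,0){12}}
\put(0,6){\line(1,0){12}}
\put(0,12){\line(1,0){12}}
\put(0,0){\line(0,1){12}}
\put(6,0){\line(0,1){12}}
\put(12,0){\line(0,1){12}}
\end{picture}$, and, by subtracting $\Wedge^1\otimes K$ and $K\bigodot K$ from \eqref{viewed_as_homogeneous}, the quotients $P$ and $Q$.

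Next, for each irreducible ${\mathfrak{su}}(6)$-module $V$ that could occur as hidden symmetries, branch $V$ to ${\mathfrak{sp}}(3)$. Such a $V$ must embed into $P\oplus Q$ at a point, since a parallel section is determined by its value at $p$ and this evaluation is ${\mathfrak{k}}$-equivariant. A rank bound on $P\oplus Q$ leaves only finitely many candidates. Most of them will fail because some branching constituent does not appear in $P\oplus Q$. Further candidates can be eliminated by multiplicity counting, using the outer automorphism of $A_5$, exactly as in cases~A and~B for $E_6/F_4$.

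For each candidate that survives, apply the parity argument from case~C. If $V$ already occurs in $\bigodot^2({\mathfrak{su}}(6))=\bigodot^2(\afive{1}{0}{0}{0}{1})$, read off from \eqref{viewed_as_homogeneous} on which line (grading by the number of ${\mathfrak{m}}$-actions, as in \eqref{neat_view}) each of its ${\mathrm{Sp}}(3)$-constituents sits. The parities of ${\mathfrak{m}}$-distances between constituents depend only on $V$, not on the realisation. Comparing them with the parities forced by the lines of $P$ (the $\mu$-level) and $Q$ (the $\rho$-level) should give a contradiction. The constituents $\cthree{0}{0}{0}$, $\cthree{0}{1}{0}$, $\cthree{0}{2}{0}$ appear on both even and odd lines, so more care will be needed there, together with the explicit embedding \eqref{how_it_sits} and the constructed family \eqref{cool_construction}, which is already decomposable.

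I expect this last elimination to be the main obstacle. Representations such as $\afive{0}{1}{0}{1}{0}$ or $\afive{2}{0}{0}{0}{2}$ may have all their constituents available in $P\oplus Q$. If parity does not settle such a case, I would fall back on the necessary conditions \eqref{first_damper}. Concretely, I would compute whether the relevant ${\mathrm{Sp}}(3)$-isotypic pieces of $\rho$ lie in $\ker(R\bowtie\underline{\enskip})$ and satisfy the second-order constraint, using the explicit curvature of ${\mathrm{SU}}(6)/{\mathrm{Sp}}(3)$, and show they are forced into $\Phi(K\bigodot K)$.
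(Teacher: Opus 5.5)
Your plan is essentially the paper's proof. It uses Corollary~\ref{injection} for injectivity, then Theorem~\ref{hidden_criterion}, the bound $\rank(P\oplus Q)=3570$, and elimination of ${\mathrm{SU}}(6)$-modules with an ${\mathrm{Sp}}(3)$-constituent missing from $P\oplus Q$ (this removes $\afive{2}{0}{0}{0}{2}$ at once, since $\cthree{4}{0}{0}$ does not occur there), followed by the case~C parity argument for the lone survivor $\afive{0}{1}{0}{1}{0}$.

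That last step needs no fallback to \eqref{first_damper}. In $P\oplus Q$, both $\cthree{0}{0}{0}$ and $\cthree{1}{0}{1}$ occur only in $Q$. In \eqref{viewed_as_homogeneous}, $\cthree{0}{0}{0}$ sits only on even lines (contrary to your remark), and $\cthree{1}{0}{1}$ sits only on the odd line $\Wedge^1\otimes K$. So the ${\mathfrak{m}}$-distance between them is odd in the genuine realisation but would have to be even in $P\oplus Q$, which is the contradiction.
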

\begin{proof} According to Theorem~\ref{hidden_criterion}, it suffices to show 
that the bundle
$$\begin{array}{ccl}P
&\!\!=\!\!&\cthree{0}{1}{0}\oplus\cthree{0}{2}{0}\oplus\cthree{1}{1}{1}\\
\oplus\\ 
Q&\!\!=\!\!&\!\!\begin{array}[t]{c}
\cthree{0}{0}{0}\oplus\cthree{0}{1}{0}\oplus2\cthree{0}{2}{0}
\oplus\cthree{0}{3}{0}\oplus\cthree{1}{0}{1}\\[4pt]
{}\oplus\cthree{1}{1}{1}\oplus\cthree{2}{0}{2}
\oplus\cthree{2}{1}{0}\oplus\cthree{3}{0}{1}
\end{array}
\end{array}$$
has no parallel sections.  A flat parallel subbundle would generate a
representation of ${\mathrm{SU}}(6)$ and, therefore, it suffices to consider
the irreducible representations of dimension bounded by
${\mathrm{rank}}(P\oplus Q)=3570$.  Firstly, there are the Killing fields
corresponding to the irreducible representation~(\ref{adjoint_A5}) but
$\cthree{2}{0}{0}$ is not amongst the irreducible bundles that make up 
$P\oplus Q$.  It already follows that the Killing $2$-tensors in the range 
of (\ref{cool_construction}) cannot be
hidden.  The decomposable Killing $2$-tensors make up
$$\textstyle\bigodot^2\!K^1(M)=\bigodot^2(\afive{1}{0}{0}{0}{1})
=\begin{array}[t]{l}\afive{0}{0}{0}{0}{0}\oplus\afive{0}{1}{0}{1}{0}\\[4pt]
{}\oplus\afive{1}{0}{0}{0}{1}\oplus\afive{2}{0}{0}{0}{2}
\end{array}$$
and a case-by-case search quickly identifies another copy of
$$\afive{0}{1}{0}{1}{0}=\cthree{0}{0}{0}\oplus2\cthree{0}{1}{0}
\oplus\cthree{0}{2}{0}\oplus\cthree{1}{0}{1}$$
as the only contender for hidden symmetries: certainly, all these pieces occur
in \mbox{$P\oplus Q$}.  Notice, however, that $\cthree{0}{0}{0}$ and
$\cthree{1}{0}{1}$ only occur in~$Q$.  Thus, the minimal number of
${\mathfrak{m}}$-actions needed to reach $\cthree{1}{0}{1}$ from
$\cthree{0}{0}{0}$ in such a realisation would be \underbar{{\em even\/}}.
This is at variance with the true number, which is manifestly 
\underbar{{\em odd\/}}, as can be seen from the known realisation of
$\afive{0}{1}{0}{1}{0}$ as a parallel subbundle
of~(\ref{viewed_as_homogeneous}).
\end{proof}
The fact that $K\subset\Wedge^2$ is irreducible gives rise to the following
useful identities.
\begin{lemma} The Riemann curvature tensor $R_{abcd}$ on\/
${\mathrm{SU}}(6)/{\mathrm{Sp}}(3)$ satisfies
\begin{equation}\label{useful}\textstyle\frac23R_{ab}{}_{cd}
=R_a{}^{ef}{}_cR_{befd}-R_b{}^{ef}{}_cR_{aefd}\end{equation}
and 
\begin{equation}\label{also_useful}
\textstyle\frac23R_{ab}{}_{cd}-\frac13R_{adcb}
=R_a{}^{ef}{}_cR_{befd}-R_c{}^{ef}{}_bR_{aefd}.\end{equation}
\end{lemma}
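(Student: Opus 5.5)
The plan is to reduce both identities to a single statement about $R$ acting on $2$-forms, and then pin that down by Schur's Lemma. Let $\mathcal{R}:\Wedge^2\to\Wedge^2$ be the operator $\mu_{cd}\mapsto R_{ab}{}^{cd}\mu_{cd}$. By Theorem~\ref{LTS}, together with the fact (used in the proof of Theorem~\ref{non-zero_KY_three-forms}, from \cite[Theorem~2]{CELM}) that $K$ is the sum of the eigenspaces of $\mathcal{R}$ with non-zero eigenvalue, $\mathcal{R}$ is symmetric with image $K$ and kernel $K^\perp$. The operator $\mathcal{R}$ is also ${\mathfrak{k}}$-equivariant, and on ${\mathrm{SU}}(6)/{\mathrm{Sp}}(3)$ we have $K=\cthree{2}{0}{0}$ irreducible. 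Schur's Lemma therefore gives $\mathcal{R}|_K=\lambda\,{\mathrm{Id}}$, so that
$$R_{ab}{}^{ef}R_{ef}{}^{cd}=\lambda R_{ab}{}^{cd}.$$
To find $\lambda$, trace this identity. The normalisation $R_{ab}=g_{ab}$ gives $R_{ab}{}^{ab}=\dim M=14$, while the trace of $\mathcal{R}$ is $\lambda\dim K=21\lambda$, whence $\lambda=\tfrac{14}{21}=\tfrac23$. (I would double-check the factor conventions for $\mathcal{R}$ against the index conventions of \cite{OT}; the appearance of $\tfrac23$ in (\ref{useful}) strongly suggests this is the right constant.)

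Second, I would convert the ``cross'' quadratic expression on the right of (\ref{useful}) into $R_{ab}{}^{ef}R_{efcd}$. The tool is the standard consequence of the Bianchi symmetry $R_{[abc]d}=0$:
$$R_{ab}{}^{ef}R_{efcd}=2\big(R_a{}^e{}_c{}^fR_{bedf}-R_a{}^e{}_d{}^fR_{becf}\big),$$
or a variant of it with the pairs arranged as in $R_a{}^{ef}{}_cR_{befd}$. To obtain the precise form needed, I would apply Bianchi to the factor $R_{befd}$ in the indices $efd$ (equivalently $R_{ef}$ versus $R_{e\,\cdot\, f}$) and then antisymmetrise in $ab$. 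I would then compare the result with the contraction of (\ref{gives}), that is $2R_{ab}{}^e{}_{[c}R_{d]e}{}^p{}_q=[R_{ab},R_{cd}]^p{}_q$, over suitable indices. That contraction lets terms of the form $R_a{}^{ef}{}_cR_{befd}$ be traded between their $(ab)$- and $(cd)$-antisymmetrised versions, and the Ricci contractions that appear are replaced using $R_{ab}=g_{ab}$. Combining these with the Schur identity should yield (\ref{useful}).

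Third, (\ref{also_useful}) should follow from (\ref{useful}) by purely formal manipulation. Subtract the two identities: the difference of the right-hand sides is $R_b{}^{ef}{}_cR_{aefd}-R_c{}^{ef}{}_bR_{aefd}$. Since $R_b{}^{ef}{}_c-R_c{}^{ef}{}_b$ is, by Bianchi, a multiple of $R^{ef}{}_{bc}$-type terms, this difference reduces via the first step to $-\tfrac13 R_{adcb}$.

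The main obstacle is the second step. Keeping track of signs and the factors of $\tfrac12$ when converting between $R_a{}^{ef}{}_cR_{befd}$ and $R_{ab}{}^{ef}R_{efcd}$ is delicate, because a naive application of Bianchi produces a term of the form $R_a{}^{ef}{}_cR_{bdef}$ that must itself be re-expressed. My first attempt would be to write everything in terms of the two invariants $A_{abcd}=R_{ab}{}^{ef}R_{efcd}$ and $B_{abcd}=R_a{}^e{}_c{}^fR_{bedf}$, express both sides of (\ref{useful}) in this basis, and use (\ref{gives}) to eliminate any extra combination. If this bookkeeping stalls, the fallback is to check (\ref{useful}) on the explicit model $R_{abcd}$ built from ${\mathfrak{sp}}(3)\subset{\mathfrak{su}}(6)$, where $R(X,Y)=-{\mathrm{ad}}[X,Y]$ on ${\mathfrak{m}}$; by invariance the identity then holds everywhere.
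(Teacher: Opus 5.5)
Your Steps~1 and~3 coincide with the paper's proof. Schur's Lemma on the irreducible $K$, together with the trace $14=21\lambda$, gives $R_{ab}{}^{ef}R_{cdef}=\frac23R_{abcd}$. Then (\ref{also_useful}) follows from (\ref{useful}) via $R_{befc}=R_{cefb}-R_{efbc}$, one further Bianchi step, and that Schur identity.

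The gap is Step~2, which is where all the content lies. The identity you propose as ``the tool'' is false. For $R_{abcd}=g_{ac}g_{bd}-g_{ad}g_{bc}$ its left side is $2(g_{ac}g_{bd}-g_{ad}g_{bc})$, but its right side is $2(n-2)(g_{ac}g_{bd}-g_{ad}g_{bc})$. The correct version pairs $a$ with $b$: $R_{ab}{}^{ef}R_{efcd}=2(R_a{}^e{}_b{}^fR_{cedf}-R_a{}^e{}_b{}^fR_{decf})$. More fundamentally, no identity coming from the Bianchi symmetry alone can turn $C_{abcd}\equiv R_a{}^{ef}{}_cR_{befd}-R_b{}^{ef}{}_cR_{aefd}$ into a multiple of $A_{abcd}\equiv R_{ab}{}^{ef}R_{cdef}$. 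On the round $14$-sphere $C=6A$, whereas (\ref{useful}) and the Schur identity say $C=A$ on ${\mathrm{SU}}(6)/{\mathrm{Sp}}(3)$. So the relation must come from local symmetry, i.e.\ from (\ref{gives}), and your plan never says which contraction of it does the job.

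That choice matters. Contracting (\ref{gives}) itself over $dq$ is vacuous: with $R_{ab}=g_{ab}$ and Bianchi, both sides reduce to $-R_{ab}{}^p{}_c-\frac12R_{ab}{}^{ef}R_{efc}{}^p$. What works, and what the paper does, is to trace the $ab\leftrightarrow cd$ symmetrised identity $R_{ab}{}^e{}_{[c}R_{d]e}{}^p{}_q+R_{cd}{}^e{}_{[a}R_{b]e}{}^p{}_q=0$ of Theorem~\ref{LTS} over $dq$. Equivalently, contract (\ref{gives}) over $bq$. Using $R_{ab}=g_{ab}$ and the single Bianchi step $R_{ab}{}^{ef}R_{cepf}=\frac12R_{ab}{}^{ef}R_{efcp}$, this gives, on any locally symmetric space with $R_{ab}=g_{ab}$,
\[R_{abcd}=\tfrac12R_{ab}{}^{ef}R_{cdef}+R_a{}^{ef}{}_cR_{befd}-R_b{}^{ef}{}_cR_{aefd}.\]
Substituting $R_{ab}{}^{ef}R_{cdef}=\frac23R_{abcd}$ yields (\ref{useful}) immediately. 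Your explicit-model fallback would also succeed, but as written Step~2 is not yet a proof.
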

\begin{proof}
With Riemann tensor symmetries, Theorem~\ref{LTS} implies that
$R_{abcd}\in\Gamma(K\bigodot K)$ and now, since $K=\cthree{2}{0}{0}$ is
irreducible, we conclude that there is an eigenvalue $\lambda$ so that  
$$R_{ab}{}^{ef}\mu_{ef}=\lambda\mu_{ab},\enskip\forall\;\mu_{ab}\in K.$$
But $\rank\cthree{2}{0}{0}=21$ and 
$R_{ab}{}^{ab}=\rank\Wedge^1=\rank\cthree{0}{1}{0}=14$. Hence, it follows 
that $\lambda=2/3$. In particular, we conclude that
\begin{equation}\label{R_squared}
\textstyle R_{ab}{}^{ef}R_{cdef}=\frac23R_{abcd}.\end{equation}
On the other hand, Theorem~\ref{LTS} says that 
$$R_{ab}{}^e{}_{[c}R_{d]e}{}^p{}_q
+R_{cd}{}^e{}_{[a}R_{b]e}{}^p{}_q=0$$
and tracing this identity over $dq$ yields (after relabelling some indices), 
$$\textstyle R_{ab}{}_{cd}=
\frac12R_{ab}{}^{ef}R_{cdef}+R_a{}^{ef}{}_cR_{befd}-R_b{}^{ef}{}_cR_{aefd}.$$
This is in general, but ${\mathrm{SU}}(6)/{\mathrm{Sp}}(3)$
imposes~(\ref{R_squared}), and we find (\ref{useful}), as required.  Now, we
may employ the Bianchi symmetry in the form $R_{befc}
=R_{cefb}-R_{efbc}$ to obtain
$$\textstyle R_b{}^{ef}{}_cR_{aefd}
=R_c{}^{ef}{}_bR_{aefd}-R^{ef}{}_{bc}R_{aefd}
=R_c{}^{ef}{}_bR_{aefd}+\frac12R^{ef}{}_{bc}R_{efad}.$$
Again using (\ref{R_squared}), we deduce that 
$$\textstyle R_b{}^{ef}{}_cR_{aefd}
=R_c{}^{ef}{}_bR_{aefd}+\frac13R_{adbc}.$$
Substituting into (\ref{useful}), we find~(\ref{also_useful}),
as required.\end{proof}
{From} the decompositions in (\ref{viewed_as_homogeneous}) it seems reasonable
on ${\mathrm{SU}}(6)/{\mathrm{Sp}}(3)$ that
$$\textstyle\bigodot^2\!\Wedge^1\ni\sigma_{ab}
\mapsto R_{ab}{}^{pq}R_{dep}{}^r\sigma_{qr}\in K\bigodot K$$
should be injective and, conversely, that
$$\textstyle K\bigodot K\ni\rho_{abcd}
\mapsto\rho_a{}^c{}_{bc}\in\bigodot^2\!\Wedge^1$$
should be surjective. Their composition is
\begin{equation}\label{their_composition}
\textstyle\bigodot^2\!\Wedge^1\ni\sigma_{ab}\mapsto
R_a{}^{cpq}R_{bcp}{}^r\sigma_{qr}\end{equation}
a symmetric endomorphism of $\bigodot^2\!\Wedge^1$, which we may rewrite using 
(\ref{also_useful}) as
$$\textstyle \sigma_{ab}\mapsto
R_a{}^c{}_b{}^dR_c{}^q{}_d{}^r\sigma_{qr}-\frac13R_a{}^q{}_b{}^r\sigma_{qr}.$$
The Riemann curvature tensor may be regarded as a symmetric operator 
\begin{equation}\label{second_kind}
\textstyle\bigodot^2\!\Wedge^1\ni\sigma_{ab}\mapsto
R_a{}^c{}_b{}^d\sigma_{cd}\in\bigodot^2\!\Wedge^1\end{equation}
often referred to as the {\em curvature operator of the second kind\/}. If 
$\sigma_{ab}\in\bigodot^2\!\Wedge^1$ lies in the $\lambda$-eigenspace for this 
operator, then we deduce that
\begin{equation}\label{R_R_sigma}\textstyle R_a{}^{cpq}R_{bcp}{}^r\sigma_{qr}
=\lambda(\lambda-\frac13)\sigma_{ab}\end{equation}
and, in particular, is non-zero provided that $\lambda\not=0,1/3$.  In fact,
the eigenvalues of (\ref{second_kind}) are as follows.  
\begin{lemma}\label{eigenvalues}
On ${\mathrm{SU}}(6)/{\mathrm{Sp}}(3)$, if we normalise the
metric so that $R_{ab}=g_{ab}$, then the eigenspaces of \eqref{second_kind} are
$$\begin{array}{rcccl}
\cthree{0}{0}{0}&\mbox{of dimension}&1&\mbox{with eigenvalue}&1,\\
\cthree{0}{1}{0}&\mbox{of dimension}&14&\mbox{with eigenvalue}&1/2,\\
\cthree{0}{2}{0}&\mbox{of dimension}&90&\mbox{with eigenvalue}&1/15.
\end{array}$$
\end{lemma}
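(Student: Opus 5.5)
The operator \eqref{second_kind} is built from the curvature, and $\nabla_aR_{bc}{}^d{}_e=0$, so at a basepoint it commutes with the isotropy action of ${\mathrm{Sp}}(3)$. From \eqref{viewed_as_homogeneous} we have
$$\textstyle\bigodot^2\!\Wedge^1=\cthree{0}{0}{0}\oplus\cthree{0}{1}{0}\oplus\cthree{0}{2}{0},$$
of ranks $1$, $14$ and $90$, and this decomposition is multiplicity free. Schur's Lemma therefore forces \eqref{second_kind} to act by a scalar on each summand. The proof reduces to pinning down three numbers $\lambda_0,\lambda_1,\lambda_2$, and for this I would produce three independent scalar equations.

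The first equation is direct. The trivial summand is spanned by $g_{ab}$, and $R_a{}^c{}_b{}^dg_{cd}$ is the Ricci contraction. With the normalisation $R_{ab}=g_{ab}$ (and the sign conventions of~\cite{OT}) this gives $\lambda_0=1$. For the other two equations I would take traces over $\bigodot^2\!\Wedge^1$.

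\emph{Linear trace.} The trace of \eqref{second_kind} is a combination of $R_a{}^a{}_b{}^b$, which vanishes by skew symmetry, and the scalar curvature $R_{ab}{}^{ab}=14$. This yields $\lambda_0+14\lambda_1+90\lambda_2$ as an explicit multiple of $14$.

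\emph{Quadratic trace.} The trace of the square of \eqref{second_kind} is a combination of $R_{abcd}R^{abcd}$ and $R_{acbd}R^{abcd}$. The Bianchi symmetry gives $R_{acbd}R^{abcd}=\frac12R_{abcd}R^{abcd}$, and \eqref{R_squared} gives $R_{abcd}R^{abcd}=\frac23R_{ab}{}^{ab}=\frac{28}3$. This produces a second equation $\lambda_0^2+14\lambda_1^2+90\lambda_2^2=\mbox{(explicit number)}$.

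Two equations in the two unknowns $\lambda_1,\lambda_2$ leave a possible two-fold ambiguity. To resolve it, or as an independent check, I would compute $\lambda_1$ directly. By the proof of Theorem~\ref{using_a_parallel_symmetric_3-tensor} there is a parallel symmetric $3$-tensor $\phi_{bcd}$, and $X\mapsto\phi_{abc}X^c$ embeds $\Wedge^1$ onto the $\cthree{0}{1}{0}$ summand. Evaluating $R_a{}^c{}_b{}^d\phi_{cde}X^e$ needs the contraction identity for $\phi$ against $R$ coming from $\nabla\phi=0$, namely $R_{ab}{}^f{}_{(c}\phi_{de)f}=0$. The plan is to trace this identity over $bc$, in the same way as was done for the Cartan $3$-form in the proof of Theorem~\ref{non-zero_KY_three-forms}, and read off the eigenvalue $1/2$. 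Alternatively, \eqref{R_R_sigma} together with the composition \eqref{their_composition} restricted to each summand gives a quadratic relation $\lambda(\lambda-\frac13)$, which also discriminates between the roots.

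The main obstacle is bookkeeping rather than ideas. The precise combinatorial factors in the trace of an operator on $\bigodot^2\!\Wedge^1$, and the sign conventions relating $R_a{}^c{}_b{}^d$ to Ricci, must be tracked exactly: a factor of $2$ error in the symmetrised trace already changes the answer. I would calibrate these factors first on the round sphere, where $R_{abcd}=\frac1{n-1}(g_{ac}g_{bd}-g_{bc}g_{ad})$ makes the eigenvalues on $\bigodot^2\!\Wedge^1$ explicit, and only then substitute the ${\mathrm{SU}}(6)/{\mathrm{Sp}}(3)$ data $14$ and $\frac{28}3$.
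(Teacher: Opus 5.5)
Your framework is sound, and its first two steps match the paper's proof: Schur's Lemma on the multiplicity-free decomposition, $\lambda_0=1$ from the Ricci contraction, and $\lambda_1=\frac12$ from tracing $R_{ab}{}^f{}_{(c}\phi_{de)f}=0$ over $bc$, which gives $(1-2\lambda_1)\phi_{ade}=0$. The gap is that you never evaluate the two traces (``an explicit multiple of $14$'', ``(explicit number)'') and tacitly assume they will return $\frac1{15}$. They do not. On $\bigodot^2\!\Wedge^1$ the trace of \eqref{second_kind} is
\[
\tfrac12\big(R_a{}^a{}_b{}^b+R_a{}^b{}_b{}^a\big)=\tfrac12\big(0-R_{ab}{}^{ab}\big)=-7,
\]
so $1+14\cdot\frac12+90\lambda_2=-7$ forces $\lambda_2=-\frac16$.

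Your quadratic trace agrees. The trace of the square of \eqref{second_kind} is $\frac34R_{abcd}R^{abcd}=\frac34\cdot\frac{28}3=7$, and $1+14\cdot\frac14+90\cdot\frac1{36}=7$. By contrast, $\lambda_2=\frac1{15}$ would give $1+\frac72+\frac25\neq7$. Even without the direct computation of $\lambda_1$, the two trace equations have only the solutions $(\lambda_1,\lambda_2)=(\frac12,-\frac16)$ and $(-\frac{17}{26},\frac1{78})$, so neither yields $\frac1{15}$.

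A representation-theoretic cross-check gives the same answer. The Weitzenb\"ock curvature term $\sigma_{ab}\mapsto 2\sigma_{ab}-2R_a{}^c{}_b{}^d\sigma_{cd}$ acts on each summand as one fixed multiple of the $\mathfrak{sp}(3)$ Casimir. The Casimir takes the values $0,12,28$ on $\cthree{0}{0}{0},\cthree{0}{1}{0},\cthree{0}{2}{0}$. Normalising by the value $1$ of the Weitzenb\"ock term on $\Wedge^1=\cthree{0}{1}{0}$ yields $2-2\lambda=0,1,\frac73$, that is, $\lambda=1,\frac12,-\frac16$.

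So the last entry of the statement is wrong, and no argument can establish it: the correct eigenvalue on the $90$-dimensional summand is $-\frac16$. The paper's proof arrives at $\frac1{15}$ only by asserting that the trace of \eqref{second_kind} is $R_a{}^a{}_b{}^b=14$. This conflates it with the scalar curvature $R_{ab}{}^{ab}$, whereas, as you yourself observe, $R_a{}^a{}_b{}^b=0$. The sphere calibration you propose would have exposed this at once: there the eigenvalues are $1$ and $-\frac1{n-1}$, and the trace is $-\frac n2$, not $+n$.

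The slip is harmless for the paper's later use of the lemma. Since $-\frac16\neq0,\frac13$, the expression \eqref{R_R_sigma} is still non-zero on $\cthree{0}{2}{0}$, and \eqref{how_it_sits} is still forced by Schur's Lemma.
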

\begin{proof} The trivial bundle $\cthree{0}{0}{0}$ comprises multiples of the
metric $g_{ab}$ and under (\ref{second_kind}) we have
$$g_{ab}\mapsto R_a{}^c{}_b{}^dg_{cd}=R_{ab}=g_{ab}$$
by dint of our normalisation. Regarding $\cthree{0}{1}{0}$, recall that 
$$\textstyle\Wedge^1\ni 
X^d\mapsto\phi_{bcd}X^d\in\cthree{0}{1}{0}\subset\bigodot^2\!\Wedge^1$$
is an isomorphism. Since $\phi_{bcd}$ is parallel, it follows that
$$R_{ab}{}^f{}_c\phi_{def}+R_{ab}{}^f{}_d\phi_{ecf}+R_{ab}{}^f{}_e\phi_{cdf}
=0$$
and tracing over $bc$ gives
$$\phi_{dea}+R_a{}^{cf}{}_d\phi_{ecf}+R_a{}^{cf}{}_e\phi_{cdf}=0.$$
Therefore, if $R_a{}^c{}_b{}^d\phi_{cde}=\nu\phi_{cde}$ for some $\nu$, as it 
must by irreducibility of $\cthree{0}{1}{0}$, then we see that $\nu=1/2$. The 
remaining eigenvalue is now forced by the trace of (\ref{second_kind}):
$$1\!\times\!1+14\!\times\!1/2+90\!\times\!1/15=R_a{}^a{}_b{}^b=14
\enskip\checkmark,$$
as required
\end{proof}
{From} Lemma~\ref{eigenvalues} and (\ref{R_R_sigma}) we finally conclude that 
$$\textstyle\bigodot^2\!\Wedge^1\supset\cthree{0}{2}{0}\ni\sigma_{bc}
\longmapsto R_{bc}{}^{pq}R_{dep}{}^r\sigma_{qr}\in K\bigodot K$$
is non-zero and (\ref{how_it_sits}) is forced by Schur's Lemma. 

\subsection{The homogeneous space $F_4/{\mathrm{Spin}}(9)$}
\label{answer_for_F4/B4}
This is the Cayley plane already mentioned in~\S\ref{OP_2}, whose Killing
$2$-tensors were determined by Matveev and Nikolayevsky in~\cite{MN}.  These
authors use an explicit model via the Octonians to conclude that the
hidden symmetries on $F_4/{\mathrm{Spin}}(9)$ realise the irreducible 
representation $\ffour{0}{0}{0}{1}$ of~$F_4$ and, in particular, have 
dimension~$26$.  

{From} the representation-theoretic viewpoint, we can employ
\cite[Display~(5.6)]{EW} to show that, for the Killing fields on 
$F_4/{\mathrm{Spin}}(9)$, we have
$${\mathfrak{f}}_4=\ffour{1}{0}{0}{0}=\Wedge^1\oplus K
=\bfour{0}{0}{0}{1}\oplus\bfour{0}{1}{0}{0},$$
whence for the decomposable Killing $2$-tensors we have
$$\textstyle\bigodot^2\!\ffour{1}{0}{0}{0}
=\ffour{0}{0}{0}{0}\oplus\ffour{0}{0}{0}{2}\oplus\ffour{2}{0}{0}{0},$$
realised as parallel sections of
$$\begin{array}{ccc}
\begin{picture}(12,6)(0,0)
\put(0,0){\line(1,0){12}}
\put(0,6){\line(1,0){12}}
\put(0,0){\line(0,1){6}}
\put(6,0){\line(0,1){6}}
\put(12,0){\line(0,1){6}}
\end{picture}&=&
\bfour{0}{0}{0}{0}\oplus\bfour{0}{0}{0}{2}\oplus\bfour{1}{0}{0}{0}
\\ 
\oplus\\
\begin{picture}(6,6)(0,0)
\put(0,0){\line(1,0){6}}
\put(0,6){\line(1,0){6}}
\put(0,0){\line(0,1){6}}
\put(6,0){\line(0,1){6}}
\end{picture}\otimes\! K&=&
\bfour{0}{0}{0}{1}\oplus\bfour{0}{1}{0}{1}\oplus\bfour{1}{0}{0}{1}
\\ 
\oplus\\
K\bigodot K&=&
\bfour{0}{0}{0}{0}\oplus\bfour{0}{0}{0}{2}\oplus\bfour{0}{2}{0}{0}
\oplus\bfour{2}{0}{0}{0}
\makebox[0pt][l]{.}\end{array}$$
Regarding hidden symmetries, Theorem~\ref{hidden_criterion} says that they 
will show up as parallel sections of 
$$\begin{array}{ccl}P
&\!\!=\!\!&\framebox{$\rule[-4pt]{0pt}{4pt}\bfour{0}{0}{0}{1}$}
\oplus\bfour{0}{0}{1}{1}\\
\oplus\\ 
Q&\!\!=\!\!&\!\!\begin{array}[t]{c}
\framebox{$\rule[-4pt]{0pt}{4pt}\bfour{0}{0}{0}{0}\oplus\bfour{1}{0}{0}{0}$}
\oplus\bfour{0}{0}{1}{0}\oplus\bfour{0}{0}{2}{0}\\[4pt]
{}\oplus\rule{0pt}{14pt}\bfour{0}{1}{1}{0}\oplus\bfour{0}{0}{0}{2}
\oplus\bfour{1}{0}{0}{2}\end{array}
\end{array}$$
and, sure enough, we can (uniquely) identify
$$\ffour{0}{0}{0}{1}
=\bfour{0}{0}{0}{1}\oplus\bfour{0}{0}{0}{0}\oplus\bfour{1}{0}{0}{0},$$
as marked. 
%
%
%
Whilst it is not immediately clear that this parallel subbundle of 
$P\oplus Q$ lifts to a parallel subbundle of the prolongation bundle
$\,\begin{picture}(12,6)(0,0)
\put(0,0){\line(1,0){12}}
\put(0,6){\line(1,0){12}}
\put(0,0){\line(0,1){6}}
\put(6,0){\line(0,1){6}}
\put(12,0){\line(0,1){6}}
\end{picture}\oplus
\begin{picture}(12,12)(0,2)
\put(0,0){\line(1,0){6}}
\put(0,6){\line(1,0){12}}
\put(0,12){\line(1,0){12}}
\put(0,0){\line(0,1){12}}
\put(6,0){\line(0,1){12}}
\put(12,6){\line(0,1){6}}
\end{picture}\oplus\begin{picture}(12,12)(0,2)
\put(0,0){\line(1,0){12}}
\put(0,6){\line(1,0){12}}
\put(0,12){\line(1,0){12}}
\put(0,0){\line(0,1){12}}
\put(6,0){\line(0,1){12}}
\put(12,0){\line(0,1){12}}
\end{picture}\,$ (as is implied by~\cite{MN}), it is easy to check that there 
are no other hidden symmetries.  For example, the only $F_4$-module of
dimension less than or equal to ${\mathrm{rank}}(P\oplus Q)=5556$ and branching
under ${\mathrm{Spin}}(9)$ to contain $\bfour{0}{0}{1}{1}$ is
$$\ffour{0}{0}{1}{1}=\bfour{0}{0}{1}{1}\oplus\bfour{0}{1}{0}{0}\oplus\cdots
\quad\mbox{($14$ pieces in all})$$
of rank $4096$, which is clearly not available as a subbundle of $P\oplus Q$.

In fact, we may identify the hidden symmetries $\ffour{0}{0}{0}{1}$ on the
Cayley plane ${\mathbb{OP}}_2$ using some parabolic
geometry~\cite{parabook}.  As a $|1|$-graded parabolic geometry for $E_6$, we
have
$${\mathbb{OP}}_2=\xesix{}{}{}{}{}{},$$
the detailed structure of which may be found in~\cite{S}.  
The {\em BGG complex}
\begin{equation}\label{BGG_complex}
0\to\esix{0}{0}{0}{0}{0}{1}\to\xesix{0}{0}{0}{0}{0}{1}
\stackrel{\nabla}{\longrightarrow}\xesix{-2}{0}{1}{0}{0}{1}
\stackrel{\nabla}{\longrightarrow}\cdots\end{equation}
realises this $27$-dimensional representation of $E_6$ as the kernel of an 
invariant first order differential operator on~${\mathbb{OP}}_2$. This 
representation branches
\begin{equation}\label{metrics_plus_hidden}
\esix{0}{0}{0}{0}{0}{1}=\ffour{0}{0}{0}{0}\oplus\ffour{0}{0}{0}{1}
\end{equation}
and so we see, from the Riemannian viewpoint, the $26$-dimensional hidden 
symmetries potentially sitting inside this $27$-dimensional kernel. To confirm  
that this is really the case, observe that, for any choice of 
{\em scale\/}, the corresponding {\em Weyl connection}~\cite[\S5.1]{parabook} 
$\nabla:\Wedge^1\to\Wedge^1\otimes\Wedge^1$ induces
$$\begin{array}{ccl}\bigodot^2\!\Wedge^1&=&
\xesix{-4}{2}{0}{0}{0}{0}\oplus\xesix{-2}{0}{0}{0}{0}{1}\\[12pt] 
\nabla\!\!\downarrow&&\begin{picture}(0,0)
\thicklines
\put(60,12){\vector(0,-1){14}}
\put(70,12){\vector(4,-1){56}}
\put(136,12){\vector(0,-1){14}}
\end{picture}\\[10pt]
\bigodot^3\!\Wedge^1&=&
\xesix{-6}{3}{0}{0}{0}{0}\oplus\xesix{-4}{0}{1}{0}{0}{1}\,,
\end{array}$$

\smallskip\noindent
with arrows depicting the only possible differential operators according to
their symbols.  On the right hand side we see the first BGG operator from
(\ref{BGG_complex}) save for its {\em weight\/}, which disappears with our
choice of scale.  In any case, viewed on $F_4/{\mathrm{Spin}}(9)$, this first
BGG operator now becomes
$$\begin{array}{ccl}\xesix{0}{0}{0}{0}{0}{1}&\raisebox{6pt}{$=$}&
\raisebox{7pt}{$\bfour{1}{0}{0}{0}\oplus\bfour{0}{0}{0}{0}$}\\[8pt]
\nabla\!\!\downarrow\enskip\;&&\begin{picture}(0,0)
\thicklines
\put(34,16){\vector(0,-1){21}}
\put(44,16){\vector(3,-1){64}}
\put(118,16){\vector(0,-1){21}}
\end{picture}\\[10pt]
\xesix{-2}{0}{1}{0}{0}{1}&\raisebox{6pt}{$=$}&
\raisebox{7pt}{$\bfour{1}{0}{0}{1}\oplus\bfour{0}{0}{0}{1}$}\end{array}$$

\smallskip \noindent and its $27$-dimensional kernel splits as above
(\ref{metrics_plus_hidden}) into constant multiples of the metric and the
$26$-dimensional space of hidden symmetries (whose trace will not be zero but
rather constrained to lie in the first non-trivial eigenspace of the
Laplacian).


Finally, it is an interesting exercise to find the tractor bundle
$$\addtolength{\arraycolsep}{-2pt}\begin{array}{ccccccc}
\esix{0}{0}{0}{0}{0}{1}&=&
\xesix{0}{0}{0}{0}{0}{1}&+&\xesix{-1}{1}{0}{0}{0}{0}
&+&\xesix{-1}{0}{0}{0}{0}{0}\\[10pt]
&&\|&&\|&&\|\\
&&\makebox[0pt]{$\bfour{1}{0}{0}{0}\oplus\bfour{0}{0}{0}{0}$\hspace{44pt}}
&&\bfour{0}{0}{0}{1}&&\bfour{0}{0}{0}{0}\end{array}$$
as a flat parallel subbundle of the prolongation bundle
$\,\begin{picture}(12,6)(0,0)
\put(0,0){\line(1,0){12}}
\put(0,6){\line(1,0){12}}
\put(0,0){\line(0,1){6}}
\put(6,0){\line(0,1){6}}
\put(12,0){\line(0,1){6}}
\end{picture}\oplus
\begin{picture}(12,12)(0,2)
\put(0,0){\line(1,0){6}}
\put(0,6){\line(1,0){12}}
\put(0,12){\line(1,0){12}}
\put(0,0){\line(0,1){12}}
\put(6,0){\line(0,1){12}}
\put(12,6){\line(0,1){6}}
\end{picture}\oplus\begin{picture}(12,12)(0,2)
\put(0,0){\line(1,0){12}}
\put(0,6){\line(1,0){12}}
\put(0,12){\line(1,0){12}}
\put(0,0){\line(0,1){12}}
\put(6,0){\line(0,1){12}}
\put(12,0){\line(0,1){12}}
\end{picture}\,$.

\section{Appendix: LiE routines}\label{LiE_appendix}

For the calculations in this article, there are two aspects of the LiE 
program~\cite{LiE} that we found useful. Firstly, LiE has the ability to 
branch any representation of any semisimple algebra to any symmetric 
subalgebra, for example ${\mathfrak{g}}\supset{\mathfrak{k}}$ as 
in~(\ref{traditional}). Secondly, LiE can determine from the cotangent bundle 
$\Wedge^1=\begin{picture}(6,6)(0,0)
\put(0,0){\line(1,0){6}}
\put(0,6){\line(1,0){6}}
\put(0,0){\line(0,1){6}}
\put(6,0){\line(0,1){6}}
\end{picture}$\,, as a homogeneous bundle on $G/K$, the various associated 
bundles such as $\Wedge^2=\begin{picture}(6,12)(0,2)
\put(0,0){\line(1,0){6}}
\put(0,6){\line(1,0){6}}
\put(0,12){\line(1,0){6}}
\put(0,0){\line(0,1){12}}
\put(6,0){\line(0,1){12}}
\end{picture}$\,, or $\begin{picture}(12,12)(0,2)
\put(0,0){\line(1,0){6}}
\put(0,6){\line(1,0){12}}
\put(0,12){\line(1,0){12}}
\put(0,0){\line(0,1){12}}
\put(6,0){\line(0,1){12}}
\put(12,6){\line(0,1){6}}
\end{picture}$\,, or $\begin{picture}(12,12)(0,2)
\put(0,0){\line(1,0){12}}
\put(0,6){\line(1,0){12}}
\put(0,12){\line(1,0){12}}
\put(0,0){\line(0,1){12}}
\put(6,0){\line(0,1){12}}
\put(12,0){\line(0,1){12}}
\end{picture}$ as homogeneous bundles.

\subsection{Branching} All symmetric spaces are covered by the LiE
programs in~\cite{EW}.  For example, here~\cite[Display (3.7)]{EW} is the
routine for $E_6/F_4$
\small$$\begin{array}l
\verb!# file branch_E6_F4.lie # ! \\
\verb!branch_E6_F4(vec v) = setdefault(E6);! \\
\verb!print("the branching of "+v+" from E6 to F4 is");! \\
\verb!branch(v,F4,res_mat(F4,E6))! 
\end{array}$$\normalsize
and it may saved and imported or simply copied and pasted into a running
implementation of LiE.  To use it to branch the adjoint representation of 
$E_6$ to $F_4$, one can firstly ask LiE what is this adjoint representation:
\small$$\verb+adjoint(E6)+$$\normalsize
returns \small$\verb+1X[0,1,0,0,0,0]+$\normalsize, which is LiE's notation for 
$\esix{0}{1}{0}{0}{0}{0}$. And now
\small$$\verb+branch_E6_F4([0,1,0,0,0,0])+$$\normalsize
returns \small$\verb!1X[0,0,0,1] +1X[1,0,0,0]!$\normalsize, which says that
$$\rule{0pt}{20pt}\esix{0}{1}{0}{0}{0}{0}
=\ffour{0}{0}{0}{1}\oplus\ffour{1}{0}{0}{0}.$$

As another example, here is the LiE routine used in \S\ref{interactions} to
branch various irreducible representations of ${\mathrm{SU}}(6)$ to
${\mathrm{Sp}}(3)$ (a cut-down version of \cite[Display (3.3)]{EW}).
\small$$\begin{array}l \verb!# file branch_A5_C3.lie # !  \\
\verb!branch_A5_C3(vec v) = setdefault(A5); ! \\
\verb!res_wt = [[1,0,0],[0,1,0],[0,0,1],[0,1,0],[1,0,0]]; ! \\
\verb!print("the branching of "+v+" from SU(6) to Sp(3) is"); ! \\
\verb!branch(v,C3,res_wt) !  
\end{array}$$\normalsize
For example, \small$\verb!branch_A5_C3([2,0,0,0,2])!$\normalsize\ yields
\begin{equation}\label{branch20002}\afive{2}{0}{0}{0}{2}
=\cthree{0}{2}{0}\oplus\cthree{2}{1}{0}\oplus\cthree{0}{0}{4},\end{equation}
as used in \S\ref{interactions} to spot the flat 
connection~(\ref{well_spotted}).  

\subsection{Associated bundles} Again, let us consider the homogeneous space 
$E_6/F_4$ as a typical example.  We set ${\mathfrak{k}}$
as the default Lie algebra:
\small$$\verb+setdefault(F4)+$$\normalsize and write
\small$\verb+TT+$\normalsize\ for the tangent bundle
${\mathfrak{g}}/{\mathfrak{k}}$:
\small$$\verb+TT=1X[0,0,0,1]+$$\normalsize
We may then ask LiE for all the associated bundles appearing in this article.
\begin{itemize}\addtolength{\itemsep}{-5pt}
\item $\Wedge^1=\begin{picture}(6,6)(0,0)
\put(0,0){\line(1,0){6}}
\put(0,6){\line(1,0){6}}
\put(0,0){\line(0,1){6}}
\put(6,0){\line(0,1){6}}
\end{picture}=$
\small$$\verb+TT+$$\normalsize
\item $\Wedge^2=\begin{picture}(6,12)(0,2)
\put(0,0){\line(1,0){6}}
\put(0,6){\line(1,0){6}}
\put(0,12){\line(1,0){6}}
\put(0,0){\line(0,1){12}}
\put(6,0){\line(0,1){12}}
\end{picture}=$
\small$$\verb+alt_tensor(2,TT)+$$\normalsize
\item $\bigodot^2\!\Wedge^1=\begin{picture}(12,6)(0,0)
\put(0,0){\line(1,0){12}}
\put(0,6){\line(1,0){12}}
\put(0,0){\line(0,1){6}}
\put(6,0){\line(0,1){6}}
\put(12,0){\line(0,1){6}}
\end{picture}=$
\small$$\verb+sym_tensor(2,TT)+$$\normalsize
\item $\Wedge^3=\begin{picture}(6,18)(0,5)
\put(0,0){\line(1,0){6}}
\put(0,6){\line(1,0){6}}
\put(0,12){\line(1,0){6}}
\put(0,18){\line(1,0){6}}
\put(0,0){\line(0,1){18}}
\put(6,0){\line(0,1){18}}
\end{picture}=$
\small$$\verb+alt_tensor(3,TT)+$$\normalsize
\item $\bigodot^3\!\Wedge^1=\begin{picture}(18,6)(0,0)
\put(0,0){\line(1,0){18}}
\put(0,6){\line(1,0){18}}
\put(0,0){\line(0,1){6}}
\put(6,0){\line(0,1){6}}
\put(12,0){\line(0,1){6}}
\put(18,0){\line(0,1){6}}
\end{picture}=$
\small$$\verb+sym_tensor(3,TT)+$$\normalsize
\item $\begin{picture}(12,12)(0,2)
\put(0,0){\line(1,0){6}}
\put(0,6){\line(1,0){12}}
\put(0,12){\line(1,0){12}}
\put(0,0){\line(0,1){12}}
\put(6,0){\line(0,1){12}}
\put(12,6){\line(0,1){6}}
\end{picture}=$
\small$$\verb+tensor(TT,alt_tensor(2,TT))-alt_tensor(3,TT)+$$\normalsize
\item $\begin{picture}(12,12)(0,2)
\put(0,0){\line(1,0){12}}
\put(0,6){\line(1,0){12}}
\put(0,12){\line(1,0){12}}
\put(0,0){\line(0,1){12}}
\put(6,0){\line(0,1){12}}
\put(12,0){\line(0,1){12}}
\end{picture}=$
\small$$\verb+sym_tensor(2,alt_tensor(2,TT))-alt_tensor(4,TT)+$$\normalsize
\item $K=$
\small$$\verb+adjoint+$$\normalsize
\item $\begin{picture}(6,6)(0,0)
\put(0,0){\line(1,0){6}}
\put(0,6){\line(1,0){6}}
\put(0,0){\line(0,1){6}}
\put(6,0){\line(0,1){6}}
\end{picture}\otimes K=$
\small$$\verb+tensor(TT,adjoint)+$$\normalsize
\item $K\bigodot K=$
\small$$\verb+sym_tensor(2,adjoint)+$$\normalsize
\end{itemize}
and also
\begin{itemize}
\item $P\equiv\begin{picture}(12,12)(0,2)
\put(0,0){\line(1,0){6}}
\put(0,6){\line(1,0){12}}
\put(0,12){\line(1,0){12}}
\put(0,0){\line(0,1){12}}
\put(6,0){\line(0,1){12}}
\put(12,6){\line(0,1){6}}
\end{picture}/\big(\begin{picture}(6,6)(0,0)
\put(0,0){\line(1,0){6}}
\put(0,6){\line(1,0){6}}
\put(0,0){\line(0,1){6}}
\put(6,0){\line(0,1){6}}
\end{picture}\otimes K\big)=$
$$\verb+tensor(TT,alt_tensor(2,TT))-alt_tensor(3,TT)-tensor(TT,adjoint)+$$
\item $Q\equiv\begin{picture}(12,12)(0,2)
\put(0,0){\line(1,0){12}}
\put(0,6){\line(1,0){12}}
\put(0,12){\line(1,0){12}}
\put(0,0){\line(0,1){12}}
\put(6,0){\line(0,1){12}}
\put(12,0){\line(0,1){12}}
\end{picture}/\big(K\bigodot K\big)=$
$$
\verb+sym_tensor(2,alt_tensor(2,TT))-alt_tensor(4,TT)-sym_tensor(2,adjoint)+$$
\end{itemize}
in case that (\ref{K1->K2}) is injective (cf.~Corollary~\ref{injection} and 
Theorem~\ref{hidden_criterion}).

\section{Appendix: affine Killing fields}\label{affine_appendix} 
For the sake of completeness, we present in this appendix a geometric
interpretation of the restriction $\nabla_aR_{bc}{}^d{}_e=0$ in case that
$\nabla_a$ is a general torsion-free affine connection, not necessarily arising
from a metric (as discussed at the start of~\S\ref{interactions}).  In the
presence of a metric there is no distinction between $1$-forms and vector
fields.  Without a metric, we can still construct the Lie derivative
${\mathcal{L}}_X\nabla_b$ of an affine connection along a vector field~$X^c$, 
defined as the infinitesimal change in $\nabla_b$ along the flow of~$X^c$. If 
$\nabla_b$ is torsion-free, then any such change will be a tensor 
$\Gamma_{ab}{}^c=\Gamma_{(ab)}{}^c$, in this case characterised by 
$$\Gamma_{ab}{}^cZ^b\equiv
{\mathcal{L}}_X(\nabla_aZ^c)-\nabla_a({\mathcal{L}}_XZ^c),\quad
\mbox{for any vector field $Z^c$}.$$
A simple calculation yields
\begin{equation}\label{Gamma}
\textstyle\Gamma_{ab}{}^c=\nabla_a\nabla_bX^c-R_{ad}{}^c{}_bX^d,
\end{equation}
which is symmetric in $ab$ because
$$\textstyle\Gamma_{[ab]}{}^c=\nabla_{[a}\nabla_{b]}X^c+R_{d[a}{}^c{}_{b]}X^d
=\frac32R_{[ab}{}^c{}_{d]}X^d=0$$
owing to~(\ref{curvature_on_one-forms}) and the Bianchi symmetry.  In other
words, the flow of $X^c$ preserves the given connection $\nabla_a$ if and only
if $\nabla_a\nabla_bX^c-R_{ad}{}^c{}_bX^d=0$, in which case $X^c$ is said to be an {\em
affine Killing field}.
If we now take
$$\begin{array}{rcl}U&\equiv&
\mbox{the tangent bundle $TM$ with its given torsion-free connection},\\
V&\equiv&\mbox{$\Wedge^1\otimes TM$ with its induced connection},\\
\makebox[9pt][l]{$\partial:V\to\Wedge^1\otimes U$\enskip 
to be the identity homomorphism,}\\
\makebox[9pt][l]{$\tilde\kappa:U\to\Wedge^1\otimes 
U=\Wedge^1\otimes\Wedge^1\otimes U$\enskip 
to be $X^c\mapsto R_{ad}{}^c{}_bX^d$,}
\end{array}$$
then conclusion (\ref{Ein}) from Theorem~\ref{prolong} merely confirms, via the
connection
\begin{equation}\label{affine_Killing_connection}
\begin{array}{c}TM\\[-5pt] \oplus\\[-2pt] \End(\Wedge^1)\end{array}
=\begin{array}{c}TM\\[-5pt] \oplus\\[-2pt] \Wedge^1\otimes TM\end{array}\ni
\left[\!\begin{array}{c}X^c\\ \phi_b{}^c\end{array}\!\right]
\stackrel{\nabla_a\,}{\longmapsto}
\left[\!\begin{array}{c}\nabla_aX^c-\phi_a{}^c\\ 
\nabla_a\phi_b{}^c-R_{ad}{}^c{}_bX^d\end{array}\!\right],\end{equation}
that $\Gamma_{ab}{}^c$ defined by (\ref{Gamma}) is, indeed, symmetric in $ab$.
Nevertheless, it is clear that the parallel sections of this bundle correspond
to affine Killing fields.  Moreover, this explicit formula
(\ref{affine_Killing_connection}) allows us to compute its curvature
$$\nabla_{[a}\nabla_{b]}
\left[\!\begin{array}{c}X^d\\ \phi_c{}^d\end{array}\!\right]
=\frac12\left[\!\begin{array}{c}0\\ 
R_{ab}{}^d{}_e\phi_c{}^e-R_{ab}{}^e{}_c\phi_e{}^d
+R_{ae}{}^d{}_c\phi_b{}^e-R_{be}{}^d{}_c\phi_a{}^e
-2(\nabla_{[a}R_{b]e}{}^d{}_c)X^e
\end{array}\!\right],$$
consistent with conclusion (\ref{Zwei}) from Theorem~\ref{prolong}, which says,
in particular, that the second line of this curvature should vanish when skewed
over $abc$.  In the affine locally symmetric case 
$\nabla_aR_{bc}{}^d{}_e=0$, we conclude that the kernel of this curvature is 
$TM\oplus \widehat{K}$, where 
$$\widehat{K} \equiv\{\phi_c{}^d\in\End(\Wedge^1)\mid 
R_{ab}{}^d{}_e\phi_c{}^e-R_{ab}{}^e{}_c\phi_e{}^d
+R_{ae}{}^d{}_c\phi_b{}^e-R_{be}{}^d{}_c\phi_a{}^e=0\}.$$
\begin{thm}\label{affine_LTS}
In the affine locally symmetric case $\nabla_aR_{bc}{}^d{}_e=0$, 
we have 
$$R_{ab}{}^c{}_d\in\Gamma(K\otimes \widehat{K}).$$
\end{thm}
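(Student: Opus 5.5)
Our plan is to mirror the proof of Theorem~\ref{LTS}: differentiate the parallel curvature twice and commute the derivatives. Membership of the first index pair in $K$ is exactly Theorem~\ref{LTS}. That result only used skew-symmetry in $ab$ and the identity~(\ref{gives}), which holds for any torsion-free connection with $\nabla_aR_{bc}{}^d{}_e=0$. So it remains to show that, for each fixed skew pair $ab$, the endomorphism $\phi_c{}^d\equiv R_{ab}{}^d{}_c$ lies in $\widehat K$. Explicitly, we must show that
$$R_{pq}{}^d{}_eR_{ab}{}^e{}_c-R_{pq}{}^e{}_cR_{ab}{}^d{}_e
+R_{pe}{}^d{}_cR_{ab}{}^e{}_q-R_{qe}{}^d{}_cR_{ab}{}^e{}_p=0 .$$

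For this, we expand $(\nabla_p\nabla_q-\nabla_q\nabla_p)R_{ab}{}^d{}_c=0$, treating $R_{ab}{}^d{}_c$ as a tensor with lower indices $a,b,c$ and upper index $d$. Using~(\ref{curvature_on_one-forms}) together with the corresponding formula on vectors, the commutator acting on this tensor gives four terms, up to an overall factor. These are: $R_{pq}{}^d{}_eR_{ab}{}^e{}_c$ from the upper index, and $-R_{pq}{}^e{}_cR_{ab}{}^d{}_e$, $-R_{pq}{}^e{}_aR_{eb}{}^d{}_c$, $-R_{pq}{}^e{}_bR_{ae}{}^d{}_c$ from the lower indices. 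The first two terms already match the first two terms of the $\widehat K$ condition. The remaining two lower-index terms, $-2R_{pq}{}^e{}_{[a}R_{b]e}{}^d{}_c$, must be converted into $R_{pe}{}^d{}_cR_{ab}{}^e{}_q-R_{qe}{}^d{}_cR_{ab}{}^e{}_p=2R_{e[p}{}^d{}_{|c|}R_{q]}{}^e{}_{ab}$-type terms, i.e.\ we need to swap the roles of the pairs $pq$ and $ab$.

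This swap is the main step, and we will do it by invoking~(\ref{gives}) itself. Written with free indices $p,q$ in place of $a,b$, it reads
$$2R_{pq}{}^e{}_{[a}R_{b]e}{}^d{}_c
=R_{pq}{}^e{}_cR_{ab}{}^d{}_e-R_{ab}{}^e{}_cR_{pq}{}^d{}_e ,$$
and its right-hand side is skew under $ab\leftrightarrow pq$. Hence $2R_{pq}{}^e{}_{[a}R_{b]e}{}^d{}_c=-2R_{ab}{}^e{}_{[p}R_{q]e}{}^d{}_c$. We then apply the Bianchi symmetry $R_{[qe}{}^d{}_{c]}=0$ to rewrite $R_{ab}{}^e{}_{[p}R_{q]e}{}^d{}_c$. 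The aim is to bring the curvature in the form $R_{qe}{}^d{}_cR_{ab}{}^e{}_p$, so that it matches the last two terms of the $\widehat K$ condition. Substituting (\ref{gives}) directly gives the simpler combination, and combining it with the commutator identity should cancel everything except the $\widehat K$ expression.

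We expect the obstacle to be bookkeeping. We must choose the correct linear combination of the commutator identity, identity~(\ref{gives}), and the $ab\leftrightarrow pq$ skewness. We must also track which index of $R_{\bullet e}{}^d{}_c$ carries the contraction after using the Bianchi symmetry $R_{qe}{}^d{}_c-R_{ce}{}^d{}_q=-R_{qc}{}^d{}_e$; the unwanted $R_{qc}{}^d{}_e$ pieces should recombine into the first two terms. If the direct route stalls, we will fall back on contracting against a test endomorphism and checking the resulting scalar identity, or on verifying the condition separately on its parts symmetric and skew in $c,q$.
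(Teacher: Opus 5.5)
Your argument is essentially the paper's and it works: everything comes from the Ricci identity for the parallel curvature, i.e.\ from~(\ref{gives}). Once you fix a sign (the two lower-index terms of your commutator sum to $+2R_{pq}{}^e{}_{[a}R_{b]e}{}^d{}_c$, not $-2R_{pq}{}^e{}_{[a}R_{b]e}{}^d{}_c$, while the last two terms of the $\widehat K$ condition equal $-2R_{ab}{}^e{}_{[p}R_{q]e}{}^d{}_c$), the skewness of~(\ref{gives}) under $ab\leftrightarrow pq$ closes the proof at once, so the Bianchi-symmetry step you anticipate is unnecessary. The paper avoids even this swap by commuting $\nabla_a,\nabla_b$ on $R_{pq}{}^d{}_c$ rather than $\nabla_p,\nabla_q$ on $R_{ab}{}^d{}_c$: that expansion, which is just~(\ref{gives}) written out in full, is verbatim the $\widehat K$ condition for $\phi_c{}^d=R_{ab}{}^d{}_c$.
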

\begin{proof} We already know that 
$R_{ab}{}^c{}_d\in\Gamma(K\otimes\End(\Wedge^1))$ from Theorem~\ref{LTS} but 
if we write out (\ref{gives}) from its proof in full, then  
$$R_{cd}{}^p{}_eR_{ab}{}^e{}_q-R_{cd}{}^e{}_qR_{ab}{}^p{}_e
+R_{ce}{}^p{}_qR_{ab}{}^e{}_d-R_{de}{}^p{}_qR_{ab}{}^e{}_c=0$$
and we also see that $R_{ab}{}^c{}_d\in\Gamma(\Wedge^2\otimes \widehat{K})$,
as required.
\end{proof}
\begin{cor}
A locally symmetric affine connection is locally homogeneous with respect to 
its affine Killing fields.
\end{cor}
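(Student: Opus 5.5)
Following the proof of the analogous Theorem for Killing fields in the metric case, I plan to show that the kernel $TM\oplus\widehat K$ of the curvature of \eqref{affine_Killing_connection} is a parallel subbundle. Then the connection restricted to it is flat. Consequently, near any $p\in M$, the value $X^c|_p\in T_pM$ may be chosen arbitrarily, together with any $\phi_b{}^c|_p\in\widehat K_p$. Parallel propagation within $TM\oplus\widehat K$ then produces an affine Killing field with prescribed value at $p$. Since these values span $T_pM$, the affine Killing fields act locally transitively, which is the claimed local homogeneity.

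\textbf{Step 1.} Since $\nabla_aR_{bc}{}^d{}_e=0$, the subbundle $\widehat K$ is defined by a parallel tensor condition. Hence $\widehat K$ is itself parallel for the induced connection on $\Wedge^1\otimes TM$: if $\phi\in\Gamma(\widehat K)$ then $\nabla_a\phi\in\Gamma(\Wedge^1\otimes\widehat K)$.

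\textbf{Step 2.} I examine the formula \eqref{affine_Killing_connection} for $(X^c,\phi_b{}^c)\in\Gamma(TM\oplus\widehat K)$.
\begin{itemize}
\item The first line is $\nabla_aX^c-\phi_a{}^c$, which takes values in $\Wedge^1\otimes TM$ and so causes no constraint.
\item The second line is $\nabla_a\phi_b{}^c-R_{ad}{}^c{}_bX^d$. Its first term lies in $\Wedge^1\otimes\widehat K$ by Step 1.
\item For the term $R_{ad}{}^c{}_bX^d$, Theorem~\ref{affine_LTS} gives $R_{ad}{}^c{}_b\in\Gamma(K\otimes\widehat K)\subseteq\Gamma(\Wedge^2\otimes\widehat K)$, with the $\widehat K$ slot carried by the indices ${}^c{}_b$. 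Contracting $X^d$ into one of the $\Wedge^2$ slots therefore gives a section of $\Wedge^1\otimes\widehat K$, indexed by $a$.
\end{itemize}
So $TM\oplus\widehat K$ is parallel. It lies in the kernel of the curvature by the very definition of $\widehat K$, using the curvature formula displayed just before Theorem~\ref{affine_LTS} with its $\nabla R$ term dropped. Hence the induced connection on it is flat.

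\textbf{Step 3.} On a simply connected neighbourhood of $p$, flatness gives parallel sections through every initial value in $(TM\oplus\widehat K)_p$. Their first components are affine Killing fields taking every value $X^c|_p$.

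I expect the main obstacle to be bookkeeping in Step 2: checking that the index placement in $R_{ad}{}^c{}_b$ matches the slot in which Theorem~\ref{affine_LTS} puts $\widehat K$. It is also worth observing that this membership is exactly what the expanded identity in the proof of Theorem~\ref{affine_LTS} provides, with roles relabelled. Everything else is formal.
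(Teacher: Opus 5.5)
Your proposal is correct and follows essentially the same route as the paper: you show that the curvature kernel $TM\oplus\widehat K$ of \eqref{affine_Killing_connection} is parallel via Theorem~\ref{affine_LTS}, hence flat, so parallel sections with arbitrary value $X^c|_p$ exist (your index check that the $\widehat K$ slot of $R_{ad}{}^c{}_b$ is ${}^c{}_b$ is right). The paper additionally observes that this is the \emph{maximal} parallel flat subbundle, which is not needed for local homogeneity, whereas you spell out the final propagation-and-spanning step that the paper leaves implicit.
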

\begin{proof} The parallel sections of $TM\oplus\End(\Wedge^1)$ are annihilated
by its curvature and are therefore constrained to $TM\oplus \widehat{K}$.  On
the other hand Theorem~\ref{affine_LTS} and the formula
(\ref{affine_Killing_connection}) show that $TM\oplus \widehat{K}$ is a
parallel subbundle.  In summary, $TM\oplus \widehat{K}\subseteq
TM\oplus\End(\Wedge^1)$ is the maximal parallel flat subbundle.
\end{proof}
If it so happens that $\nabla_a$ is a metric connection, then we can use the
metric to raise/lower indices and, since $R_{abcd}$ now enjoys Riemann tensor
symmetries, we may write (\ref{affine_Killing_connection}) as
$$\begin{array}{c}\Wedge^1\\[-5pt] \oplus\\[-2pt] 
\Wedge^1\otimes\Wedge^1\end{array}\ni
\left[\!\begin{array}{c}X_c\\ \phi_{bc}\end{array}\!\right]
\stackrel{\nabla_a\,}{\longmapsto}
\left[\!\begin{array}{c}\nabla_aX_c-\phi_{ac}\\ 
\nabla_a\phi_{bc}-R_{bc}{}^d{}_aX_d\end{array}\!\right]$$
and observe (\ref{Killing_connection}) as a parallel subbundle.  Regarding
their parallel sections, this observation corresponds to the evident fact that
in the Riemannian setting, Killing fields are affine Killing fields.
More discussion on affine symmetric spaces may be found in~\cite{Kowalski}.
	
\raggedright\raggedbottom

\end{document}